\documentclass[11pt,a4paper]{amsart}
\title{Reflection groups on smooth projective threefolds}
\author{Wendelin Lutz}

\usepackage{amsfonts, amssymb}
\usepackage{tikz}
\usepackage{float}
\usepackage{xcolor}

\usepackage{tikz-cd}
\usepackage[margin=2.5cm]{geometry}
\usepackage{booktabs}
\usepackage{hyperref}
\newcommand{\ob}{\operatorname{ob}}
\usepackage{xfrac}
\usepackage{epsfig}
\usepackage{setspace}
\usepackage{enumerate}
\usepackage{mathrsfs}
\newcommand{\Adm}{\mathrm{Adm}}
\newcommand{\Amp}{\mathrm{Amp}}
\newcommand{\Aut}{\mathrm{Aut}}
\newcommand{\Bbig}{\mathrm{Big}}

\newcommand{\Def}{\mathrm{Def}}
\newcommand{\DR}{\mathrm{DR}}

\newcommand{\Eff}{\mathrm{Eff}}

\newcommand{\GL}{\mathrm{GL}}
\newcommand{\Gr}{\mathrm{Gr}}

\newcommand{\Hilb}{\mathrm{Hilb}}
\newcommand{\Hom}{\mathrm{Hom}}
\newcommand{\id}{\mathrm{id}}

\newcommand{\Mov}{\mathrm{Mov}}
\newcommand{\NE}{\mathrm{NE}}
\newcommand{\Nef}{\mathrm{Nef}}

\newcommand{\Pic}{\mathrm{Pic}}

\newcommand{\relint}{\mathrm{relint}}

\newcommand{\cA}{\mathcal{A}}
\newcommand{\cC}{\mathcal{C}}
\newcommand{\cD}{\mathcal{D}}
\newcommand{\cE}{\mathcal{E}}
\newcommand{\cF}{\mathcal{F}}
\newcommand{\cH}{\mathcal{H}}

\newcommand{\cL}{\mathcal{L}}
\newcommand{\cM}{\mathcal{M}}
\newcommand{\cO}{\mathcal{O}}

\newcommand{\cX}{\mathcal{X}}
\newcommand{\cY}{\mathcal{Y}}
\newcommand{\cZ}{\mathcal{Z}}
\newcommand{\AAA}{\mathbb{A}}
\newcommand{\CC}{\mathbb{C}}
\newcommand{\DD}{\mathbb{D}}
\newcommand{\FF}{\mathbb{F}}

\newcommand{\PP}{\mathbb{P}}
\newcommand{\QQ}{\mathbb{Q}}
\newcommand{\RR}{\mathbb{R}}

\newcommand{\ZZ}{\mathbb{Z}}

\newcommand{\ip}{\raise1pt\hbox{\large $\lrcorner$}}

\theoremstyle{plain}
\newtheorem{theorem}{Theorem}[section]
\newtheorem{proposition}[theorem]{Proposition}
\newtheorem{lemma}[theorem]{Lemma}
\newtheorem*{lem*}{Lemma}

\newtheorem{corollary}[theorem]{Corollary}

\theoremstyle{definition}
\newtheorem{definition}[theorem]{Definition}

\newtheorem*{remark*}{Remark}

\begin{document}
\begin{abstract}
	Let $Y$ be a smooth projective threefold with $H^2(Y, \cO_Y)=0$ and unobstructed deformations. We define reflection groups $W_Y^{sm}$ and $W_Y^{big}$ canonically associated to $Y$, acting on the nef, resp. movable cone of a very general deformation of $Y$. If $-K_Y$ is nef, we show that $\Nef(Y)\cap\Bbig(Y)$ (resp. $\overline{\Mov}(Y)\cap\Bbig(Y)$) is a fundamental domain for the action of the corresponding group on the big part of the generic cone. This generalizes work of \cite{WilsonCone, Lutz} on Calabi--Yau threefolds.
	The group $W_Y^{big}$ recovers and gives a deformation-theoretic explanation for the Weyl group action on divisor classes of point blowups of $\mathbb P^3$ studied by Dolgachev--Ortland~\cite{Dolgachev-Ortland} and Mukai~\cite{Mukai}.
\end{abstract}
\maketitle
	\section{Introduction and overview of results}
	The nef and the movable cones of a projective variety $Y$ are fundamental invariants of $Y$. The nef cone is dual to the cone of curves and closely related to contractions of $Y$ through Mori theory. The movable cone further encodes information about contractions of small birational modifications of $Y$.
It is therefore natural to ask how the cones $\Nef(Y)$ and $\overline{\Mov}(Y)$ change under deformation.
We assume throughout that $H^2(Y, \cO_Y)=0$, and $\Def(Y)$ is smooth. The assumption $H^2(Y, \cO_Y)=0$ implies that parallel transport identifies the spaces of numerical divisor classes of the fibers in a sufficiently small family. We may then regard the nef (resp. movable) cones of $Y$ and a very general deformation $Y^{\mathrm{gen}}$ 
as cones in the same vector space $N^1(Y, \RR)$.
The assumption that deformations of $Y$ are unobstructed implies in particular that we may unambiguously speak of a very general deformation of $Y$. 

The behaviour of $\Nef(Y)$ under deformation was studied by Wilson~\cite{WilsonCone} for Calabi--Yau threefolds, by Minagawa~\cite{Minagawa} and Paoletti~\cite{Paoletti} for weak Fano threefolds, and by Wi\'{s}niewski~\cite{Wisniewski} for Fano manifolds of arbitrary dimension. The behaviour of $\overline{\Mov}(Y)$ for Calabi--Yau threefolds was studied by Lutz~\cite{Lutz}. 

Our starting point is that, in many cases, a disappearing facet of $\Nef(Y)$ or $\overline{\Mov}(Y)$ induces a symmetry of the generic cone, given by reflection in the supporting hyperplane of the facet.
To describe the simplest example, suppose that $E \subset Y$ is a surface ruled over a curve $C$, and $K_Y \cdot \ell_E=0$, where $\ell_E \in N_1(Y)$ is the class of a fiber of the ruling. 
We define  
\[
\sigma_E \in \Aut(N^1(Y, \ZZ)), \qquad \sigma_E(x) = x+(x \cdot \ell_E)[E].
\] 
The adjunction formula shows that $E \cdot \ell_E=-2$, so $\sigma_E$ preserves the hyperplane $\ell_E^\perp$ pointwise and sends $[E]$ to $-[E]$. We call $\sigma_E$ the reflection associated to $E$. The pairing in the definition of $\sigma_E$ is the duality pairing between $N^1(Y)$ and $N_1(Y)$ and does not use any bilinear form on $N^1(Y)$.  

Suppose now that $E$ is the exceptional divisor of a $K$-trivial contraction $p \colon Y \rightarrow \bar{Y}$ contracting $E$ along its ruling, so that $\ell_E^\perp$ supports a facet of $\Nef(Y)$ and $\overline{\Mov}(Y)$. Under our hypotheses, the contraction extends to a sufficiently small deformation of $Y$. If the deformed contraction is an isomorphism on a very general fibre, then the nef facet disappears, and $\sigma_E$ preserves the generic nef cone. If the exceptional divisor disappears but exceptional curves remain, the movable facet disappears, and $\sigma_E$ preserves the generic movable cone.
These two behaviours lead to reflection groups
\[
W_Y^{sm}\subset W_Y^{big},
\]
preserving the generic nef and movable cones, respectively. The full construction allows quasi-ruled surfaces for the small group and conic fibrations on crepant small modifications of $Y$ for the big group, see Definition~\ref{def:deformation-root-classes}. 

For Calabi--Yau threefolds, these reflection groups and their fundamental-domain properties were studied by Wilson and Lutz. We construct the reflection groups and their actions without any global positivity assumption on $-K_Y$. When $-K_Y$ is nef, we further show that the nef and movable cones of $Y$, intersected with the big cone, are fundamental domains for the corresponding actions.
\begin{theorem}[Reflection actions and fundamental domains]
	\label{thm:withoutnef}\label{thm:withnef}
	Let $Y$ be a smooth projective threefold such that
	$H^2(Y,\cO_Y)=0$ and $\Def(Y)$ is smooth, and let $Y^{\mathrm{gen}}$ be a very
	general deformation of $Y$.
	\begin{enumerate}
		\item 
		The group $W_Y^{sm}$ acts faithfully on $\Nef(Y^{\mathrm{gen}})$, and
		$W_Y^{big}$ acts faithfully on $\overline{\Mov}(Y^{\mathrm{gen}})$.
		\item 
		If $-K_Y$ is nef, then
		\[
		\Nef(Y)\cap\Bbig(Y)
		\quad\text{and}\quad
		\overline{\Mov}(Y)\cap\Bbig(Y)
		\]
		are fundamental domains for the actions of $W_Y^{sm}$ and $W_Y^{big}$ on
		\[
		\Nef(Y^{\mathrm{gen}})\cap\Bbig(Y^{\mathrm{gen}})
		\quad\text{and}\quad
		\overline{\Mov}(Y^{\mathrm{gen}})\cap\Bbig(Y^{\mathrm{gen}}),
		\]
		respectively. In particular, $\Nef(Y)=\Nef(Y^{\mathrm{gen}})$
		if and only if $W_Y^{\mathrm{sm}}=\{1\}$, and
		$\overline{\Mov}(Y)=\overline{\Mov}(Y^{\mathrm{gen}})$ if and only if
		$W_Y^{\mathrm{big}}=\{1\}$.
	\end{enumerate}
\end{theorem}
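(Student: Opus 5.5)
The plan is to follow Wilson's and Lutz's strategy for Calabi--Yau threefolds, replacing $K_Y\equiv 0$ by $K$-triviality of the relevant extremal contractions. Throughout, $N^1$ of all fibres of the universal family over $\Def(Y)$ is identified with $N^1(Y,\RR)$ by parallel transport; this uses $H^2(Y,\cO_Y)=0$.

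\textbf{Part (1): invariance and faithfulness.} First I show that each generating reflection preserves the generic cone. Take a root class from Definition~\ref{def:deformation-root-classes}, say $E$ ruled over $C$ with $K_Y\cdot\ell_E=0$, or a quasi-ruled or conic-fibration analogue on a crepant small modification. By unobstructedness and the relative deformation theory of $K$-trivial contractions, the contraction $p\colon Y\to\bar Y$ extends over $\Def(Y)$. On $Y^{\mathrm{gen}}$ the contraction is then either an isomorphism (nef case) or small (movable case). In the nef case I would show that $\sigma_E$ is realized as a monodromy transformation. The route is to take a generic one-parameter smoothing of $\bar Y$ near the image of $E$, where a simultaneous-resolution argument identifies the two resolutions of the family. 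Their relative ample cones are interchanged by $\sigma_E$, so $\sigma_E$ maps $\Nef(Y^{\mathrm{gen}})$ to itself. Equivalently, one can use that monodromy of the family over $\Def(Y)$ minus the discriminant preserves the generic nef cone.

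The movable case works the same way, except that we pass to the flopped model. There $\sigma_E$ swaps the two small modifications of $\bar Y^{\mathrm{gen}}$ and so preserves $\overline{\Mov}(Y^{\mathrm{gen}})$. Faithfulness is then linear algebra: $W$ acts on $N^1(Y,\RR)$ by definition, and the generic cone has nonempty interior (it contains $\Amp(Y)$), so any element acting trivially on it acts trivially on $N^1$.

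\textbf{Part (2): fundamental domain.} Assume now that $-K_Y$ is nef. Every $K_Y$-negative extremal contraction of $Y$ is then rigid under deformation, since its extremal ray lies in $K^{<0}$ and Mori's bend-and-break gives deformation-invariance. So every facet of $\Nef(Y)\cap\Bbig(Y)$ that disappears on $Y^{\mathrm{gen}}$ must come from a $K$-trivial extremal ray. Moreover, by the basepoint-free theorem applied to $D-\epsilon K$, big nef classes are semiample, so such a facet corresponds to a genuine $K$-trivial birational contraction. The classification of these contractions (Wilson's analysis: divisorial to a curve, or small) shows that a disappearing nef facet is exactly a $\sigma_E$ with $E$ a root class of $W^{sm}$. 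For movable facets we run the same argument on each crepant small modification, which gives the conic-fibration root classes.

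Next I run the standard chamber argument. Take $D$ in the interior of $\Nef(Y^{\mathrm{gen}})\cap\Bbig$ and a path from an ample class of $Y$ to $D$. Each time the path leaves $\Nef(Y)$ through a disappearing wall, apply the corresponding reflection. This shows that the translates $w(\Nef(Y)\cap\Bbig)$ cover the big part of the generic cone. Local finiteness of walls inside the big cone, away from $K$-trivial accumulation, follows from the cone theorem for the klt pair $(Y,\epsilon D)$. Disjointness of interiors comes from the fact that the $W$-translates of walls are exactly the images of reflection hyperplanes. Tits' chamber theory for a group generated by reflections in facets of a cone then gives the fundamental-domain property. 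The final equivalences are immediate: if the domain equals the whole generic cone, no reflection can move an interior point inside it, so $W=\{1\}$.

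\textbf{Main obstacle.} The hardest step is identifying disappearing facets with root classes precisely, and in particular excluding faces of the generic cone that have no wall counterpart. Here nefness of $-K_Y$ is used essentially, and I would first try to reduce to Wilson's local computation near $K$-trivial exceptional loci.
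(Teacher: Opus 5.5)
Your outline has the right global shape (the reflections preserve the generic cone, followed by a chamber argument), but the step that carries all of part (1) is asserted rather than proved. You write that once $p$ extends over $\Def(Y)$, on $Y^{\mathrm{gen}}$ the contraction ``is then'' an isomorphism or small. Extension of $p$ gives nothing of the kind. It holds equally well when $E$ deforms, and in that case the facet persists and $\sigma_E$ cannot preserve the generic cone. So the condition on $M_E$ has to be used, and turning it into disappearance is the core of the paper. This needs three things:
\begin{enumerate}
\item the identification $H^1(E,N_{E/Y})\cong H^0(C,M_E)$;
\item surjectivity of $\ob_E\colon H^1(Y,T_Y)\to H^1(E,N_{E/Y})$ (Theorem~\ref{thm:surjective}), proved with Saito's Hodge modules and the decomposition theorem for the semismall map $p$, because no positivity of $-K_Y$ is available;
\item an argument that no thickening $mE$ survives in the family, resp.\ no fibre, fibre component or non-reduced curve supported on a fibre. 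This is the $A_1/A_2$ simultaneous-resolution and Hilbert-scheme analysis of Propositions~\ref{prop:root-faces-disappear-movable} and~\ref{prop:root-faces-disappear-nef}.
\end{enumerate}
You also assume every root comes with a contraction. For $E\in\Delta_Y^{\mathrm{big}}$ the definition only provides a $K$-trivial conic model, and producing a contractible one uses Spicer's foliated cone theorem and a sequence of flops (Proposition~\ref{prop:existenceofcontractionofE}). Finally, your way of globalising the reflection does not work as stated. The family over $\Def(Y)$ is smooth and its $N^1$ local system is trivial after the \'etale base change, so there is no monodromy ``over $\Def(Y)$ minus the discriminant''. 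A simultaneous resolution near $C$ also says nothing by itself about the global cone of $Y^{\mathrm{gen}}$. The paper instead proceeds as in Proposition~\ref{prop:flopofroot}:
\begin{enumerate}
\item it flops the whole total space $\cY\dashrightarrow\cY^+$, which exists by BCHM because $P_E$ is a flipping contraction for a suitable klt pair on $\cY$;
\item it shows the induced map of central fibres extends to an isomorphism $Y\cong Y^+$ that induces $\sigma_E$ on $N^1$;
\item it constructs an automorphism $\alpha$ of the base with $\alpha^*\cY\cong\cY^+$, which is what makes very general fibres correspond.
\end{enumerate}

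Part (2) has three further gaps.

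First, Tits' theorem needs the generalized-root conditions for distinct roots: $E_1\cdot\ell_{E_2}\in\ZZ_{\geq0}$ and $E_1\cdot\ell_{E_2}=0\iff E_2\cdot\ell_{E_1}=0$. These are also what the paper uses for faithfulness of $W$ as a Coxeter group. The symmetric-vanishing condition is not formal. Proposition~\ref{prop:deformation-roots-form-a-root-system} proves it by comparing $E_1\cap E_2$ on two contractible models, using a genus argument if $g(C)>0$ and the crepant triple product $E_1\cdot E_2\cdot K_Y$ if $g(C)=0$.

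Second, your path-crossing argument needs the segment from an ample class to $D$ to cross only finitely many walls, and the cone theorem for a klt pair on $Y$ does not give this. It controls the facets of the single chamber $\Nef(Y)\cap\Bbig(Y)$, not a possible accumulation of infinitely many translates $w(\Nef(Y))$. Reflection walls are locally finite only in the interior of the Tits cone, and showing that $\Nef(Y^{\mathrm{gen}})\cap\Bbig(Y^{\mathrm{gen}})$ lies in the Tits cone is exactly the covering statement you are trying to prove. The paper gets the finiteness from termination (Theorem~\ref{thm:MMP}):
\begin{enumerate}
\item using the relative Kodaira lemma and nefness of $-K_Y$, it writes $\delta\cD\sim_{\RR,S}K_{\cY}+\Gamma+\delta\mathcal{G}$ with a klt, relatively big boundary;
\item it runs the $\cD$-MMP over the versal base, which terminates by BCHM;
\item by the bijection between disappearing facets and roots (Proposition~\ref{prop:bijection-roots-faces}), every step is either a deformed flop of the central fibre or a root flop.
\end{enumerate}
Some input of this kind, such as BCHM finiteness of models, is unavoidable.

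Third, showing that a disappearing nef facet comes from $\Delta_Y^{\mathrm{sm}}$ is more than Wilson's classification of contraction types. You must deduce from disappearance that $M_E\cong\cO_C$ and that $E$ is quasi-ruled, i.e.\ has no singular fibres (resp.\ no double fibres). This is again done by the Hilbert-scheme analysis at the fibres.
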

We find that the group $W_Y^{big}$ gives a deformation-theoretic origin for the appearance of Weyl groups in the literature on point-blowups of $\PP^3$.
Indeed, for each integer $n \geq 4$, we construct a variety $Y_e$ which is the blowup of $\PP^3$ in a set of $n$ points in a carefully chosen infinitely near position, and compute that 
$W(T_{2,4,n-4})\subset W_{Y_e}^{big}$, where the Dynkin diagram
$T_{2,4,n-4}$ is shown in Figure~\ref{fig:dynkin-t24}.
\begin{figure}[H]
\centering
\begin{tikzpicture}[x=1.15cm,y=0.95cm,
    root/.style={circle,draw,fill=white,inner sep=0pt,minimum size=5pt},
    every label/.style={font=\small}]
  \node[root,label=below:{$\alpha_1$}] (a1) at (0,0) {};
  \node[root,label=below:{$\alpha_2$}] (a2) at (1,0) {};
  \node[root,label=below:{$\alpha_3$}] (a3) at (2,0) {};
  \node[root,label=below:{$\alpha_4$}] (a4) at (3,0) {};
  \node[root,label=above:{$\alpha_0$}] (a0) at (3,1) {};
  \node[root,label=below:{$\alpha_5$}] (a5) at (4,0) {};
  \node (dots) at (5,0) {$\cdots$};
  \node[root,label=below:{$\alpha_{n-1}$}] (an) at (6,0) {};
  \draw (a1)--(a2)--(a3)--(a4)--(a5);
  \draw (a4)--(a0);
  \draw (a5)--(dots);
  \draw (dots)--(an);
\end{tikzpicture}
\caption{The diagram $T_{2,4,n-4}$. If $n=4$, by convention, the diagram only contains the nodes $\alpha_0$ through $\alpha_3$.}
\label{fig:dynkin-t24}\label{fig}
\end{figure}
A very general deformation of $Y_e$ is of course the blowup of $\PP^3$ in $n$ points in general position, and $\Def(Y_e)$ is smooth. Theorem~\ref{thm:withoutnef} thus recovers the Weyl group action defined by Dolgachev--Ortland~\cite{Dolgachev-Ortland} and Mukai~\cite{Mukai}, using the auxiliary form 
\begin{equation}\label{eq:Mukai}
D_1 \star D_2=D_1 \cdot D_2 \cdot -\tfrac{1}{2}K_Y.
\end{equation}
If $n = 8$, $-K_{Y_e}$ is nef. We prove that
\[
W_{Y_e}^{big}=W(T_{2,4,4})
\]
and that $\overline{\Mov}(Y_e)$ is rational polyhedral. Theorem~\ref{thm:withoutnef} therefore recovers a theorem of Stenger--Xie.
\begin{theorem}(Theorem~\ref{thm:Stenger--Xie--proof}) \label{thm:stenger-xie-eight-point-blowup}\cite[Theorem 1.2]{StengerXie}
	Let $Y^{\mathrm{gen}}$ be the blowup of $\PP^3$ in $ 8$ points in very general position. The group $W(\tilde{E}_7)=W(T_{2,4,4})$ acts on ${\Mov}^+(Y^{\mathrm{gen}})$ with a rational polyhedral fundamental domain.
\end{theorem}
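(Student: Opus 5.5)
We derive this from Theorem~\ref{thm:withnef}(2) applied to $Y=Y_e$ with $n=8$, so that $Y^{\mathrm{gen}}$ is the blowup of $\PP^3$ in $8$ very general points.

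\textbf{Hypotheses.} First we confirm that $Y_e$ satisfies the hypotheses of Theorem~\ref{thm:withnef}.
\begin{enumerate}
\item $H^2(Y_e,\cO)=0$, since $Y_e$ is rational.
\item $\Def(Y_e)$ is smooth, which the construction guarantees.
\item $-K_{Y_e}$ is nef when $n=8$, as recorded above.
\end{enumerate}
Part (2) then says that $\overline{\Mov}(Y_e)\cap\Bbig(Y_e)$ is a fundamental domain for $W_{Y_e}^{big}$ acting on $\overline{\Mov}(Y^{\mathrm{gen}})\cap\Bbig(Y^{\mathrm{gen}})$. Note that $N^1$ is identified by parallel transport.

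\textbf{Identifying the group.} Next we show $W_{Y_e}^{big}=W(T_{2,4,4})$. The inclusion $\supseteq$ comes from the general construction. For each node $\alpha_i$ we exhibit a $K$-trivial ruled surface or conic fibration on $Y_e$, or on a crepant small modification of it. It degenerates suitably: the exceptional divisor disappears in $Y^{\mathrm{gen}}$ while the curves persist. Its class $[E_i]$ is then a deformation root class, and the reflections $\sigma_{E_i}$ have Cartan matrix $(\ell_{E_i}\cdot E_j)$ of type $T_{2,4,4}=\tilde E_7$.

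For $\subseteq$ we must show that every root class in Definition~\ref{def:deformation-root-classes} is in the $W(T_{2,4,4})$-orbit of these. I would do this using the form \eqref{eq:Mukai}. Any disappearing facet $\ell^\perp$ of the movable cone has $\ell$ a class with $K\cdot\ell=0$ and $E\cdot\ell=-2$, so the root is a $\star$-root of square $-2$ in the appropriate normalization. For $n=8$ the lattice $K^\perp$ with $\star$ is the $\tilde E_7$ root lattice, and its real roots form a single $W$-orbit.

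\textbf{Rational polyhedrality.} Then we show that $\overline{\Mov}(Y_e)$ is rational polyhedral. I plan to list the finitely many SQMs of $Y_e$, related by flops of the $K$-trivial curves. On each we compute the nef cone via the cone theorem: $-K$ is nef and big, so these are log Fano-type, and the cone of curves is rational polyhedral. Their union is $\overline{\Mov}(Y_e)$. Alternatively, $Y_e$ is a Mori dream space by the Fano-type argument, since $-K$ is nef and big and $Y_e$ is rational. This gives polyhedrality immediately.

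\textbf{Matching cones.} Finally we compare $\Mov^+(Y^{\mathrm{gen}})$ with $\overline{\Mov}(Y^{\mathrm{gen}})\cap\Bbig(Y^{\mathrm{gen}})$, and translate the fundamental domain into the stated form. The convex hull of rational points of the effective movable cone agrees with the big part up to the boundary ray $-K$ (or $-\tfrac12K$). The imaginary root direction is fixed by $W$, and it is handled as in \cite{StengerXie}'s convention.

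The main obstacle is the inclusion $W_{Y_e}^{big}\subseteq W(T_{2,4,4})$: controlling all conic fibrations on all crepant small modifications of $Y_e$. A naive count risks missing roots, and it relies on the $\star$-lattice argument being compatible with the duality pairing.
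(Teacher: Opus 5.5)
Your overall plan (apply Theorem~\ref{thm:withnef} to $Y_e$, identify the group, show $\overline{\Mov}(Y_e)$ is rational polyhedral, pass to $\Mov^+$) matches the paper's. However, the polyhedrality step rests on a false premise.

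For $n=8$, $-\tfrac12K_{Y_e}$ is the pullback of $\cO_{\PP^2}(1)$ under the elliptic fibration $\pi\colon Y_e\to\PP^2$, and indeed $(-K_{Y_e})^3=64-8\cdot 8=0$. So $-K_{Y_e}$ is nef but \emph{not} big. Hence $Y_e$ is not weak Fano, and it is not of Fano type either, since Fano type forces $-K$ to be big. Both of your routes (the Mori dream space shortcut, and a finite list of SQMs with polyhedral Mori cones) used bigness, so neither is available.

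The paper instead computes $\Eff(Y_e)=\sum_{i=0}^8\RR_{\ge0}\alpha_i$ (with $\alpha_8=E_8$) by hand, using $0\le m_i\le d$ and $D\cdot F\ge0$ for the elliptic fibre class $F$. It then uses Proposition~\ref{prop:movable-cone-divisorial-halfspaces} to cut $\overline{\Mov}(Y_e)$ out of $\Eff(Y_e)$ by the half-spaces of divisorial contractions $Z\to\bar Z$ on SQMs, and shows only finitely many occur:
\begin{enumerate}
\item A $K$-negative one is, by two-divisibility of $-K_Z$ and Mori's classification, the blowup of a smooth point. Its exceptional divisor is a section of $\pi_Z$, hence the transform of $E_8$, because the Mordell--Weil group is trivial.
\item A $K$-trivial one is of type III (as $\pi_Z$ factors through it) with admissible exceptional divisor, and there are only eight admissible divisors.
\end{enumerate}

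The inclusion $W^{big}_{Y_e}\subseteq W(T_{2,4,4})$, which you correctly flag, is also not established by your sketch. The relations $K\cdot\ell_E=0$ and $E\cdot\ell_E=-2$ do not imply $[E]\in K^\perp$ or $E\star E=-2$. Moreover, $\sigma_E$ is defined via $\ell_E$ rather than $\star$, so even knowing that $[E]$ is a real root would not place $\sigma_E$ in $W(\tilde E_7)$.

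The missing input is the equality $E\cdot(-\tfrac12K)=\ell_E$ in $N_1$, i.e.\ that $E$ is $\pi$-vertical and its conic fibration is $\pi|_E$. This is Proposition~\ref{prop:bijectionbigroots}. The flops in a conic model are flops of $\pi$-vertical curves, so $-\tfrac12K_Z$ still defines $\pi_Z$. Then $E_Z$ is a component of a reducible vertical divisor, hence admissible, lying over a line with connected fibres. Therefore $q=\pi_Z|_{E_Z}$ and $s_E=\sigma_E$. Combined with Prendergast-Smith's computation that the admissible divisors on $Y_e$ are the eight $\tilde E_7$ simple roots, this gives $W^{big}_{Y_e}=W(\tilde E_7)$.

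Finally, your passage to $\Mov^+$ is too loose. The non-big part of $\overline{\Mov}(Y^{\mathrm{gen}})$ is not just the ray of $-K$: for instance, $H-E_1$ is nef with $(H-E_1)^3=0$. Theorem~\ref{thm:withnef} says nothing about this non-big part. The paper closes this with \cite[Theorem 4.1]{LooijengaCones}. Since the $W$-translates of the rational polyhedral cone $P=\overline{\Mov}(Y_e)$ cover $\overline{\Mov}(Y^{\mathrm{gen}})^\circ$, one gets $\overline{\Mov}(Y^{\mathrm{gen}})^+=W\cdot P$. Disjointness of the interiors of the translates then makes $P$ a fundamental domain.
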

Here $C^+=\mathrm{Conv}(\bar{C} \cap V_\QQ)$ for $C$ a convex cone in a vector space $V$ with rational structure. Our proof gives a geometric meaning to the rational polyhedral fundamental domain: it is the movable cone of a special degeneration. 

\subsection{Deformation Roots}
Let $p \colon Y \rightarrow \bar{Y}$ be a divisorial contraction contracting a surface $E$ to a smooth curve $C$, so that $q=p|_E \colon E \rightarrow C$ is a conic bundle. Suppose also that the contraction is $K$-trivial, i.e. that $K_Y \cdot \ell_E=0$ where $\ell_E$ is the class of a general fiber of $q$.
The Leray spectral sequence gives an isomorphism 
\[
H^1(E, N_{E/Y}) \cong H^0(C, M_E)
\]
where
\begin{equation}\label{eq:obstruction-bundle}
M_E:=\omega_{\bar{Y}}^{-1}|_C\otimes\omega_C\in\Pic(C).
\end{equation}
We prove in Theorem~\ref{thm:surjective} that the obstruction map
\[
\ob_E \colon H^1(Y,T_Y)\longrightarrow H^0(C,M_E)
\]
is surjective. Therefore, if $\Def(Y)$ is smooth and $H^0(C,M_E)\neq0$, the Kodaira--Spencer class $\kappa$ of a general one-parameter deformation $\cY\to\Delta$ has nonzero image $\sigma\in H^0(C,M_E)$, so that $E$ does not deform with $Y$ to first order, although some exceptional fibers of $E$ may still deform. 
We will show that this implies that the facet of $\overline{\Mov}(Y)$ supported on $\ell_E^\perp$ disappears.
In view of this, we call $M_E$ the obstruction bundle of $E$. 

The obstruction to deforming the fiber $F_c$ over $c \in C$ with $Y$ factors through the evaluation map 
\[
H^0(C, M_E) \rightarrow H^0(c, M_E|_c) \cong \CC.
\]
Therefore, if $M_E$ is trivial, the nonzero section $\sigma$ is nowhere vanishing, so no fiber of $E$ deforms to first order. If $E$ is also quasi-ruled, we show that the facet of $\Nef(Y)$ supported on $\ell_E^\perp$ disappears.

For the movable cone, one also needs to consider $K$-trivial conic bundles on small $\QQ$-factorial modifications (SQMs) of $Y$. This leads to the following definition.
\begin{definition}
		Let $E \subset Y$ be an irreducible surface. 
		\begin{itemize}
\item A \emph{\(K\)-trivial conic model} of \(E\subset Y\) consists of a crepant SQM \(\alpha\colon Y\dashrightarrow Z\) together with a conic fibration~\footnote{See Definition~\ref{def:conic-bundle} for our definition of a conic fibration, which is more general than the one sometimes considered in the literature.} \(q\colon E_Z\to C\) such that \(K_Z\equiv_q0\), where $E_Z$ denotes the strict transform of $E$. 
\item
A $K$-trivial conic model is \emph{contractible} if there exists a primitive birational contraction $p \colon Z \rightarrow \bar{Z}$ contracting $q \colon E_Z \rightarrow C$ along its fibers.
\end{itemize}
\end{definition}
With this terminology in place, we can now define our deformation roots.
\begin{definition}[Deformation root classes]\label{def:deformation-root-classes}
	Let $Y$ be a smooth projective threefold with $H^2(Y, \cO_Y)=0$ and $\Def(Y)$ smooth.
	\begin{itemize}
		\item Define $\Delta_Y^{\mathrm{big}}$ to be the set of classes $[E]\in N^1(Y,\ZZ)$, where $E \subset Y$ is an irreducible surface for which there is a \(K\)-trivial conic model satisfying 
	\[
	H^0(E_Z, \omega_Z^{-1}|_{E_Z}\otimes q^*\omega_C) \neq 0 \qquad \text{(non-vanishing condition)}
	\]
	\item Define $\Delta_Y^{\mathrm{sm}}\subset\Delta_Y^{\mathrm{big}}$ to be the subset consisting
	of classes represented on $Y$ itself by a quasi-ruled surface
	$q\colon E\to C$ satisfying $K_Y \cdot \ell_E=0$ and 
	\[
	\omega_Y^{-1}|_E\otimes q^*\omega_C\cong\cO_E.
	\]
	\end{itemize}
\end{definition} 
We will show in Proposition~\ref{prop:existenceofcontractionofE} that every $E \in \Delta_Y^{\mathrm{big}}$ admits a \emph{contractible} $K$-trivial conic model for $E$ satisfying the non-vanishing condition. Moreover, we show that the \emph{fiber class} $\ell_E:=\alpha_*^{-1}\ell_{E_Z}$ is independent of the choice of such model. This allows us to define the reflection $\sigma_E$ for every $E \in \Delta_Y^{\mathrm{big}}$. 
On any such contractible model, we have $K_Z \cdot \ell_{E_Z}=0$ and 
\[
\omega_Z^{-1}|_{E_Z}\otimes q^*\omega_C \cong q^*M_{E_Z},
\]
so by the above discussion, the facet of $\overline{\Mov}(Z)$ supported on $\ell_{E_Z}^\perp$ disappears. Hence the facet of $\overline{\Mov}(Y)$ supported on $\ell_E^{\perp}$ disappears.

As in \cite{WilsonCone, Lutz}, the reflection $\sigma_E$ admits a geometric realization by flops in deformation families. Suppose first that $E \in \Delta_Y^{\mathrm{big}}$ is the exceptional divisor of a primitive $K$-trivial type III contraction $p \colon Y \rightarrow \bar{Y}$. Over a sufficiently small miniversal deformation $\cY \rightarrow S$ of $Y$, the contraction extends to a small contraction $\cY \rightarrow \bar{\cY}$, whose flop $\cY^+ \rightarrow \bar{\cY}$ exists. The induced birational map on central fibers extends to an isomorphism $Y \cong Y^+$. Using this identification, the composite
\[
N^1(Y) \cong N^1(\cY/S)
\cong N^1(\cY^+/S)\cong N^1(Y),
\]
where the middle isomorphism is given by strict transform, is precisely $\sigma_E$ (Proposition~\ref{prop:flopofroot}). For a general $E \in \Delta_Y^{\mathrm{big}}$, we first pass by a sequence of flops to a contractible conic model and apply the same construction there; conjugating by the resulting identification of divisor spaces gives $\sigma_E$.

In the case that $-K_Y$ is nef, we find that elements $E \in \Delta_Y^{\mathrm{big}}$ capture all disappearing facets of the movable cone:
\begin{proposition}(see Propositions~\ref{prop:injection-roots-faces} and~\ref{prop:bijection-roots-faces}) \label{prop:roots-vs-disappears} Suppose that $-K_Y$ is nef. 
	The assignment sending $E$ to the facet supported on $\ell_E^\perp$
	defines a bijection between $\Delta_Y^{\mathrm{big}}$ (resp. $\Delta_Y^{\mathrm{sm}}$) and the set of facets of $\overline{\Mov}(Y)$ (resp. $\Nef(Y)$) which disappear under deformation. 
\end{proposition}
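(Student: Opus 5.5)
We prove the proposition by splitting it into injectivity, that the map lands among disappearing facets, and surjectivity. Both parts rely on the cone and contraction theorem for threefolds with $-K_Y$ nef. Since $-K_Y$ is nef, every $K$-trivial extremal face of $\overline{\NE}$, for $Y$ or for any crepant SQM $Z$, is $K_Y$-nonpositive. The cone theorem then applies: $\Nef(Y)\cap\Bbig(Y)$ and $\overline{\Mov}(Y)\cap\Bbig(Y)$ are locally rational polyhedral, with facets cut out by contractions of $Y$ or of its SQMs. The facets of $\overline{\Mov}(Y)$ that meet the big cone are therefore of two kinds. Those supported by a small contraction are never disappearing, since after deformation the flop still exists and the wall stays a wall between two chambers of the generic movable cone. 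The others are supported by a divisorial contraction $p\colon Z\to\bar Z$ on some SQM $Z$.

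\textbf{Step 1: the map is well defined.} For $E\in\Delta_Y^{big}$, Proposition~\ref{prop:existenceofcontractionofE} gives a contractible $K$-trivial conic model, and the fiber class $\ell_E$ does not depend on the choice. The facet supported on $\ell_E^\perp$ disappears by the obstruction argument in the introduction. Theorem~\ref{thm:surjective} gives that $\ob_E$ is surjective. The non-vanishing condition, via $\omega_Z^{-1}|_{E_Z}\otimes q^*\omega_C\cong q^*M_{E_Z}$, gives $H^0(C,M_{E_Z})\neq 0$. So $E_Z$ does not deform in a very general direction, and the divisorial facet is not present on $Y^{gen}$. For $\Delta_Y^{sm}$ we argue the same way: triviality of $M_E$ makes the obstruction section nowhere zero, so no fiber deforms, and the nef facet disappears.

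\textbf{Step 2: injectivity.} If $E$ and $E'$ give the same facet, then $\ell_E$ and $\ell_{E'}$ span the same extremal ray on some model. The contraction of that ray has a unique exceptional divisor, because a primitive divisorial contraction contracts one prime divisor. Hence $[E]=[E']$.

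\textbf{Step 3: surjectivity.} This is the main obstacle. Let $F$ be a disappearing facet of $\overline{\Mov}(Y)$ meeting the big cone. By the dichotomy above, $F$ is cut out by a divisorial contraction $p\colon Z\to\bar Z$ of some SQM, with exceptional divisor $E_Z$.

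First, if $p$ contracted $E_Z$ to a point, or were $K$-negative on its fibers, the contraction would extend over the whole deformation space. For a $K$-negative contraction this holds by Kollár–Mori stability of extremal contractions. For divisor-to-point contractions, a rigid exceptional divisor has $H^1(N)=0$. In either case the facet would persist. Hence $p$ is $K$-trivial (using $-K$ nef, $K\cdot\ell\le 0$ and $=0$) and contracts $E_Z$ onto a curve $C$. The general fiber is then a conic with $K_Z\cdot\ell=0$, which gives a $K$-trivial conic model.

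It remains to get the non-vanishing condition. Suppose $H^0(C,M_{E_Z})=0$. Then $H^1(E_Z,N_{E_Z})=0$ by the Leray isomorphism, so $E_Z$ deforms over all of $\Def(Z)=\Def(Y)$. The family of deformed $E_Z$ with its conic fibration then stays contractible in families, and the facet persists, a contradiction. The care needed here is on singular or reducible fibers: we must show that $H^1$ of the normal sheaf still vanishes, and that deformations of the SQM match deformations of $Y$ (crepant flops deform).

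For the nef version, let $F$ be a disappearing facet of $\Nef(Y)$, necessarily from a $K$-trivial divisorial contraction to a curve on $Y$ itself. We must show that $E$ is quasi-ruled and that $M_E$ is trivial. If $M_E$ is nontrivial with a section, or has no section, then the section vanishes at some $c$ (or is zero). The fiber $F_c$ then deforms to first order, and we need to upgrade this to an actual deformation, using that $\ell_E$ is a $K$-trivial rational curve with $-K$ nef. Unobstructedness of the resulting curves ($h^1(N_\ell)$ controlled, since $N_\ell$ is $\cO\oplus\cO(-2)$ or $\cO(-1)^2$) shows that the curve class survives on $Y^{gen}$ and so does the facet. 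Finally, quasi-ruledness follows because a reducible fiber would contain a component of $K$-degree zero that deforms, by the same degree argument.
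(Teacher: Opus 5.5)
Your overall plan matches the paper's proof (Propositions~\ref{prop:injection-roots-faces} and~\ref{prop:bijection-roots-faces}). The movable-cone surjectivity is essentially the paper's argument: pass to an SQM where the facet comes from a $K$-nonpositive primitive divisorial contraction, then exclude $K$-negative, type II and $H^0(C,M_E)=0$ contractions because $E$ deforms. Your injectivity argument via uniqueness of the exceptional divisor also works, once you note that general fibres of the conic model of $E'$ survive on the contracting model of $E$ and sweep out $E'$. The paper instead uses $E\cdot\ell_E=-2<0\le E\cdot\ell_{E'}$.

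The gaps appear wherever you replace the criteria of Propositions~\ref{prop:root-faces-disappear-movable} and~\ref{prop:root-faces-disappear-nef} with your own arguments. \textbf{Nef surjectivity.} Your claim that a first-order deformation of $F_c$ at a zero of $\sigma$ lifts by ``unobstructedness'' fails. Fibres of $E$ have $N_{F_c/Y}\cong\cO\oplus\cO(-2)$; the $\cO(-1)^{\oplus 2}$ case does not arise here. So $h^1(N_{F_c/Y})=1$, and this is exactly the group in which $\ob_{F_c}$ and all higher obstructions live. Moreover, $\chi(N_{F_c/Y})=0$ only gives $\dim_{[F_c]}\Hilb(\cY/\Delta)\ge 1$, which the vertical family already realises. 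The missing idea is the paper's. Koll\'ar's presentation $\CC[[u,t]]/(F)$ makes the germ Cohen--Macaulay, and $\sigma\not\equiv 0$ makes it generically smooth. Hence, if the facet disappears, the relative Hilbert scheme is reduced and equal to the central family. This forces $\ob_{F_c}\neq 0$ for \emph{every} $c$, i.e.\ $M_E\cong\cO_C$. Your quasi-ruledness argument also covers only the case where the general fibre is irreducible. If the general fibre is a line pair, the reduced curve $Z$ of a double fibre is a limit of the moving fibre components. It therefore deforms inside $Y$, and the degree count of Proposition~\ref{prop:k-trivial-curves-persist-in-families} gives nothing. The paper excludes double fibres with a second Cohen--Macaulay argument: Koll\'ar's presentation with at most $m=h^1$ equations, plus generic reducedness from $\tilde\sigma\neq 0$, shows the Hilbert scheme would be reduced at $[Z]$. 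This contradicts the embedded component found by Wilson.

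\textbf{Step 1.} Your Step~1 contains exactly the converse the paper warns is ``far from obvious''. That $E_Z$ does not deform (resp.\ no fibre deforms to first order) does not by itself make the movable (resp.\ nef) facet disappear. A thickening $mE_Z$ could be the specialisation of the deformed exceptional divisor. A curve of class in the ray on $\cY_s$ could specialise to a subscheme supported on a fibre or fibre component but with a different scheme structure. In the line-pair case you also need $\tilde\sigma$ to be nowhere vanishing, which uses that $\nu_C$ is \'etale when $E$ is quasi-ruled. The paper closes these points with the transverse $A_1/A_2$ simultaneous resolution. On a transverse slice the deformed exceptional curve has self-intersection $-2$, which forces $m=1$, and Lemma~\ref{lem:versality} identifies the possible limits of exceptional cycles. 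Either cite Propositions~\ref{prop:root-faces-disappear-movable} and~\ref{prop:root-faces-disappear-nef} outright, or supply these arguments.

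There are also smaller omissions. You never show that facets disjoint from $\Bbig(Y)$ persist (Lemmas~\ref{lem:ConeInclusions} and~\ref{lem:movable-cone-inclusions}). You also never show that nef facets of small $K$-trivial contractions persist (Proposition~\ref{prop:k-trivial-curves-persist-in-families}).
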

It is certainly possible for an irreducible divisor $E \subset Y$ to correspond to several distinct faces of the movable cone, corresponding to different contractions on different birational models of $Y$.  Proposition~\ref{prop:roots-vs-disappears} shows that every $E \in \Delta_Y^{\mathrm{big}}$ corresponds to a unique disappearing $K$-trivial facet. 
Note that $K$-negative contractions always deform by \cite{MoriThreefolds}.

We investigate the meaning of Definition~\ref{def:deformation-root-classes} in a few special cases. 
If $Y$ is Calabi--Yau, $\omega_Y$ is trivial, so this recovers the definition in \cite{WilsonCone, Lutz}:
\begin{itemize}
	\item The condition $H^0(E,\omega_Y^{-1}|_{E}\otimes q^*\omega_C) \neq 0$ is equivalent to requiring that $g(C) \geq 1$.
	\item The condition $\omega_Y^{-1}|_{E}\otimes q^*\omega_C \cong \cO_E$ is equivalent to $g(C)=1$.
\end{itemize}
It is also instructive to consider the case that $-K_Y$ is base-point free. For a $K$-trivial quasi-ruled surface $q \colon E \rightarrow C$, the condition that $E \in \Delta_Y^{\mathrm{sm}}$ is equivalent to either
\begin{enumerate}
	\item $g(C)=0$ and $\omega_Y^{-1}|_E=q^*\cO_{\PP^1}(2)$.
	\item $g(C)=1$ and $\omega_Y^{-1}|_E=\cO_E$.
\end{enumerate}
We can view (1) as a log Calabi--Yau analogue of (2). Indeed, choose a very general (smooth) element $D \in |-K_Y|$. Then $(Y, D)$ is a log Calabi--Yau pair, and the open complement $U=Y \setminus D$ has trivial canonical bundle.

If $g(C)=1$ and $\omega_Y^{-1}|_E$ is trivial, then $E \subset U$, and $E$ is quasi-ruled over the Calabi--Yau curve $C$.

If $g(C)=0$ and $\omega_Y^{-1}|_E=q^*\cO_{\PP^1}(2)$, then $D$ meets $E$ in two fibers, so that $E \cap U$ is ruled over the log Calabi--Yau curve $\PP^1 \setminus \{0, \infty\}$.
\subsection{Rational elliptic threefolds and discriminant quartics}
Let $N$ be a net of quadrics in $\PP^3$ satisfying a nondegeneracy assumption of Prendergast--Smith~\cite[Assumption 1]{PrendergastSmithextremal}: there exist $Q,Q'\in N$ such that $Q\cap Q'$ is smooth along the base
locus of $N$. Let $Y \rightarrow \PP^3$ be the iterated blowup of the (possibly infinitely near) eight base points. The induced map $\pi \colon Y \rightarrow \PP^2$ is an elliptic fibration. 
Prendergast--Smith~\cite{PrendergastSmithextremal} defined an affine root lattice $\tilde{R}$, associated to $Y$, generated by irreducible components of \emph{reducible} divisors of the form $\pi^{-1}(B)$ for $B \subset \PP^2$ a curve, and with respect to the form \eqref{eq:Mukai}. This is an analogue of $(-2)$-curves on rational elliptic surfaces, which by the Kodaira classification are exactly the irreducible components of reducible fibers.

We find that the generators of $\tilde{R}$ biject with $\Delta_Y^{big}$, and in particular, $W(\tilde{R})=W_Y^{big}$. 
We also give an answer to a question raised in \cite[Section 6]{PrendergastSmithextremal}:

The plane quartic 
\[
\Delta=\left\{\det(\lambda_1Q_1+\lambda_2Q_2+\lambda_3Q_3)=0\right\}
\subset(\mathbb P^2)^*,
\]
in the dual projective space parametrizes singular quadrics in $N$ and has at worst ADE singularities. In the case that the elliptic fibration is extremal, Prendergast--Smith observed that the ADE type of the singularities of $\Delta$ agrees with the finite root system $R$ of ADE type associated to $\tilde{R}$. 
We prove this assertion for every net of quadrics satisfying the nondegeneracy assumption.
\begin{theorem}(see Corollary~\ref{cor:PS})
	Let $N$ be a net of quadrics satisfying the nondegeneracy assumption, let $R$ be the associated finite root system. Then the ADE type of $R$ agrees with the ADE type of the discriminant quartic $\Delta$. 
\end{theorem}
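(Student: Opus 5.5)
We will deduce the statement from the identification of the generators of $\tilde{R}$ with $\Delta_Y^{big}$, combined with a local analysis of $\Delta$ at its singular points. For each point $\lambda\in\Delta$ we have a singular quadric $Q_\lambda\in N$. The corresponding line $\{\lambda\}^\perp\subset(\PP^2)^*$, or equivalently the pencil of quadrics through $Q_\lambda$, relates $\Delta$ to the elliptic fibration $\pi$.

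Our first step is to understand the reducible divisors $\pi^{-1}(B)$ over curves $B\subset\PP^2$. The fibre of $\pi$ over a point of $\PP^2$ is the strict transform of the base curve of a pencil in $N$. Since $\pi^{-1}(B)$ is swept out by these curves, its components come from quadric surfaces in $N$. Concretely, a quadric $Q\in N$ is the preimage of a line $L_Q\subset\PP^2$. The divisor $\pi^{-1}(L_Q)$ is the strict transform of $Q$, and it becomes reducible exactly when $Q$ has rank $\le 2$, or when $Q$ is a cone whose strict transform splits off exceptional divisors over infinitely near base points.

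The second step is to express the root system locally. The plan is to show that the roots of $R$ (the finite part of $\tilde{R}$, obtained by discarding the affine node in each component) decompose as a direct sum over the singular points $\lambda\in\operatorname{Sing}\Delta$. The summand $R_\lambda$ is generated by the components of reducible $\pi^{-1}(B)$ that are supported over $Q_\lambda$, together with the exceptional divisors of base points where $Q_\lambda$ is singular. By Proposition~\ref{prop:roots-vs-disappears} and the bijection with $\Delta_Y^{big}$, each such root is a $K$-trivial conic bundle $E\to C$ with $C\cong\PP^1$. Its conic fibres degenerate over points $c$, and these points correspond to singular quadrics in the pencil.

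The third and main step is to match the Dynkin type of $R_\lambda$ with the singularity type of $\Delta$ at $\lambda$. We will deform: $\Def(Y)$ is smooth and the nets form a smooth family, so consider a versal deformation of the net near $N$. The ADE singularity of $\Delta$ at $\lambda$ is determined by the local discriminant of the corank of $Q_\lambda$ and its position relative to the base locus. Its Milnor lattice is the vanishing lattice, with monodromy the Weyl group. On the other side, $W_Y^{big}$ is realized by flops in deformation families (Proposition~\ref{prop:flopofroot}), i.e. as monodromy on $N^1$ of the versal family. The strategy is to show that the local monodromy around the discriminant of the versal deformation of the net at $\lambda$ acts on $N^1(Y)$, with the form $\star$ from \eqref{eq:Mukai}, as the Weyl group $W(R_\lambda)$. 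We then compare this with the standard fact that the monodromy of a simple singularity of type $X$ is $W(X)$. Since the rank of $R_\lambda$ equals the Milnor number and the root system is irreducible exactly when the singularity is, the types agree.

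I expect this monodromy identification to be the main obstacle. As a fallback, I would run an explicit case analysis of the corank and tangency configurations allowed by the nondegeneracy assumption, reducing to the classification in \cite{PrendergastSmithextremal}. The extremal case is known there, and every nondegenerate net specializes to an extremal one. Semicontinuity of both Dynkin types under specialization would then give the general case.
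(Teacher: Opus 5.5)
There is a genuine gap. Your third step carries all the content, you do not carry it out, and as formulated it cannot work.

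(i) The Weyl group $W(X)$ is the monodromy group of the \emph{surface} singularity $w^2=f(x,y)$, not of the plane curve singularity $f=0$ of $\Delta$. The Milnor lattice of a curve singularity carries a skew-symmetric form; for a cusp the monodromy group is $SL_2(\ZZ)$.

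(ii) The reflections $\sigma_E$ are not monodromy on $N^1$. A versal family of the smooth threefold $Y$ over a smooth germ has no monodromy at all. In Proposition~\ref{prop:flopofroot}, $\sigma_E$ comes from a flop of total spaces covering a nontrivial automorphism $\alpha$ of the base. Concretely, if $N$ has eight distinct base points, the blowups $Y_{N'}$ for nets $N'$ near $N$ form a smooth proper family over a ball, so local monodromy on $N^1$ is trivial. Yet $R$ can be nonzero there. For example, four coplanar base points force a rank two quadric in $N$, which is a singular point of $\Delta$, and they give two admissible planes $H-E_1-\dots-E_4$ and $H-E_5-\dots-E_8$.

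(iii) Even with matching Weyl groups, rank plus irreducibility does not determine an ADE type, since $A_n$, $D_n$ and $E_n$ share both. You need an actual isometry of lattices.

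Your fallback does not help either. Specialization to an extremal net gives at most inclusions on each side, and equality at the special net does not propagate back to $N$. Your Step 2, splitting $R$ over $\mathrm{Sing}\,\Delta$, is essentially the statement itself and is given no mechanism.

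The missing idea is the object hidden in (i): the double cover $\bar S\to(\PP^2)^*$ branched along $\Delta$. This is a degree two del Pezzo surface whose du Val singularities have the ADE types of $\Delta$. Its minimal resolution $S$ generically parametrizes pairs (quadric in $N$, ruling).

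\begin{itemize}
\item The paper forms the incidence variety $Z\subset S\times\PP^2$. It is both a $\PP^1$-bundle $r\colon Z\to S$ and an elliptic fibration $\pi'\colon Z\to\PP^2$ with $\omega_Z\cong\pi'^*\cO(-2)$.
\item Identifying generic fibres of $\pi$ and $\pi'$ (lines through a base point versus rulings) shows that $Y\dashrightarrow Z$ is a crepant SQM over $\PP^2$. So $R$ may be computed on $Z$.
\item On $Z$, the assignment $\Gamma\mapsto r^{-1}(\Gamma)\cong\PP^1\times\PP^1$ matches $(-2)$-curves on $S$ with $\Delta_Z^{\mathrm{sm}}$.
\item The formula $r^*\alpha\star r^*\beta=(\alpha\cdot\beta)_S$ identifies the lattice of $(-2)$-curves on $S$, i.e.\ of the exceptional curves of $S\to\bar S$, isometrically with $Q(R)$.
\end{itemize}

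This is a direct lattice isometry and needs no deformation or monodromy argument.
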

\subsection{Proof overview}
Two ingredients allow us to remove global positivity assumptions on $-K_Y$: the surjectivity of the obstruction map $\ob_E$ for $E$ the exceptional divisor of a \(K\)-trivial contraction (Theorem~\ref{thm:surjective}), and the existence of contractible conic models for deformation roots (Proposition~\ref{prop:existenceofcontractionofE}). We briefly sketch the ideas going into their proofs.

Via the perfect pairing 
$\Omega^1_Y \otimes \Omega^2_Y \rightarrow \omega_Y$ and the adjunction formula, the obstruction map is identified with 
\[
\ob_E \colon H^1(\Omega^2_Y \otimes \omega_Y^{-1}) \rightarrow H^1(\omega_E \otimes \omega_Y^{-1}|_E),
\]
induced by restriction of forms. 
If $Y$ is Calabi--Yau, $\ob_E$ is a graded piece of the Hodge filtration for the natural restriction map 
\[
i^* \colon H^3(Y, \CC) \rightarrow H^3(E, \CC)
\]
and by strictness of the Hodge filtration, it is enough to show that $i^*$ is surjective~\cite{WilsonCone}.
In the weak Fano case, Minagawa chooses a smooth divisor $D \in |-2K_Y|$ and deduces surjectivity of $\ob_E$ from the known surjectivity result on the Calabi--Yau double cover $\tilde{Y}$ of $Y$ branched along $D$. 

In the absence of any hypothesis on $-K_Y$, we use Saito's theory of Hodge modules to prove surjectivity of $\ob_E$. The necessary background material for the argument is collected in Appendix~\ref{app:hodge}. The idea is as follows. Let $p \colon Y \rightarrow \bar{Y}$ denote the contraction of $E$ to a curve $C$, so we have a diagram
\[
\begin{tikzcd}
	E \arrow[r, hook, "i"] \arrow[d, "q"'] &
	Y \arrow[d, "p"] \\
	C \arrow[r, hook, "j"'] &
	\bar{Y}
\end{tikzcd}
\]
The morphism of sheaves $\QQ_Y \rightarrow i_*\QQ_E$ and $\QQ_C \rightarrow Rq_*\QQ_E$, together with the fundamental class and Verdier duality, give rise to a morphism 
\[
 \Phi \colon Rp_*\QQ_Y[3] \rightarrow j_*\QQ_C[1]
\]
in the derived category of constructible sheaves. 
Since the morphism $p$ is semismall (see Appendix), the decomposition theorem shows that $Rp_*\QQ_Y[3]$ is a semisimple perverse sheaf. $\Phi$ is easily seen to be nonzero, so, since  $j_*\QQ_C[1]$ is simple, $\Phi$ is split surjective. 

Using Saito's theory of mixed Hodge modules, the morphism $\Phi$ can be constructed in the category of pure Hodge modules (with appropriate Tate twists inserted), and $\Phi$ continues to be split surjective as a morphism of pure Hodge modules. Applying the graded de Rham functor, we obtain a split surjective morphism 
\[
Rp_*T_Y \rightarrow j_*Rq_*N_{E/Y}
\]
in $D^b_{coh}(\bar{Y})$. The induced map $\ob_E$ on hypercohomology is therefore surjective as well.

The existence of contractible $K$-trivial conic models for an arbitrary $E \in \Delta_Y^{\mathrm{big}}$ (Proposition~\ref{prop:existenceofcontractionofE}) is easy to establish under the assumption that $-K_Y$ is nef, the difficulty in the general case is to prove the existence of an extremal ray $R$ of the cone of curves satisfying 
\[
K_Y \cdot R=0, \qquad E \cdot R<0.
\]
Spicer's cone theorem for foliations on surfaces allows us to find such a ray and construct a contractible model by a sequence of flops.

\subsection{Outline}
Section~\ref{sec:background} gives the necessary background on birational geometry and deformation theory used in this paper. 

Section~\ref{sec:contractions} studies in detail the geometry of $K$-trivial type III contractions, that is, divisorial contractions $Y \rightarrow \bar{Y}$ contracting a divisor $E$ to a curve $C$. 
 
Section 4 shows that the defining conditions of $E \in \Delta_Y^{\mathrm{big}}$, resp. $E \in \Delta_Y^{\mathrm{sm}}$ are equivalent to the corresponding facet of the movable, resp. nef cone disappearing under deformation. 

In Section 5, we prove the existence of a contractible $K$-trivial conic model for $E \in \Delta_Y^{\mathrm{big}}$ and independence of the fiber class $\ell_E$ among such models.

In Section 6 we show that the sets $\Delta_Y^{\mathrm{sm}}$ and $\Delta_Y^{\mathrm{big}}$ form sets of generalized roots, so that classical results on reflection groups can be used.
In particular this gives the first half of Theorem~\ref{thm:withoutnef}. 

Section 7 is then devoted to the second half of Theorem~\ref{thm:withoutnef} using the assumption that $-K_Y$ is nef. 

Section~\ref{sec:examples} is devoted to the applications mentioned in the introduction.
 
\subsection{Acknowledgements}
I would like to thank Paul Hacking, Isabel Stenger, and Artie Prendergast-Smith for helpful discussions. I am especially grateful to Zhixin Xie for pointing out to me that many of the results of the MMP continue to hold for generalized pairs. 
\subsection{AI Usage}
This project started before the advent of AI tools and the main ideas of this paper were developed without the use of AI tools. However, ChatGPT was used to strengthen the main results of this paper. 

An earlier draft assumed throughout that $-K_Y$ is semiample. GPT suggested that Saito's theory of Hodge modules could be used to prove the surjectivity result of Theorem~\ref{thm:surjective} without any global positivity assumption on $-K_Y$, and the final proof given in Theorem~\ref{thm:surjective} was developed in conversations with GPT. A previous version of the argument assumed that $-K_Y$ is semiample and then used the logarithmic complex $\Omega_Y^\bullet(\log(D))$ for $D \in |-mK_Y|$ smooth. 
GPT also pointed out that Proposition~\ref{prop:existenceofcontractionofE} could be proved without any assumptions on $-K_Y$ by using Spicer's cone theorem for foliations. 

Aside from the above, GPT was used for expositional suggestions, copyediting and minor simplifications of some of the proofs. 
\section{Background}\label{sec:background}
Throughout this section, $Y$ will denote a smooth projective variety over the field of complex numbers. 
\subsection{Background on cones}
Given any cone $C$, we denote by $\bar{C}$ its closure, and $C^{\circ}$ its interior. 
The Mori cone $\NE(Y)$ is the convex cone in $N_1(Y, \RR)$ spanned by the set of effective curves on $Y$. 
The nef cone $\Nef(Y) \subset N^1(Y, \RR)$ is the dual cone of $\overline{\NE}(Y)$, and by the Kleiman criterion, the ample cone $\Amp(Y)$ is identified with the interior of $\Nef(Y)$. 
Recall that an effective divisor $D$ is movable if the stable base locus of $D$ has codimension at least $2$ in $Y$. We denote by $\Mov(Y) \subset N^1(Y, \RR)$ the cone spanned by movable divisors. In general, $\Mov(Y)$ is neither open nor closed, and we denote $\overline{\Mov}(Y)$ its closure. 
$\Eff(Y)$ is the cone spanned by effective divisor classes (which is also not open or closed in general), and its interior is the cone of big divisors $\Bbig(Y)$.
We have inclusions
\begin{align*}
	\Amp(Y)  &\subset \Mov(Y)^{\circ} \subset \Bbig(Y)\\
	{\Nef}(Y) & \subset \overline{\Mov}(Y) \subset \overline{\Eff}(Y)
\end{align*}
Integral points on the boundary of $\Nef(Y)$ (respectively, $\overline{\Mov}(Y)$) are not always represented by effective divisor classes, so we define the effective nef and movable cones
\[
\Nef^e(Y)=\Nef(Y) \cap \Eff(Y), \qquad
{\Mov}^e(Y)=\overline{\Mov}(Y) \cap \Eff(Y).
\]
A small $\QQ$-factorial modification (SQM) of $Y$ is a birational map $\alpha \colon Y \dashrightarrow Z$ with $Z$ projective and $\QQ$-factorial which is an isomorphism in codimension one. An SQM is crepant if each curve in the locus of indeterminacy has degree zero against the canonical bundle. By \cite[Theorem 4.6]{KawamataD}, a crepant SQM of threefolds factors as a composite of flops. 
An SQM $\alpha \colon Y \dashrightarrow Z$ induces an isomorphism $\alpha^* \colon N^1(Z, \ZZ) \rightarrow N^1(Y, \ZZ)$ by strict transform of divisors. The strict transform of a movable divisor is again movable, so that $\alpha^*(\Mov(Z))=\Mov(Y)$. Analogous equalities hold for ${\Mov}^e$, $\overline{\Mov}$, $\Bbig$, $\Eff$, and $\overline{\Eff}$.
On curve classes, $\alpha_*\colon N_1(Y)\to N_1(Z)$ is dual to $\alpha^*$.

A contraction is a surjective projective morphism $p \colon Y \rightarrow \bar{Y}$ to a normal projective variety with connected fibers. It is primitive if the relative Picard rank of $p$ is one. 
A primitive contraction defines an extremal ray $R_p$ of the cone of curves: let $A$ be ample on $\bar{Y}$, and let $H=p^*A$. Then $H^\perp$ meets $\NE(Y)$ in a face of dimension one, spanned by the class of any curve contracted by $p$. 
Dually, a primitive contraction defines a codimension one face (facet) $F$ of $\Nef(Y)$. If the contraction is birational, then $F$ meets $\Bbig(Y)$, so that $F \cap \Bbig(Y)$ is a facet of $\Nef(Y) \cap \Bbig(Y)$. 
A contraction is small if the exceptional locus has codimension at least two, and divisorial if the exceptional locus is a divisor. 
By \cite[Lemma 2.3]{KawamataCrepant}, a birational contraction morphism is divisorial if and only if the corresponding facet of $\Nef(Y)$ is contained in a facet of $\overline{\Mov}(Y)$. 
Indeed, since curves with class in $R_p$ sweep out a divisor, every movable divisor is nonnegative on $R_p$, so that $F \subset \partial \overline{\Mov}(Y)$.
We say that $p$ is $K$-trivial if $K_Y \cdot R_p=0$. 

Suppose that $(Y, \Delta)$ is a klt pair. We say that $p$ is a Mori contraction for $(Y, \Delta)$ if $(K_Y+\Delta) \cdot R_p<0$. 
We have the following result:
\begin{lemma}\label{lem:Every-Contraction-Is-Mori}
	Let $p \colon Y \rightarrow \bar{Y}$ be a primitive $K$-trivial birational contraction. Then there exists an effective $\QQ$-divisor $\Delta$ such that $(Y, \Delta)$ is klt and $p$ is a Mori contraction for $(Y, \Delta)$.
	In particular, a line bundle $L$ descends to $\bar{Y}$ iff $L \cdot R_p=0$.
\end{lemma}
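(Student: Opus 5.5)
Since $p$ is birational, $\bar{Y}$ is projective, and we may choose an ample divisor $A$ on $\bar{Y}$. Put $H=p^*A$. Then $H$ is nef and big on $Y$, and $H^\perp\cap\overline{\NE}(Y)=R_p$. By Kodaira's lemma we can write $H\sim_\QQ B+\epsilon' H_0$ with $H_0$ ample and $B$ effective. In fact the sharper statement holds: for $m$ large, $H-\delta F\sim_\QQ$ ample for any effective divisor $F$ whose support is the exceptional locus and which is $p$-anti-ample.

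We need an effective divisor that is negative on $R_p$. If $p$ is divisorial with exceptional divisor $E$, take $E$ itself: $E\cdot R_p<0$, since the curves of $R_p$ cover $E$ and $E$ is $p$-exceptional (negativity lemma). If $p$ is small, take instead any effective divisor $G$ through the exceptional curves with $G\cdot R_p<0$. Such a $G$ exists because $\rho(Y/\bar{Y})=1$ and some $p$-anti-ample divisor class can be made effective as $-H'+mH$, where $H'$ is ample on $Y$. In either case we obtain an effective $\QQ$-divisor $G$ with $G\cdot R_p<0$. We then set $\Delta=\epsilon G$ with $0<\epsilon\ll1$. Since $Y$ is smooth, $(Y,\epsilon G)$ is klt for small $\epsilon$ (log canonical thresholds are positive). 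Finally,
\[
(K_Y+\Delta)\cdot R_p=\epsilon\, G\cdot R_p<0,
\]
so $p$ is a Mori contraction for $(Y,\Delta)$.

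For the second claim, note that $R_p$ is a $(K_Y+\Delta)$-negative extremal ray and $p$ is its contraction: $p$ has relative Picard number one and contracts exactly the curves in $R_p$. The contraction theorem for klt pairs [KMM, Kollár–Mori 3.7(4)] then says that a line bundle $L$ with $L\cdot R_p=0$ satisfies $L\cong p^*L'$ for some line bundle $L'$ on $\bar{Y}$. Conversely, any pullback is trivial on contracted curves.

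The main subtlety is the small case: we must ensure that an effective divisor negative on $R_p$ exists. Here I take $G\in|mH-H'|$ for $m\gg0$, which is effective because $H$ is big. Then $G\cdot R_p=-H'\cdot R_p<0$.
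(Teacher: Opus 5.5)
Your proof is correct and follows the paper's argument. The paper also builds an effective divisor in a class $-B+mp^*A$ with $B$ $p$-ample, sets $\Delta=\epsilon D$, and quotes \cite[Theorem 3.7(4)]{KollarMori} for descent. The only difference is how effectivity is obtained: the paper gets a section from global generation of $p_*\cO_Y(-B)\otimes A^m$ (nonzero since $p$ is birational), whereas you apply Kodaira's lemma to the big divisor $p^*A$. Your split into divisorial and small cases is unnecessary, since $G\in|mH-H'|$ already handles both.
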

\begin{proof}
	Let $A$ be ample on $\bar{Y}$.
	Since $p$ is a projective morphism, there is a $p$-ample divisor $B$. Since $p$ is birational, $p_*\cO_Y(-B)$ is nonzero, and hence
	\[
	p_*\cO_Y(-B) \otimes A^m
	\]
	is globally generated for $m \gg 0$. By the projection formula, it follows that $-B+mp^*A$ has a section $D$ for some $m$. Since $Y$ is smooth, the pair $(Y, \Delta)$ is klt, where $\Delta=\epsilon D$ for $\epsilon$ small. 
	Since $p$ is $K$-trivial, we have that 
	\[
	(K_Y+\Delta) \cdot R_p=\epsilon(-B+mp^*A) \cdot R_p<0,
	\]
	as required.
	The last claim is \cite[Theorem 3.7(4)]{KollarMori}.
\end{proof}
Note that the exceptional divisor of a primitive divisorial contraction is irreducible by the negativity lemma. In particular, we can take $\Delta=\epsilon E$ in Lemma~\ref{lem:Every-Contraction-Is-Mori} if $p$ is divisorial. 

\subsection{Background on deformation theory}
Throughout this section, $Y$ is a smooth projective variety over $\CC$ satisfying $H^2(Y, \cO_Y)=0$. 

A \emph{deformation} $\pi \colon \cY \rightarrow (S,0)$ of $Y$ is a flat projective morphism over a pointed smooth base $S$, together with an isomorphism of the central fiber with $Y$. 
All constructions are understood locally around the marked point in the
\'etale topology. Thus we may shrink $S$ and replace it by a pointed
\'etale neighbourhood, retaining the same notation for the pulled-back
family. We refer to this convention by saying that $S$ is an
\'etale germ. The analytically inclined reader is welcome to think of the germ of the analytic space $(S,0)$ throughout.

We write $\Def(Y)$ for the miniversal analytic deformation germ of $Y$.
Its tangent space is naturally identified with $H^1(Y,T_Y)$. A deformation
$\pi\colon\cY\to(S,0)$ has a Kodaira--Spencer map
\[
\operatorname{KS}_{\pi,0}\colon T_0S\longrightarrow H^1(Y,T_Y),
\]
which sends a tangent vector to the corresponding first-order deformation
of $Y$. We say that $Y$ has unobstructed deformations if $\Def(Y)$ is
smooth. 

The assumption $H^2(Y,\cO_Y)=0$ allows to lift an ample line bundle on $Y$ to a formal miniversal deformation. By Artin algebraization~\cite[Section 2.5]{Sernesi}, \cite{ArtinAlgebraization},
there exists an algebraic miniversal deformation
$
\pi\colon\cY\longrightarrow(S,0),
$
that is, 
\[
\widehat{\cO}_{S,0}\cong\widehat{\cO}_{\Def(Y),0},
\qquad
\operatorname{KS}_{\pi,0}\colon T_0S\xrightarrow{\sim}H^1(Y,T_Y).
\]
If $\Def(Y)$ is smooth, we may take $S$ smooth after shrinking.

As mentioned above, the condition that $H^2(Y, \cO_Y)=0$ implies that any line bundle on $Y$ extends etale locally to the total space of any deformation $\cY \rightarrow S$. This implies that 
\[
N^1(\cY/S) \cong N^1(Y)
\]
after making an etale base change. To see this, we consider the real local system $GN^1(\cY/S)$ defined in \cite{KollarMoriFlips}[12.2]. By \cite{KollarMoriFlips}[12.2.5], it has finite monodromy and 
\[
GN^1(\cY/S)|_s=N^1(Y_s, \RR)
\]
for $s$ very general. 
We claim that if $H^2(Y, \cO_Y)=0$, then this equality holds for all $s$. Indeed, after shrinking $S$ around $0$, semicontinuity gives $H^2(Y_s, \cO_{Y_s})=0$ for all $s$. 
We have a natural inclusion 
\[
GN^1(\cY/S) \subset R^2\pi_*\RR
\]
of local systems given by the first Chern class. Since $H^2(Y_s, \cO_{Y_s})=0$, the exponential sequence gives an isomorphism 
$
N^1(Y_s, \RR) \cong H^2(Y_s, \RR)
$
for all $s$. It follows that the inclusion is an isomorphism on very general stalks, hence an isomorphism. 
After passing to an etale cover around zero, we may assume that $GN^1(\cY/S)$ has trivial monodromy, giving isomorphisms
\begin{equation}\label{eq:restriction}
	N^1(\cY/S) \cong N^1(Y_s)
\end{equation}
for all $s \in S$. In particular, we can always choose a miniversal family satisfying \eqref{eq:restriction}.
In the analytic category, the corresponding statement for singular cohomology is of course just Ehresmann's theorem.

Using \eqref{eq:restriction}, we identify the spaces $N^1(Y_s)$ for $s \in S$, and we consider the nef, movable, effective cones of $Y_s$ as a varying family of cones in the fixed vector space $N^1(Y, \RR)$ . 

We say that $Y$ has unobstructed deformations if $\Def(Y)$ is a smooth analytic germ. A sufficient condition for unobstructedness is that $H^2(Y, T_Y)=0$, but it is not necessary: by the BTT theorem, Calabi--Yau threefolds have unobstructed deformations even though this group does not vanish. 

If $\Def(Y)$ is smooth, every class $\xi\in H^1(Y,T_Y)$ can be
realized as the Kodaira--Spencer class of a deformation over a smooth
pointed curve.

We wish to compare the nef and movable cones of the fibers $\cY_s$ in a miniversal deformation.
If $Y$ has unobstructed deformations and $S$ denotes its miniversal deformation space, then standard Hilbert-scheme arguments give a subset
\[
S^{\mathrm{vg}}=S\setminus\bigcup_{i\geq1}Z_i,
\]
where each $Z_i$ is a proper closed subset of $S$, on which the nef,
effective, and movable cones of the fibers are constant, see~\cite[Section 12.2]{KollarMoriFlips}. We denote these cones by $\Nef(Y^{\mathrm{gen}})$,
$\Eff(Y^{\mathrm{gen}})$, and $\Mov(Y^{\mathrm{gen}})$, and write $\Bbig(Y^{\mathrm{gen}})$
for the interior of $\Eff(Y^{\mathrm{gen}})$, so that $Y^{\mathrm{gen}}$ denotes a very
general fiber of a miniversal deformation. 
\begin{lemma}\label{lem:ConeInclusions}
	Let $Y$ be a smooth projective variety. We have the following inclusions:
	\begin{enumerate}
		\item $\Nef(Y) \subset \Nef(Y^{\mathrm{gen}})$
		\item $\Eff(Y^{\mathrm{gen}}) \subset \Eff(Y)$
		\item $\Bbig(Y^{\mathrm{gen}}) \subset \Bbig(Y)$
		\item $\Nef(Y) \cap \Bbig(Y) = \Nef(Y) \cap \Bbig(Y^{\mathrm{gen}})$ 
	\end{enumerate}
\end{lemma}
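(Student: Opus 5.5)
For each inclusion I would work in a miniversal family $\pi\colon\cY\to S$ satisfying \eqref{eq:restriction}. This lets me regard every line bundle $L$ on $Y$ as the restriction of a line bundle $\cL$ on $\cY$, and compare all the cones inside the fixed space $N^1(Y,\RR)$. The four items will follow from two standard semicontinuity principles together with some formal cone bookkeeping.

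For (1), I would use that ampleness is an open condition in flat projective families. If $L$ is ample on $Y=Y_0$, then $\cL$ is $\pi$-ample over a neighbourhood of $0$. Hence $\cL|_{Y_s}$ is ample for every $s$ in a Zariski open neighbourhood of $0$. This open set meets $S^{\mathrm{vg}}$, so $\Amp(Y)\subset\Amp(Y^{\mathrm{gen}})$; equivalently, the ample cone of $Y$ sits inside the ample cone of a very general fibre, because the very general cone is constant on $S^{\mathrm{vg}}$. Taking closures gives $\Nef(Y)\subset\Nef(Y^{\mathrm{gen}})$.

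For (2), I would use upper semicontinuity of $h^0(Y_s,\cL|_{Y_s})$. Let $D$ be an integral class with $h^0(Y_s,\cL_s)>0$ for $s$ very general. By semicontinuity the locus $\{s : h^0(Y_s,\cL_s)>0\}$ is closed. It contains a dense set of points, so it is all of $S$, and in particular $D$ is effective on $Y_0$. By the same argument, effectivity of $\QQ$-classes on $Y^{\mathrm{gen}}$ passes to $Y$, using multiples. The real cone $\Eff(Y^{\mathrm{gen}})$ is spanned by effective integral classes, so $\Eff(Y^{\mathrm{gen}})\subset\Eff(Y)$. For (3), I would take interiors in (2).

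For (4), the inclusion "$\supseteq$" is immediate from (3). For "$\subseteq$", take $D\in\Nef(Y)\cap\Bbig(Y)$. I claim $D\in\Bbig(Y^{\mathrm{gen}})$. Since $D$ is nef and big on $Y$, $D^{\dim Y}>0$, where the top self-intersection is computed on $Y$. Intersection numbers are locally constant in the family, so $D^{\dim Y}>0$ on $Y^{\mathrm{gen}}$ as well. By (1), $D$ is also nef on $Y^{\mathrm{gen}}$, and a nef class with positive top self-intersection is big. Therefore $D\in\Bbig(Y^{\mathrm{gen}})$.

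The only mildly delicate point is the very general versus general issue in (1) and (2). The openness and closedness statements hold on Zariski open or closed sets, whereas the cone of $Y^{\mathrm{gen}}$ is defined on a countable intersection $S^{\mathrm{vg}}$. I would handle this class by class: fix an integral (or rational) class, and for that class note that the relevant open set is nonempty and the relevant closed set is all of $S$, so each meets $S^{\mathrm{vg}}$. Then I would pass to the real cones using the fact that they are generated by rational classes, or by closure.
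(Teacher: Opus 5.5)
Your arguments for (1), (3) and (4) are correct, and your overall strategy is the standard one: openness of ampleness, closedness of the effective locus in the family, and constancy of intersection numbers. The paper gives no proof of its own; it only cites \cite[Lemma 3.3]{Lutz}, which was written for Calabi--Yau threefolds. Your deduction of (4) from (1) via $D^{\dim Y}>0$ is clean.

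The gap is in (2), in the generality of this paper. A numerical class $D$ is effective on $Y_s$ if \emph{some} line bundle numerically equivalent to $D$ has a section. You replace this with $h^0(Y_s,\cL_s)>0$ for one fixed extension $\cL$ of one fixed $L$.
\begin{itemize}
\item This is harmless when $\Pic^0(Y_s)=0$, since torsion is absorbed by passing to multiples.
\item The standing hypothesis here is only $H^2(Y,\cO_Y)=0$, so $H^1(Y,\cO_Y)\neq 0$ is allowed. For example, take $Y=C\times\PP^2$ with $C$ an elliptic curve and $P\in\Pic^0(C)$ nontrivial. Then $p_1^*P\otimes p_2^*\cO(1)$ has no sections, although its numerical class is effective.
\item In general the effective representative on $Y_s$ is $\cL_s\otimes P_s$, with $P_s\in\Pic^0(Y_s)$ depending on $s$. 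So there is no reason for a single $\cL$ to satisfy $h^0(Y_s,\cL_s)>0$ on a dense set, and semicontinuity of $h^0$ on $S$ gives nothing.
\end{itemize}

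The repair is to run the closedness argument upstairs rather than on $S$.
\begin{itemize}
\item Divisors in fibres of $\cY\to S$ with numerical class $D$ have a fixed Hilbert polynomial with respect to a relative polarization, by Riemann--Roch.
\item They are therefore parametrized by finitely many components of the relative Hilbert scheme, which is proper over $S$, so the image in $S$ is closed. Equivalently, apply semicontinuity on the universal family over the proper scheme $\Pic^D(\cY/S)$.
\item This is exactly the Hilbert-scheme argument that defines $S^{\mathrm{vg}}$. An effective divisor on $Y_s$ with $s\in S^{\mathrm{vg}}$ lies on a component dominating $S$, hence surjecting onto $S$, so its class is effective on $Y_0$.
\end{itemize}
This also settles your uniformity issue with multiples. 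Otherwise you would need the observation that $S^{\mathrm{vg}}$ is not covered by countably many proper closed subsets $\{s : mD \text{ effective on } Y_s\}$. When $H^1(\cO_Y)=0$, as for Calabi--Yau threefolds or blowups of $\PP^3$, your argument works as written.
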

\begin{proof}
	See \cite[Lemma 3.3]{Lutz}.
\end{proof}
In general, we do not have an inclusion of movable cones, but if $-K_Y$ is nef, such an inclusion does hold (Lemma~\ref{lem:movable-cone-inclusions}). 

\begin{definition}[Persistence of facets]\label{def:face-persistence}
	Assume $H^2(Y,\cO_Y)=0$, and let
	$\pi\colon\cY\rightarrow S$ be a versal deformation of $Y$.
	Let $F$ be a facet of $\Nef(Y)$
	(respectively, of $\overline{\Mov}(Y)$).
	We say that $F$ \emph{persists} if its supporting hyperplane also supports a facet of 
	$\Nef(Y^{\mathrm{gen}})$ (respectively, of
	$\overline{\Mov}(Y^{\mathrm{gen}})$).
	Otherwise, we say that $F$ \emph{disappears}.
\end{definition}
We next have a lemma about deforming contraction morphisms.
\begin{lemma}[Extension of contractions in families]\label{lemma:contractions-extend-in-families}
	Let $p \colon Y \rightarrow \bar{Y}$ be a primitive Mori contraction for a klt pair $(Y, \Delta)$. Let $\pi\colon\cY \rightarrow (S,0)$ be a deformation of $Y$.
	Suppose that $H^2(Y,\cO_Y)=0$.
	Then there is a morphism $P \colon \cY \rightarrow \bar{\cY}$ over the etale germ of $(S,0)$ of relative Picard rank one which restricts to $p$ on the central fiber.
\end{lemma}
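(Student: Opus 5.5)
The plan is to run the relative Mori program over the base $S$. Deformation invariance of the klt condition for generic members, together with $H^2(\cO)=0$, lets us lift the boundary $\Delta$ and a supporting divisor to $\cY$. We then apply the relative cone and contraction theorem to $\cY/S$.

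\textbf{Step 1: lifting the data.} Since $H^2(Y,\cO_Y)=0$, after an étale base change the identification \eqref{eq:restriction} holds, so every line bundle on $Y$ extends to $\cY$. Following the remark after Lemma~\ref{lem:Every-Contraction-Is-Mori}, choose $\Delta$ as there: write $\Delta=\epsilon D$, where $D$ is an effective divisor in $|{-B}+mp^*A|$. Next choose a nef divisor $H=p^*A$ with $H^\perp\cap\overline{\NE}(Y)=R_p$; after replacing $A$ by a multiple we may take $H$ base point free. Extend $H$ to a line bundle $\cH$ on $\cY$. Since $H^1(Y,\cO_Y)$ need not vanish, we do not try to lift sections of $H$ directly. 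Instead, consider the divisor $L=H-\tfrac{1}{k}(K_Y+\Delta)$ for suitable $k$, which is ample on $Y$. Its extension $\cL$ is $\pi$-ample after shrinking $S$, by openness of ampleness.

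\textbf{Step 2: lifting the pair.} Extend $D$ to a divisor $\cD$ on $\cY$. Since $H^2(\cO)=0$ the line bundle $\cO(D)$ extends, but the section may not. To handle this, note that $-B+mp^*A$ can be taken very ample with $m\gg0$, so $H^1(Y,\cO(D))=0$. By cohomology and base change, sections then extend over $S$. Set $\Delta_\cY=\epsilon\cD$. After shrinking $S$, the pair $(\cY,\Delta_\cY)$ is klt, since the klt condition is open in families and $\cY$ is smooth.

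\textbf{Step 3: the relative ray.} Apply the relative cone theorem for $(\cY,\Delta_\cY)$ over $S$. The class $R_p$, viewed in $\overline{\NE}(\cY/S)$ via the inclusion $\overline{\NE}(Y)\to\overline{\NE}(\cY/S)$, satisfies $(K_\cY+\Delta_\cY)\cdot R_p<0$ and $\cH\cdot R_p=0$. I claim $\cH$ is $\pi$-nef after shrinking $S$, and that $\cH^\perp\cap\overline{\NE}(\cY/S)=R_p$.

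This claim is the main obstacle. Curves in fibers $Y_s$ specialize to curves in $Y$, so every class in $\overline{\NE}(\cY/S)$ near $0$ lies in $\overline{\NE}(Y)$ under \eqref{eq:restriction}. This requires the upper semicontinuity of the Mori cone, which is the dual of Lemma~\ref{lem:ConeInclusions}(1); over a germ it follows from properness of relative Hilbert/Chow schemes. Granting it, $\overline{\NE}(\cY/S)=\overline{\NE}(Y)$, so $\cH$ is $\pi$-nef and cuts out exactly $R_p$. Hence $R_p$ is a $(K_\cY+\Delta_\cY)$-negative extremal ray of $\overline{\NE}(\cY/S)$.

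\textbf{Step 4: contracting.} The relative contraction theorem gives $P\colon\cY\to\bar\cY$ over $S$, with relative Picard rank one, contracting exactly the curves with class in $R_p$. Concretely, $P$ is defined by $\pi_*\cO(k\cH+\text{relatively ample})$; by the basepoint-free theorem, $\cH$ itself is $\pi$-semiample. We also need $P|_Y=p$. The sheaves $\pi_*\cO(k\cH)$ commute with base change by Kawamata--Viehweg vanishing for $k\cH-(K_\cY+\Delta_\cY)$, which is $\pi$-nef and big on the relevant ray. Hence $P$ restricted to the central fiber is the morphism given by $|kH|$, which is $p$ because $\bar Y$ is normal and $p$ has connected fibers.
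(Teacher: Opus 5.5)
Your route (lift the boundary to $\cY$, then apply the relative cone and contraction theorems over $S$) differs from the paper's, and it breaks at Step 2. Steps 3 and 4 both need a klt boundary on $\cY$, and Step 2 does not produce one.

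\textbf{Step 2.} The divisor $D\in|{-B}+mp^*A|$ is negative on $R_p$ by construction; that is exactly why $(K_Y+\epsilon D)\cdot R_p<0$. So $D$ is never ample, and there is no reason for $H^1(Y,\cO_Y(D))$ to vanish.

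In the case the paper cares about, it does not vanish. For a $K$-trivial type III contraction you may take $B=-E$, so $E$ is a fixed component of $|E+mp^*A|$. Since $\cO_Y(E)$ has degree $-2$ on the conic fibres, $R^1p_*\cO_Y(E)\neq 0$. For $m\gg0$ the Leray spectral sequence then gives a surjection $H^1(Y,\cO_Y(E+mp^*A))\twoheadrightarrow H^0(\bar Y,R^1p_*\cO_Y(E)\otimes A^{m})\neq 0$.

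So cohomology and base change gives you nothing, and you have not shown that any member of $|D|$ lifts. Note that its fixed part $E$ does not deform when $H^0(C,M_E)\neq0$ (Theorem~\ref{thm:surjective}).

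There is also a generality problem. Your choice of $\Delta$ via Lemma~\ref{lem:Every-Contraction-Is-Mori} only works for birational $p$ with $K_Y\cdot R_p\le 0$. The statement allows an arbitrary klt Mori contraction, including $K$-positive rays made negative by a boundary that need not deform at all.

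\textbf{Step 3.} This step is not established either: you simply grant $\overline{\NE}(\cY/S)=\overline{\NE}(Y)$. Properness of a component of $\Hilb(\cY/S)$ only lets you specialise curves on components whose image contains $0$. There are countably many components, and shrinking $S$ removes only finitely many at a time. Lemma~\ref{lem:ConeInclusions}(1) only concerns very general fibres, not all fibres over a neighbourhood of $0$. A boundedness argument from the relative cone theorem could repair this, but only once the boundary exists.

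\textbf{How the paper avoids this.} It never puts a boundary on $\cY$. Relative Kawamata--Viehweg vanishing is used only on the central fibre: $H^j(Y,p^*A)=H^j(\bar Y,A)=0$ for $j>0$ when $A$ is sufficiently ample. You dismissed lifting sections of $H$ because of $H^1(Y,\cO_Y)$, but the relevant group is $H^1(Y,H)$, and that one vanishes.

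Cohomology and base change then makes $\cH$ relatively globally generated near $Y$, so it defines $P$ with $P|_Y=p$.

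For relative Picard rank one, the paper also extends $L_i=p^*\bar L_i$ spanning $R_p^\perp$, chosen so that $A\pm\bar L_i$ are globally generated and acyclic. Then the $\cH\pm\cL_i$ are relatively globally generated. Any curve contracted by some $P_s$ has $\cH_s\cdot C=0$ and $(\cH_s\pm\cL_{i,s})\cdot C\ge0$, hence pairs to zero with every $\cL_i$. Its class therefore lies in $\RR R_p$, and positivity against an ample class puts it in $R_p$. No relative cone theorem and no semicontinuity of Mori cones is needed.
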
	
\begin{proof}
	Let $R\subset \overline{\NE}(Y)$ be the extremal ray contracted by $p$. Since $p$ has relative Picard rank one,
	\[
	p^*N^1(\bar Y,\RR)=R^\perp\subset N^1(Y,\RR).
	\]
	Choose line bundles $\bar L_1,\dots,\bar L_r$ whose numerical classes span $N^1(\bar Y,\RR)$, and choose $A\in\Pic(\bar Y)$ sufficiently ample that $A$ and all $A\pm\bar L_i$ are basepoint free and have vanishing higher cohomology. Set
	\[
	H=p^*A,\qquad L_i=p^*\bar L_i.
	\]
	The line bundle $H$ defines $p$. Since $p$ is a Mori contraction for the klt pair $(Y,\Delta)$, $-(K_Y+\Delta)$ is $p$-ample. Hence $H-(K_Y+\Delta)$ and $(H\pm L_i)-(K_Y+\Delta)$ are also $p$-ample. Relative Kawamata--Viehweg vanishing \cite[Theorem 1.2.5]{KawamataMatsudaMatsuki} and the Leray spectral sequence give
	\[
	H^j(Y,\cO_Y(H))=H^j(\bar Y,\cO_{\bar Y}(A))=0\qquad (j>0),
	\]
	and similarly
	\[
	H^j(Y,\cO_Y(H\pm L_i))=H^j(\bar Y,\cO_{\bar Y}(A\pm\bar L_i))=0\qquad (j>0).
	\]
		Since $H^2(Y,\cO_Y)=0$, the relative Picard functor is smooth at every point of $\Pic(Y)$. Thus, after shrinking $S$ and making an \'etale base change, the finitely many line bundles $H,L_1,\dots,L_r$ extend to line bundles $\cH,\cL_1,\dots,\cL_r$ on $\cY$.

	Cohomology and base change shows, after shrinking $S$ further, that 
	\[
	H^0(\cY,\cH) \rightarrow H^0(Y, H)
	\]
	is surjective. Hence $\cH$ defines a morphism
$
	P\colon\cY\longrightarrow\bar{\cY}
	$
	over $S$ whose restriction to the central fiber is $p$.

	It remains to prove that $P$ has relative Picard rank one. Using cohomology and base change again, we see that $\cH \pm \cL_i$ are $P$-nef. If a curve $C\subset\cY_s$ is contracted by $P_s$, then $\cH_s\cdot C=0$, and it follows that $\cL_{i,s}\cdot C=0$ for every $i$. Under parallel transport, $[C]$ therefore lies in the line spanned by $R$; positivity against an ample class shows that it lies in the ray $R$. It follows that every curve contracted by $P$ lies in the ray $R$, and hence $\rho(\cY/\bar{\cY})=1$.
\end{proof}
Whether a nef or movable facet associated to a contraction $p \colon Y \rightarrow \bar{Y}$ disappears can be read off from the deformed morphism $P \colon \cY \rightarrow \bar{\cY}$ in a versal family.

\begin{proposition}\label{prop:persistence-via-contraction}
	Let $Y$ be a smooth projective threefold with $H^2(Y,\cO_Y)=0$. Let $p \colon Y \rightarrow \bar{Y}$ be a primitive contraction which is a Mori contraction for a klt pair, and let $F$ be the corresponding facet of $\Nef(Y)$. Let $\cY \rightarrow (S,0)$ be a versal deformation of $Y$, and let $P \colon \cY \rightarrow \bar{\cY}$ be the induced contraction from Lemma~\ref{lemma:contractions-extend-in-families}.
	\begin{itemize}
		\item $F$ persists as a nef facet iff $P_s$ is not an isomorphism for $s \in S$ very general. 
		\item Suppose that $p$ is divisorial. Then the facet of $\overline{\Mov}(Y)$ containing $F$ persists iff $P_s$ is divisorial. 
	\end{itemize}
\end{proposition}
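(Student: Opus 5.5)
The plan is to read both statements off the relative geometry of the extended contraction $P\colon\cY\to\bar{\cY}$ from Lemma~\ref{lemma:contractions-extend-in-families}, using the identifications \eqref{eq:restriction}. Let $H=p^*A$ and let $\cH$ be its extension. The supporting hyperplane of $F$ is $R^\perp$, where $R=R_p$. Over the germ we have $N^1(\bar{\cY}/S)=R^\perp$ by relative Picard rank one.

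\textbf{Nef facet.} Suppose first that $P_s$ is not an isomorphism for very general $s$. Then $P_s$ contracts some curve $C_s\subset Y_s$, and by the argument in the proof of Lemma~\ref{lemma:contractions-extend-in-families} its class lies in $R$. Thus $R\cap\NE(Y_s)\neq0$, so $R^\perp$ is a supporting hyperplane of $\Nef(Y_s)$. Now $\cH_s$ and nearby classes $\cH_s\pm\epsilon\cL_{i,s}$ are nef on $Y_s$ (pullbacks from $\bar{\cY}_s$), and they span $R^\perp$. Hence $\Nef(Y_s)\cap R^\perp$ has codimension one, and the facet persists.

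Conversely, suppose $P_s$ is an isomorphism for very general $s$. Then $\cH_s$ is ample on $Y_s$, since it is the pullback of an ample class under an isomorphism; more precisely, $\cH$ is $\pi$-ample after shrinking wherever $P$ is finite. So $H$ lies in the interior of $\Nef(Y^{\mathrm{gen}})$. Therefore $R^\perp$, which passes through an interior point, cannot support a facet, and $F$ disappears. One should be careful that the very general locus where $P_s$ is an isomorphism is where the nef cone equals $\Nef(Y^{\mathrm{gen}})$; this is handled by intersecting with $S^{\mathrm{vg}}$.

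\textbf{Movable facet ($p$ divisorial).} If $P_s$ is divisorial with exceptional divisor $E_s$, then curves of class in $R$ cover $E_s$. The argument quoted from \cite[Lemma 2.3]{KawamataCrepant} then shows that every movable class on $Y_s$ is nonnegative on $R$. Combined with the nef classes spanning $R^\perp$ from the previous step, $R^\perp$ supports a facet of $\overline{\Mov}(Y_s)$, so the facet persists.

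If $P_s$ is not divisorial, then it is either an isomorphism or small; the latter occurs because the family's exceptional locus has codimension at least two in the general fiber. In both cases I will produce a movable class on $Y_s$ that is negative on $R$. The natural candidate is $H+\epsilon E$, or $-E$ plus a large multiple of $H$, where $E=\operatorname{Exc}(p)$ and $E\cdot R<0$. Using $H^2(\cO_Y)=0$, the class $[E]$ extends as a line bundle $\cE$ on $\cY$.

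\begin{itemize}
\item If $P_s$ is an isomorphism, then $\cH_s$ is ample, so a small perturbation in the direction $-E$ remains ample, hence movable, and is negative on $R$.
\item If $P_s$ is small, then $\cO(m\cH_s+\cE_s)$, with $-\cE_s$ being $P_s$-ample (or its negative, as appropriate), gives a $P_s$-ample class that is movable on $Y_s$: relative ampleness over a small contraction, pulled back, has base locus contained in the codimension-two exceptional locus. Its stable base locus on $Y_s$ is therefore contained in $\operatorname{Exc}(P_s)$, which has codimension at least two.
\end{itemize}

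In both cases $\overline{\Mov}(Y^{\mathrm{gen}})$ contains points strictly on both sides of $R^\perp$, near the relative interior of $F$, so the facet disappears.

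\textbf{Main obstacle.} The step I expect to be hardest is the small case. It requires that $-E_s$ (or the appropriate sign) is $P_s$-ample, and that the relative base locus is controlled. The first point follows from $\rho(\cY/\bar{\cY})=1$ together with $\cE\cdot R<0$. The second follows from relative base-point freeness of $P$-ample bundles for the klt Mori contraction $P$, obtained by extending $\Delta$ to $\epsilon\cE$ and applying the relative basepoint-free theorem. Given this, $m\cH_s-\cE_s$ is free outside the exceptional locus.
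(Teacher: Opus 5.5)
Your strategy is the paper's: read both bullets off the extended contraction $P$. The paper then cites \cite[Lemma 2.3]{KawamataCrepant} for the fact that $\cH_s$ lies in the interior of $\overline{\Mov}(\cY_s)$ exactly when $\operatorname{Exc}(P_s)$ has codimension $\geq 2$. Two parts of your proposal are fine: the nef-cone argument, and the divisorial half of the movable argument (contracted curves sweep out $E_s$, so every movable class is $\geq 0$ on $R$).

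\textbf{The gap is in the non-divisorial half.} There you need movable classes strictly \emph{negative} on $R$. Since $\cE\cdot R<0$, these are classes like $m\cH_s+\cE_s$, which are $P_s$-\emph{anti}-ample.

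In the isomorphism case you perturb in the direction $-E$, but $(H-\epsilon E)\cdot R=-\epsilon\,(E\cdot R)>0$. This slip is harmless: $\cH_s$ is ample, so $H+\epsilon E$ is ample as well.

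The small case is a real problem. Your mechanism is relative ampleness of $-\cE$ plus the relative basepoint-free theorem. It produces $m\cH_s-\cE_s$, which for $m\gg0$ is already ample on $Y_s$. That class lies on the same side of $R^\perp$ as $\Nef(Y_s)$, so its movability says nothing about the facet disappearing. Relative basepoint-freeness gives no information about the anti-ample class $m\cH_s+\cE_s$, which is the one you actually need.

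\textbf{The fix uses smallness, not relative ampleness.} Write $\cH_s=P_s^*A_s$ with $A_s$ ample on $\bar{\cY}_s$.
\begin{enumerate}
\item For any line bundle $D$ on $Y_s$, the sheaf $(P_s)_*\cO(D)$ is torsion-free of rank one on $\bar{\cY}_s$.
\item Hence $(P_s)_*\cO(D)\otimes A_s^m$ is globally generated for $m\gg0$.
\item The pulled-back sections generate $\cO(D+m\cH_s)$ wherever $P_s$ is an isomorphism.
\item So the base locus of $|D+m\cH_s|$ lies in $\operatorname{Exc}(P_s)$, which has codimension $\geq 2$. Thus $D+m\cH_s$ is movable for every $D$.
\end{enumerate}
Taking $D=\cE_s$ gives movable classes $\cH_s+\frac1m\cE_s$ that are negative on $R$ and arbitrarily close to $\cH_s$. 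Hence $\cH_s$ is interior to $\overline{\Mov}(Y_s)$ and the facet disappears.

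Alternatively, pass to the flip/flop of $P_s$, on which the strict transform of $m\cH_s+\cE_s$ becomes ample. Either way, this is exactly the content of the Kawamata lemma the paper invokes.
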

\begin{proof}
	Let $P\colon\cY\to\bar{\cY}$ be the extension from Lemma~\ref{lemma:contractions-extend-in-families}.
	Let $H\in \relint(F)$ be a nef line bundle defining the contraction, and let $\cH$ be the corresponding line bundle on $\cY$. 
	$F$ disappears as a nef facet precisely when $\cH_s$ is ample on $\cY_s$ for $s$ very general. This happens iff $P_s$ is an isomorphism. 
	
	The movable facet containing $F$ disappears precisely when $\cH_s$ is in the interior of $\overline{\Mov}(\cY_s)$ for $s$ very general. By~\cite[Lemma 2.3]{KawamataCrepant}, this happens precisely when the exceptional locus of $P_s$ has codimension at least two. 
\end{proof}
In general, it is not easy to show whether a facet disappears or deforms. This becomes much easier if $F$ is the facet associated to a primitive contraction morphism, as in this case, the question of persistence of $F$ is closely related to the deformation theory of the exceptional locus of the contraction.

Fix $\cY \rightarrow S$ a miniversal deformation of $Y$. We say that a subscheme $Z\subset Y$ deforms with $Y$ if, after an \'etale base change, there exists a subscheme $\cZ \subset \cY$, flat over $S$ and restricting to $Z$ on the central fiber.
If $Z$ is an lci subscheme, then $N_{Z/Y}$ is a vector bundle, and the natural morphism $T_Y\to i_*N_{Z/Y}$
induces an obstruction map
\[
\ob_Z\colon H^1(Y,T_Y)\longrightarrow H^1(Z,N_{Z/Y}).
\]
If $\xi\in H^1(Y,T_Y)$ is a first-order deformation of $Y$, then $Z$
extends as an embedded flat subscheme over that deformation if and
only if $\ob_Z(\xi)=0$.
In particular, if $H^1(Z,N_{Z/Y})=0$, then $Z$ always deforms with $Y$. 

The statement that $Z$ deforms with $Y$ is stronger than
requiring an irreducible component of $\operatorname{Hilb}(\cY/S)$
through $[Z]$ to dominate $S$. Indeed, such a component might ramify at $[Z]$, so that there is no flat family $\cZ \rightarrow S$ extending $Z$. 

We have the following easy result:
\begin{proposition}[Exceptional-locus criterion for persistence]\label{prop:easy-persistence-criteria}
Let $p \colon Y \rightarrow \bar{Y}$ be a primitive contraction as above, and let $R_p$ be the contracted extremal ray. With notation as above, we have
	\begin{itemize}
		\item $F$ persists as a nef facet iff $\Hilb(\cY/S)$ has a component dominating $S$ and containing a point $[Z]$ such that the numerical class of $Z$ is in $R _p\setminus \{0\}$.
		\item Suppose that $p$ is divisorial with exceptional divisor $E$. If $E$ deforms with $Y$, then $F$ and the corresponding movable facet both persist. In particular, this holds if $H^1(N_{E/Y})=0$.
	\end{itemize}
\end{proposition}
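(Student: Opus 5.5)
The plan is to prove both bullets by relating the Hilbert scheme of $\cY/S$ to the extended contraction $P\colon\cY\to\bar{\cY}$ of Lemma~\ref{lemma:contractions-extend-in-families}, and then reading off persistence via Proposition~\ref{prop:persistence-via-contraction}.

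For the first bullet, Proposition~\ref{prop:persistence-via-contraction} says $F$ persists as a nef facet iff $P_s$ is not an isomorphism for very general $s$. Suppose first that $P_s$ is not an isomorphism for very general $s$. Then $P_s$ contracts some curve $Z_s$. By the proof of Lemma~\ref{lemma:contractions-extend-in-families}, every curve contracted by $P_s$ has class in $R_p\setminus\{0\}$ under parallel transport, using the identification \eqref{eq:restriction}. The Hilbert scheme of $\cY/S$ has countably many components. Since the very general fibres all carry such curves, one component must dominate $S$ and contain points $[Z_s]$ with class in $R_p\setminus\{0\}$.

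Conversely, suppose a component dominating $S$ contains a point $[Z]$ whose class lies in $R_p\setminus\{0\}$. The numerical class is locally constant in flat families, again via \eqref{eq:restriction}, and the component is connected. So the very general fibre $Y_s$ contains a curve $Z_s$ of class in $R_p\setminus\{0\}$. Such a curve satisfies $\cH_s\cdot Z_s=0$, so $\cH_s$ is not ample and $P_s$ contracts $Z_s$. Hence $P_s$ is not an isomorphism and $F$ persists. The step needing care is the countability / very-general argument together with the constancy of classes in a component; both are standard.

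For the second bullet, assume $E$ deforms with $Y$. After an étale base change there is a flat family $\cE\subset\cY$ over $S$ with $\cE_0=E$. Every fibre $\cE_s$ is then a divisor in $Y_s$ whose class is the parallel transport of $[E]$. I intend to show that $P_s$ contracts $\cE_s$ to a curve:
\begin{itemize}
\item The class $[\cE_s]$ is the transport of $[E]$.
\item The restriction $\cH|_{\cE}$ is relatively numerically trivial on curves in the fibre class, and has self-intersection data $\cH^2\cdot\cE_s = H^2\cdot E=0$, since $p(E)$ is a curve or point.
\end{itemize}
It follows that $P_s(\cE_s)$ has dimension at most $1$, so $P_s$ is divisorial. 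Proposition~\ref{prop:persistence-via-contraction} then gives persistence of the movable facet. Persistence of the nef facet follows as well, since $P_s$ is not an isomorphism; alternatively, the curves in $\cE_s$ contracted by $P_s$ give the dominating Hilbert component required in the first bullet.

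Finally, if $H^1(N_{E/Y})=0$, then $\ob_E=0$. So $E$ extends over every first-order deformation, and since the obstruction space vanishes, over the whole germ. This is the main (though standard) point: the vanishing of $H^1(N_{E/Y})$ makes $\Hilb(\cY/S)\to S$ smooth at $[E]$. We can therefore choose an étale-local section through $[E]$, which yields the required flat family $\cE$.
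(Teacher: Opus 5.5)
Your proposal is correct and follows essentially the same route as the paper. The first bullet reduces persistence to $P_s$ (equivalently $\cH_s$) failing to be an isomorphism (resp.\ ample) on a very general fibre, combined with the countability and constancy-of-class argument for components of $\Hilb(\cY/S)$. The second bullet uses $\cH_s^2\cdot\cE_s=H^2\cdot E=0$ to force $P_s$ to contract a divisor, and the last claim uses the vanishing of $H^1(N_{E/Y})$ to make the relative Hilbert scheme smooth over $S$ at $[E]$, which is the content of the Sernesi reference the paper cites.
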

\begin{proof}
Let $H\in \relint(F)$ be a nef line bundle defining the primitive contraction, and let $\cH$ be the corresponding line bundle on $\cY$. The facet $F$ disappears if and only if $\cH_s$ is ample. 
	By Lemma~\ref{lemma:contractions-extend-in-families},
	$\cH_s$ is ample precisely when $\cY_s$ contains no curve with class in $R_p$. This happens precisely if the components of the Hilbert scheme containing such curves don't dominate $S$.
	
	Suppose that $E$ deforms to a relative Cartier divisor $\cE$. 
	For every $s$ near the central point,
	\[
	\cH_s^2\cdot E_s=H^2\cdot E=0.
	\]
	Hence $P_s$ contracts the divisor $E_s$ and $P_s$ is divisorial. It follows that $F$ and the corresponding movable facet both persist.
	
For the last claim,	it is well-known that an lci subscheme $E$ deforms with $Y$ if $H^1(N_{E/Y})=0$, see \cite[Proposition 4.4.7]{Sernesi}.
\end{proof}
\section{$K$-trivial contractions of smooth threefolds}\label{sec:contractions}
Throughout this section, $Y$ is a smooth projective threefold. We do not assume that $-K_Y$ is nef. 
Following established terminology, we say that a primitive birational contraction morphism is 
\begin{itemize}
	\item type I if it is small.
	\item type II if it is divisorial and the exceptional divisor is contracted to a point.
	\item type III if it is divisorial and the exceptional divisor is contracted to a curve.
\end{itemize}
We will find it convenient to adopt the following terminology:
\begin{definition}\label{def:conic-bundle}
	Let $E$ be a projective irreducible Cohen--Macaulay surface, and $C$ a smooth curve. 
	
	We say that a flat morphism $q \colon E \rightarrow C$ with connected fibers is a conic fibration if 
	 the geometric generic fiber of $q$ is either a smooth conic or a reducible conic. 
	 
	We say that $E$ is a conic bundle if $E$ additionally admits a $q$-very ample line bundle of fiber degree two.
	
	We say that $E$ is a ruled surface if $q$ is a $\PP^1$-bundle.
	 
	We say that $E$ is quasi-ruled if the normalization $\tilde{E} \rightarrow E$ is a $\PP^1$-bundle over an unramified cover $\tilde{C} \rightarrow C$. So $E$ is either ruled, or a conic bundle all of whose fibers are line pairs. 
\end{definition}
With this terminology, a conic fibration can have very complicated singular fibers, they can be arbitrary trees of rational curves. On the other hand, the singular fibers of a conic bundle are either a pair of lines or a double line. Our definition of a conic bundle is still more general than the one usually considered as we allow the general fiber to be a line pair. In particular, the total space is not necessarily normal.
 
Our next result is a classification of the structure of exceptional divisors of primitive $K$-trivial type III contractions. We only summarize the main steps whenever the corresponding argument in \cite{WilsonCone, WilsonSymplectic} for Calabi--Yau threefolds carries over verbatim. 
\begin{proposition}[Wilson-type analysis of contractions to curves]\label{prop:wilson-type-contraction-to-curve}
	Let $p\colon Y\to\bar Y$ be a primitive $K$-trivial type III contraction. Let $E$ be the exceptional divisor, and let $C=p(E)$. Then:
	\begin{enumerate}
		\item
		$\bar Y$ is normal and Gorenstein with cDV singularities along $C$, and the contraction 
	$p$ is crepant.
		\item By \cite[Theorem 3.1]{WilsonSymplectic}, the curve $C$ is smooth~\footnote{On 23 October 2026, a preprint by Brown and Wemyss appeared on the arXiv, giving a local example of a crepant type III contraction with singular centre \cite[Proposition 9.14]{Wemyss}. As discussed in \cite[Remark 9.17]{Wemyss}, this example is incompatible with the smoothness of $C$ asserted in \cite{WilsonCone, WilsonSymplectic}, on which the present paper relies. We are working on a revised version of this paper that is independent of these results of Wilson.} , and $p$ is the blowup of $\bar{Y}$ in $C$. In particular, the restriction $q:=p|_E\colon E\to C$ is a conic bundle, and 
		\[
		E \cdot \ell_E=-2,
		\]
		where $\ell_E\in N_1(Y,\ZZ)$ is the class of a fiber of $q$. 
		\item We have an isomorphism
		\[
		\omega_Y^{-1}|_E\otimes q^*\omega_C\cong q^*M_E,
		\]
		where $M_E=\omega_{\bar{Y}}^{-1}|_C\otimes\omega_C\in\Pic(C)$ is the obstruction bundle~ \eqref{eq:obstruction-bundle}.
		\item The singularities of $E$ are du Bois. If the general fiber of $E$ is $\PP^1$, $E$ has at worst du Val singularities. 
	\end{enumerate}
\end{proposition}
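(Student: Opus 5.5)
We prove the four items in order, reusing Lemma~\ref{lem:Every-Contraction-Is-Mori} to put ourselves in the setting of a Mori contraction for the klt pair $(Y,\epsilon E)$.

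\emph{Step 1: item (1).} Since $K_Y\cdot R_p=0$ and $\rho(Y/\bar Y)=1$, the last assertion of Lemma~\ref{lem:Every-Contraction-Is-Mori} shows that $K_Y$ descends: $K_Y\sim p^*K'$ for a Cartier divisor $K'$ on $\bar Y$. Normality of $\bar Y$ holds because $p$ is a contraction with connected fibres. Pushing forward gives $K'=K_{\bar Y}$, so $K_{\bar Y}$ is Cartier and $K_Y=p^*K_{\bar Y}$, i.e.\ $p$ is crepant. The basepoint-free theorem for $(Y,\epsilon E)$ gives $R^ip_*\cO_Y=0$ for $i>0$, so $\bar Y$ has rational singularities. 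Rational Gorenstein threefold singularities are canonical. Since $p$ is crepant and not an isomorphism over the generic point of $C$, a general hyperplane section through a general point of $C$ is a surface with a rational double point, resolved crepantly by the fibre of $E$. Hence the singularities along $C$ are cDV, by Reid's characterization of canonical singularities with one-dimensional crepant locus.

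\emph{Step 2: item (2).} We quote \cite[Theorem 3.1]{WilsonSymplectic} for smoothness of $C$ and for $p$ being the blowup of the ideal of $C$, noting that the argument is local over $\bar Y$. It only uses crepancy and cDV singularities along $C$, so it carries over from the Calabi--Yau case. Then $E=\mathrm{Proj}$ of the conormal algebra, and $\cO_Y(-E)|_E$ is $q$-very ample of fibre degree two, so $q$ is a conic bundle. Moreover $K_Y\cdot\ell_E=0$ together with adjunction $(K_Y+E)\cdot\ell_E=\deg\omega_{\ell_E}=-2$ gives $E\cdot\ell_E=-2$. This is computed on a general fibre, which is a smooth conic or a line pair of arithmetic genus $0$.

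\emph{Step 3: item (3).} By crepancy, $\omega_Y^{-1}|_E=q^*(\omega_{\bar Y}^{-1}|_C)$ as line bundles, since $\omega_Y=p^*\omega_{\bar Y}$. Tensoring with $q^*\omega_C$ gives exactly $q^*M_E$.

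\emph{Step 4: item (4).} Since $E$ is a Cartier divisor in a smooth threefold, it is Gorenstein. If the general fibre is $\PP^1$, then $E$ is smooth over a dense open subset of $C$, so it is normal (being $S_2$ and $R_1$, after checking that the double-line fibres are isolated). Adjunction gives $\omega_E=(K_Y+E)|_E$, and the normality plus the conic bundle structure shows that the singularities occur at finitely many points of singular fibres, locally of the form $xy=t^k$ or $x^2=\dots$. These are $A_n$ or $D_n$ type, hence du Val. For du Bois in general, the local form of $E$ around any point is a hypersurface $\{f=0\}$ in a smooth threefold, with $f$ a family of conics. Fibrewise it is a normal crossing line pair or a double line. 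A line-pair family is seminormal with normal crossings generically and pinch or degenerate points, and such double-point surface singularities are du Bois. The double-line fibres need a separate local computation.

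\emph{Main obstacle.} I expect Step 4 to be the main obstacle, specifically the du Bois property at points of double-line fibres when the general fibre is a line pair. There the local equation can be fairly degenerate. The first approach to try is the criterion that a hypersurface $E$ is du Bois if it is log canonical in the sense of semi-log-canonical pairs, by inversion of adjunction for $(Y,E)$. Because $\bar Y$ is cDV and $p$ is crepant, I would aim to show that $(Y,E)$ is log canonical by comparing with the lc threshold of the cDV singularity along $C$.
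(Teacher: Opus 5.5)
Your items (1)--(3) follow essentially the paper's route: descent of $K_Y$ via Lemma~\ref{lem:Every-Contraction-Is-Mori}, crepancy, canonical hence cDV via Reid, citing Wilson for smoothness of $C$ and the blowup, adjunction for $E\cdot\ell_E=-2$, and restricting $\omega_Y\cong p^*\omega_{\bar Y}$ to $E$. Using $\cO_Y(-E)|_E=\cO_E(1)$ on the blowup for $q$-very ampleness is a harmless variation of the paper's positivity-on-fibre-components argument. One small point: cDV at the \emph{special} points of $C$ is what Reid's theorem supplies. It applies because $E$ is the only crepant divisor over $\bar Y$ and its centre is the curve $C$. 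Your hyperplane-section remark by itself only covers general points of $C$.

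The genuine gap is item (4). The paper does not derive it from the abstract conic bundle structure. It quotes Wilson's local analysis, \cite[Theorem 2.2]{WilsonCone} (du Val singularities when the general fibre is $\PP^1$) and \cite[Proposition 3.2]{WilsonSymplectic} (normal crossings along the double curve, plus pinch points, when the general fibre is a line pair), and then uses that these singularities are du Bois.

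Your substitute fails at its first step. In the $\PP^1$ case, normality of $E$ does not follow from $E$ being smooth over a dense open subset of $C$ with finitely many singular fibres, because $R_1$ can fail along an entire reduced double-line fibre. For instance, $\{x^2+txy+t^2z^2=0\}\subset\PP^2\times\Delta$ has smooth conics over $t\neq0$ and a double line over $t=0$, yet it is singular along all of $\{x=t=0\}$. Ruling this out needs input from the contraction itself, namely the cDV structure of $\bar Y$ along $C$, which is exactly what Wilson's theorem provides. Without normality, your local forms $xy=t^k$, $x^2=\dots$ and the du Val conclusion are unsupported. (Once normality is known, there is a cleaner finish: $R^1q_*\cO_E=0$ and the Leray sequence for the minimal resolution give rational, hence du Val, Gorenstein singularities.)

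In the line-pair case you leave the double-line fibres open, and the suggested lc/inversion-of-adjunction route is not a shortcut. Since $p$ is crepant and $I_C\cO_Y=\cO_Y(-E)$, the pair $(Y,E)$ is the crepant pullback of $(\bar Y,I_C)$. So proving $(Y,E)$ is lc at special points of $C$ is the same local problem about the cDV singularities along $C$ that Wilson's classification solves.

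To close the gap, either cite Wilson's local results as the paper does (their proofs are local and use only crepancy and cDV singularities along $C$, just as for item (2)), or carry out that local analysis yourself.
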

\begin{proof}
	\smallskip\noindent\emph{Singularities of $\bar{Y}$.}
	Let $\ell$ be the extremal ray contracted by $p$. By negativity of contraction, we have that $E \cdot \ell<0$. The pair $(Y, \epsilon E)$ is klt for $\epsilon$ small and $K_Y \cdot \ell=0$ by assumption, so it follows that $p$ is a $(K_Y+\epsilon E)$-negative contraction for the klt pair $(Y, \epsilon E)$. In particular, the cone theorem applies~\cite[Theorem 3.7]{KollarMori}, and by~\cite[Corollary 3.18]{KollarMoriFlips} $\bar Y$ is normal and
	$\QQ$-factorial. 
	
	Since $p$ is $K$-trivial, \cite[Theorem 3.7(4)]{KollarMori} shows that $\omega_Y\cong p^*L$ for some $L \in \Pic(\bar{Y})$. Let $\omega_{\bar{Y}}$ be the dualizing sheaf of $\bar{Y}$, a reflexive sheaf of rank one. 
	On a subset $U$ whose complement has codimension two, we have that $L|_{U} \cong \omega_{\bar{Y}}|_U$. Since $\bar{Y}$ is normal, this implies that $L \cong \omega_{\bar{Y}}$ is a line bundle, so that $\bar{Y}$ is Gorenstein. 
	Intersecting the equality
	\[
	K_Y =p^*K_{\bar{Y}}+a(E)E
	\]
	with $\ell$ and using that $K_Y \cdot \ell=0$ shows that $a(E)=0$, so that $p$ is crepant. Since $Y$ is smooth, $\bar{Y}$ has canonical singularities. 
	Since $E$ is the only exceptional divisor of a resolution of $\bar{Y}$, \cite{Reid1}[Theorem 2.2] shows that $\bar{Y}$ has cDV singularities. 	

	\smallskip\noindent\emph{Conic bundle}
	The fact that $C$ is smooth and that $p$ is the blowup is proved by the same argument as in Wilson~\cite[Proposition 3.1]{WilsonSymplectic}. Indeed, the proof given there is local, so goes through under the weaker assumption that $p$ is a crepant contraction from a smooth threefold, see also \cite[Theorem 5.5]{Wierzba}. 
	 The equality $E \cdot \ell_E=-2$ follows from the adjunction formula on the Cohen--Macaulay surface $E$. 
	
		Finally we show the existence of a relatively ample line bundle of fiber degree two. 
	Every fiber $F$ of $q$ is a tree of rational curves, and since the contraction is primitive, all of its components are numerically proportional to $\ell_E$ in $N_1(Y)$. Since $E \cdot \ell_E=-2$, it follows that $\cO_F(-E)$ has positive degree on every component of $F$, and is therefore very ample on $F$. 
	It follows that $\cO_E(-E)$ is $q$-very ample of fiber degree two. We conclude that $q$ is a conic bundle. 
	
	\smallskip\noindent\emph{Descent of $\omega_Y^{-1}|_E$.}
	Denoting $i \colon E \rightarrow Y$ and $j \colon C \rightarrow \bar{Y}$, we have that 
	\[
	\omega_Y|_E \cong i^*p^*\omega_{\bar{Y}}\cong q^*(\omega_{\bar{Y}}|_C),
	\]
	and the claim follows.
	
	\smallskip\noindent\emph{Singularities of $E$.}
	The statement about the singularities of $E$ follows from \cite[Theorem 2.2]{WilsonCone} in the $\PP^1$ case and \cite[Proposition 3.2]{WilsonSymplectic} in the case that the general fiber is two $\PP^1$'s. In the former case, $E$ has at worst du Val singularities, which are du Bois by \cite{Kovacs}. In the latter case, $E$ has generically a normal crossing singularity at the points where two fiber components meet, and possibly also a pinch point singularity on special fibers.
	Both are du Bois, see e.g. \cite[Examples~3.2--3.3]{Doherty}.
\end{proof}
\section{Deformations of $K$-trivial facets}\label{sec:deformations}
We now turn to the question whether $K$-trivial facets of the nef and movable cone persist under deformation. 

Our first result shows that faces of the nef cone associated to small $K$-trivial contractions always persist.
\begin{proposition}[Deformation of isolated $K$-trivial curves]\label{prop:k-trivial-curves-persist-in-families}
	Let $Y$ be a smooth projective threefold with $H^2(Y,\cO_Y)=0$, let $\pi\colon\cY\to S$ be an arbitrary deformation of $Y$, and let $Z \subset Y$ be a smooth rational $K$-trivial curve which does not deform inside $Y$. Then the union of components of $\Hilb(\cY/S)$ containing $[Z]$ dominates $S$. 
	In particular, if $p \colon Y \rightarrow \bar{Y}$ is a $K$-trivial small contraction, the corresponding nef facet persists. 
\end{proposition}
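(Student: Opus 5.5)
Since $Z\cong\PP^1$ and $K_Y\cdot Z=0$, the normal bundle $N_{Z/Y}$ has degree $-2$ by adjunction, so $N_{Z/Y}\cong\cO(a)\oplus\cO(-2-a)$ with $a\geq -1$. The possible types are $(-1,-1)$, $(0,-2)$ and $(1,-3)$. In all cases $\chi(N_{Z/Y})=0$, so the expected dimension of the relative Hilbert scheme of $\cY/S$ at $[Z]$ is
\[
\chi(N_{Z/Y})+\dim S=\dim S.
\]
The plan is to use the standard lower bound for relative Hilbert schemes: every component of $\Hilb(\cY/S)$ through $[Z]$ has dimension at least $h^0(N_{Z/Y})-h^1(N_{Z/Y})+\dim S=\dim S$ (cf.\ \cite{KollarMori} or Koll\'ar's \emph{Rational curves}, Theorem I.2.15). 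If $S$ is singular, one first pulls back to a smooth base, or works over the smooth miniversal space, using that the bound only needs $\dim$ of the base germ.

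Next I use that $Z$ does not deform inside $Y$. This means the fiber of $\Hilb(\cY/S)\to S$ over $0$ is zero-dimensional at $[Z]$, i.e.\ $[Z]$ is an isolated point of $\Hilb(Y)$; note this is a condition on the reduced structure, not on $h^0(N_{Z/Y})$, which may be positive in the $(0,-2)$ and $(1,-3)$ cases. Let $\cH'$ be any irreducible component of $\Hilb(\cY/S)$ through $[Z]$. The map $\cH'\to S$ has fiber over $0$ of dimension $0$ at $[Z]$. By upper semicontinuity of fiber dimension, all fibers of $\cH'$ near $[Z]$ are finite, so $\dim \overline{\pi(\cH')}=\dim\cH'\geq\dim S$. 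Hence $\cH'$ dominates $S$; in fact it is quasi-finite and surjective onto the germ. This proves the first claim.

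For the consequence about small contractions, let $p$ be a $K$-trivial small contraction. By Lemma~\ref{lem:Every-Contraction-Is-Mori} it is a Mori contraction for a klt pair, so Proposition~\ref{prop:easy-persistence-criteria} applies. It then suffices to find a dominating component of $\Hilb(\cY/S)$ containing a point whose class lies in $R_p\setminus\{0\}$. Take an irreducible component $Z$ of the exceptional locus.
\begin{itemize}
\item Its class lies in $R_p$, and $K_Y\cdot Z=0$.
\item It does not deform in $Y$: deformations of $Z$ would stay in the class $R_p$, hence in the exceptional locus, which is one-dimensional, so they are forced to be trivial.
\item It is a smooth rational curve. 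Exceptional curves of $K$-trivial (equivalently, by the first step of Proposition~\ref{prop:wilson-type-contraction-to-curve}, crepant) small contractions of smooth threefolds to cDV points are trees of smooth rational curves, by Reid's analysis (\cite{Reid1}, or Laufer/Pinkham).
\end{itemize}
Applying the first part to $Z$ gives the required dominating component, so the nef facet persists.

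The main obstacle is the dimension estimate. One needs a version valid over an arbitrary (possibly singular) base germ $S$. I would argue via the obstruction theory for the relative Hilbert scheme: obstructions lie in $H^1(N_{Z/Y})$, so $\Hilb(\cY/S)$ is locally cut out of a smooth germ of dimension $h^0(N_{Z/Y})$ over $S$ by $h^1(N_{Z/Y})$ equations. Hence each component has dimension at least $\dim S+\chi(N_{Z/Y})=\dim S$, and the argument above goes through.
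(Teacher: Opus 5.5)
Your proposal is correct and is essentially the paper's argument. The paper bounds $\dim_{[Z]}$ of the Hilbert scheme from below by $\chi(N_{Z/\cY})=\deg N_{Z/Y}+\dim S+2=\dim S$ on the smooth total space, which is equivalent to your relative bound $\chi(N_{Z/Y})+\dim S$; it then uses the zero-dimensional central fiber to force dominance and Proposition~\ref{prop:easy-persistence-criteria} for the nef facet, and your extra check that exceptional curves of a small $K$-trivial contraction are rigid smooth rational curves fills in a point the paper leaves implicit. One harmless slip: the list $(-1,-1),(0,-2),(1,-3)$ of splitting types is Laufer's constraint for contractible curves and need not be exhaustive for an arbitrary rigid $K$-trivial $\PP^1$, but you only use $\chi(N_{Z/Y})=\deg+\mathrm{rank}=0$, which holds for every splitting type (and since the paper's deformations have smooth base by definition, your singular-base discussion is unnecessary).
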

\begin{proof}
	The proof is very similar to \cite[Proposition 3.1]{WilsonCone}. Let $\cH$ be the component of $\operatorname{Hilb}(\cY/S)$ containing $Z$, and let $g\colon\cH\to S$ be the structure morphism. We have that $\dim_{[Z]}(\cH) \geq h^0(N_{Z/\cY})-h^1(N_{Z/\cY})$. By Riemann--Roch, the right-hand side is
	\[
	\chi(N_{Z/\cY})=\deg(N_{Z/\cY})+\dim(S)+2.
	\]
	By adjunction, we have 
	\[
	\deg(N_{Z/\cY})=\deg(N_{Z/Y})=-K_Y \cdot Z-2=-2,
	\]
	so that $\dim_{[Z]}(\cH)\geq\dim(S)$. Since $Z$ does not deform in $Y$, the fiber of $g$ through $[Z]$ is zero-dimensional. Hence $g$ is dominant. It follows that every deformation of $Y$ contains a curve $C$ with $[C]=[Z]$. We conclude by Proposition~\ref{prop:easy-persistence-criteria} that the corresponding facet of $\Nef(Y)$ persists. 
\end{proof}
We now turn to divisorial contractions to points. 
\begin{lemma}[Normal-bundle obstructions for exceptional divisors]
	\label{lem:normal-obstructions-exceptional-divisor}
	Let $p \colon Y \rightarrow \bar{Y}$ be a primitive $K$-trivial
	divisorial contraction with exceptional divisor $E$. 
	\begin{enumerate}
	\item 
If $p$ is of type II, then
		\[
		H^1(E,N_{E/Y})=0.
		\]
		\item If $p$ is of type III, then 
		\[
		H^1(E,N_{E/Y})
		\cong H^0(C,M_E).
		\]
		where $M_E=\omega_{\bar{Y}}^{-1}|_C\otimes\omega_C$.
	\end{enumerate}
\end{lemma}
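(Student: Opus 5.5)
The plan is to compute $N_{E/Y}=\cO_E(E)$ via adjunction and then push forward along the contraction. Since $E$ is a Cartier divisor on the smooth threefold $Y$, $E$ is Gorenstein and
\[
\omega_E\cong(\omega_Y\otimes\cO_Y(E))|_E,\qquad\text{so}\qquad N_{E/Y}\cong\omega_E\otimes\omega_Y^{-1}|_E .
\]
In both cases $p$ is crepant: intersecting $K_Y=p^*K_{\bar Y}+a(E)E$ with a contracted curve gives $a(E)=0$, exactly as in the proof of Proposition~\ref{prop:wilson-type-contraction-to-curve}. Hence $\omega_Y\cong p^*\omega_{\bar Y}$ and $\omega_Y|_E\cong (p|_E)^*(\omega_{\bar Y}|_{p(E)})$.

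\emph{Type II.} Here $p(E)$ is a point, so $\omega_Y|_E\cong\cO_E$ and $N_{E/Y}\cong\omega_E$. By Serre duality on the Cohen--Macaulay surface $E$, $H^1(E,\omega_E)\cong H^1(E,\cO_E)^\vee$. To show that this vanishes, I would use the exact sequence $0\to\cO_Y(-E)\to\cO_Y\to\cO_E\to0$ and push forward by $p$. Since $p$ is a Mori contraction for the klt pair $(Y,\epsilon E)$ (Lemma~\ref{lem:Every-Contraction-Is-Mori}) and $\bar Y$ is rational, we get $R^1p_*\cO_Y=0$. Moreover $-E-(K_Y+\epsilon E)$ is $p$-ample, because $K_Y$ is $p$-trivial and $-E$ is $p$-ample. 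Relative Kawamata--Viehweg vanishing therefore gives $R^2p_*\cO_Y(-E)=0$. The long exact sequence of higher direct images then yields $H^1(E,\cO_E)=R^1p_*\cO_E=0$, noting that $E$ is exactly the fiber over a point. Alternatively, $E$ is a normal Gorenstein del Pezzo-type surface, since $\omega_E=\cO_E(E)$ is anti-ample, and such a surface has $h^1(\cO_E)=0$.

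\emph{Type III.} By Proposition~\ref{prop:wilson-type-contraction-to-curve}(3),
\[
N_{E/Y}\cong\omega_E\otimes q^*(\omega_{\bar Y}^{-1}|_C).
\]
The projection formula gives
\[
Rq_*N_{E/Y}\cong Rq_*\omega_E\otimes\omega_{\bar Y}^{-1}|_C .
\]
Next I compute $Rq_*\omega_E$ using relative duality for the flat Cohen--Macaulay morphism $q$ with one-dimensional fibers: $\omega_E\cong\omega_{E/C}\otimes q^*\omega_C$. The fibers are conics, so $q_*\cO_E=\cO_C$ and $R^1q_*\cO_E=0$; the latter holds because the fibers are trees of rational curves and $H^1$ of the fiber vanishes, so cohomology and base change applies. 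Grothendieck duality then gives $q_*\omega_{E/C}=0$ and $R^1q_*\omega_{E/C}\cong\cO_C$. Consequently $q_*N_{E/Y}=0$ and $R^1q_*N_{E/Y}\cong\omega_C\otimes\omega_{\bar Y}^{-1}|_C=M_E$. Finally, the Leray spectral sequence collapses to
\[
H^1(E,N_{E/Y})\cong H^0(C,R^1q_*N_{E/Y})=H^0(C,M_E).
\]

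The main obstacle I expect is the duality step: $E$ may be non-normal, with line-pair fibers and pinch points, so the argument must rely only on $q$ being flat with Cohen--Macaulay (indeed Gorenstein) fibers. Grothendieck duality for flat Gorenstein morphisms handles this, and it needs only $R^1q_*\cO_E=0$ and $q_*\cO_E=\cO_C$, both of which follow from the conic fibers. In type II, the vanishing of $H^1(\cO_E)$ for a possibly non-normal $E$ is handled cleanly by the relative vanishing argument above rather than by any classification.
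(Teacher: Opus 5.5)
Your proof is correct, but it is organized differently from the paper's.

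The paper applies Serre duality on $E$ at the outset, which reduces both cases to the single group $H^1(E,\omega_Y|_E)$.

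In type II the paper kills this group with global Kawamata--Viehweg vanishing for $K_Y+mH$ and $K_Y+mH-E$, where $H=p^*A$. That step uses only $H|_E\cong\cO_E$, so it needs neither $\omega_Y|_E\cong\cO_E$ nor $K$-triviality. You instead reduce to $H^1(E,\cO_E)$ and kill it by relative vanishing for $\cO_Y$ and $\cO_Y(-E)$. This is the same restriction-sequence idea in relative form. Two small points need fixing:
\begin{enumerate}
\item $R^1p_*\cO_Y=0$ follows from relative Kawamata--Viehweg, because $-(K_Y+\epsilon E)$ is $p$-ample (or because $\bar Y$ has rational singularities). It does not follow from $\bar Y$ being rational.
\item $a(E)=0$ by itself only gives $K_Y\sim_\QQ p^*K_{\bar Y}$. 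The isomorphism $\omega_Y\cong p^*L$ for a line bundle $L$ comes from the descent statement in Lemma~\ref{lem:Every-Contraction-Is-Mori}. That is what makes $\omega_Y|_E$ actually trivial rather than merely torsion, so cite it.
\end{enumerate}

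In type III the paper runs Leray on $q^*(\omega_{\bar Y}|_C)$. This needs only $Rq_*\cO_E\cong\cO_C$ and absolute Serre duality on $E$ and on $C$, and yields $H^1(E,N_{E/Y})^\vee\cong H^1(C,\omega_{\bar Y}|_C)\cong H^0(C,M_E)^\vee$.

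You run Leray directly on $N_{E/Y}$ and use relative Grothendieck duality for the flat Cohen--Macaulay morphism $q$ to get $Rq_*N_{E/Y}\cong M_E[-1]$. This costs slightly more machinery. In exchange, it produces the isomorphism as the Leray edge map $H^1(E,N_{E/Y})\to H^0(C,R^1q_*N_{E/Y})$. That is exactly the form the paper uses later, when obstructions are restricted to fibers via $\mathrm{ev}_c$. In the paper's route this compatibility is not visible directly, because the isomorphism is obtained by dualizing twice.
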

\begin{proof}
	Since $E$ is a Cartier divisor, it is Cohen--Macaulay, so Serre duality and the adjunction formula yield
	\begin{equation}\label{eq:normal-bundle-H1}
	H^1(E,N_{E/Y})^\vee
	\cong H^1(E,\omega_E\otimes N_{E/Y}^{-1})
	\cong H^1(E,\omega_Y|_E).
	\end{equation}
	For (1), let $H$ be a big and nef line bundle defining $p$.	Write $H=p^*A$, for $A$
	ample on $\bar Y$. For $m\gg0$, the divisors $mH$ and $mH-E$ are big
	and nef, so Kawamata--Viehweg vanishing gives
	\[
	H^i(Y,\cO_Y(K_Y+mH))=
	H^i(Y,\cO_Y(K_Y+mH-E))=0\qquad(i>0).
	\]
	The exact sequence
	\[
	0\to\cO_Y(K_Y+mH-E)\to\cO_Y(K_Y+mH)
	\to\cO_E(K_Y|_E)\to0
	\]
	therefore gives $H^1(E,\omega_Y|_E)=0$, where we used
	$H|_E\cong\cO_E$ since $E$ is contracted to a point. We conclude using \eqref{eq:normal-bundle-H1}.
	
	For (2), write $\omega_Y|_E\cong q^*(\omega_{\bar{Y}}|_C)$ as in the proof of Proposition~\ref{prop:wilson-type-contraction-to-curve}. The Leray spectral sequence applied to $q \colon E \rightarrow C$ gives an isomorphism 
	\[
	H^1(E,\omega_Y|_E)\cong H^1(C,\omega_{\bar{Y}}|_C),
	\]
	where we used $q_*\cO_E=\cO_C$ and $R^1q_*\cO_E=0$. 
	Using Serre duality on $C$, we obtain 
	\[
	H^1(C,\omega_{\bar{Y}}|_C)^\vee \cong H^0(C,M_E)
	\]
	and we conclude using~\eqref{eq:normal-bundle-H1}.
\end{proof}
In view of this, we see that the only case where the exceptional divisor $E$ of a $K$-trivial primitive contraction can disappear under deformation is when $q \colon E \rightarrow C$ is a conic fiber space and $H^0(C,M_E)\neq 0$. By definition, this implies that $E \in \Delta_Y^{\mathrm{big}}$ and motivates Definition~\ref{def:deformation-root-classes}. 

We now turn to the much harder question whether an exceptional divisor $E$ with $E \in \Delta_Y^{\mathrm{big}}$ actually disappears under deformation. 
Recall that if \(\alpha\in H^1(Y,T_Y)\) is a first-order deformation of \(Y\), then \(E\) deforms with \(Y\) to first order if and only if \(\ob_E(\alpha)=0\), where
\[
\ob_E\colon H^1(Y,T_Y)\longrightarrow H^1(E,N_{E/Y})
\]
is induced by the composite \(f\colon T_Y\to T_Y|_E\to N_{E/Y}\). In order to show that $E$ disappears under deformation, it is therefore enough to show that \(H^1(E,N_{E/Y})\neq 0\) and that \(\ob_E\) is surjective.
For this, we use Saito's theory of Hodge modules, which we recall in Appendix~\ref{app:hodge}.

\begin{theorem}\label{thm:surjective}
	Let $p \colon Y \rightarrow \bar{Y}$ be a divisorial contraction as in Proposition~\ref{prop:wilson-type-contraction-to-curve}. Then the obstruction map
	\[
	\ob_E \colon H^1(Y,T_Y)\longrightarrow H^1(E,N_{E/Y})
	\]
	is surjective.
\end{theorem}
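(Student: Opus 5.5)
We follow the strategy sketched in the proof overview, working with perverse sheaves and Hodge modules on $\bar Y$.

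\textbf{Reformulation.} We first rewrite $\ob_E$ in terms of differential forms. Since $\dim Y=3$, the pairing $\Omega^1_Y\otimes\Omega^2_Y\to\omega_Y$ gives $T_Y\cong\Omega^2_Y\otimes\omega_Y^{-1}$. Adjunction on the Cartier (hence Cohen--Macaulay) divisor $E$ gives $N_{E/Y}\cong\omega_E\otimes\omega_Y^{-1}|_E$. Under these identifications, the composite $T_Y\to N_{E/Y}$ becomes restriction of $2$-forms $\Omega^2_Y\to\omega_E$, twisted by $\omega_Y^{-1}$. One checks this locally, where $E=\{f=0\}$ and the map is contraction with $df$. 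Since $p$ is crepant (Proposition~\ref{prop:wilson-type-contraction-to-curve}), we have $\omega_Y=p^*\omega_{\bar Y}$, and $\omega_{\bar Y}$ is a line bundle. By the projection formula, it therefore suffices to produce a morphism in $D^b_{coh}(\bar Y)$
\[
\psi\colon Rp_*\Omega^2_Y[1]\longrightarrow j_*Rq_*\omega_E[1]
\]
that realizes the restriction map and is \emph{split surjective}. Twisting by $\omega_{\bar Y}^{-1}$ and taking $\mathbb H^0$ then yields surjectivity of $H^1(Y,T_Y)\to H^1(E,N_{E/Y})$. Note that $Rq_*\omega_E\cong\omega_C[-1]$, because $q_*\omega_E=0$ and $R^1q_*\omega_E=\omega_C$ by relative duality for the conic bundle $q$. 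Hence the target is simply $j_*\omega_C$ placed in degree $0$.

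\textbf{Topological morphism.} Next we construct $\Phi\colon Rp_*\QQ_Y[3]\to j_*\QQ_C[1]$ as the composite
\[
Rp_*\QQ_Y[3]\to j_*Rq_*\QQ_E[3]\to j_*(R^2q_*\QQ_E)[1]\cong j_*\QQ_C[1](-1).
\]
The first arrow is restriction. The second is truncation, using that $Rq_*\QQ_E$ lives in degrees $0,1,2$ with top piece $R^2q_*\QQ_E\cong\QQ_C(-1)$; here we may need to pass to the normalization when the generic fiber is a line pair, or else use the trace map along fibers.

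We then show $p$ is semismall: its fibers have dimension at most $1$ over the curve $C$ and are finite elsewhere, so $2\cdot 1\le 3-1$. By the decomposition theorem, $Rp_*\QQ_Y[3]$ is a semisimple perverse sheaf. Since $C$ is smooth (Proposition~\ref{prop:wilson-type-contraction-to-curve}), $j_*\QQ_C[1]$ is simple, once we account for the local system $R^2q_*$ when fibers are reducible, in which case we replace $\QQ_C$ by the rank-one summand obtained via the trace map.

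$\Phi$ is nonzero: on stalks at a general point $c\in C$, the induced map $H^2(F_c)\to H^2(F_c)$ restricted along fibers hits the class of a relatively ample divisor, e.g.\ $-E|_E$, which has nonzero degree on $\ell_E$. A nonzero morphism from a semisimple object to a simple one is split surjective.

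\textbf{Hodge lift and graded de Rham.} The main obstacle is lifting $\Phi$ to Saito's category and identifying $\mathrm{Gr}^F\mathrm{DR}$ of it with $\psi$ up to shift. The plan is as follows. Restriction, adjunction and trace morphisms all exist in $D^b\mathrm{MHM}$, so $\Phi$ lifts to a morphism of pure Hodge modules of the same weight. The category of polarizable pure Hodge modules of fixed weight is semisimple, and the forgetful functor is faithful, so the lift $\Phi^H$ is again split surjective.

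Applying the exact functor $\mathrm{Gr}^F_{-k}\mathrm{DR}$ and using its compatibility with proper pushforward (Saito) gives a split surjection. For the constant module on $Y$, the relevant graded piece is $Rp_*\Omega^2_Y[1]$. On the target we use the Du Bois property of $E$ (Proposition~\ref{prop:wilson-type-contraction-to-curve}(4)) to identify $\mathrm{Gr}$ of $Rq_*\QQ_E^H$ with $Rq_*\underline\Omega^\bullet_E$, whose top piece is $Rq_*\omega_E$. The delicate point is verifying that the graded map is exactly $\psi$, i.e.\ restriction of forms, with the correct Tate twist and index. We would check this on the open part where $E$ is smooth and then extend, using that both sides are torsion-free in degree $0$ over $C$.
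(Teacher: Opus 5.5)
Your architecture is the paper's: rewrite $\ob_E$ via $g\colon\Omega^2_Y\to i_*\omega_E$, use semismallness to make $Rp_*\QQ_Y^H[3]$ semisimple, map it nonzero-ly to the simple object $j_*\QQ_C^H(-1)[1]$, and apply $\Gr^F\DR$. The gap is at the step you call delicate, namely identifying the graded target with $Rq_*\omega_E$.

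\begin{itemize}
\item Saito's theorem gives $\Gr^F_{-2}\DR(\QQ_E^H)\simeq\underline\Omega^2_E$ (up to shift) for every $E$. The du Bois property only says $\underline\Omega^0_E\simeq\cO_E$; it says nothing about $\underline\Omega^2_E$.
\item For a surface, $\underline\Omega^2_E\simeq\pi_*\omega_{\tilde E}$ for a resolution $\pi\colon\tilde E\to E$. This equals $\omega_E$ when $E$ has du Val singularities.
\item When the general fibre of $q$ is a line pair, $E$ is non-normal along a double curve. Then $\pi_*\omega_{\tilde E}\subsetneq\omega_E$, since it lies in the conductor times $\omega_E$.
\item Hence $\Gr^F\DR$ of your restriction arrow is $\Omega^2_Y\to i_*\underline\Omega^2_E$, not $g$.
\item Your repair (check on the smooth locus of $E$, then extend by torsion-freeness over $C$) fails in this case. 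The double curve dominates $C$, so no nonempty open subset of $C$ has smooth preimage in $E$.
\item Separately, the standard truncation $Rq_*\QQ_E\to R^2q_*\QQ_E[-2]$ is not available in $D^b\mathrm{MHM}$. Also $R^2q_*\QQ_E\not\cong\QQ_C(-1)$ at line-pair fibres, where the stalk is $\QQ^2$. So the fibrewise ``trace'' you fall back on has to be constructed explicitly.
\end{itemize}

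The missing idea is to build that trace from Saito's fundamental class $c_E\colon\QQ_E^H\to\DD(\QQ_E^H)(-2)[-4]$, followed by the dual $\tau$ of $u\colon\QQ_C^H\to Rq_*\QQ_E^H$. With $G_a=\Gr^F_{-a}\DR$, the duality compatibility $G_a\circ\DD\simeq D\circ G_{-a}$ turns the top graded piece into the dual of the zeroth:
\[
G_2(\DD(\QQ_E^H)(-2)[-4])\simeq D_E(\underline\Omega^0_E)[-4]\simeq D_E(\cO_E)[-4]\simeq\omega_E[-2].
\]
This is where the du Bois and Cohen--Macaulay properties are actually used. The identity $G_2(\Psi)[2]=g$ then compares two sheaf maps $\Omega^2_Y\to i_*\omega_E$ on $Y$, before pushing forward. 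These are determined on $E^{\mathrm{reg}}$, because $\omega_E$ is invertible on the reduced Cohen--Macaulay surface $E$. Finally, $G_0(\tau)\colon Rq_*\omega_E[2]\to\omega_C[1]$ is a quasi-isomorphism because $\cO_C\to Rq_*\cO_E$ is. So $G_2(\Phi)$ is an isomorphism composed with $Rp_*g$, which gives the split surjectivity of $Rp_*g$ that you need.
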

\begin{proof}
	We first make a few reductions. 
	Since $Y$ is smooth, we have $T_Y\simeq \Omega_Y^2\otimes \omega_Y^{-1}$.
	Since $E$ is a hypersurface, it is Cohen--Macaulay, and the adjunction formula gives
	$
	N_{E/Y}\simeq \omega_E\otimes \omega_Y^{-1}|_E.
	$
	We view $f\colon T_Y\to T_Y|_E\to N_{E/Y}$ as a morphism
	\[
	f\colon \Omega_Y^2 \otimes \omega_Y^{-1} \longrightarrow i_*(\omega_E \otimes \omega_Y^{-1}|_E).
	\]
	This agrees with the morphism $g \otimes \omega_Y^{-1}$, where
	\[
	g \colon\Omega_Y^2\longrightarrow i_*\Omega_E^2
	\xrightarrow{\,c\,}i_*\omega_E,
	\]
	given by restriction of forms, followed by the morphism \(c\colon \Omega_E^2\to\omega_E\) of \cite[Lemma 48.29.2]{stacks-project}. On the smooth locus of $E$, $c$ is the canonical isomorphism, so compatibility is clear on the smooth locus of $E$. Since $\omega_E$ is locally free, compatibility follows everywhere. 
	
	By Proposition~\ref{prop:wilson-type-contraction-to-curve}, we have \(\omega_Y\cong p^*\omega_{\bar{Y}}\), so the projection formula gives
	\[
	Rp_*f\simeq Rp_*g\otimes \omega_{\bar{Y}}^{-1}.
	\]
	It is
	therefore enough to prove that
	\[
	Rp_*g\colon Rp_*\Omega_Y^2\longrightarrow j_*Rq_*\omega_E
	\]
	is split surjective in $D^b_{coh}(\bar{Y})$. 
	
	We will prove this by lifting everything to morphisms in the derived category of Hodge modules.
	Consider the morphism
	\[
	\Psi\colon \mathbb Q_Y^H
	\longrightarrow i_*\mathbb Q_E^H
	\xrightarrow{c_E} i_*\mathbb{D}(\mathbb Q_E^H)(-2)[-4],
	\]
	where $c_E$ is Saito's fundamental class morphism~\eqref{eq:appendix-fundamental-class}.
	We apply the functor $G_2=\Gr^F_{-2} \DR (-)$ to $\Psi$. Using the compatibility of the functor $G_2$ with duality~\eqref{eq:compatibilitydirectimageduality} and \eqref{eq:appendix-fundamental-graded} gives
	\begin{align*}
		G_2(\QQ_Y^H)&\cong \Omega^2_Y[-2]\\
		G_2(\mathbb{D}(\QQ_E^H)(-2)[-4])&\cong D_E(G_0(\QQ_E^H))[-4] \cong D_E(	\underline{\Omega^0_E})[-4]=\omega_E[-2]
	\end{align*}
	where we have used that $E$ has du Bois singularities by  Proposition~\ref{prop:wilson-type-contraction-to-curve}, and that $E$ is Cohen--Macaulay, so that \eqref{eq:appendix-fundamental-graded} gives
	\[
	D_E(\underline{\Omega^0_E})\cong D_E(\cO_E) \cong \omega_E[2].
	\] 
	Restricting to the smooth locus, $G_2(c_E)$ is the isomorphism $\Omega^2_{E^{reg}} \cong \omega_{E^{reg}}$, so it follows that $G_2(\Psi)[2]=g$. 
	Let 
	\[
	u \colon \QQ_C^H \rightarrow Rq_*\QQ_E^H
	\]
	be the canonical morphism induced by
	\(q\colon E\to C\).
	Using that $E$ has du Bois singularities again, we see that 
	\[
	G_0(u) \colon \cO_C \rightarrow Rq_*\cO_E
	\]
	is a quasi-isomorphism: indeed, since $q$ is a conic bundle, we have $q_*\cO_E=\cO_C$ and $R^iq_*\cO_E=0$ for $i>0$.
  Denoting
		\[
		\tau\colon Rq_*\mathbb{D}(\QQ_E^H)\longrightarrow \mathbb{D}(\QQ_C^H) \cong \QQ_C^H(1)[2]
		\]
	 the dual of $u$, we obtain that
	\[
	G_0(\tau) \colon Rq_*\omega_E[2] \rightarrow \omega_C[1]
	\]
	is a quasi-isomorphism as well. 

	Write $P=Rp_*\QQ_Y^H[3]$, and let 
	\begin{equation}\label{eq:hodge-trace-map}
		\Phi \colon P \xrightarrow{Rp_*\Psi[3]} j_*Rq_*\mathbb{D}(\QQ_E^H)(-2)[-1] \xrightarrow{j_*\tau[-1](-2)} j_*\QQ_C^H(-1)[1].
	\end{equation}
	Since $p$ is a type III contraction, $p$ is a semismall morphism. Since $Y$ is nonsingular, the Hodge module decomposition theorem~\ref{thm:semismall-hodge} shows that $P$ is a semisimple pure Hodge module of weight three. 
	On the other hand, $j_*\QQ_C^H[1](-1)$ is a simple pure Hodge module of the same weight $1-2(-1)=3$, so it follows that $\Phi$ is a morphism of pure Hodge modules. 
	We claim that $\Phi$ is nonzero, and therefore split surjective. This is easy to see on the underlying perverse sheaves. Taking the stalk of the cohomology sheaf in degree $-1$ at $c \in C$ general gives 
	\[
	H^2(E_c, \QQ)  \rightarrow \QQ(-1),
	\]
	the fundamental class map, which is nonzero.
	Therefore 
	\[
	G_2(\Phi)[-1]\colon Rp_*\Omega^2_Y\xrightarrow{Rp_*g} j_*Rq_*\omega_E \xrightarrow{j_*G_0(\tau)[-2]} j_*\omega_C[-1]
	\]
	is split surjective as well.
	By the above, $G_0(\tau) $ is a quasi-isomorphism, so $Rp_*g$ is split surjective, and induces surjections on hypercohomology. In particular, $\ob_E$ is surjective.
\end{proof}

We have already seen in Proposition~\ref{prop:easy-persistence-criteria} that if $E$ deforms, then the corresponding movable facet and nef facet persist under deformation. The converse is far from obvious: even if $E$ does not deform, some thickening of it might, causing the movable facet to persist. 
The next result shows that this does not happen for the exceptional divisors in Proposition~\ref{prop:wilson-type-contraction-to-curve}. The proof uses the very special geometry of such contractions: $C$ is a curve of transverse du Val singularities, so locally in the analytic topology, we may view the contraction $Y \rightarrow \bar{Y}$ as a family of du Val singularities with simultaneous resolution. We briefly recall the theory. 

Let $X$ be the germ of a du Val singularity and let $\cX \rightarrow \Def(X)$ be the versal deformation. By the theory of Brieskorn--Tyurina, there is a finite map $T \rightarrow \Def(X)$ such that the pulled back family admits a simultaneous resolution $\tilde{\cX} \rightarrow \cX_T \rightarrow T$. 
The discriminant locus $\Gamma \subset T$ is given by the root hyperplanes of the root system of ADE type corresponding to the du Val singularity, and the finite map $T \rightarrow \Def(X)$ is a Galois cover for the associated Weyl group, when restricted to $T \setminus \Gamma$. 
We will only consider the case of $A_1$ and $A_2$ singularities. In the former case, we have that 
\[
\Def(X)=\AAA^1, \qquad T=\AAA^1, \qquad \Gamma=0
\]
In the latter case, we have 
\[
\Def(X)=\AAA^2, \qquad T=\AAA^2
\]
and $\Gamma$ is the union of the three lines in the standard $A_2$ root system picture.
The simultaneous resolution of $X$ has a versality property:
\begin{lemma}\cite[Corollary 5.3]{Huikeshoven}\label{lem:versality}
Let $\pi \colon \cY \rightarrow S$ be a deformation of a du Val singularity $X$, and suppose that $\pi$ admits a simultaneous resolution 
\[
\tilde{\cY} \rightarrow \cY \rightarrow S
\]
Then there is an analytic morphism $\varphi\colon S\to T$ such that
$
\tilde{\cY}\rightarrow \cY \rightarrow S
$
is isomorphic to the pullback of
$
\tilde{\cX} \rightarrow \cX_T \rightarrow T
$
via $\varphi$.
\end{lemma}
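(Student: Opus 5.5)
The plan is to deduce the statement from the versality of deformations of the minimal resolution, rather than from versality of $\Def(X)$. Let $\tilde X\to X$ be the minimal resolution of the du Val germ. A simultaneous resolution $\tilde{\cY}\to\cY\to S$ restricts on the central fibre to $\tilde X\to X$. Forgetting $\cY$, the family $\tilde{\cY}\to S$ is therefore a deformation of the (non-compact, germ-like) surface $\tilde X$.

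\textbf{Step 1: $T$ is the base of a semi-universal deformation of $\tilde X$.} By the Brieskorn--Tyurina--Artin theory, $\tilde X$ has a semi-universal deformation, and it is unobstructed: $H^2(\tilde X,T_{\tilde X})=0$ because the fibres of $\tilde X\to X$ have dimension $\le 1$. I would check that this deformation is realized by $\tilde{\cX}\to T$. Concretely:
\begin{itemize}
\item its tangent space $H^1(\tilde X,T_{\tilde X})$ has dimension equal to the number of exceptional curves, which is the rank of the root system and hence $\dim T$;
\item the Kodaira--Spencer map of $\tilde{\cX}\to T$ at $0$ is an isomorphism, which can be checked on the root-hyperplane description: the line bundles $\cO(C_i)$ fail to extend in independent directions.
\end{itemize}
Given these, any deformation $\tilde{\cY}\to S$ of $\tilde X$ is induced by an analytic map $\varphi\colon S\to T$ with $\tilde{\cY}\cong\varphi^*\tilde{\cX}$.

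\textbf{Step 2: the contraction is determined by $\tilde{\cY}$ and commutes with base change.} Let $r\colon\tilde{\cY}\to\cY$ be the given resolution. Since $X$ is rational, $R^1\rho_*\cO_{\tilde X}=0$ for $\rho\colon\tilde X\to X$. By the theorem on formal functions / base change, this gives $r_*\cO_{\tilde{\cY}}=\cO_{\cY}$ and $R^1r_*\cO_{\tilde{\cY}}=0$, and formation of $r_*\cO$ commutes with arbitrary base change. Hence $\cY\cong\operatorname{Specan}_S\,\pi_*\cO_{\tilde{\cY}}$ is recovered functorially from $\tilde{\cY}$. The same holds for $\tilde{\cX}\to\cX_T$, giving
\[
\cY\cong\varphi^*\cX_T,
\]
compatibly with the resolution maps. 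This identifies the whole diagram with the pullback.

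\textbf{Main obstacle.} The delicate point is Step 1, namely identifying the Brieskorn--Tyurina base $T$ with the semi-universal deformation space of $\tilde X$, including the germ (non-proper) setting. I would handle this as follows:
\begin{itemize}
\item pass to a compact model, e.g. a neighbourhood with strictly pseudoconvex boundary, or algebraize by a projective surface containing the configuration;
\item use that deformations of $\tilde X$ are determined by deformations of a neighbourhood of the exceptional curves;
\item check the Kodaira--Spencer isomorphism by hand for $A_1$ and $A_2$, the only cases used later.
\end{itemize}
For $A_1$, $T=\AAA^1$ and the family $xy=z^2-t^2$ with small resolution is explicit, so the computation is direct.
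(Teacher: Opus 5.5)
The paper gives no argument for this lemma; it just quotes Huikeshoven's Corollary~5.3. Your architecture is the standard route to such statements: show that $\tilde{\cX}\to T$ is a semi-universal deformation of the germ of $\tilde X$ along $E=\bigcup_i C_i$, then recover $\cY$ from $\cO_{\cY}=r_*\cO_{\tilde{\cY}}$, which commutes with base change since $R^1\rho_*\cO_{\tilde X}=0$. Step~2 is fine, provided you write the blow-down as the Remmert reduction of the germ $\tilde{\cY}$ rather than as a relative $\operatorname{Specan}$ over $S$. The gap is in Step~1, exactly where you place the difficulty.

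\textbf{The Kodaira--Spencer isomorphism.} The mechanism you give is wrong. On the germ $\tilde X$ the obstruction to extending a line bundle lies in $H^2(\tilde X,\cO_{\tilde X})$. This vanishes for the same fibre-dimension reason as $H^2(\tilde X,T_{\tilde X})$, so every $\cO(C_i)$ extends over all of $T$ and line bundles detect nothing. What fails to deform is the compact curve $C_i$, with obstruction in $H^1(C_i,N_{C_i/\tilde X})\cong\CC$.

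Even then, knowing that $C_i$ survives exactly over $\{\alpha_i=0\}$ only describes the zero set of the obstruction function $f_i$ on $T$. Injectivity of the Kodaira--Spencer map needs the differentials $df_i(0)$ to be linearly independent, i.e.\ each $f_i$ must vanish to first order. The standard way to get this is via a relative symplectic form $\omega$ on $\tilde{\cX}/T$. The composite $T_0T\to H^1(\tilde X,T_{\tilde X})\to\bigoplus_iH^1(C_i,N_{C_i/\tilde X})$ is the differential of the period map $t\mapsto(\int_{C_i}\omega_t)_i$. For Brieskorn's family this map is linear with the simple roots as coordinates, hence an isomorphism. An explicit check for $A_1$ and $A_2$ would cover the paper's later use, but not the lemma as stated.

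\textbf{Analyticity of $\varphi$.} To turn a tangent-space isomorphism into an analytic $\varphi\colon S\to T$, you need an analytic semi-universal deformation of the germ of $\tilde X$ along $E$ with smooth base. This is Laufer's theorem on deformations of neighbourhoods of exceptional sets; your strictly pseudoconvex neighbourhood suggestion is exactly his setting. Alternatively, use a formal hull plus Artin approximation. Algebraizing inside a projective surface does not help, since a deformation of the germ need not extend to the compact surface.

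With these two points supplied, your argument goes through and gives semi-universality, which is more than the existence of $\varphi$ that the lemma asks for.
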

\begin{proposition}\label{prop:root-faces-disappear-movable}
	Let $Y$ be a smooth projective threefold with $\Def(Y)$ smooth and $H^2(Y, \cO_Y)=0$, and let $\pi \colon \cY \rightarrow (S,0)$ be a versal deformation. 
	Let $p \colon Y \rightarrow \bar{Y}$ be a primitive $K$-nonpositive divisorial contraction with exceptional divisor $E$, and let $F$ be the corresponding facet of $\overline{\Mov}(Y)$.  TFAE
	\begin{enumerate}
		\item $p$ is a $K$-trivial type III contraction with exceptional divisor $E$, and $H^0(M_E) \neq 0$.
		\item The deformed contraction $P \colon \cY \rightarrow \bar{\cY}$ is a small contraction.
		\item The facet $F$ disappears.
	\end{enumerate}
\end{proposition}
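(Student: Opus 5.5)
The equivalence (2)$\Leftrightarrow$(3) is Proposition~\ref{prop:persistence-via-contraction}, second bullet. Since $p$ is a $K$-nonpositive primitive divisorial contraction, it is a Mori contraction for the klt pair $(Y,\epsilon E)$, so Lemma~\ref{lemma:contractions-extend-in-families} provides $P\colon\cY\to\bar{\cY}$. That bullet says $F$ persists iff $P_s$ is divisorial for very general $s$. Because the exceptional locus of $P$ restricts to that of $p$ on the central fibre, $P_s$ is birational, so $P_s$ is either divisorial or small (possibly an isomorphism). Moreover, if the exceptional locus of $P$ is a divisor in $\cY$, it is flat over $S$ after shrinking. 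Hence the real content is (1)$\Leftrightarrow$(2).

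For (2)$\Rightarrow$(1) I argue by contraposition. If $p$ is $K$-negative, \cite{MoriThreefolds} says the exceptional divisor deforms. If $p$ is $K$-trivial of type II, Lemma~\ref{lem:normal-obstructions-exceptional-divisor}(1) gives $H^1(N_{E/Y})=0$. If $p$ is $K$-trivial of type III with $H^0(M_E)=0$, Lemma~\ref{lem:normal-obstructions-exceptional-divisor}(2) again gives $H^1(N_{E/Y})=0$. In each case Proposition~\ref{prop:easy-persistence-criteria} shows that $E$ deforms and $P$ is divisorial, so (2) fails.

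For (1)$\Rightarrow$(2), assume $p$ is $K$-trivial of type III with $H^0(M_E)\neq0$. Suppose for contradiction that $P$ contracts a divisor $\cE\subset\cY$ flat over $S$, with $\cE_0$ a divisor supported on $E$, so $\cE_0=mE$ as Cartier divisors. Since $\Def(Y)$ is smooth and $\ob_E$ is surjective (Theorem~\ref{thm:surjective}), I choose a one-parameter subfamily $\Delta\to S$ whose Kodaira--Spencer class $\kappa$ satisfies $\sigma:=\ob_E(\kappa)\neq0$ in $H^0(C,M_E)$. Pick $c\in C$ general with $\sigma(c)\neq0$ and with $\bar Y$ having a transverse $A_1$ singularity along $C$ near $c$ (or $A_2$ in the line-pair case).

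The plan is to localize analytically near $c$. Take a small disc $\Sigma\subset\bar Y$ transverse to $C$ at $c$. Then $\bar{\cY}|_{\Sigma\times\Delta}$ is a deformation of the du Val germ over the two-dimensional base $(C\text{-direction})\times\Delta$, and it admits the simultaneous resolution $\cY\to\bar{\cY}$. Lemma~\ref{lem:versality} then gives a classifying map $\varphi\colon C\times\Delta\to T$ locally near $(c,0)$. The exceptional fibres deform along the preimage $\varphi^{-1}(\Gamma)$ of the root hyperplanes. The first-order obstruction to deforming the fibre $F_c$ is the evaluation $\sigma(c)$. I expect the main technical step is to identify $\sigma(c)$ with the $\Delta$-derivative of $\varphi$ composed with the root linear form, using compatibility of $\ob_E$ with the local obstruction via $H^1(E,N_{E/Y})\cong H^0(C,M_E)$ and the evaluation map. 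Given this, $\sigma(c)\neq0$ implies that $\varphi^{-1}(\Gamma)$ is, near $(c,0)$, a smooth curve transverse to $\Delta$ and tangent to no $C\times\{t\}$. So for $t\neq0$ small, only finitely many fibres near $c$ survive: they are isolated curves, not a divisor. In the $A_2$ case each of the three root lines is handled the same way, since a nonzero $\sigma(c)$ should make every root form nonvanishing to first order along $\Delta$.

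Finally, I globalize. The locus of $c$ with $\sigma(c)=0$ is finite, so over a neighbourhood of every other point of $C$ the exceptional locus of $P_t$ is a finite set of curves. Over the finitely many remaining points it is at most a curve, by upper semicontinuity of fibre dimension of $P$ over the image of the exceptional locus. Thus the exceptional locus of $P_t$ has codimension $\ge2$, contradicting the existence of $\cE$, and $P$ is small, proving (2). The main obstacle is the identification of $\sigma(c)$ with the first-order variation of the period map $\varphi$ into $T$; that is where $H^0(M_E)\neq0$ is genuinely used.
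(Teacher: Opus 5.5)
\textbf{Comparison with the paper.} Your treatment of (2)$\Leftrightarrow$(3), and of (2)$\Rightarrow$(1) by contraposition, is the paper's argument. For (1)$\Rightarrow$(2), however, the local route you propose has a genuine gap.

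\textbf{The gap in the $A_2$ case.} Everything rests on the claim that $\sigma(c)\neq 0$ forces every root form $\lambda\circ\varphi$ to have nonzero $t$-derivative at $(c,0)$. You only assert this, and in the $A_2$ case it is false as stated. The value $\sigma(c)=\ob_{F_c}(\kappa)$ is the obstruction for the whole fibre $F_1+F_2$, so it only controls the highest root $\alpha_1+\alpha_2$. If $\partial_t(\alpha_1\circ\varphi)(c,0)=0$, then $\alpha_1\circ\varphi$ may vanish identically near $(c,0)$. The single component $F_1$ would then sweep out exactly the divisor you want to exclude.

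To control the simple roots you need the lifted obstruction $\tilde\sigma=\ob_f(\kappa)\in H^0(\tilde C,M_{\tilde E})$. It is nonzero by Lemma~\ref{lem:obstruction-compatibility}, and $\tilde C$ is irreducible, so for general $c$ both points of $\tilde C$ over $c$ are non-zeros of $\tilde\sigma$.

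Even then, you still have to prove the comparison itself. This requires knowing that the relative Hilbert scheme of each root cycle in the Brieskorn--Tyurina simultaneous resolution is the \emph{reduced} root hyperplane. Only then is the existence of a first-order deformation of the cycle along $\Delta$ equivalent to $\partial_t(\lambda\circ\varphi)(c,0)=0$.

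\textbf{The local picture and the globalization step.} Your description of the local picture is also off. Every $F_{c'}$ lives on the central fibre, so $\lambda\circ\varphi$ vanishes on $C\times\{0\}$. A nonzero $t$-derivative therefore means $\varphi^{-1}(\Gamma)=C\times\{0\}$ near $(c,0)$. So there are no exceptional curves at all near $c$ for $t\neq 0$, rather than finitely many isolated ones, and $\varphi^{-1}(\Gamma)$ is not transverse to $C\times\{0\}$.

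A single such general $c$ already contradicts the existence of $\cE$. Indeed, $\cE$ contains $F_c$, and $P(\cE)$ maps finitely, hence onto, a neighbourhood of $(c,0)$ in $C\times\Delta$. Your globalization paragraph, with its non-uniform choice of small $t$, is therefore unnecessary.

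\textbf{How the paper avoids this.} The paper sidesteps all of this by using the equality $\cE_0=mE$, which you wrote down and never used. Work in the local family of surfaces over $B=C\times\Delta$. For general $b$, the surface $\cY_b$ resolves a single $A_1$ or $A_2$ point, because every adjacent singularity is again $A_1$ or $A_2$; hence $\cE_b^2=-2$. At $b=(c,0)$ one gets $(mG)^2=-2m^2$, so conservation of intersection numbers forces $m=1$. Thus $\cE$ is a flat deformation of $E$ over $\Delta$, contradicting $\ob_E(\kappa)\neq 0$.

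This argument uses only the global nonvanishing of $\sigma$ and treats $A_1$ and $A_2$ uniformly. It needs no comparison between the global obstruction and the local classifying map. Your approach, once repaired with $\tilde\sigma$ and the Hilbert-scheme computation, gives finer information: the exceptional curves of $P_t$ can only concentrate near zeros of $\sigma$ and $\tilde\sigma$. That is the kind of analysis the paper carries out for the nef-facet statement, not for this one.
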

\begin{proof}
	 The equivalence $(2)\iff(3)$ is Proposition~\ref{prop:persistence-via-contraction} and only included here for the convenience of the reader. The real content is the equivalence with condition $(1)$. 
	
	If $F$ is $K$-negative, then $E$ deforms by~\cite[Theorem 3.47]{MoriThreefolds} and $F$ persists as a movable face, so none of the conditions are satisfied.
	
	Suppose therefore that $F$ is $K$-trivial. If $p$ is a type II or a type III contraction with $H^0(M_E)=0$, then 
	$H^1(E,N_{E/Y})=0
	$
	by Lemma~\ref{lem:normal-obstructions-exceptional-divisor}.
	Therefore $E$ deforms, and $F$ persists as a movable face, so none of the conditions are satisfied. 

	It follows that if $F$ disappears as a movable face, then $p$ is a $K$-trivial type III contraction, and $H^0(M_E) \neq 0$. Therefore the implication $(3) \implies (1)$ holds. 

	$(1) \implies (2)$.	
By Theorem~\ref{thm:surjective}, 
\[
\ob_E \colon H^1(T_Y) \rightarrow H^1(N_{E/Y})
\]
 is surjective and by assumption $H^1(N_{E/Y}) \cong H^0(M_E) \neq 0$. It follows that if $v \in H^1(T_Y)$ is very general, then $\ob_E(v) \neq 0$. Since $\Def(Y)$ is smooth, $v$ is the Kodaira--Spencer class of some (very general) deformation $\cY_\Delta \rightarrow \Delta$. 
 It is enough to show that $P_\Delta \colon \cY_\Delta \rightarrow \cY_{\bar{\Delta}}$ is a small contraction.
	
	Suppose for a contradiction that $P_\Delta$ is divisorial, and denote $\cE$ its exceptional divisor. Then 
	$\cE|_0$ is a hypersurface supported on $E$, and therefore equal to $mE$  for some $m \geq 0$. Here $mE$ denotes the codimension one scheme locally defined by $f^m$ where $f$ is a local equation for $E$. We will show that $m=1$, which will imply that $E$ deforms, contradicting $\ob_E(v) \neq 0$.
	
	As described in \cite{Reid2}, we can locally consider $\cY \rightarrow \bar{\cY}$ as a family of surface singularities with simultaneous resolution over a parameter space $B=C \times \Delta$. 
	Over a general point $b=(c,0)$, ${\mathcal{Y}}_b \rightarrow \bar{\mathcal{Y}}_b$ is the minimal resolution of a surface with a unique $A_1$ or $A_2$ singularity, depending on whether the general fiber of $q \colon E \rightarrow C$ is $\PP^1$ or the union of two $\PP^1$'s.
	
Since $P_\Delta$ is divisorial, the induced morphism $\cE \rightarrow B$ is dominant and its fibers are one-dimensional after shrinking $B$. Since every adjacent singularity to an $A_1$ or $A_2$-singularity is again an $A_1$ or $A_2$, it follows that for general $b$, $\cY_b \rightarrow \bar{\cY}_b$ is the minimal resolution of a surface with a unique $A_1$ or $A_2$ singularity, and hence $\cE_b^2=-2$ in the surface $\cY_b$ for $b$ general. 

On the other hand, over $b=(c,0)$, we have that $[\cE_b]=m[E|_b]=m[G]$, where $G$ is a general fiber of $q$, and $G^2=-2$ in the surface $\cY_b$. We conclude that $m=1$, as required.
	\end{proof}
We now turn to deformations of the fibers of $q$, following
\cite{WilsonCone, Minagawa}.
The case where the general fiber of $E$ is a line pair requires some special arguments, for which we set up some notation. We let
$\nu\colon\tilde{E}\to E$ be the normalization,
write $f=i\circ\nu\colon\tilde{E}\to Y$, and
let 
\[
\tilde{E} \xrightarrow{\tilde{q}} \tilde{C} \xrightarrow{\nu_C} C
\]
be the Stein factorization of $q\circ\nu$, so that $\tilde{q} \colon \tilde{E} \rightarrow \tilde{C}$ has general fiber $\PP^1$.
The normal bundle of $f$ is the sheaf defined by the exact sequence
\[
0 \rightarrow T_{\tilde{E}} \rightarrow f^*T_Y \rightarrow N_f \rightarrow 0
\]
As noted in \cite[p.703]{Minagawa}, $N_f$ is torsion free and locally free away from the points lying over the pinch points of $E$. In particular $N_f$ is locally free if $E$ is quasi-ruled, and we have an isomorphism
\[
N_f^{\vee\vee} \cong f^*\omega_Y^{-1} \otimes \omega_{\tilde{E}}
\]
Define
\[
\ob_f\colon H^1(T_Y)\longrightarrow H^1(N_f^{\vee\vee})
\]
by the composite $T_Y\to f_*f^*T_Y\to f_*N_f\to f_*N_f^{\vee\vee}$.
We have the following easy result.
\begin{lemma}\label{lem:obstruction-compatibility}
	Suppose that the general fiber of $E$ is a line pair. The obstruction map $\ob_E$ factors through $\ob_f$.
\end{lemma}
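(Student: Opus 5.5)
We want a factorization $\ob_E=\phi\circ\ob_f$ for some linear map $\phi\colon H^1(\tilde{E},N_f^{\vee\vee})\to H^1(E,N_{E/Y})$. The plan is to build $\phi$ at the level of sheaves on $E$, namely a morphism $\psi\colon \nu_*N_f^{\vee\vee}\to N_{E/Y}$ that is compatible with the maps coming from $T_Y$, and then pass to $H^1$.

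First, the comparison on normal bundles. Via the identifications already recorded,
\[
N_{E/Y}\cong\omega_E\otimes\omega_Y^{-1}|_E,\qquad N_f^{\vee\vee}\cong f^*\omega_Y^{-1}\otimes\omega_{\tilde{E}}.
\]
By the projection formula, $\nu_*N_f^{\vee\vee}\cong \nu_*\omega_{\tilde{E}}\otimes\omega_Y^{-1}|_E$. So it suffices to produce a map $\nu_*\omega_{\tilde{E}}\to\omega_E$. The natural candidate is the trace map, i.e.\ the Grothendieck-duality adjoint of $\cO_E\to\nu_*\cO_{\tilde{E}}$ for the finite map $\nu$. This is the standard inclusion $\nu_*\omega_{\tilde{E}}=\omega_E(-\text{conductor})\hookrightarrow\omega_E$, and it is an isomorphism over the normal locus of $E$. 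Set $\psi=\mathrm{tr}\otimes\omega_Y^{-1}|_E$.

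Next, compatibility with $T_Y$. We need the square
\[
\begin{tikzcd}
T_Y \arrow[r] \arrow[d] & i_*N_{E/Y} \\
f_*N_f^{\vee\vee} \arrow[r, equal] & i_*\nu_*N_f^{\vee\vee} \arrow[u, "i_*\psi"']
\end{tikzcd}
\]
to commute. Over the open set where $E$ is normal (in fact smooth away from the double curve and finitely many points), $\nu$ is an isomorphism. There both composites are the usual projection $T_Y|_E\to N_{E/Y}$ of the ambient tangent bundle onto the normal bundle. Since $N_{E/Y}$ is a line bundle on the Cohen--Macaulay surface $E$, it is torsion free, and so is its pushforward. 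Hence two maps from $T_Y$ into $i_*N_{E/Y}$ that agree on a dense open subset agree everywhere, and the square commutes.

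The one point needing care is the identification with duality-based descriptions. I would sidestep it: define $\psi$ directly as the unique extension from the normal locus, and check existence of that extension by the reflexivity and $S_2$ properties of $\omega_E$ on the Cohen--Macaulay surface $E$. Near the double curve $E$ has normal crossings, so the comparison can also be checked explicitly by a local computation. Either way this is the main technical step, and I expect it to be routine.

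Finally, take $H^1$. The Leray spectral sequence for the finite map $\nu$ gives $H^1(\tilde{E},N_f^{\vee\vee})=H^1(E,\nu_*N_f^{\vee\vee})$, and similarly for the pushforwards to $Y$. Applying $H^1(Y,-)$ to the commuting square gives $\ob_E=H^1(\psi)\circ\ob_f$, which is the claimed factorization.
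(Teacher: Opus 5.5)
Your proposal is correct and follows essentially the same route as the paper. The paper also takes the map $\nu_*\omega_{\tilde E}\to\omega_E$ dual to $\cO_E\to\nu_*\cO_{\tilde E}$ and tensors it with $\omega_Y^{-1}|_E$. It then checks agreement with $T_Y\to i_*N_{E/Y}$ on the smooth locus of $E$, extends to all of $E$ because $N_{E/Y}$ is locally free, and passes to $H^1$.

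One caution about your optional ``sidestep'': extending $\psi$ from the normal locus by an $S_2$/reflexivity argument would not work as stated. The non-normal locus of $E$ is the double curve, which has codimension one in $E$, and $S_2$ extension only works across codimension two. The trace map already gives $\psi$ on all of $E$, so the detour is unnecessary.
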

\begin{proof}
Since $\nu$ is finite, the dual of $\cO_E\to\nu_*\cO_{\tilde{E}}$ gives a morphism
	$
	\nu_*\omega_{\tilde{E}}\to\omega_E
	$
by~\cite[48.20.12]{stacks-project}. The composites
\[
T_Y \rightarrow f_*N_f^{\vee\vee}
\]
and 
\[
\nu_*N_f^{\vee\vee} \cong \nu_*(\omega_{\tilde{E}})\otimes \omega_Y^{-1}|_E \rightarrow \omega_E \otimes \omega_Y^{-1}|_E \cong N_{E/Y}
\]
The composite sheaf morphism agrees with $T_Y \rightarrow i_*N_{E/Y}$ on the smooth locus of $E$, as $\nu$ is an isomorphism there. Since $N_{E/Y}$ is locally free, they agree everywhere. It follows that the induced map 
\[
H^1(T_Y) \xrightarrow{\ob_f} H^1(N_f^{\vee\vee}) \xrightarrow{T} H^1(N_{E/Y})
\]
agrees with $\ob_E$.
\end{proof}
Let $F_c$ be the (not necessarily smooth) fiber of $E$ over $c \in C$. Recall from \cite{WilsonCone} and more specifically from the proof of~\cite[Proposition 3.5]{Minagawa} that naturality of the Leray spectral sequence gives a commutative diagram
\[
\begin{tikzcd}[column sep=large, row sep=large]
	H^1(T_Y)
	\arrow[r, "\operatorname{ob}_E"]
	&
	H^1(N_{E/Y})
	\arrow[r]
	\arrow[d, "\sim"]
	&
	H^1(N_{E/Y}|_{F_c}) \cong H^1(N_{F_c/Y})
	\arrow[d, "\sim"]
	\\
	&
	H^0(C,M_E)
	\arrow[r, "\operatorname{ev}_c"]
	&
	H^0\!\left(c,(M_E)|_c\right)
\end{tikzcd}
\]
where the top composite is $\ob_{F_c}$. If $\kappa \in H^1(T_Y)$ is the KS class of a one parameter deformation $\cY \rightarrow \Delta$, and $\sigma=\ob_E(\kappa) \in H^0(M_E)$ is not identically zero, then $F_c$ deforms to first order iff the section $\sigma$ has a zero at $c$. In particular, no fiber deforms to first order iff $M_E$ is trivial. 

In the case that the general fiber is a line pair, write
\[
M_{\tilde{E}}:=
\nu_C^*(\omega_{\bar{Y}}^{-1}|_C)\otimes\omega_{\tilde{C}}
\cong\nu_C^*M_E\otimes\cO_{\tilde{C}}(R_{\nu_C}),
\]
where $R_{\nu_C}$ is the ramification divisor. 
Let $F_{\tilde{c}}$ be the fiber component corresponding to $\tilde{c} \in \tilde{C} \setminus R_{\nu_C}$. We also have a diagram
\[
\begin{tikzcd}[column sep=large, row sep=large]
	H^1(T_Y)
	\arrow[r, "\operatorname{ob}_f"]
	&
	H^1(N_{f}^{\vee\vee})
	\arrow[r]
	\arrow[d, "\sim"]
	&
	H^1(N_{f}^{\vee\vee}|_{F_{\tilde{c}}}) \cong H^1(N_{F_{\tilde{c}}/Y})
	\arrow[d, "\sim"]
	\\
	&
	H^0(\tilde{C},M_{\tilde{E}})
	\arrow[r, "\operatorname{ev}_{\tilde{c}}"]
	&
	H^0\!\left({\tilde{c}},(M_{\tilde{E}})|_{\tilde{c}}\right)
\end{tikzcd}
\]
where the top row is the obstruction map for $F_{\tilde{c}}$. Denoting $\tilde{\sigma}=\ob_f(\kappa)$, Lemma~\ref{lem:obstruction-compatibility} shows that $\tilde{\sigma} \neq 0$ whenever $\sigma \neq 0$. In particular the fiber component $F_{\tilde{c}}$ deforms to first order iff the section $\tilde\sigma$ has a zero at $\tilde{c}$.

\begin{proposition}
	\label{prop:ruling-fiber-persistence-criterion}
	\label{prop:root-faces-disappear-nef}
	Let $Y$ be a smooth projective threefold with $\Def(Y)$ smooth and
	$H^2(Y,\cO_Y)=0$, and let $\pi \colon \cY \rightarrow S$ be a miniversal deformation. Let $p\colon Y\rightarrow\bar Y$ be a $K$-nonpositive primitive birational contraction, and let $F$ be the corresponding facet of $\Nef(Y)$. The following are equivalent:
	\begin{enumerate}
		\item  $p$ is a $K$-trivial type III contraction, and, denoting $E$ the exceptional divisor, we have 
		that $q=p|_E \colon E \rightarrow C$ is quasi-ruled and $M_E \cong \cO_C$.
		\item The deformed contraction
		$P\colon\cY\rightarrow\bar{\cY}$ over $S$ restricts to an
		isomorphism on a very general fiber.
		\item The facet $F$ disappears.
	\end{enumerate}
\end{proposition}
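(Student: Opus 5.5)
The equivalence $(2)\iff(3)$ is the first bullet of Proposition~\ref{prop:persistence-via-contraction}, so the work is to show $(1)\iff(3)$. We argue by cases on the type of $p$.

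\emph{Cases in which $F$ persists.} If $p$ is $K$-negative, the exceptional locus deforms by \cite{MoriThreefolds}, so $F$ persists. If $p$ is $K$-trivial of type I, then $F$ persists by Proposition~\ref{prop:k-trivial-curves-persist-in-families}. If $p$ is of type II, or of type III with $H^0(M_E)=0$, then $H^1(N_{E/Y})=0$ by Lemma~\ref{lem:normal-obstructions-exceptional-divisor}, so $E$ deforms and $F$ persists by Proposition~\ref{prop:easy-persistence-criteria}.

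\emph{Type III with $H^0(M_E)\neq0$.} Here the question reduces to whether some curve in the ray $R_p$ survives in a very general deformation. Take a very general one-parameter deformation with Kodaira--Spencer class $\kappa$. By Theorem~\ref{thm:surjective}, the section $\sigma=\ob_E(\kappa)$ is a general element of $H^0(M_E)$; in the line-pair case, Lemma~\ref{lem:obstruction-compatibility} gives $\tilde\sigma=\ob_f(\kappa)\neq0$.

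\emph{Proof of $(1)\Rightarrow(3)$.} Assume $M_E\cong\cO_C$ and $E$ is quasi-ruled. Then $\sigma$ is nowhere vanishing, and $\tilde\sigma$ is a pullback of $\sigma$ twisted by $R_{\nu_C}=0$, since $\tilde C\to C$ is unramified. So no fiber, and no fiber component, deforms to first order. The curves with class in $R_p$ are the fibers, the fiber components (classes $\ell_E$ or $\ell_E/2$), and their multiples and unions. By Proposition~\ref{prop:easy-persistence-criteria} we must show that no component of $\Hilb(\cY/S)$ containing such a curve dominates $S$.

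Since the fibers $F_c$ are lci with $h^0(N)$ small, an infinitesimal Hilbert scheme argument should work, following \cite{WilsonCone, Minagawa}. If a component dominated $S$, then after a finite base change the fiber curves would deform along a curve $\Delta'\to\Delta$. Using the family of fibers parametrized by $C$, we would obtain either a curve deforming to first order relative to the reparametrized $\kappa'$, or a dominating family whose tangent direction is tangent to the fiber family. To rule these out, we compare with the ramified case using the $1$-dimensional obstruction space $H^1(N_{F_c/Y})$. Multiples and non-reduced structures are the main obstacle. I would follow Wilson's argument: any limit curve in $Y$ is supported on a fiber, and its deformation forces a reduced component to deform to first order after base change. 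That would contradict the fact that $\sigma$ has no zeros, possibly by arguing on the total space $\cE$ of a family of fibers.

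\emph{Proof of $(3)\Rightarrow(1)$.} If $M_E$ is nontrivial, then $\deg M_E>0$ because it has sections, so $\sigma$ vanishes at some $c$. If $E$ is not quasi-ruled, then either $E$ has double-line fibers, or $\nu_C$ is ramified so that $\tilde\sigma$ vanishes at ramification points. In each case we must show that some curve in $R_p$ deforms over all of $S$, not merely to first order. For this I would use the dimension count of Proposition~\ref{prop:k-trivial-curves-persist-in-families}: a $K$-trivial fiber component $Z\cong\PP^1$ over a zero of $\sigma$ has $\chi(N_{Z/\cY})=\dim S$ when we work over the relative Hilbert scheme of the total family, with $C$-directions included. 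This would show that the locus of fibers over zeros of $\sigma$ varies in a family dominating $S$.

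For double-line or pinch fibers, I would argue as in Minagawa \cite{Minagawa}, Prop.~3.5ff. The scheme of zeros of $\tilde\sigma$ on $\tilde C$ is nonempty, and the Hilbert scheme of $\cE$-type curves over $\Delta$ meets it. Upper semicontinuity, combined with $\dim_{[Z]}\cH\ge\dim S$, then gives dominance. This nonemptiness plus dimension count is the step I expect to need the most care, especially for reducible and nonreduced fibers.
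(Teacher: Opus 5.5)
Your case analysis and the setup in the type III case with $H^0(C,M_E)\neq0$ (surjectivity of $\ob_E$, the sections $\sigma$ and $\tilde\sigma$) agree with the paper. However, both substantive implications have genuine gaps.

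\emph{The direction $(3)\Rightarrow(1)$.} The dimension count of Proposition~\ref{prop:k-trivial-curves-persist-in-families} cannot give dominance here. It needs $Z$ to be rigid in $Y$, so that the fibre of $\Hilb(\cY/S)\to S$ through $[Z]$ is zero-dimensional. A fibre $F_c$ over a zero of $\sigma$ moves in a one-dimensional family inside $Y$. So does, in the line-pair case, the reduced line of a double fibre, which is a limit of the components $F_{\tilde c}$. Hence $\dim_{[F_c]}\Hilb(\cY/S)\geq\dim S$ is perfectly consistent with the Hilbert scheme dominating only a hypersurface of $S$. Moreover, a zero of $\sigma$ only yields a first-order deformation of $F_c$.

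The paper instead argues by contradiction, without constructing any deformation:
\begin{enumerate}
\item If $F$ disappears, the germ of $\Hilb(\cY/\Delta)$ at $[F_c]$ is one-dimensional and lies in the central fibre.
\item Since $h^0(N_{F_c/Y})=h^1(N_{F_c/Y})=1$, Koll\'ar's local presentation $\CC[[u,t]]/(G)$ makes this germ Cohen--Macaulay. It is smooth where $\sigma(c)\neq0$, hence reduced.
\item So $t=0$ on it, and the map $T_{[F_c]}\Hilb(\cY/\Delta)\to T_0\Delta$ vanishes.
\item Exactness of the tangent--obstruction sequence then forces $\ob_{F_c}\neq0$ for every $c$, so $\sigma$ is nowhere vanishing and $M_E\cong\cO_C$.
\end{enumerate}
The same complete-intersection argument works at the reduced line $Z$ of a double fibre: there are $m+1$ variables and at most $m$ equations, with $m=h^0=h^1$. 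It would make the central Hilbert scheme reduced at $[Z]$, contradicting the embedded component there noted by Wilson. You also omit the case where the general fibre is $\PP^1$ and $E$ has singular fibres. There the fibre components are rigid $K$-trivial curves, and Proposition~\ref{prop:k-trivial-curves-persist-in-families} does apply directly.

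\emph{The direction $(1)\Rightarrow(3)$.} Nowhere-vanishing of $\sigma$ and $\tilde\sigma$ gives $\dim T_{[F_c]}\Hilb(\cY/\Delta)=1$, and likewise at fibre components. So the germ at $[F_c]$ has embedding dimension one and must equal the smooth curve of central fibres. This excludes every horizontal component through a reduced fibre or fibre component, ramified or not. Phrasing this as ``deforming to first order after base change'' does not work: after a ramified base change the Kodaira--Spencer class vanishes, and there is nothing to contradict.

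The remaining point, which you flag as the main obstacle but do not resolve, is why exceptional curves of $P_s$ must specialize to fibres or fibre components \emph{with their reduced structure}, rather than to some nonreduced structure supported on them. The missing ingredient in the paper is local. Along $C$, $\cY\to\bar{\cY}$ is a family over $C\times\Delta$ of $A_1$ (ruled case) or $A_2$ (line-pair case) surface singularities with simultaneous resolution. By Lemma~\ref{lem:versality} it is therefore a pullback of the Brieskorn--Tyurina simultaneous resolution. The exceptional-curve families of that resolution have special members $F_c$, respectively $F_1$, $F_2$, $F_1+F_2$, all with reduced structure. Without an input of this kind, ``following Wilson's argument'' does not control nonreduced limits.
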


\begin{proof}
	 The equivalence $(2)\iff(3)$ is Proposition~\ref{prop:persistence-via-contraction} and only included here for the convenience of the reader. The real content is the equivalence with condition $(1)$. 
	 
	If $F$ is $K$-negative, then $E$ deforms by~\cite[Theorem 3.47]{MoriThreefolds}, so none of the conditions are satisfied.
	
If $p$ is of type I or II, or of type III with $H^0(C,M_E)=0$, then none of the conditions are satisfied. Indeed, if $p$ is of type I, then Proposition~\ref{prop:k-trivial-curves-persist-in-families} shows that the component of the Hilbert scheme containing the exceptional curve dominates $\Def(Y)$, so that the nef facet persists. If $p$ is of type II or of type III with $H^0(C,M_E)=0$
	then we have
	$
	H^1(E,N_{E/Y})=0
	$
	by Lemma~\ref{lem:normal-obstructions-exceptional-divisor}.
	Therefore $E$ deforms, and $F$ persists, so none of the conditions are satisfied. 
	
	We henceforth assume that $p$ is a $K$-trivial type III contraction and $H^0(C,M_E)\neq0$.
	By Theorem~\ref{thm:surjective}, $\ob_E$ is surjective, so if $v \in H^1(T_Y)$ is very general, then 
	\[
	\sigma:=\ob_E(v)\in H^0(C,M_E)
	\]
	is nonzero.
	If the general fiber of $q$ is two lines, Lemma~\ref{lem:obstruction-compatibility} shows that we may also assume that
	\[
	\tilde\sigma:=\ob_f(v)\in
	H^0\bigl(\tilde{C},M_{\tilde{E}}\bigr)
	\]
	is nonzero. Since $\Def(Y)$ is smooth, $v$ is the Kodaira--Spencer class of some (very general) deformation $\cY_\Delta \rightarrow \Delta$ over a curve $\Delta$. 
	We continue to use $P$ for its restriction to $\cY_\Delta$.

Recall that for an lci subscheme $Z \subset Y$, we have the tangent-obstruction sequence~\cite[Proposition 4.4.7]{Sernesi}
	\[
	0\longrightarrow H^0(N_{Z/Y})
	\longrightarrow T_{[Z]}\Hilb(\cY/\Delta)
	\longrightarrow T_0\Delta
	\xrightarrow{\ob_Z}H^1(N_{Z/Y}).
	\]
	$(3) \implies (1)$.

	\smallskip\noindent\emph{Step 1: We show that $M_E$ is trivial.}
	By assumption, $F$ disappears, so the component of $\Hilb(\cY/\Delta)$ containing fibers of the ruling is supported on the central fiber. It follows that $\Hilb(\cY/\Delta)$ has dimension one at every point $[F_c]$, $c \in C$. We claim that $\Hilb(\cY/\Delta)$ is reduced at $[F_c]$.
For every fiber $F_c$ (not necessarily smooth), the flag
	$F_c\subset E\subset Y$ consists of regular embeddings, so the normal bundle sequence gives
	\[
	h^0(N_{F_c/Y})=1, \qquad h^1(N_{F_c/Y})=1.
	\]
	By \cite{KollarRationalCurves}, the completed local ring of
	$\Hilb(\cY/\Delta)$ at $[F_c]$ has a presentation
	\[
	\CC[[u,t]]/(F),
	\]
	where $t$ is a parameter on $\Delta$ and $F$ is a power series.
	In particular, $\Hilb(\cY/\Delta)$ is Cohen--Macaulay at every point $[F_c]$. 
	Since $\sigma \neq 0$, $\ob_{F_c}$ is nonzero for generic $c$, and the obstruction sequence shows that $T_{[F_c]}\Hilb(\cY/\Delta)$ has dimension one for generic $c$. It follows that $\Hilb(\cY/\Delta)$ is smooth, and hence reduced, at $[F_c]$ for generic $c$. A generically reduced Cohen--Macaulay scheme is reduced, so the claim follows. 
	
	It now follows that $t=0$ in the local ring of $\Hilb(\cY/\Delta)$ at $[F_c]$, and therefore, the differential 
	\[
	T_{[F_c]}\Hilb(\cY/\Delta)
	\longrightarrow T_0\Delta
	\]
	is zero for all $c \in C$. The tangent-obstruction sequence now shows that $\ob_{F_c}$ is nonzero for all $c$, so $\sigma$ must be nowhere vanishing. It follows that $M_E\cong\cO_C$. 

	\smallskip\noindent\emph{Step 2: We show that $E$ is quasi-ruled}. There are two cases to consider:
		
		\smallskip\noindent\emph{Case a: The general fiber of $q$ is irreducible. }
	
	If $q$ has a singular fiber, then $E$ contains a $K$-trivial vertical curve $Z$ which does not deform inside $Y$. By Proposition~\ref{prop:k-trivial-curves-persist-in-families}, the component of $\Hilb(\cY/\Delta)$ containing $[Z]$ dominates $\Delta$ and the nef facet persists, contradiction. It follows that $E$ is ruled in this case. 

		\smallskip\noindent\emph{Case b: The general fiber of $q$ is reducible. }
	
The fibers of $E$ are either line pairs or double lines. 
Since $F$ disappears under deformation, $\Hilb(\cY/\Delta)$ has dimension one at the point corresponding to every reduced fiber component. For every component $F_c$, $c \in \tilde{C}$ of a line pair fiber, the morphism $f \colon \tilde{E} \rightarrow Y$ is unramified in a neighbourhood of $F_c$, so~\cite[37.63(4)]{stacks-project} gives an exact sequence
\begin{equation}\label{eq:quasi-ruled-normal-bundle-sequence}
0 \longrightarrow N_{F_c/\tilde{E}}
\longrightarrow N_{F_c/Y}
\longrightarrow N_{f}|_{F_c}
\longrightarrow 0,
\end{equation}
and hence
\[
h^0(N_{F_c/Y})=1, \qquad h^1(N_{F_c/Y})=1.
\]
Using $\tilde{\sigma} \neq 0$, we conclude as before that $\Hilb(\cY/\Delta)$ is generically smooth at the points $[F_c]$, hence generically reduced. 

Suppose for a contradiction that $E$ has a double fiber, and denote $Z$ its reduction. Using  Riemann--Roch and $K_Y \cdot Z=0$, we have
	\[
	\chi(N_{Z/Y})=0
	\]
 giving $m=h^0(N_{Z/Y})=h^1(N_{Z/Y})$.
	By \cite[Theorem 2.10]{KollarRationalCurves}, the completed local ring of
	$\Hilb(\cY/\Delta)$ at $[Z]$ has a presentation
	\[
	\CC[[t, x_1, \dots, x_m]]/I,
	\]
	where $t$ is a parameter on $\Delta$ and $I$ is generated by at most $m$ elements. Since $\Hilb(\cY/\Delta)$ has dimension one at $[Z]$, $I$ is generated by a regular sequence. It follows that $\Hilb(\cY/\Delta)$ is Cohen--Macaulay at $[Z]$ as well. As before we conclude that $\Hilb(\cY/\Delta)$ is reduced at every point corresponding to a reduced fiber component. 
	In particular, the Hilbert scheme of the central fiber is reduced at $[Z]$.

However, as noted in \cite[p.22]{WilsonSymplectic} the Hilbert scheme has an embedded component
at $[Z]$. Contradiction.
It follows that $E$ cannot have a double fiber and is quasi-ruled.
	
	$(1)\implies(3)$.
	By assumption, $M_E$ is trivial, so $\sigma$ is nowhere vanishing. $E$ is quasi-ruled, so the tangent obstruction sequence shows that $\dim(T_{[F_c]}\Hilb(\cY/\Delta))=1$ for all $c$, so no horizontal component
	of $\Hilb(\cY/\Delta)$ contains $[F_c]$ for any $c\in C$. 
	If every fiber of $E$ is the union of two
	$\PP^1$'s, then the cover $\nu_C$ is etale, and
	\[
	M_{\tilde{E}}
	\cong\nu_C^*M_E\cong\cO_{\tilde{C}}.
	\]
	Then $\tilde\sigma$ is also nowhere vanishing, so using the tangent-obstruction sequence again, no horizontal component contains a fiber component $[F_c]$ for $c \in \tilde{C}$. 
	
	It remains to rule out the possibility that a curve on a general fiber specializes to a scheme with support equal to a fiber or fiber component, but with a different scheme structure. For this, we again use the local description of $p$ as a deformation of the $A_1/A_2$-singularity.
	
\smallskip\noindent\emph{Case a): $E$ is ruled.}
	Every point of $C$ is a
	$cA_1$-singularity, so locally we view
	$\cY\rightarrow\bar{\cY}$ as a deformation of the
	$A_1$-singularity with simultaneous resolution. By the versality
	property of Lemma~\ref{lem:versality}, if $P$ is not an isomorphism
	on a general fiber, the exceptional-curve family in the
	simultaneous resolution gives a horizontal component of
	$\Hilb(\cY/\Delta)$ containing $[F_c]$ for some $c\in C$.
	Contradiction. 
	
	\smallskip\noindent\emph{Case b: Every fiber of $E$ is a line pair.}
	 Every point of $C$ is a $cA_2$-singularity, and we view
	$\cY\rightarrow\bar{\cY}$ as a deformation of the
	$A_2$-singularity with simultaneous resolution. Its exceptional-cycle
	families corresponding to the three positive roots have special
	fibers
	\[
	F_1,\qquad F_2,\qquad F_1+F_2,
	\]
	with their reduced scheme structures.
	Consequently, if $P$ is not an isomorphism on a general fiber,
	there is a horizontal component of $\Hilb(\cY/\Delta)$ containing
	either a whole fiber or an individual fiber component.
	Both possibilities have been excluded. This proves $(3)$.
\end{proof}
	For a contraction $p$ with exceptional divisor $E$ as in Proposition~\ref{prop:root-faces-disappear-movable}(1) the associated contraction $P_E$ on the versal deformation is small. We show that its flop exists and under appropriate identifications, the induced map on $N^1(Y)$ is given by the reflection $\sigma_E$ defined in the introduction.
\begin{proposition}\label{prop:flopofroot}
	Let $Y$ be a smooth projective threefold with $H^2(Y, \cO_Y)=0$ and $\Def(Y)$ smooth. 
Let $p_E \colon Y \rightarrow \bar{Y}$ be a primitive $K$-trivial type III contraction with exceptional divisor $E$. Denote $q=(p_E)|_E \colon E \rightarrow C$ the induced conic bundle, $\ell_E \in N_1(Y)$ the class of a general fiber of $q$, and assume that $H^0(C, M_E) \neq 0$. 
	\begin{enumerate}
		\item 
		The extension $P_E \colon \cY \rightarrow \bar{\cY}$ over the miniversal deformation $S$ is a flopping contraction, and the flop $P_E^+ \colon \cY^+ \rightarrow \bar{\cY}$ over $S$ exists.
		\item 
		Denoting 
		\[
		\Phi_E \colon \cY \dashrightarrow \cY^+
		\]
		the flop, the induced rational map 
		\[
		\phi_E \colon Y \dashrightarrow Y^+
		\]
		on central fibers extends to an isomorphism, denoted $\bar{\phi}_E \colon Y \xrightarrow{\sim} Y^+$.
		In particular, $\cY^+ \rightarrow S$ is again a deformation of $Y$ (possibly after shrinking $S$).
		\item  The induced rational map $\cY_t \dashrightarrow \cY_t^+$ on a very general fiber is biregular if $q \colon E \rightarrow C$ is quasi-ruled and $M_E \cong \cO_C$.
		\item Identifying the central fibers $Y$ and $Y^+$ using $\bar{\phi}_E$, the isomorphism 
		\[
		\Phi^0_E \colon N^1(Y) \cong N^1(\cY/S) \cong N^1(\cY^+/S) \cong N^1(Y^+)
		\]
		is identified with the reflection 
		\[
		\sigma_E \in \Aut(N^1(Y)), \qquad \sigma_E(x)=x+(x, \ell_E)[E].
		\]
		\item There is an automorphism $\alpha$ of the etale germ $(S, 0)$ such that 
		$
		\alpha^*\cY^+ \cong \cY.
		$
		giving a commutative diagram
		\[
		\begin{tikzcd}
			\cY \arrow[r, dashed, "\Phi_E"] \arrow[d] & \cY \arrow[d] \\
			(S,0) \arrow[r, "\alpha"] & (S,0).
		\end{tikzcd}
		\]
		where we have suppressed the identification of $\cY^+$ and $\cY$. 
	\end{enumerate}
\end{proposition}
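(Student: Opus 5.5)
We treat the five assertions in order, reducing each to results already in hand.

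For (1), Lemma~\ref{lem:Every-Contraction-Is-Mori} and Lemma~\ref{lemma:contractions-extend-in-families} give the extension $P_E\colon\cY\to\bar\cY$ of relative Picard rank one. By Proposition~\ref{prop:root-faces-disappear-movable}, $(1)\Rightarrow(2)$, this extension is small: $H^0(M_E)\ne0$ implies that $E$ does not deform. The exceptional locus of $P_E$ is then contained in $E\subset Y$, so it has codimension two in the fourfold-or-larger total space $\cY$. Since $K_{\cY}\cdot\ell_E=K_Y\cdot\ell_E=0$ and $K_\cY$ is relatively trivial over $\bar\cY$, $P_E$ is a flopping contraction. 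Existence of the flop is obtained by running a $(K_\cY+\epsilon\cD)$-MMP over $\bar\cY$, where $\cD$ is a relative divisor with $\cD\cdot\ell_E>0$, for instance a lift of a general member of $|H-E|$ with $H$ defining $p_E$. Since $-\cD$ is $P_E$-ample and the pair is klt, the flip of this log-terminal pair exists by the threefold/relative MMP over a base (finite generation in the klt setting, BCHM). We take $\cY^+=\mathrm{Proj}_{\bar\cY}\bigoplus_m P_{E*}\cO(-m\cD)$.

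For (2), we avoid a direct local analysis and instead identify $Y^+$ as the blowup of $\bar Y$ along $C$. The central fiber $\bar\cY_0=\bar Y$ has cDV singularities along $C$ and is Gorenstein. The central fiber $Y^+$ of $\cY^+$ maps to $\bar Y$ with one-dimensional fibers over $C$, and it is relatively anti-ample for the strict transform of $-\cD$, i.e. for $\cD^+$. Since $\cD|_Y$ restricted to $E$ is $q$-ample up to sign, $Y^+\to\bar Y$ is $\mathrm{Proj}\bigoplus p_*\cO(mE)$-type. Concretely, $E$ is $\QQ$-Cartier with $-E$ being $p_E$-ample, and on $Y^+$ the strict transform of $\cD$ restricts to a relatively ample divisor proportional to $-E^+$ with the same sign. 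Both $Y$ and $Y^+$ are then $\mathrm{Proj}_{\bar Y}\bigoplus_m \cO(-mC)^{\text{sat}}$, so the rational map extends to an isomorphism. The step I expect to be hardest is ensuring the central fiber of $\cY^+$ is the right Proj, i.e. compatibility of relative Proj with restriction to $t=0$. This requires vanishing of $R^1P^+_*\cO(-m\cD^+)$, which follows from relative Kawamata--Viehweg vanishing. A fallback is the local $A_1/A_2$ simultaneous-resolution picture of Lemma~\ref{lem:versality}: flopping corresponds to acting by the Weyl reflection on $T$, which preserves the resolution of the central surface fibers.

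For (3), Proposition~\ref{prop:root-faces-disappear-nef} shows that under the quasi-ruled hypothesis with $M_E\cong\cO_C$, $P_{E,t}$ is an isomorphism for very general $t$. Hence $P^+_t$ is also an isomorphism, since it is small with no exceptional curves over the isomorphism locus. The map $\cY_t\dashrightarrow\cY_t^+$ is the composite $(P_t^+)^{-1}\circ P_t$, which is therefore biregular.

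For (4), $\Phi^0_E$ fixes $\ell_E^\perp=P_E^*N^1(\bar\cY/S)$ pointwise, because pullbacks from $\bar\cY$ are preserved by the flop and by $\bar\phi_E$, which commutes with the maps to $\bar Y$. It then suffices to compute the image of a single class $D$ with $D\cdot\ell_E\ne0$, and we take $D=[E]$. On $\cY$, $[E]$ is the restriction of a relative class that is $P_E$-negative. After the flop its strict transform is $P^+$-positive, so under $\bar\phi_E$ it maps to a class $x$ with $x-[E]\in\RR[E]$ (the complement of $\ell_E^\perp$ is spanned by $[E]$) and $x\cdot\ell_E=+2$. This forces $x=-[E]$, matching $\sigma_E([E])=-[E]$. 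The sign change follows from the standard fact that a flop reverses the sign of intersection with the flopped curve class: the flopped curves on $\cY^+$ have class $-\ell_E$ under the identification.

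For (5), $\cY^+\to S$ is a deformation of $Y$ via $\bar\phi_E$, and both families are miniversal, since $\cY^+$ is isomorphic to $\cY$ off codimension two in a way compatible with Kodaira--Spencer maps on the locus where they agree. Miniversality therefore yields a morphism $\alpha\colon(S,0)\to(S,0)$ with $\alpha^*\cY\cong\cY^+$, and it is an automorphism because its differential is an isomorphism of $H^1(T_Y)$. To see this, note that the Kodaira--Spencer map of $\cY^+$ differs from that of $\cY$ by the automorphism induced by $\bar\phi_E$. Applying the argument symmetrically gives the inverse.
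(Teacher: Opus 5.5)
\textbf{The main gap is in (5).} You assert that $\operatorname{KS}_{\cY^+}=(\bar\phi_E)_*\circ\operatorname{KS}_{\cY}$ because $\cY$ and $\cY^+$ agree off a codimension-two subset of the total space. But on the central fibre, $\Phi_E$ is an isomorphism only on $Y\setminus E$, whose complement is a \emph{divisor}. The map $H^1(Y,T_Y)\to H^1(Y\setminus E,T_Y)$ is not injective, so two Kodaira--Spencer classes that agree on $Y\setminus E$ can still differ by a class coming from $H^1_E(Y,T_Y)$.

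Here they really do differ. Suppose the general fibre of $q$ is irreducible. Near a general fibre $F_c$, the family $\cY\to\bar\cY$ is pulled back from the simultaneous resolution of $xy=(z-t)(z+t)$ (Lemma~\ref{lem:versality}). The flop of that resolution is its pullback under $t\mapsto -t$, and the induced identification of central fibres is the identity over $\bar Y$, i.e.\ compatible with $\bar\phi_E$. Hence $\ob_{F_c}$ of the Kodaira--Spencer class of $\cY^+$ is minus that of $\cY$. The latter is nonzero for a very general direction by Theorem~\ref{thm:surjective}.

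So your identity is false and $\alpha$ is not tangent to the identity. Nothing in your argument shows that $d\alpha_0$ is invertible, or equivalently that $\cY^+$ is miniversal; ``applying the argument symmetrically'' has the same defect. The paper instead uses (4). Since $H\in\ell_E^\perp$, we have $\sigma_E(H)=H$, so $\alpha^*P_E$ and $P_E^+$ are defined by line bundles of the same class and coincide. Pulling the flop diagram back along $\alpha$ then exhibits $\alpha^*\cY^+$ as the flop of $\cY^+$ over $\bar\cY$, i.e.\ as $\cY$. Thus $(\alpha\circ\alpha)^*\cY\cong\cY$, which forces $d\alpha_0$ to be invertible.

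\textbf{Further errors, repairable but real.}

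In (1) the boundary has the wrong sign. Your $\cD$ (a lift of a member of $|H-E|$) satisfies $\cD\cdot\ell_E=2>0$. Since $\rho(\cY/\bar\cY)=1$, it is $\cD$, not $-\cD$, that is $P_E$-ample. Then $K_\cY+\epsilon\cD$ is $P_E$-ample, and the $(K_\cY+\epsilon\cD)$-MMP over $\bar\cY$ does nothing. You need an effective boundary that is negative on $\ell_E$, and $\epsilon\cE$ is not available because $E$ does not deform. The paper applies the relative Kodaira lemma to the relatively big class $\cH+\epsilon\cE$ to get $\Delta\geq0$ with $(K_\cY+\delta\Delta)\cdot\ell_E=\delta\epsilon\, E\cdot\ell_E<0$. (Also, $\mathrm{Exc}(P_E)$ need not lie in $Y$; you only need smallness, which you quote.)

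In (4), ``the complement of $\ell_E^\perp$ is spanned by $[E]$'' does not show $x\in\RR[E]$. The real reason is that the strict transform of $\cE$ restricts on $Y^+$ to a line bundle trivial off $E^+$, so $x=k[E]$ with $k\in\ZZ$. Your step ``$x\cdot\ell_E=+2$ because the flopped curves have class $-\ell_E$'' is circular, since the normalization of the flopped class against $\ell_E$ is exactly what (4) computes. Relative ampleness only gives $x\cdot\ell_E>0$, i.e.\ $k\leq-1$. To get $k=-1$ you need an extra input. For example, $\Phi^0_E$ is an automorphism of $N^1(Y,\ZZ)$ fixing $\ell_E^\perp$ pointwise, so its determinant is $k=\pm1$.

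Your (2) presupposes the same sign. It also needs normality of $Y^+$ and equality of the divisorial valuations of $E$ and $E^+$ over $C$, neither of which you establish. The paper takes (2) and (4) from \cite[Propositions 4.11, 4.12]{Lutz}, which use only crepancy of $p_E$ and the cDV structure of $\bar Y$ along $C$.
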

We emphasize that we are not claiming that $\Phi_E \colon \cY \dashrightarrow \cY^+$ is defined everywhere on $Y$. Only after restricting the birational map of total spaces to central fibers does it extend to an isomorphism. 
\begin{proof}
	\begin{enumerate}
		\item 
		The argument is very similar to the case of compact Calabi--Yaus~\cite[Proposition 4.11]{Lutz}. We only summarize the main steps:
		Replacing $S$ by an etale cover, we may assume that 
		\[
		N^1(\cY/S) \rightarrow N^1(Y)
		\]
		is an isomorphism.
		Let $H$ be a big and nef line bundle defining the contraction of $E$. Let $\cH$ and $\cE$ be line bundles extending $H$ and $\cO_Y(E)$ to $\cY$, and let $P_E \colon \cY \rightarrow \bar{\cY}$ be the contraction over $S$ defined by $\cH \in \Pic(\cY)$.
		By Proposition~\ref{prop:root-faces-disappear-movable}, $P_E$ is a small contraction. 
		To show that the flop exists, we show that $P_E$ is a flipping contraction for a suitable klt pair. \\
		Since the contraction $P_E$ defined by $\cH$ is birational on a very general fiber, $\cH$ is relatively big,
		and therefore 
		\[
		\cL=\cH+\epsilon \cE \in \Bbig(\cY/S)
		\]
		as well for $\epsilon$ a small positive rational number. By the relative Kodaira lemma, we may write 
		\[
		\cL \sim_{\RR, S} \mathcal{A}+\mathcal{D}
		\]
		where $\mathcal{A}$ is relatively ample and $\mathcal{D}$ is effective. Shrinking $S$ and replacing $\mathcal{A}$ with an $\RR$-linearly equivalent effective divisor, it follows that there is $\Delta \geq 0$ which is $\RR$-linearly equivalent to $\cL$. 
		
		Since $\cY/S$ is smooth, 
		the pair $(\cY,\delta \Delta)$ is klt for $0 < \delta \ll 1$. 
		We have 
		\[
		(K_{\cY/S}+\delta \Delta) \cdot \ell_E=(K_{\cY/S}+\delta(\cH+\epsilon \cE))\cdot \ell_E =0+\delta\epsilon E \cdot \ell_E<0.
		\]
		Since $\rho(\cY/\bar{\cY})=1$, it follows that 
		\[
		-(K_{\cY/S}+\delta \Delta)
		\]
		is $P_E$-ample, so that $P_E \colon \cY \rightarrow \bar{\cY}$ is a flipping contraction for the pair $(\cY, \delta\Delta)$.
		By \cite[Corollary 1.4.1]{BCHM}, the flip $P_E^+ \colon \cY^+ \rightarrow \bar{\cY}$ of $(\cY, \delta \Delta)$ exists. It is a flop since $\ell_E$ is $K$-trivial. 
		\item The contraction morphism $p_E \colon Y \rightarrow \bar{Y}$ is crepant by Proposition~\ref{prop:wilson-type-contraction-to-curve}, so the exact same argument as in \cite[Proposition 4.11]{Lutz} shows that $\phi_E$ extends to an isomorphism.

		\item 
		This is immediate from Proposition~\ref{prop:root-faces-disappear-nef}.
		\item This follows from \cite[Proposition 4.12]{Lutz}. The argument there only uses that $Y \rightarrow \bar{Y}$ contracts $E$ to a curve of $cDV$ singularities, which is true by Proposition~\ref{prop:wilson-type-contraction-to-curve}.
		\item In \cite{Lutz}, we deduced this from the infinitesimal Torelli theorem for Calabi--Yau manifolds, which is not available here. We instead give a more direct geometric argument. 
		Let
		\begin{equation}\label{eq:flop-diagram}
			\begin{tikzcd}
				\cY
				\arrow[rr, dashed, "\Phi_E"]
				\arrow[dr, "P_E"']
				&&
				\cY^+
				\arrow[dl, "P_E^+"]
				\\
				&
				\bar{\cY}
				&
			\end{tikzcd}
		\end{equation}
		be the flop of $P_E$ over $S$ constructed in part 1. Since $\cY/S$ is a miniversal deformation, there is a morphism $\alpha \colon S \rightarrow S$ of etale germs such that
		\[
		\alpha^*\cY \simeq \cY^+
		\]
		as deformations. 
		Pull back \eqref{eq:flop-diagram} via $\alpha$ to get 
		\begin{equation}\label{eq:flop-diagram2}
			\begin{tikzcd}
				\alpha^*\cY
				\arrow[rr, dashed, "\Phi_E"]
				\arrow[dr, "\alpha^*P_E"']
				&&
				\alpha^*\cY^+
				\arrow[dl, "\alpha^*P_E^+"]
				\\
				&
				\alpha^*\bar{\cY}
				&
			\end{tikzcd}
		\end{equation}
		We have to show that the diagram~\eqref{eq:flop-diagram2} of birational maps is still a flopping diagram. By construction of $P_E$ and $P_E^+$, the fibers of $\alpha^*P_E$ and $\alpha^*P_E^+$ have dimension $\leq 1$. It is therefore enough to show that the image of the exceptional locus of $\alpha^*P_E$ has codimension at least three. This will follow if we can show that $\alpha^*P_E$ and $P_E^+$ are the same morphism (up to an isomorphism of the target). 
		The birational morphism $\alpha^*P_E$ is defined by the line bundle $\alpha^*\cH$, which restricts on the central fiber to $H$. 
		On the other hand, the strict transform $\cH^+$ of $\cH$ on $\cY^+$ restricts on the central fiber to $\sigma_E(H)$ by part (4). Since $H \in \ell_E^\perp$, $\sigma_E(H)=H$. It follows that the morphisms $\alpha^*P_E$ and $P_E^+$ are defined by line bundles of the same numerical class and contract the same curves. They therefore define the same morphism, up to an isomorphism of the target. 
		
		$\alpha^*P_E^+$ must then be the flop of $P_E^+$
		and hence $\alpha^*\cY^+ \cong \cY$. Therefore 
		\[
		(\alpha \circ \alpha)^*\cY=\cY.
		\]
		We conclude that $d\alpha_0$ is an isomorphism, so that $\alpha$ is \'etale at the origin, and hence defines an automorphism of the etale germ $(S, 0)$.
	\end{enumerate}
\end{proof}
\section{Contractible models of $K$-trivial conic fibration surfaces}\label{sec:contractible-models}
So far, we have only studied conic fibration surfaces arising as the exceptional divisors of a contraction morphism. To understand the full movable cone, it is necessary to also look at the strict transforms of such surfaces on small modifications of $Y$. 

By definition, every $E \in \Delta_Y^{\mathrm{big}}$ admits a $K$-trivial conic model satisfying the nonvanishing condition, not necessarily a \emph{contractible} model. Our first result shows that a contractible $K$-trivial conic model 
\[
(\alpha \colon Y \dashrightarrow Z, q \colon E_Z \rightarrow C)
\]
satisfying the nonvanishing condition always exists. The conditions 
 $H^2(Y, \cO_Y)=0$ and $\Def(Y)$ is smooth used throughout Section 4 are invariant under crepant SQMs: indeed, the Hodge numbers are preserved by threefold flops, whereas the preservation of $\Def(Y)$ follows from \cite[11.10]{KollarMoriFlips}, see also \cite[Theorem 3.2.2]{KollarSurvey}. In particular, the theory developed in Section 4 applies to the strict transform $E_Z$.
 
\begin{proposition}\label{prop:existenceofcontractionofE}
	Let $Y$ be a smooth projective threefold satisfying $H^2(Y, \cO_Y)=0$ and $\Def(Y)$ smooth.
	\begin{itemize}
		
		\item Let $E \in \Delta_Y^{\mathrm{big}}$. Then $E$ has a contractible $K$-trivial conic model with nonvanishing obstruction group.
		\item Let $E \in \Delta_Y^{\mathrm{sm}}$, and let $q \colon E \rightarrow C$ be the defining quasi-ruling. Then $q$ is contractible. 
	\end{itemize}
\end{proposition}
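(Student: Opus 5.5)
We start from a $K$-trivial conic model $(\alpha\colon Y\dashrightarrow Z,\ q\colon E_Z\to C)$ satisfying the non-vanishing condition $H^0(E_Z,\omega_Z^{-1}|_{E_Z}\otimes q^*\omega_C)\neq 0$. For the second item the model is $Z=Y$ with the given quasi-ruling. Since crepant SQMs preserve $H^2(\cO)=0$ and smoothness of $\Def$, we may replace $Y$ by $Z$. The aim is to produce an extremal ray $R\subset\overline{\NE}(Z)$ with
\[
K_Z\cdot R=0,\qquad E_Z\cdot R<0,
\]
and then run flops until $R$ becomes divisorial with exceptional divisor $E_Z$ contracted along the fibres of $q$.

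\textbf{Step 1: produce the ray.} First we note $E_Z\cdot\ell_{E_Z}<0$. By adjunction on the Cohen--Macaulay surface $E_Z$, using $K_Z\cdot\ell=0$ and that the general fibre is a conic of arithmetic genus $0$, we get $E_Z\cdot \ell_{E_Z}=\deg\omega_{E_Z}|_{\ell}=-2$. So the curves of the ruling are $K$-trivial and $E_Z$-negative, and they sweep out $E_Z$. The difficulty is that $\ell_{E_Z}$ need not lie on an extremal ray of $\overline{\NE}(Z)$. We split $\overline{\NE}(Z)$ into the part supported on $E_Z$ and the part off $E_Z$. On the pair $(Z,\epsilon E_Z)$ the ray $\ell$ is $(K+\epsilon E_Z)$-negative, so the cone theorem for the klt pair gives
\[
\ell=\sum_i a_i r_i+(\text{a $(K+\epsilon E_Z)$-nonnegative class}),
\]
with $r_i$ extremal rational curves. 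We must also control $K_Z$. If $-K_Z$ is nef this is easy: any $(K+\epsilon E)$-negative extremal ray with $K\cdot r\le 0$ must have $K\cdot r=0$, unless it is $K$-negative, in which case it deforms and moves $E$. Without nefness I would follow the introduction and use Spicer's cone theorem for the rank-one foliation $\cF$ on (the normalization of) $E_Z$ given by the fibres of $q$. The foliated cone theorem decomposes $\overline{\NE}(\tilde E_Z)$ into a $K_\cF$-nonnegative part plus $\cF$-invariant rational curves, which are vertical curves. Now $K_\cF$ restricted to the fibres is related to $(K_Z+E_Z)|_{E_Z}-q^*K_C$. We compare this with $K_Z|_{E_Z}$ using the non-vanishing condition: a nonzero section of $-K_Z|_{E_Z}+q^*K_C$ shows this class is effective on $E_Z$. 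Together with $E_Z|_{E_Z}\cdot\ell=-2$, this should force an $E_Z$-negative, $K$-trivial extremal ray of $\overline{\NE}(Z)$ spanned by a vertical component. This step is the main obstacle, and I expect it to take the most care: pushing extremality from $\overline{\NE}(E_Z)$ to $\overline{\NE}(Z)$, handling reducible fibres, and proving $K$-triviality of the ray rather than mere $K$-nonpositivity.

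\textbf{Step 2: contract or flop.} Given $R$, Lemma~\ref{lem:Every-Contraction-Is-Mori} applied to $(Z,\epsilon E_Z)$ yields the contraction $p_R$. If $p_R$ is small, its exceptional curves lie in $E_Z$ (as $E_Z\cdot R<0$) and are $K$-trivial. We flop them, which exists by \cite{KawamataD} or BCHM. The flop is crepant and preserves all hypotheses, so we iterate. Termination follows because each flop of $E_Z$-negative curves is a step of the $(K+\epsilon E_Z)$-MMP, and flips terminate for threefolds. Moreover $E_Z$ is never contracted by a flop, and the MMP must end in a divisorial contraction of $E_Z$, because $E_Z$ is not pseudo-effective-nef: it is negative on the covering family $\ell$. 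That contraction is $K$-trivial with exceptional divisor $E_Z$.

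\textbf{Step 3: identify the model.} The resulting contraction is $K$-trivial. It cannot be of type II, since the $\ell$ have class independent of position and are not contracted to a point unless $C$ is a point. So it is of type III, and by Proposition~\ref{prop:wilson-type-contraction-to-curve} it is a conic bundle whose fibres are the strict transforms of the conic fibres, with the same $C$. The non-vanishing condition is preserved because the flops are isomorphisms near the general fibre, and on the final model the relevant group becomes $H^0(C,M_E)$ via Proposition~\ref{prop:wilson-type-contraction-to-curve}(3). For $E\in\Delta_Y^{\mathrm{sm}}$, quasi-ruledness means every fibre component is numerically proportional to $\ell_E$. Then $\ell_E$ spans a face of $\overline{\NE}(E)$, and Step 1 applies directly on $Y$. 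To show no flop is needed, we check that any $K$-trivial $E$-negative curve in $E$ is a fibre component, using triviality of $\omega_Y^{-1}|_E\otimes q^*\omega_C$. Hence $q$ itself is contracted.
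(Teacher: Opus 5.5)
\textbf{There is a genuine gap: Step~1 is never proved.} Your architecture matches the paper's proof: find a $K$-trivial, $E$-negative extremal ray, contract or flop, terminate by termination of threefold flips, and use $E\cdot\ell_E=-2$ to see that the process ends with the divisorial contraction of $E$ along its fibres. But Step~1 is the whole content of the proposition, and you only assert it (``this should force''). The route you indicate for it also does not work as stated.

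Without $-K_Z$ nef, decomposing $\ell$ via the cone theorem for $(Z,\epsilon E_Z)$ gives nothing, because the rays $r_i$ may be $K$-negative and $E_Z$-nonnegative. ``Pushing extremality'' from $E_Z$ to $Z$ is also the wrong direction: an extremal ray of $\overline{\NE}(\tilde E)$ spanned by a vertical curve has no reason to be extremal in $\overline{\NE}(Z)$.

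The missing idea is to start on $Z$ and lift. Since $E_Z$ is not nef, take $t>0$ minimal with $E_Z+tH$ nef, and an extremal ray $R\subset(E_Z+tH)^\perp\cap\overline{\NE}(Z)$, so that $E_Z\cdot R<0$. Approximate a generator of $R$ by effective cycles $Z_n=Z_n^E+W_n$, where $W_n$ has no component in $E_Z$, so $E_Z\cdot W_n\ge0$. Then the limit of the $E_Z$-supported parts, lifted through the finite map $\nu$, pushes forward to a nonzero class in $R$. Hence $f_*^{-1}(R)\cap\overline{\NE}(\tilde E)$ is a nonzero face, with $f=i\circ\nu$, and it contains an extremal ray $\tilde R$ with $f_*\tilde R=R$.

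Only now does Spicer's theorem enter, and it needs the precise identity
\[
\nu^*(E_Z|_{E_Z})\sim K_{\cF}+\Delta,\qquad \Delta=R_{\tilde q}+B+\nu^*V+\tilde q^*R_{\nu_C}\ge 0 .
\]
Here $V$ is the zero divisor of the non-vanishing section, which is vertical, and $B$ is the conductor. You must also check that $Z_{-\infty}$ is spanned by vertical curves: the only horizontal part of $\Delta$ is the conductor over the nodal double curve, which has coefficient one. Since $(K_{\cF}+\Delta)\cdot\tilde R=E_Z\cdot R<0$, $\tilde R$ and hence $R$ is spanned by a vertical curve. Then $K_Z\cdot R=0$ follows automatically from $K_Z\equiv_q 0$; it is not a separate obstacle.

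\textbf{Step~2 skips what makes the iteration legitimate.} Each flopped model must again be a $K$-trivial conic model with non-vanishing, so that Step~1 can be reapplied. Two things are needed.

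First, every curve contracted by $p_R$ must be $q$-vertical, so that $q$ descends to $\bar E=p(E_Z)$ and $q^+$ exists. This is clear if $g(C)>0$. If $g(C)=0$, the non-vanishing gives $-K_Z\cdot\Gamma\ge 2\deg(\Gamma\to C)>0$ for every horizontal curve $\Gamma$.

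Second, ``isomorphism near the general fibre'' does not transport the section. Unless the flopped curves avoid $E^+$, the complement of the isomorphism locus in $E^+$ is one-dimensional, and the transported section may acquire poles. The paper uses $R^1p_*\cO_Z(-E)=0$ to get $r_*\cO_E=\cO_{\bar E}$. It writes $\omega_Z^{-1}|_E\otimes q^*\omega_C\cong r^*\cM$ with $\cM=\omega_{\bar Z}^{-1}|_{\bar E}\otimes\bar q^*\omega_C$. This gives $H^0(\bar E,\cM)\hookrightarrow H^0(E^+,\omega_{Z^+}^{-1}|_{E^+}\otimes(q^+)^*\omega_C)$, so non-vanishing is preserved.

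A minor point: the contraction of $R$ comes from the cone theorem for $(Z,\epsilon E_Z)$, not from Lemma~\ref{lem:Every-Contraction-Is-Mori}, which presupposes the contraction.

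Once Step~1 is in hand, your treatment of $\Delta_Y^{\mathrm{sm}}$ is fine. The ray is spanned by a vertical curve, hence by $\ell_E$ by quasi-ruledness, so the first contraction already contracts $E$ along $q$.
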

\begingroup
\raggedbottom
\begin{proof}
	Let $E\in\Delta_Y^{\mathrm{big}}$. By definition, there exists a $K$-trivial conic model
	\[
	(\alpha\colon Y\dashrightarrow Z,\ q\colon E_Z\to C).
	\]
satisfying
\[
H^0(E,\omega_Z^{-1}|_{E}\otimes q^*\omega_C)\neq0.
	\]
To declutter notation, we write $E=E_Z$. Let $\ell_E$ be the class of a general fiber of $q$. 
	As usual, let 
	$
	\nu\colon \tilde{E}\to E
	$
	be the normalization, set $f\colon \tilde{E}\to E\hookrightarrow Z$, and
	write the Stein factorization of $q\circ\nu$ as
	\[
	\tilde{E}\xrightarrow{\tilde{q}}\tilde{C}\xrightarrow{\nu_C}C.
	\]
	\smallskip\noindent\emph{Step 1: There is an extremal ray
		$R\subset~\overline{\NE}(Z)$ such that $E\cdot R<0$ and $K_{Z}\cdot R=0.$}
	
	\smallskip\noindent\emph{Step 1a: We find an $E$-negative extremal ray $R$.}
	Choose an ample divisor $H$ and let $t>0$ be minimal such that
	$
	L:=E+tH
	$
	is nef. The face $L^\perp\cap\overline{\NE}(Z)$ contains an extremal
	ray $R$, and
	\[
	E\cdot R<0.
	\]
	\smallskip\noindent\emph{Step 1b: There is an extremal ray $\tilde{R} \in \overline{\NE}(\tilde{E})$ mapping to $R$.} 
	This step is similar to the proof of \cite[Claim 9.11]{Bhatt}.
	We first show that $R$ lies in the image of
	$f_* \colon \overline{\NE}(\tilde{E}) \rightarrow \overline{\NE}(Z)$.
	Choose a generator $r \in R$, and a sequence 
	of effective real one-cycles $Z_n$ such that $[Z_n] \to r$. Decompose each $Z_n$ as 
	\[
	Z_n=Z_n^E+W_n
	\]
	where $Z_n^E$ is the part of $Z_n$ supported on $E$, and $W_n$ the part not supported on $E$.
	The morphism $\nu\colon\tilde{E} \rightarrow E$ is finite and surjective, so we may choose effective real one-cycles $Z_n^{\tilde{E}}$ such that 
	\[
	f_*[Z_n^{\tilde{E}}]=[Z_n^E]
	\]
	Choose an ample $H$ on $Z$, since $f$ is finite, $f^*H$ is ample as well. The intersection of  $Z_n$ with $H$ is bounded, so the intersections of $\{[Z_n^{\tilde{E}}]\}$ with $f^*H$ resp. $\{[W_n]\}$ with $H$ are bounded as well. After passing to a subsequence, we may therefore assume that $[Z_n^{\tilde{E}}] \to a \in \overline{\NE}(\tilde{E})$, and $[W_n] \to b \in \overline{\NE}(Z)$, and hence $r=f_*a+b$. Since $E \cdot R<0$ and $E \cdot W_n \geq 0$, we must have $f_*(a) \neq 0$. Since $R$ is extremal, $f_*(a)$ spans $R$. 
	It follows that $f_*^{-1}(R) \cap \overline{\NE}(\tilde{E})$ is a nonempty face, and hence contains an extremal ray $\tilde{R}$, which maps to $R$ by construction. 
	
	\smallskip\noindent\emph{Step 1c: $\tilde{R} \subset \overline{\NE}(\tilde{E})$ is generated by the class of a $\tilde{q}$-vertical curve.} 
	Let
	$
	\cF:=\ker(T_{\tilde{E}}\to \tilde{q}^*T_{\tilde{C}})
	$
	be the rank-one foliation induced by $\tilde{q}$ (we refer to \cite{Cascini} for elementary definitions in the theory of foliations). There is an effective
	$\tilde{q}$-vertical divisor $R_{\tilde{q}}$ such that
	\[
	K_{\tilde{E}}=K_{\cF}+\tilde{q}^*K_{\tilde{C}}+R_{\tilde{q}}.
	\]
	where $K_\cF$ is the canonical divisor of the foliation~\cite[p.4-5]{Cascini}.
	Since $E$ is Gorenstein and satisfies the $S_2$-condition, we have by~\cite[Proposition~2.3]{ReidNonnormal},
	\[
	\nu^*K_{E}=K_{\tilde{E}}+B.
	\]
	where $B$ is the conductor divisor of the normalization. The horizontal part of $B$ is the double curve if the general fiber of $E$ is a line pair, and zero if the general fiber is a $\PP^1$.
	Our assumptions on $q$ show that 
	\begin{equation}\label{eq:vertical}
	-K_Z|_E+q^*K_C\sim V
	\end{equation}
	for $V$ a $q$-vertical effective divisor. 
	Adjunction therefore gives
	\[
	\begin{aligned}
		\nu^*(E|_{E})
		&\sim K_{\tilde{E}}+B+\nu^*(V)-\nu^*q^*K_C \\
		&=K_{\cF}+\tilde{q}^*K_{\tilde{C}}+R_{\tilde{q}}+B+\nu^*(V)-\nu^*q^*K_C
	\end{aligned}
	\]
	We have $\nu_C^*K_C\sim K_{\tilde{C}}-R_{\nu_C}$,
	where $R_{\nu_C}$ is the ramification divisor for $\nu_C$, and therefore obtain 
	\begin{equation}\label{eqref:foliation}
		\nu^*(E|_{E})\sim K_\cF+\Delta,
	\end{equation}
	where 
	\[
	\Delta=R_{\tilde{q}}+B+\nu^*(V)+\tilde{q}^*R_{\nu_C} \geq 0.
	\]
	Note that a curve $\Gamma \subset \tilde{E}$ is $\cF$-invariant if and only if $\Gamma$ is $\tilde{q}$-vertical. The foliated surface cone theorem of~\cite[Theorem~6.3]{Spicer} gives:
	\[
	\overline{\NE}(\tilde{E})
	=
	\overline{\NE}(\tilde{E})_{K_{\cF}+\Delta\geq0}
	+
	Z_{-\infty}
	+
	\sum_i\RR_{\geq0}[L_i],
	\]
	where the $L_i$ are $\tilde{q}$-vertical rational curves and
	$Z_{-\infty}$ is spanned by those components $D$ of $\Delta$ whose
	coefficient is greater than $\epsilon(D)$, with
	\[
	\epsilon(D)=
	\begin{cases}
		0,&D\text{ is $\tilde{q}$-vertical},\\
		1,&D\text{ is not $\tilde{q}$-vertical}.
	\end{cases}
	\]
	We claim that $Z_{-\infty}$ is spanned by $\tilde{q}$-vertical curves. Indeed, 
	all components of $R_{\tilde{q}}$, $\tilde{q}^*R_{\nu_C}$, $\nu^*V$ are
	$\tilde{q}$-vertical.
	Moreover, every horizontal component of $B$ has
	coefficient one and therefore does not contribute to $Z_{-\infty}$: since $q$ is a conic fibration, over a general point in $C$, the fiber is either a smooth $\PP^1$, in which case there is no
	horizontal conductor, or a pair of lines meeting in a node,
	whose conductor in the normalization is reduced. 
	
	By Equation~\eqref{eqref:foliation}
	\[
	(K_{\cF}+\Delta)\cdot \tilde{R}
	=
	E\cdot R<0,
	\]
	The foliated cone theorem shows that $\tilde{R}=\RR_{\geq 0}[\tilde{\Gamma}]$ for a $\tilde{q}$-vertical curve $\tilde{\Gamma}$. 
	
	\smallskip\noindent\emph{Step 1d: $R$ is $K$-trivial.}
	 $\tilde{R}$ is spanned by the class of the $\tilde{q}$-vertical curve $\tilde{\Gamma}$, so $R$ is spanned by the class of the $q$-vertical curve $\Gamma=\nu(\tilde{\Gamma})$. 
	Since $K_{Z}$ is $q$-trivial, we have
	$
	K_{Z}\cdot R=K_Z \cdot \Gamma=0.
	$
	
	\smallskip\noindent\emph{Step 2: Inductive step.}
	Let $R$ be the extremal ray constructed in Step 1. Choose
	$0<\epsilon\ll1$ such that $(Z,\epsilon E)$ is klt.
	Then
	\[
	(K_Z+\epsilon E)\cdot R<0,
	\]
	so $R$ admits a Mori contraction
	$
	p\colon Z\to\bar Z.
	$
	By Step 1, $R$ is generated by the class of a  $q$-vertical rational curve. We claim that \emph{every} curve contracted by $p$ is $q$-vertical. This is clear if $g(C)>0$. If $g(C)=0$, suppose $\Gamma$ maps to $C$ with degree $d>0$. Then using Equation~\eqref{eq:vertical}, we get 
	\[
	K_Z \cdot \Gamma \leq -2d+0<0,
	\]
	and therefore $[\Gamma] \notin R$.
	
	If $\ell_E\in R$, then $p$ contracts $E$ along its
	ruling to $C$ and we are done. Otherwise $p$ is small.
	Since $K_Z\cdot R=0$, the flip of $p$ is a flop
	\[
	Z\dashrightarrow Z^+,
	\qquad
	p^+\colon Z^+\to\bar Z,
	\]
	with $Z^+$ again smooth.
	Let $E^+$ be the strict transform of $E$, and put
	$\bar E:=p(E)$, $r:=p|_E$, and $r^+:=p^+|_{E^+}$.
	Since $-E$ is $p$-ample and $K_Z\equiv_p0$, relative
	Kawamata--Viehweg vanishing gives
	\[
	R^1p_*\cO_Z(-E)=0.
	\]
	Pushing forward the restriction sequence
	\[
	0\longrightarrow\cO_Z(-E)
	\longrightarrow\cO_Z
	\longrightarrow\cO_E
	\longrightarrow0
	\]
	therefore gives $r_*\cO_E=\cO_{\bar E}$.
	The flopping curves are $q$-vertical, so $q$ descends to
	$\bar q\colon\bar E\to C$. This gives a diagram
	\[
	\begin{tikzcd}[row sep=large, column sep=large]
		E
		\arrow[rr, dashed]
		\arrow[dr, "r"']
		\arrow[ddr, bend right=45, "q"']
		&&
		E^+
		\arrow[dl, "r^+"]
		\arrow[ddl, bend left=45, "q^+"]
		\\
		& \bar E \arrow[d, "\bar q"] &
		\\
		& C &
	\end{tikzcd}
	\]
	In particular, $q^+=\bar q\circ r^+\colon E^+\to C$
	is again a conic fibration with the same generic fiber.
	Since a flop is $K$-trivial, we have
	$K_{Z^+}|_{E^+}\equiv_{q^+}0$.
	Since $\omega_Z$ is $p$-trivial, it descends to $\bar Z$ by
	Lemma~\ref{lem:Every-Contraction-Is-Mori}.
	
	Set
	\[
	\cM:=\omega_{\bar{Z}}^{-1}|_{\bar E}\otimes\bar q^*\omega_C.
	\]
	We obtain an isomorphism
	\[
	\omega_Z^{-1}|_E\otimes q^*\omega_C\cong r^*\cM,
	\]
	and similarly on the other side of the flop. Since $r^+$ is dominant, the natural map $\cO_{\bar{E}} \rightarrow (r^+)_*\cO_{E^+}$ is injective. 
	The projection formula then gives
	\[
	\begin{aligned}
		0&\ne H^0(E,\omega_Z^{-1}|_E\otimes q^*\omega_C)\\
		&\cong H^0(\bar E,\cM)\\
		&\hookrightarrow H^0(E^+,\omega_{Z^+}^{-1}|_{E^+}
		\otimes(q^+)^*\omega_C).
	\end{aligned}
	\]
	Thus the nonvanishing condition is preserved. 
	
	\smallskip\noindent\emph{Step 3: Termination.}
	Rename $(Z^+,E^+,q^+)$ from Step 2 as $(Z_1,E_1,q_1)$.
	Repeating this construction gives a sequence of flips
	\[
	Z\dashrightarrow Z_1\dashrightarrow\cdots.
	\]
	for the pair $(Z, \epsilon E)$
	which terminates by termination of
	threefold flips. Since
	\[
	E_i\cdot\ell_{E_i}=-2
	\]
	at every stage, the process cannot terminate with $E_i$
	nef. Therefore at some stage the contraction is divisorial,
	and its extremal ray is the fiber class $\ell_{E_i}$.
	This proves that $E$ has a $K$-trivial contractible conic model.
	
	Finally, if $E\in\Delta_Y^{\mathrm{sm}}$, we may take $(\id_Y, q)$ as our $K$-trivial conic model. Since $q$ is quasi-ruled, every vertical curve is numerically proportional to $\ell_E$, so the first contraction already contracts $E$ along its
	ruling. Hence $q$ is contractible.
\end{proof}
\endgroup

In general, a rational surface $E \subset Y$ can of course admit several conic fibrations, which might be contractible on different birational models of $Y$. We find that the nonvanishing condition in Definition~\ref{def:deformation-root-classes} imposes enough rigidity for a uniqueness statement among $K$-trivial conic fibrations:
\begin{lemma}\label{lem:uniqueness-conic-fiber-class}
	Let $E\in\Delta_Y^{\mathrm{big}}$, and let
	\[
	(\alpha_i\colon Y\dashrightarrow Z_i,\;
	q_i\colon E_{Z_i}\to C_i),\qquad i=1,2,
	\]
	be two contractible $K$-trivial conic models of $E$ with nonvanishing obstruction group. 
	Denote by $\ell_i$ the class of a general fiber of $q_i$,
	and by $M_i\in\Pic(C_i)$ the associated obstruction bundle.
	
	\begin{itemize}
		\item Then $\beta=\alpha_2\circ\alpha_1^{-1}$ maps a general
	fiber of $q_1$ isomorphically onto a general fiber of $q_2$
	and induces an isomorphism
	$\bar\beta\colon C_1\to C_2$. Moreover,
	\[
	\beta_*(\ell_1)=\ell_2,
	\qquad
	\bar\beta^*M_2\cong M_1.
	\]
	\item If $E \in \Delta_Y^{\mathrm{big}}$ admits a contractible $K$-trivial conic fibration with nonvanishing obstruction group on $Y$, it is unique. In particular, every $E \in \Delta_Y^{\mathrm{sm}}$ admits a unique $K$-trivial quasi-ruling with nonvanishing obstruction group
	\end{itemize}
\end{lemma}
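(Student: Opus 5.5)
The plan is to compare the two conic fibrations directly on the surface $E$ by means of the birational map $\beta=\alpha_2\circ\alpha_1^{-1}\colon Z_1\dashrightarrow Z_2$. This map is a crepant SQM, hence a composite of flops by \cite[Theorem 4.6]{KawamataD}, and it restricts to a birational map $E_{Z_1}\dashrightarrow E_{Z_2}$. Let $G\subset E_{Z_1}$ be a general fiber of $q_1$. Since the indeterminacy locus of $\beta$ is a finite union of curves, the strict transform $G'\subset E_{Z_2}$ is well defined. The first step is to show that $G'$ is contracted by $q_2$.

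Suppose instead that some component of $G'$ maps onto $C_2$ with degree $d>0$. Every component of $G$ is a smooth rational curve. If $g(C_2)\geq 1$, no rational curve dominates $C_2$, which is already a contradiction. If $g(C_2)=0$, I would use the relation \eqref{eq:vertical} on $Z_2$, namely $-K_{Z_2}|_{E_{Z_2}}+q_2^*K_{C_2}\sim V_2$ with $V_2$ effective and $q_2$-vertical. This relation is exactly what the nonvanishing condition provides. Exactly as in Step~2 of the proof of Proposition~\ref{prop:existenceofcontractionofE}, it gives $K_{Z_2}\cdot G'\leq -2d<0$.

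On the other hand, take a common resolution $W$ of $Z_1$ and $Z_2$. Because $\beta$ is crepant, $K_{Z_1}$ and $K_{Z_2}$ pull back to the same class on $W$. Since $G$ is general, this gives $K_{Z_2}\cdot G'=K_{Z_1}\cdot G=0$, contradicting $K_{Z_2}\cdot G'<0$. Hence $q_2$ contracts the general fiber of $q_1$, and by symmetry $q_1$ contracts the general fiber of $q_2$. The rigidity lemma (or Zariski's main theorem for the normal curves $C_i$) then produces mutually inverse maps, so $\beta$ induces an isomorphism $\bar\beta\colon C_1\to C_2$. Moreover, $\beta$ carries a general fiber of $q_1$ birationally onto a general fiber of $q_2$. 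Since fibers are conics, hence reduced trees of $\PP^1$'s with at most two components, this birational map is an isomorphism.

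Next, I would prove $\beta_*\ell_1=\ell_2$, which I expect to be the main obstacle. The difficulty is that a flopping curve of $\beta$ could lie in $E_{Z_1}$ and be $q_1$-horizontal. It would then meet every fiber, and strict transform of curves need not commute with $\beta_*$ on numerical classes. My first attempt is to factor $\beta$ into flops $Z_1=X_0\dashrightarrow X_1\dashrightarrow\cdots$ and show, step by step, that each flopped curve contained in the strict transform of $E$ is vertical. The argument for this is the same as above: a horizontal $K$-trivial curve in $E$ is impossible when $g(C)\geq1$ and violates \eqref{eq:vertical} when $g(C)=0$. Flopped curves that merely meet $E$ in points do not affect the class of a general fiber, because a general fiber avoids finitely many points.

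With this in hand, $\beta$ is an isomorphism in a neighbourhood of a general fiber at every stage, so $D\cdot G'=\beta^*D\cdot G$ for every divisor $D$, which is exactly $\beta_*\ell_1=\ell_2$. For the obstruction bundles, I would use Proposition~\ref{prop:wilson-type-contraction-to-curve}(3), which gives $\omega_{Z_i}^{-1}|_{E_{Z_i}}\otimes q_i^*\omega_{C_i}\cong q_i^*M_i$, together with $\bar\beta$. Since $\beta$ is crepant and preserves fibers, the two pullbacks of $\omega^{-1}$ agree on the common open set lying over $C_1$ away from finitely many fibers. The remaining point is to compare degrees on fibers and on a section: both sides are pulled back from $C_1$, and any vertical discrepancy supported on whole fibers would change the degree of the descended bundle. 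I would rule out such a discrepancy using the same vertical-flop analysis, which shows that $\beta$ modifies $E$ only along curves inside finitely many fibers, and this gives $\bar\beta^*M_2\cong M_1$.

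The second bullet is the special case $Z_1=Z_2=Y$ with $\alpha_i=\id$. There $\beta$ is the identity, so the two fibrations have the same general fibers and therefore coincide, being determined by $q_{i*}\cO_E$ and the Stein factorization. For $E\in\Delta_Y^{\mathrm{sm}}$, the quasi-ruling is contractible by Proposition~\ref{prop:existenceofcontractionofE}, so uniqueness follows.
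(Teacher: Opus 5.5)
\textbf{Main gap: the isomorphism $\bar\beta^*M_2\cong M_1$.} You use crepancy only to say that the two pullbacks of $\omega^{-1}$ agree over an open set $U\subset C_1$. That gives $M_1\cong\bar\beta^*M_2\otimes\cO_{C_1}(D)$ for some divisor $D$ supported on $C_1\setminus U$, and comparing degrees on fibres and a section only gives $\deg D=0$. This suffices when $g(C_1)=0$. When $g(C_1)\geq 1$, however, a degree-zero divisor supported at finitely many points need not be principal. Knowing that $\beta$ modifies $E$ only along finitely many fibres does not force $D\sim 0$.

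The missing idea is that crepancy is a global identity of line bundles. On a common resolution $r\colon W\to Z_1$, $s\colon W\to Z_2$ one has $r^*\omega_{Z_1}\cong s^*\omega_{Z_2}$ on all of $W$. Restrict this to the strict transform $E_W$, and let $\pi\colon E_W\to C_1$ be the common value of $q_1\circ r$ and $\bar\beta^{-1}\circ q_2\circ s$; these agree on a dense open set, hence everywhere. Using $\omega_{Z_i}^{-1}|_{E_{Z_i}}\cong q_i^*(M_i\otimes\omega_{C_i}^{-1})$, you get $\pi^*(M_1\otimes\omega_{C_1}^{-1})\cong\pi^*(\bar\beta^*M_2\otimes\omega_{C_1}^{-1})$ globally on $E_W$. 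Since $\pi_*\cO_{E_W}=\cO_{C_1}$, the projection formula gives $M_1\cong\bar\beta^*M_2$ with no discrepancy left to rule out. This is exactly the paper's argument.

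\textbf{Secondary issues in the fibre part.} Your crepancy argument correctly shows that the strict transform of a general $q_1$-fibre is $q_2$-vertical, and it yields $\bar\beta$. Two later steps do not hold as written, though the fix is close to what you have.
\begin{enumerate}
\item A birational map between line-pair fibres need not be an isomorphism: the components can be matched while the nodes are not.
\item Your flop-by-flop analysis applies \eqref{eq:vertical} to the intermediate strict transforms $E_{X_j}$, but these need not admit any morphism to $C$. For example, if a flopping curve not contained in $E$ meets it at points of two different fibres, the flopped curve is contained in the next strict transform of $E$ and connects those two fibres.
\end{enumerate}
Both issues disappear if you argue, as the paper does, directly with the non-isomorphism locus of $\beta$ on $Z_1$. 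It consists of $K_{Z_1}$-trivial rational curves. Those lying in $E_{Z_1}$ are $q_1$-vertical by your own $g(C)$ and \eqref{eq:vertical} argument applied on $Z_1$, and the others meet $E_{Z_1}$ in finitely many points. So $\beta$ is a local isomorphism near a general fibre, and symmetrically for $\beta^{-1}$. Together with the same verticality statement on $Z_2$, this gives the identification of general fibres and $\beta_*\ell_1=\ell_2$ at once.
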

\begin{proof}
Note first that every $K_{Z_i}$-trivial rational curve
	$\Gamma\subset E_{Z_i}$ is $q_i$-vertical.
	Indeed, this is clear if $g(C_i)>0$. Otherwise,
	$H^0(C_i,M_i)\neq0$ implies
	$\deg(M_i\otimes\omega_{C_i}^{-1})\geq2$, so
	\[
	0=-K_{Z_i}\cdot\Gamma
	=q_i^*(M_i\otimes\omega_{C_i}^{-1})\cdot\Gamma
	\]
	forces $\Gamma$ to be vertical.
	
	Since $\beta$ factors as a composite of flops, its
	non-isomorphism locus consists of $K$-trivial rational
	curves. Those contained in $E_1$ are vertical, while
	the others meet $E_1$ in finitely many points.
	Thus $\beta$ is an isomorphism near a general fiber
	of $q_1$, and similarly for $\beta^{-1}$ and $q_2$.
	Therefore, $\beta$ identifies
	general fibers and induces an isomorphism
	$\bar\beta\colon C_1\to C_2$. In particular,
	$\beta_*(\ell_1)=\ell_2$.
	
	Identify the bases with $C$ via $\bar\beta$.
	Choose a common resolution
	$r\colon W\to Z_1$, $s\colon W\to Z_2$
	which is unchanged near general fibers, and let
	$\pi\colon E_W\to C$ be the induced morphism on the
	strict transform.
	Noting that $\omega_{Z_i}|_{E_{Z_i}}^{-1}=q_i^*(M_i\otimes\omega_{C_i}^{-1})$, crepancy of $\beta$ gives
	\[
	\pi^*(M_1\otimes\omega_C^{-1})
	\cong
	\pi^*(M_2\otimes\omega_C^{-1}).
	\]
	Tensoring with $\pi^*\omega_C$ and applying the projection formula therefore gives
	$M_1\cong M_2$, as required.
	
	For the second part, taking $\alpha_i=\id_Y$ gives the first claim. For the second claim, note that the defining ruling on $E \in \Delta_Y^{\mathrm{sm}}$ has nonvanishing obstruction group by definition, and is contractible by Proposition~\ref{prop:existenceofcontractionofE}
\end{proof}
Given this rigidity, we now define the fiber class associated to $E \in \Delta_Y^{\mathrm{big}}$.
\begin{definition}
	Let $E \in \Delta_Y^{\mathrm{big}}$, and let 		
	\[
	(\alpha \colon Y \dashrightarrow Z, \;q \colon E_Z \rightarrow C)
	\]
	be a contractible $K$-trivial conic model with nonvanishing obstruction group (which exists by Proposition~\ref{prop:existenceofcontractionofE}). We define 
	\[
	\ell_E=(\alpha^{-1})_*(\ell_{E_Z}), 
	\qquad \sigma_E(x)=x+(x, \ell_E)[E].
	\]
\end{definition}
By Lemma~\ref{lem:uniqueness-conic-fiber-class}, the definition of $\ell_E$, and hence also $\sigma_E$ is independent of the choice of $\alpha$ and intrinsic to the surface $E$. 
It follows that for an arbitrary crepant SQM $\alpha \colon Y \dashrightarrow Y'$, we have 
\begin{equation}\label{eq:reflection-compatible}
\sigma_E=\alpha^* \circ \sigma_{E_{Y'}} \circ (\alpha^*)^{-1}.
\end{equation}

We are now ready to extend Proposition~\ref{prop:root-faces-disappear-movable} to all deformation roots.
	\begin{proposition}\label{prop:injection-roots-faces}
		Let $Y$ be a smooth projective threefold with $\Def(Y)$ smooth and $H^2(Y,\cO_Y)=0$. Let $S$ (respectively $S'$) be the set of disappearing facets of $\overline{\Mov}(Y)$ (resp. $\Nef(Y)$). There is an injective map of sets 
		\[
		\Phi \colon \Delta_Y^{\mathrm{big}} \rightarrow S
		\]
		defined by sending $E$ to the facet of $\overline{\Mov}(Y)$ supported on $\ell_E^\perp$.
		
		Similarly, there is an injective map 
		\[
		\Phi' \colon \Delta_Y^{\mathrm{sm}} \rightarrow S',
		\]
	 defined by sending $E$ to the facet of $\Nef(Y)$ supported on $\ell_E^\perp$.
	\end{proposition}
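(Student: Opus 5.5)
Given $E\in\Delta_Y^{\mathrm{big}}$, I would first use Proposition~\ref{prop:existenceofcontractionofE} to choose a contractible $K$-trivial conic model $(\alpha\colon Y\dashrightarrow Z,\ q\colon E_Z\to C)$ with $H^0(C,M_{E_Z})\neq0$. Let $p\colon Z\to\bar Z$ be the primitive contraction of $E_Z$ along its fibers. It is a primitive $K$-trivial type III contraction. As recorded at the start of Section~\ref{sec:contractible-models}, the hypotheses $H^2(\cO)=0$ and $\Def$ smooth pass to $Z$, and $N^1(Z)\cong N^1(Y)$ compatibly with the deformations. Proposition~\ref{prop:root-faces-disappear-movable}, applied on $Z$, shows that the facet of $\overline{\Mov}(Z)$ supported on $\ell_{E_Z}^\perp$ disappears.

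To transport this to $Y$, I would use $\alpha^*(\overline{\Mov}(Z))=\overline{\Mov}(Y)$ and $\alpha^*\ell_{E_Z}^\perp=\ell_E^\perp$, since $\ell_E=(\alpha^{-1})_*\ell_{E_Z}$ and $\alpha_*$ is dual to $\alpha^*$. Hence $\ell_E^\perp$ supports a facet of $\overline{\Mov}(Y)$. For persistence, note that a crepant SQM $\alpha$ is a composite of flops. By \cite[11.10]{KollarMoriFlips} these flops extend over a miniversal deformation and identify very general fibers of the deformations of $Y$ and $Z$ up to SQM. So $\overline{\Mov}(Y^{\mathrm{gen}})$ and $\overline{\Mov}(Z^{\mathrm{gen}})$ correspond under $\alpha^*$, and the facet disappears on $Y$ as well. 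By Lemma~\ref{lem:uniqueness-conic-fiber-class} the result does not depend on the model chosen, so $\Phi$ is well defined.

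For $\Phi'$, take $E\in\Delta_Y^{\mathrm{sm}}$. Its quasi-ruling is contractible on $Y$ itself by Proposition~\ref{prop:existenceofcontractionofE}. It has $M_E\cong\cO_C$, because $\omega_Y^{-1}|_E\otimes q^*\omega_C\cong\cO_E\cong q^*M_E$, and $q^*$ is injective on $\Pic$ since $q_*\cO_E=\cO_C$. Proposition~\ref{prop:root-faces-disappear-nef}, (1)$\Rightarrow$(3), then shows that the nef facet $\ell_E^\perp\cap\Nef(Y)$ disappears.

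For injectivity, suppose $E_1\neq E_2$ give the same facet, so $\ell_{E_1}^\perp=\ell_{E_2}^\perp$ and hence $\ell_{E_1}=\lambda\ell_{E_2}$ with $\lambda>0$. I would argue that a facet of $\overline{\Mov}(Y)$ that meets the big cone determines a unique exceptional divisor. On an SQM $Z$ where the facet comes from a primitive divisorial contraction, the contracted divisor is the unique irreducible divisor negative on the ray; it is irreducible by the negativity lemma. Concretely, $E_1\cdot\ell_{E_1}=-2<0$, so every curve in the class $\ell_{E_1}$ lies in $E_1$. If $\ell_{E_1}=\lambda\ell_{E_2}$, the curves of the family $\ell_{E_2}$ sweep out $E_2$, and they must also lie in $E_1$ on the corresponding models. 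This forces $E_1=E_2$ as divisors, since both are strict transforms of the same surface under SQMs, which are isomorphisms in codimension one.

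The main obstacle is this last step. Different models $Z_1$ and $Z_2$ are used for $E_1$ and $E_2$, so one must pass to a single model. I would argue that the facet $\ell^\perp\cap\overline{\Mov}(Y)$ corresponds, via the Kawamata chamber decomposition \cite{KawamataCrepant}, to a divisorial contraction on some SQM, whose exceptional divisor is unique. Then $E_1$ and $E_2$ both coincide with its strict transform on $Y$. Injectivity for $\Phi'$ is easier: both contractions live on $Y$ itself, and the exceptional divisor of the primitive contraction attached to a nef facet is unique.
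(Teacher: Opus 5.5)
Your argument for well-definedness and for the disappearance of the facets is essentially the paper's. You choose a contractible model via Proposition~\ref{prop:existenceofcontractionofE}, apply Proposition~\ref{prop:root-faces-disappear-movable} on $Z$, and transport the conclusion to $Y$ using that the flops making up $\alpha$ deform. For $\Delta_Y^{\mathrm{sm}}$ you contract the quasi-ruling on $Y$ itself and apply Proposition~\ref{prop:root-faces-disappear-nef}.

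What needs correcting is your assessment of injectivity. Your ``concrete'' argument is already complete. It is what the paper's one-line justification (distinct roots have nonproportional fiber classes; compare the proof of Proposition~\ref{prop:deformation-roots-form-a-root-system}) amounts to.

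\begin{itemize}
\item Since $(\alpha_2)_*$ is dual to $\alpha_2^*$, you can work entirely on $Z_2$. There
\[
E_{1,Z_2}\cdot\ell_{E_2,Z_2}=\lambda^{-1}\,E_1\cdot\ell_{E_1}=-2/\lambda<0 .
\]
\item So every fiber of $q_2$ lies in $E_{1,Z_2}$. These fibers sweep out $E_{2,Z_2}$, so the two irreducible surfaces coincide, and hence $E_1=E_2$ on $Y$.
\item Here $\lambda>0$ because $\overline{\Mov}(Y)$ is full-dimensional and lies in $\ell_{E_i}^{\geq 0}$ for both $i$. Indeed, movable divisors on $Z_i$ are nonnegative on curves sweeping out a divisor.
\end{itemize}

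No common model is needed, and you should drop the fallback through a Kawamata-type chamber decomposition. The paper only has the decomposition of $\overline{\Mov}(Y)$ into nef chambers of SQMs when $-K_Y$ is nef (Proposition~\ref{prop:movable-nef-chamber-decomposition}), and that is not assumed here. In any case, you already have divisorial contractions realizing the facet on both $Z_1$ and $Z_2$. Producing a third one would not identify the exceptional divisors without the same intersection computation.
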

	\begin{proof}
		Note first that the map is injective since distinct $E \in \Delta_Y^{\mathrm{big}}$ have nonproportional conic fiber classes. 
		
		By Proposition~\ref{prop:existenceofcontractionofE}, we may choose a contractible $K$-trivial conic model
		\[
		\alpha \colon Y \dashrightarrow Z, \qquad q\colon E_Z \rightarrow C.
		\]
		with nonvanishing obstruction group. Proposition~\ref{prop:root-faces-disappear-movable}
	 shows that the facet of $\overline{\Mov}(Z)$ supported on $\ell_{E_Z}^\perp$ disappears under deformation of $Z$. Therefore, $\ell_E^\perp$ supports a facet of $\overline{\Mov}(Y)$, and it remains to show that it disappears under deformation of $Y$. Indeed, the crepant SQM $\alpha$ is a composite of flops, and by~\cite[Theorem 11.10]{KollarMoriFlips}, these flops deform to give a deformation $\pi_Z\colon\cZ\to \Def(Z)$ of $Z$ and an identification $\Def(Y) \cong \Def(Z)$. In particular, $\alpha$ induces an SQM between the very general fibers, and therefore an identification of $\overline{\Mov}(Y^{\mathrm{gen}})$ with $\overline{\Mov}(Z^{\mathrm{gen}})$. 

If $E \in \Delta_Y^{\mathrm{sm}}$, then the quasi-ruling on $E$ is contractible by Proposition~\ref{prop:existenceofcontractionofE}, so that $\ell_E^\perp$ supports a facet of $\Nef(Y)$. By Proposition~\ref{prop:root-faces-disappear-nef}, this facet disappears.
	\end{proof}
		\section{Weyl groups and their geometric realizations}\label{sec:weyl}
	Throughout this section, $Y$ is a smooth projective threefold. 
	We will show that the pairs $(E, \ell_E)$, where $E \in \Delta_Y^{\mathrm{big}}$ and $\ell_E$ is its conic fiber class can be viewed as simple roots in a generalized sense. We make the following definition: 
	\begin{definition}\label{def:genroots}
		Let $V$ be a real vector space, and $V^*$ its dual. 
		A subset $\Delta \subset V$ together with an injection $\vee \colon \Delta \rightarrow V^*$ is called a set of generalized roots if 
		\[
		\alpha^\vee(\alpha)=-2
		\]
		for all $\alpha \in \Delta$, and for all distinct $\alpha, \beta \in \Delta$, we have
		\begin{enumerate}
			\item $\alpha^\vee(\beta) \in \ZZ_{\geq 0}$ 
			\item $\alpha^\vee(\beta) =0$ if and only if $\beta^\vee(\alpha)=0$
			\item $\{\alpha, \beta\}$ (respectively $\{\alpha^\vee, \beta^\vee\}$) is a linearly independent set in $V$ (respectively in $V^*$) 
		\end{enumerate}
	\end{definition}
	Let $\Delta$ be a set of generalized roots. 
							For each $\alpha\in \Delta$ we define the reflection
							\[
							\sigma_{\alpha} \colon V \rightarrow V, \quad x \mapsto x+\alpha^\vee(x) \alpha
							\]
			Let $W(\Delta)$ be the group generated by the $\sigma_\alpha$. Then $W(\Delta)$ is a Coxeter group (see \cite[Section 4]{Lutz}).
					For each $\alpha\in \Delta$, the inequality $\langle x, \alpha^\vee \rangle>0 $ defines an open half space $A_\alpha$. 
					We define the cone 
					\[
					\cC(\Delta)=\bigcap_{\alpha\in \Delta}A_\alpha \subset V
					\]
					and the Tits cone 
					\[
					U=\bigcup_{w \in W}w \cdot \bar{\cC}(\Delta).
					\]
					We have the following result, which is an adaptation of~\cite[V. 4.6 Proposition 6]{Bourbaki}:
					\begin{proposition}\label{cor:fundamentaldomain}\cite[Corollary 4.8]{Lutz}
							Suppose that $\cC(\Delta)$ is nonempty. Then the natural map $W(\Delta) \rightarrow \GL(V)$ is injective, and $\bar{\cC}(\Delta)$ is a fundamental domain for the action of $ W(\Delta)$ on $U$ .
						\end{proposition}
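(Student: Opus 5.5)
The plan is to reduce to the classical theory of linear reflection groups in the sense of Tits and Vinberg, following Bourbaki~\cite[V.4]{Bourbaki}, and to feed in the combinatorics of Definition~\ref{def:genroots}.

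Write $a_{\alpha\beta}=\alpha^\vee(\beta)$. The first step is a rank-two analysis. Fix distinct $\alpha,\beta\in\Delta$ and let $U_{\alpha\beta}\subset V$ be the span of $\alpha,\beta$, which is two-dimensional by condition (3). The group $\langle\sigma_\alpha,\sigma_\beta\rangle$ preserves $U_{\alpha\beta}$. It also acts trivially on $V/U_{\alpha\beta}$ and on $\ker\alpha^\vee\cap\ker\beta^\vee$, the latter having codimension two by the independence of $\alpha^\vee,\beta^\vee$. On $U_{\alpha\beta}$ the product $\sigma_\alpha\sigma_\beta$ has trace $a_{\alpha\beta}a_{\beta\alpha}-2$ and determinant $1$. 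The non-negative integer $n=a_{\alpha\beta}a_{\beta\alpha}$ then falls into one of two regimes:
\begin{itemize}
\item if $n\in\{0,1,2,3\}$, then $n=4\cos^2(\pi/m)$ with $m\in\{2,3,4,6\}$, and $\sigma_\alpha\sigma_\beta$ has finite order $m$ on all of $V$; in the case $m=2$ one uses condition (2), which gives $a_{\alpha\beta}=a_{\beta\alpha}=0$;
\item if $n\ge4$, then $\sigma_\alpha\sigma_\beta$ has infinite order.
\end{itemize}
In both regimes one checks explicitly, as for dihedral groups, that the dihedral cone
\[
\{x : \alpha^\vee(x)>0,\ \beta^\vee(x)>0\}
\]
is a fundamental chamber for $\langle\sigma_\alpha,\sigma_\beta\rangle$ on its Tits cone. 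In particular, for each $w$ in this dihedral group the translate $w\,\cC(\Delta)$ lies in $A_\alpha$ or in $-A_\alpha$, according to whether $\ell(\sigma_\alpha w)>\ell(w)$ or $\ell(\sigma_\alpha w)<\ell(w)$.

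The second step is to globalize, following Bourbaki's proof of Tits' lemma. Let $S=\{\sigma_\alpha\}$ with word length $\ell$. I will prove by induction on $\ell(w)$ the following statement: for every $\alpha\in\Delta$, either $w\,\cC\subset A_\alpha$ and $\ell(\sigma_\alpha w)=\ell(w)+1$, or $w\,\cC\subset -A_\alpha$ and $\ell(\sigma_\alpha w)=\ell(w)-1$. The inductive step writes $w$ as an element of a standard rank-two parabolic subgroup times a shorter element and applies the dihedral case from step one, exactly as in Bourbaki V.4.5.

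Since $\cC\neq\emptyset$, this statement immediately gives $w\,\cC\cap\cC=\emptyset$ for $w\neq1$. Hence $W(\Delta)\to\GL(V)$ is injective. It also yields that $(W,S)$ is a Coxeter system, recovering the cited fact.

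The final step is to show that $\bar{\cC}$ is a fundamental domain on $U$: each point of $U$ is $W$-equivalent to a point of $\bar{\cC}$ by definition of $U$, and it remains to show uniqueness. I will follow Bourbaki V.4.6 via facets. For $x\in\bar{\cC}$, let $\Delta_x=\{\alpha\in\Delta:\alpha^\vee(x)=0\}$. I will show that if $wx\in\bar{\cC}$ then $wx=x$ and $w\in W_{\Delta_x}$, by induction on $\ell(w)$ using the sign statement: pick $\alpha$ with $\ell(\sigma_\alpha w)<\ell(w)$; then $w\,\cC\subset-A_\alpha$, which forces $\alpha^\vee(wx)\le0$ and hence $\alpha^\vee(wx)=0$, and one replaces $w$ by $\sigma_\alpha w$.

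I expect the main obstacle to be that $\Delta$ may be infinite, so $\bar{\cC}$ need not be polyhedral. I must also check that each hyperplane $\alpha^\perp$ genuinely meets $\bar{\cC}$ in a codimension-one face. For this I will pick $x\in\cC$ and move it toward $\ker\alpha^\vee$ along the direction $\alpha$: this decreases $\alpha^\vee$, since $\alpha^\vee(\alpha)=-2$, and does not decrease any other $\beta^\vee$, since $\beta^\vee(\alpha)\ge0$. Bourbaki's arguments only use finitely many roots at a time, so local finiteness is not needed.
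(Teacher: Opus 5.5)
Your proposal is correct and follows essentially the same route as the paper. The paper does not reprove the statement; it cites \cite[Corollary 4.8]{Lutz}, itself an adaptation of Bourbaki's Tits-cone argument: a rank-two analysis forced by the integrality conditions, Tits' lemma by induction on length, and then the stabilizer argument showing $\bar{\cC}$ is a fundamental domain. One point to make explicit: take $\ell$ to be the length in the abstract Coxeter group whose Coxeter matrix is given by the orders of $\sigma_\alpha\sigma_\beta$ computed in your first step. Then $w\,\cC\cap\cC=\emptyset$ for $w\neq 1$ genuinely proves injectivity of $W(\Delta)\to\GL(V)$, rather than being a tautology about a subgroup of $\GL(V)$.
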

					In view of this, we call $\bar{\cC}(\Delta)$ the fundamental chamber. 
					Our next result shows that the sets $\Delta_Y^{\mathrm{big}}$ and $\Delta_Y^{\mathrm{sm}}$ form generalized roots:
	\begin{proposition}\label{prop:deformation-roots-form-a-root-system}
		Let $Y$ be a smooth projective threefold. Then $(N^1(Y, \RR), \Delta_Y^{\mathrm{big}})$ is a set of generalized roots, with the injection 
		\[
		\vee \colon \Delta_Y^{\mathrm{big}} \rightarrow N^1(Y, \RR)^* \cong N_1(Y, \RR)
		\]
		given by sending $E$ to its conic fiber class $\ell_E$. 
	\end{proposition}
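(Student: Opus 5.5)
The plan is to check the axioms of Definition~\ref{def:genroots} one at a time. For each $E\in\Delta_Y^{\mathrm{big}}$ I fix a contractible $K$-trivial conic model $(\alpha_E\colon Y\dashrightarrow Z_E,\ q_E\colon E_{Z_E}\to C_E)$ with nonvanishing obstruction group, which exists by Proposition~\ref{prop:existenceofcontractionofE}. By Lemma~\ref{lem:uniqueness-conic-fiber-class}, $\ell_E$ does not depend on this choice. All pairings can be computed on any crepant SQM, because $\alpha_*$ is dual to $\alpha^*$ and $\alpha^*$ sends the strict transform of a divisor to its strict transform.

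\emph{Normalization.} On $Z_E$ the conic fibration $q_E$ is the exceptional divisor of a primitive $K$-trivial type III contraction. Proposition~\ref{prop:wilson-type-contraction-to-curve} then gives $E_{Z_E}\cdot\ell_{E_{Z_E}}=-2$, so $\ell_E(E)=-2$.

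\emph{Axiom (1).} Let $F\neq E$. The class $\ell_E$ is integral, so $\ell_E(F)\in\ZZ$. The strict transform $F_{Z_E}$ is an irreducible surface different from $E_{Z_E}$. The fibers of $q_E$ cover $E_{Z_E}$, so a general fiber is not contained in $F_{Z_E}$. Hence $F\cdot\ell_E=F_{Z_E}\cdot\ell_{E_{Z_E}}\geq0$.

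\emph{Axiom (3) and injectivity of $\vee$.} First, $E$ and $F$ are both effective and nonzero. Moreover $E\cdot\ell_E<0\le F\cdot\ell_E$. Together these rule out $F=cE$ for any $c$.

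For the curve classes, note that $\ell_E$ is the class of a curve covering a divisor on the SQM $Z_E$. Since $\Mov$ is invariant under crepant SQMs, $\ell_E$ is strictly positive on $\Mov(Y)^\circ$, and the same holds for $\ell_F$. So if $\ell_F=c\ell_E$, then $c>0$. But then $E\cdot\ell_F=-2c<0$, which contradicts axiom (1) applied to the pair $(F,E)$. This proves the linear independence of $\{\ell_E,\ell_F\}$. In particular $E\mapsto\ell_E$ is injective.

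\emph{Axiom (2): symmetry of vanishing.} This is the step I expect to be the main obstacle. Suppose $\ell_F(E)>0$ but $\ell_E(F)=0$.

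From $\ell_E(F)=0$, on $Z_E$ the surface $F_{Z_E}$ meets $E_{Z_E}$ only in fiber components of $q_E$. These are $K$-trivial rational curves. In fact, by the argument at the start of the proof of Lemma~\ref{lem:uniqueness-conic-fiber-class}, the $K$-trivial rational curves on $E_{Z_E}$ are exactly the $q_E$-vertical ones.

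From $\ell_F(E)>0$, on $Z_F$ a general fiber of $q_F$ meets $E_{Z_F}$. Hence $E_{Z_F}\cap F_{Z_F}$ contains a curve $D$ dominating $C_F$. By the same lemma applied to $F$, every $K$-trivial rational curve in $F_{Z_F}$ is $q_F$-vertical, so $D$ cannot be both $K$-trivial and rational.

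I would aim to derive the contradiction by transporting components of $E\cap F$ between $Z_E$ and $Z_F$ through the composite of flops $\alpha_F\circ\alpha_E^{-1}$. First I would pass to a common resolution, or to an intermediate model on which both $E$ and $F$ are isomorphic near their general fibers. Then I would check that the component $D$ corresponds either to a $q_E$-fiber component (hence $K$-trivial rational) or to a flopped curve (also $K$-trivial rational). This contradicts the $q_F$-verticality just established.

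The delicate point is controlling intersection curves that are created or destroyed by the flops, since $K$-degrees of curves need not be preserved under flops. If this direct approach stalls, my fallback is the reflection realization. When $\ell_E(F)=0$, Lemma~\ref{lem:Every-Contraction-Is-Mori} shows that $F_{Z_E}$ is pulled back from the contraction of $E_{Z_E}$, so $F$ is fixed by the flop of Proposition~\ref{prop:flopofroot} and therefore deforms along the family. I would then try to compare this with the fact that, when $\ell_F(E)>0$, the flop associated to $F$ moves $E$ to $E+\ell_F(E)F$.
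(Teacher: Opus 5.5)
Your checks of the normalization $E\cdot\ell_E=-2$, of axiom (1), and of axiom (3) (including injectivity of $\vee$) are correct and essentially the paper's argument. The gap is axiom (2). You only outline a strategy there, and the step you flag as delicate rests on a misconception.

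\textbf{The missing fact.} $K$-degrees of strict transforms \emph{are} preserved by flops. If $Z\xrightarrow{\phi}\bar Z\xleftarrow{\phi^+}Z^+$ is a flop, then $K_Z=\phi^*K_{\bar Z}$ and $K_{Z^+}=(\phi^+)^*K_{\bar Z}$. Hence for every curve $\gamma\subset Z$ not contained in the flopping locus, the projection formula gives $K_Z\cdot\gamma=K_{\bar Z}\cdot\phi_*\gamma=K_{Z^+}\cdot\gamma^+$. Only the degrees of other divisors on $\gamma$ can change, by multiples of the $K$-trivial flopped classes.

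Your fallback through Proposition~\ref{prop:flopofroot} does not replace this. The identities $\sigma_E(F)=F$ and $\sigma_F(E)=E+(E\cdot\ell_F)F$ are perfectly consistent as linear algebra, so no contradiction comes from them alone. The claim that $F$ ``therefore deforms'' is also not justified.

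\textbf{Completing your route.} With this invariance your argument closes, with no common resolution needed. Factor $Z_F\dashrightarrow Z_E$ into flops and follow the horizontal curve $D\subset E_{Z_F}\cap F_{Z_F}$. There are two cases.
\begin{itemize}
\item At some intermediate stage the strict transform of $D$ is a flopping curve. Then it is rational and $K$-trivial on that model.
\item $D$ survives to $Z_E$. There it lies in $E_{Z_E}\cap F_{Z_E}$, so (since $F\cdot\ell_E=0$) it is a $q_E$-fiber component. Again it is rational and $K$-trivial.
\end{itemize}
Invariance of $K$-degree at each earlier flop then makes $D$ a $K_{Z_F}$-trivial rational curve in $F_{Z_F}$. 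By the first paragraph of the proof of Lemma~\ref{lem:uniqueness-conic-fiber-class}, $D$ is $q_F$-vertical, which contradicts that it dominates $C_F$.

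\textbf{Comparison with the paper.} The paper splits by the genus of $C_F$.
\begin{itemize}
\item If $g(C_F)>0$, the horizontal component has positive genus, so it can neither be flopped nor become a $q_E$-fiber component. This is a special case of your argument.
\item If $g(C_F)=0$, it uses invariance of $E\cdot F\cdot K_Y$ under crepant SQMs, which is the same fact integrated over $E\cap F$. This gives $d_E\,(F\cdot\ell_E)=d_F\,(E\cdot\ell_F)$ with $d_F=\deg(-K_{\bar Z_F}|_{C_F})=\deg M_F+2\geq 2$, incompatible with $F\cdot\ell_E=0<E\cdot\ell_F$.
\end{itemize}
Once the invariance is supplied, your curve-tracking version handles both genera uniformly.
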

	\begin{proof}
		We follow the argument in \cite[Proposition 4.9]{Lutz}.
		The equality
		\[
		E \cdot \ell_E=-2
		\]
		follows from Proposition~\ref{prop:wilson-type-contraction-to-curve}. 
		Suppose that $E_1, E_2 \in \Delta_Y^{\mathrm{big}}$ are distinct and write $\ell_i=\ell_{E_i}$. Since $E_1$ is irreducible, and curves of class $\ell_{2}$ move in a one-dimensional family, we must have 
		\[
		E_1 \cdot \ell_{2} \in \ZZ_{\geq 0}.
		\]
		In particular, this shows that $\vee$ is injective. 
		Linear independence of $\{E_1, E_2\}$ easily follows by intersecting with $\ell_{E_1}$ and an ample divisor. Linear independence of $\{\ell_{1}, \ell_{2}\}$ is analogous.
		
		It remains to prove that $E_1 \cdot \ell_2=0$ iff $E_2 \cdot \ell_1 = 0$.
		Let 
		\[
		(\alpha_i \colon Y \dashrightarrow Z_i, q_i \colon E_{i,Z_i} \rightarrow C_i), \qquad i=1,2
		\]  
		be $K$-trivial contractible conic models for $E_1$ and $E_2$, and denote $Z_i \rightarrow \bar{Z}_i$ the corresponding contraction.
		We write $E_{i, Z_j}$ for the strict transform of $E_i$ on $Z_j$ and $\ell_{i, Z_j}$ for the image of $\ell_i$ under the induced identification of curve classes. 
		Suppose without loss of generality that 
		\begin{equation}\label{eq:root-pairs}
			E_1 \cdot \ell_2 \neq 0, \qquad E_2 \cdot \ell_1=0
		\end{equation}
		Then $E_{1, Z_2} \cap E_{2, Z_2}$ must have a $q_2$-horizontal component, whereas every component of $E_{1, Z_1} \cap E_{2, Z_1}$ is $q_1$-vertical, and therefore rational.
		There are two cases to consider:
		\begin{itemize}
			\item $g(C_2)>0$. In this case $E_{1, Z_2} \cap E_{2, Z_2}$ has a component of positive genus. This component cannot be contracted by the SQM $\alpha_1 \circ \alpha_2^{-1} \colon Z_2 \dashrightarrow Z_1$, so that $E_{1, Z_1} \cap E_{2, Z_1}$ also has a component of positive genus, contradiction.
			\item $g(C_2)=0$. 
			Since $\alpha_i$ is crepant, we have that 
			\[
			E_1 \cdot E_2 \cdot K_Y=E_{1, Z_i} \cdot E_{2, Z_i} \cdot K_{Z_i}, \qquad i=1,2
			\]
			Computing this triple intersection on $Z_1$ and $Z_2$, it follows that 
			\[
			d_1(E_2 \cdot \ell_1)=d_2(E_1 \cdot \ell_2),
			\]
			where $d_i=\deg(-K_{\bar{Z}_i}|_{C_i})$. 
			Since $E_2 \in \Delta_Y^{\mathrm{big}}$ and $g(C_2)=0$, we must have $d_2 >0$. This contradicts Equation~\eqref{eq:root-pairs}.
		\end{itemize}
	\end{proof}
	To declutter notation, we will write
	\begin{equation*}
		W_Y^{big}=W(\Delta_Y^{\mathrm{big}}), \qquad
			\cC^{big}=\cC(\Delta_Y^{\mathrm{big}}),\qquad
			U^{big}=U(\Delta_Y^{\mathrm{big}})
	\end{equation*}
	and make similar definitions for $\Delta_Y^{\mathrm{sm}}$. 
	
	By Equation~\eqref{eq:reflection-compatible}, the group $W_Y^{big}$ is invariant under crepant SQMs. The smaller group $W_Y^{sm}$ is not, as we require its generators to correspond to quasi-ruled surfaces on $Y$.

		\begin{corollary}\label{thm:weylpreservescones}
			Let $Y$ be a smooth projective threefold with $\Def(Y)$ smooth and $H^2(Y, \cO_Y)=0$. $W_Y^{big}$ acts faithfully on $N^1(Y, \RR)$ and preserves $\overline{\Mov}(Y^{\mathrm{gen}})$, $\Eff(Y^{\mathrm{gen}})$ and  $\Bbig(Y^{\mathrm{gen}})$. 
			The action of $W_Y^{sm}$ additionally preserves $\Nef(Y^{\mathrm{gen}})$ and $\Amp(Y^{\mathrm{gen}})$.
		\end{corollary}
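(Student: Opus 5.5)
Since $W_Y^{big}$ is generated by the reflections $\sigma_E$, $E\in\Delta_Y^{\mathrm{big}}$, and $W_Y^{sm}$ by those with $E\in\Delta_Y^{\mathrm{sm}}$, I will check preservation of the cones one generator at a time. The generator is realized geometrically as a flop in a family, via Proposition~\ref{prop:flopofroot}. Faithfulness comes from the abstract theory of generalized roots. By Proposition~\ref{prop:deformation-roots-form-a-root-system}, $\Delta_Y^{\mathrm{big}}$ is a set of generalized roots. Proposition~\ref{cor:fundamentaldomain} then gives injectivity of $W_Y^{big}\to\GL(N^1(Y,\RR))$ as soon as $\cC^{big}$ is nonempty. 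To get nonemptiness, I would take an ample class $H$ on $Y$: for each $E$, the fiber class $\ell_E$ is, on a contractible model $Z$, the class of a movable curve sweeping out $E_Z$. So $\ell_E$ pairs nonnegatively with movable divisors, and I expect $H\cdot\ell_E>0$, computed on $Z$ through the strict transform of $H$, which is movable and big; this needs a short argument. The same argument applies to $\Delta_Y^{\mathrm{sm}}\subset\Delta_Y^{\mathrm{big}}$.

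For preservation, fix $E\in\Delta_Y^{\mathrm{big}}$ and a contractible $K$-trivial conic model $\alpha\colon Y\dashrightarrow Z$ (Proposition~\ref{prop:existenceofcontractionofE}). By \eqref{eq:reflection-compatible}, and because $\alpha$ deforms to an SQM of very general fibers (\cite[Theorem 11.10]{KollarMoriFlips}, as in Proposition~\ref{prop:injection-roots-faces}), which identifies $\overline{\Mov}$, $\Eff$ and $\Bbig$ of $Y^{\mathrm{gen}}$ and $Z^{\mathrm{gen}}$, we may assume $Z=Y$ and that $E$ is the exceptional divisor of a primitive $K$-trivial type III contraction with $H^0(M_E)\ne0$. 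Proposition~\ref{prop:flopofroot} then gives the flop $\Phi_E\colon\cY\dashrightarrow\cY^+$ over $S$. Via the identification $\bar\phi_E$ it induces $\sigma_E$ on $N^1$, and by part (5), $\cY^+\cong\alpha^*\cY$ for an automorphism $\alpha$ of the germ $(S,0)$. For very general $t$, the point $\alpha(t)$ is again very general. The map $\cY_t\dashrightarrow\cY^+_t\cong\cY_{\alpha(t)}$ is an isomorphism in codimension one, since $P_E$ is small by Proposition~\ref{prop:root-faces-disappear-movable}. Strict transform therefore carries $\overline{\Mov}(\cY_t)$, $\Eff(\cY_t)$ and $\Bbig(\cY_t)$ onto the corresponding cones of $\cY_{\alpha(t)}$, which are the generic cones.

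The main obstacle is checking that all the identifications of $N^1$ are compatible. The parallel transport identification $N^1(\cY_t)\cong N^1(Y)$ composed with strict transform and pullback by $\alpha$ should be exactly $\sigma_E$. This is the content of part (4) of Proposition~\ref{prop:flopofroot}, combined with the fact that $\alpha^*$ acts trivially on $N^1(\cY/S)$ after the étale base change trivializing monodromy (made in \eqref{eq:restriction}). I would argue this by noting that $\alpha$ fixes $0$ and the local system is trivial, so $\alpha^*$ acts trivially on sections.

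For $E\in\Delta_Y^{\mathrm{sm}}$, Proposition~\ref{prop:flopofroot}(3) says $\cY_t\dashrightarrow\cY_t^+$ is biregular for very general $t$. Its pullback then preserves $\Nef$ and $\Amp$, and the same identifications give that $\sigma_E$ preserves $\Nef(Y^{\mathrm{gen}})$ and $\Amp(Y^{\mathrm{gen}})$. No SQM is needed here, because the quasi-ruling is contractible on $Y$ itself.
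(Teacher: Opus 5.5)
Your proposal is correct and follows the paper's proof. Faithfulness comes from the generalized-root structure via Proposition~\ref{cor:fundamentaldomain}; preservation comes from reducing to a contractible conic model through \eqref{eq:reflection-compatible} and the deformed SQM, then using the flop of Proposition~\ref{prop:flopofroot} together with the germ automorphism that sends very general points to very general points (biregular on very general fibres when $E\in\Delta_Y^{\mathrm{sm}}$), and you are more careful than the paper about why the $N^1$-identifications compose to $\sigma_E$.

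One repair is needed for the nonemptiness of $\cC^{big}$. Movability and bigness of the strict transform of $H$ only give $H\cdot\ell_E\geq 0$: the pullback of an ample class from $\bar Z$ is big and movable but is zero on $\ell_{E_Z}$. The actual reason for strict positivity is that the SQM is an isomorphism near a general fibre of $q$, as in the proof of Lemma~\ref{lem:uniqueness-conic-fiber-class}. Hence $\ell_E$ is the class of an actual curve on $Y$, and so $H\cdot\ell_E>0$.
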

		\begin{proof}
			The fact that the action is faithful follows from the theory of Coxeter groups (see \cite[Section 4]{Lutz} for the argument). 
			
			Suppose first that $E \in \Delta_Y^{\mathrm{sm}}$. Then the quasi-ruling on $E$ is contractible, and  Proposition~\ref{prop:flopofroot} gives a diagram 
\[
\begin{tikzcd}
	\cY \arrow[r, "\Phi_E", dashed] \arrow[d, "\pi"'] &
	\cY \arrow[d, "\pi"] \\
	S \arrow[r, "\alpha"] &
	S .
\end{tikzcd}
\]
Since $\Phi_E$ restricts to an isomorphism on a very general fiber, we have that 
			\[
			(\Phi_{E,s})_*(\Nef(\cY_s))=\Nef(\cY_{\alpha(s)})
			\]
			for $s$ very general. Moreover, $\alpha$ is a local isomorphism in the \'etale topology and  therefore sends very general points to very general points. It follows that 
			\[
			(\Phi_{E,s})_*(\Nef(Y^{\mathrm{gen}}))=\Nef(Y^{\mathrm{gen}}).
			\]
			By Proposition~\ref{prop:flopofroot}, the reflection $\sigma_E \in \Aut(N^1(Y, \RR))$ is the composite of parallel transport and $\Phi_{E}$ and therefore preserves $\Nef(Y^{\mathrm{gen}})$, and its interior $\Amp(Y^{\mathrm{gen}})$.
			
			Suppose now that $E \in \Delta_Y^{\mathrm{big}}$. Choose a $K$-trivial contractible conic-model $(\alpha \colon Y \dashrightarrow Z, q \colon E_Z \rightarrow C)$. By Equation~\eqref{eq:reflection-compatible}, we have
			\[
			\sigma_E=\alpha^*\circ\sigma_{E_Z}\circ(\alpha^*)^{-1}.
			\]
			$\alpha$ is a composite of flops, so deforms to an SQM $\cY \dashrightarrow \cZ$ over $S$, and hence satisfies 
			\[
			\alpha^*(\overline{\Mov}(Z^{\mathrm{gen}}))=\overline{\Mov}(Y^{\mathrm{gen}}).
			\]
			Similarly, $\alpha^*$ preserves the effective and big cone of a very general fiber.
			For $\sigma_{E_Z}$, Proposition~\ref{prop:flopofroot} shows that $\Phi_{E_Z}$ restricts to a flop or an isomorphism on a very general fiber, so that we have 
			\[
		{\Phi_{E_Z}}_*(\overline{\Mov}(Z^{\mathrm{gen}}))=\overline{\Mov}(Z^{\mathrm{gen}}).
		\]
	It now follows that $\sigma_E$ preserves $\overline{\Mov}(Y^{\mathrm{gen}})$, and similarly, $\sigma_E$ preserves the effective cone of $Y^{\mathrm{gen}}$ and its interior $\Bbig(Y^{\mathrm{gen}})$.
		\end{proof}
		\section{The case of threefolds with nef anticanonical bundles}\label{sec:nef-anticanonical}
		The analysis in the previous section started with a facet of the nef cone for which there is an associated $K$-trivial birational contraction morphism. In general, we cannot globalize such a facet-by-facet argument to the whole nef cone: faces corresponding to $K$-positive contractions are much harder to classify, and even if a facet is $K$-nonpositive, it might not correspond to a contraction morphism. 
		It is therefore natural to restrict to threefolds with nef anticanonical bundle: this eliminates the possibility of $K$-positive faces, and furthermore, the basepointfree theorem shows that each facet of $\Nef(Y) \cap \Bbig(Y)$ actually corresponds to a contraction:
		\begin{proposition}[Nef-chamber decomposition of the movable cone]\label{prop:movable-nef-chamber-decomposition}
			Suppose that $Y$ is a smooth projective threefold with $-K_Y$ nef.
			The effective movable cone admits a decomposition 
			\begin{equation}\label{eq:movable-cone-decomposition}
			{\Mov}^e(Y)=\bigcup_{\alpha \colon Y\dashrightarrow Z} \alpha^*\Nef^e(Z)
			\end{equation}
			where the union is over all SQMs (small $\QQ$-factorial modifications) of $Y$. 
			Each SQM factors as a composite of flops. Moreover, the cones
			$\Bbig(Y) \cap \Nef(Y)$ and $\Bbig(Y) \cap \overline{\Mov}(Y)$ are locally rational polyhedral, and every facet $F$ of $\Bbig(Y) \cap \Nef(Y)$ corresponds to a contraction morphism $p_F \colon Y \rightarrow \bar{Y}$ of relative Picard rank $1$.
		\end{proposition}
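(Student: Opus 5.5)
The plan is to reduce everything to the MMP for klt pairs of the form $(Y,\Delta)$ with $\Delta\equiv -K_Y+\epsilon D$, using that $-K_Y$ is nef. The key observation is this. If $D$ is a big $\RR$-divisor, then $-K_Y$ nef implies $-K_Y+\epsilon D$ is big for small $\epsilon$. Writing it as $A+N$ with $A$ ample and $N$ effective (Kodaira's lemma), we can choose $\Delta\sim_\RR \epsilon' D + (\text{ample}) + \text{small effective}$ such that $(Y,\Delta)$ is klt and $K_Y+\Delta\equiv \epsilon D$. Thus every big divisor is, up to scaling, of the form $K_Y+\Delta$ for a klt pair with big boundary. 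This puts us in the setting of \cite{BCHM} and of the Kawamata--Morrison type results.

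\textbf{Decomposition \eqref{eq:movable-cone-decomposition}.} I would first treat a big effective movable class $D$. I run the $(K_Y+\Delta)$-MMP, i.e. the $D$-MMP, with scaling. Since $D$ is movable, no divisorial contractions occur: a divisorial contraction would contract a divisor in the stable base locus of $D$. So the MMP is a sequence of flips terminating, by \cite{BCHM}, in an SQM $\alpha\colon Y\dashrightarrow Z$ on which $D_Z$ is nef.

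Each flip is $D$-negative. Because $-K_Y$ is nef and remains nef on each step, I would check it is in fact $K$-trivial. The flipped ray $R$ satisfies $K\cdot R\le 0$ by nefness of $-K$. If $K\cdot R<0$, the contraction would be a $K$-negative small contraction of a smooth threefold, which does not exist by Mori's classification. So $K\cdot R=0$ and the step is a flop, which keeps $Z$ smooth and keeps $-K_Z$ nef ($K$ is pulled back from the base). Hence every SQM factors into flops, consistent with \cite{KawamataD}.

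Non-big effective movable classes lie on the boundary. For these I would use that such $D$ is a limit along $D+tH$ of big movable classes, together with finiteness of the chambers meeting a neighbourhood in the big cone. Non-big boundary classes lie in $\Nef^e$ of some model by a semiampleness/abundance argument for the generalized pair $(Y, -K_Y)$; this is where Xie's remark about generalized pairs enters. This is the step I expect to be most delicate, and I would try to avoid it by proving the decomposition only on $\Bbig$ plus the effective boundary via the limiting argument.

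\textbf{Local polyhedrality and contractions.} For $D$ in $\Bbig(Y)\cap\Nef(Y)$, write $D\equiv K_Y+\Delta$ with $\Delta$ big klt as above. The cone theorem for $(Y,\Delta)$ shows that near $D$ only finitely many $(K_Y+\Delta)$-negative extremal rays matter. Hence $\Nef(Y)$ is rational polyhedral near $D$; more precisely, finiteness of models \cite{BCHM} gives that $\Nef\cap\Bbig$ is locally rational polyhedral.

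A facet $F$ of $\Nef(Y)\cap\Bbig(Y)$ is cut out by an extremal ray $R$. Choosing $D\in\relint F$ and writing $D-\delta H=K_Y+\Delta'$ with $\Delta'$ klt, the ray $R$ is $(K_Y+\Delta')$-negative. The contraction theorem then gives $p_F$ of relative Picard rank one. Since $D$ is big, $p_F$ is birational, as used earlier in the paper.

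For $\overline{\Mov}\cap\Bbig$, local polyhedrality follows from the decomposition together with finiteness of the nef chambers $\alpha^*\Nef(Z)$ meeting a compact subset of $\Bbig$, which is again finiteness of models in \cite{BCHM} applied to a polytope of boundaries.
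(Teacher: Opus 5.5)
\textbf{Gap: the non-big classes in \eqref{eq:movable-cone-decomposition}.} The cone $\Mov^e(Y)=\overline{\Mov}(Y)\cap\Eff(Y)$ contains effective movable classes that are not big. For these your device $\Delta\equiv -K_Y+\epsilon D$, with $\Delta$ big and klt, is unavailable, so the BCHM argument does not apply. Neither of your fallbacks closes this.

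The limiting argument along $D+tH$ needs finiteness of chambers. Finiteness of models only gives local finiteness on compact subsets of $\Bbig(Y)$, and the segment $\{D+tH\}_{0<t\le t_0}$ is not contained in such a set. Chambers really do accumulate at non-big boundary points. For example, on the blowup of $\PP^3$ in eight very general points, the class $-\tfrac12K$ is effective, movable and not big, yet it lies in the nef cone of every SQM, and there are infinitely many SQMs. So nothing you have said prevents the segment from crossing infinitely many chambers with $D$ in the closure of none.

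The appeal to semiampleness or abundance for a generalized pair is left unproved. It also asks for more than you need: you only need $D$ to become nef on some SQM.

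\textbf{The missing observation.} Nefness of $-K_Y$ lets you run the $D$-MMP as an MMP for the klt pair $(Y,\epsilon D)$ itself. Here $D$ is a fixed effective representative, and no bigness enters anywhere.
\begin{itemize}
\item If $D\cdot R<0$, then $(K_Y+\epsilon D)\cdot R<0$, because $K_Y\cdot R\le 0$.
\item On $\overline{\NE}(Y)_{K_Y+\epsilon D\ge 0}$ one has $D=\epsilon^{-1}(K_Y+\epsilon D)+\epsilon^{-1}(-K_Y)\ge 0$. So whenever $D$ is not nef, the cone and contraction theorems for $(Y,\epsilon D)$ give a contractible $D$-negative extremal ray.
\end{itemize}
Your own case analysis then shows every step is a flop. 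There is no fibre type step since $D$ is effective, no divisorial step since $D\in\overline{\Mov}$, and no small $K$-negative step by Mori. Each flop keeps $Y$ smooth, keeps $-K$ nef, and keeps $D$ in $\overline{\Mov}$. The process is therefore a sequence of $(K+\epsilon D)$-flips of a klt threefold pair. It terminates by termination of threefold log flips, on an SQM where $D$ is nef.

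Run with scaling of $H$, this MMP traces exactly your segment and shows it meets only finitely many chambers. This is essentially Kawamata's argument in the Calabi--Yau case \cite{KawamataCY}. It gives the decomposition on all of $\Mov^e(Y)$, and it also handles big classes, so the detour through $\Delta\equiv -K_Y+\epsilon D$ becomes unnecessary.

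\textbf{Comparison with the paper and small repairs.} The rest of your proposal is sound and differs from the paper mainly in packaging. The paper cites \cite{StengerXie} for the decomposition and the factorization into flops. It bounds the rays with $x\cdot R=0$ directly via the cone theorem for $(Y,\delta E)$ plus a small ample class. For $\overline{\Mov}\cap\Bbig$ it uses finiteness of small modifications over a fixed base, where you invoke BCHM finiteness of models. Two small repairs are needed:
\begin{itemize}
\item At a nef $D\equiv K_Y+\Delta$ there are no $(K_Y+\Delta)$-negative rays. The rays to control are the $D$-trivial ones, which you must first make negative by passing to $D-\delta H$, as you do for facets.
\item To get that \emph{every} SQM $\alpha\colon Y\dashrightarrow Z$ is a composite of flops, run the MMP for $\alpha^*A$ with $A$ ample on $Z$. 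The output maps by a small birational morphism onto the $\QQ$-factorial $Z$, hence is $Z$.
\end{itemize}
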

		\begin{proof}
			The first two assertions are~\cite[Proposition 1.3]{StengerXie}. The statement about contractions follows from the base-point-free theorem~\cite[Theorem 3.3]{KollarMori}, using that $-K_Y$ is nef. For the local polyhedrality, we use a similar argument as in \cite[Theorem 2.6]{KawamataCY}. 
			
			Let $x \in  \Nef(Y) \cap \Bbig(Y)$. We first claim that there are finitely many extremal rays of $\overline{\NE}(Y)$ on which $x$ is zero. 
			Let $H$ be ample. For $\epsilon$ sufficiently small, $y=x-\epsilon H$ is big, so we may write $y \sim_\RR E$ for an effective $\RR$-divisor $E$. Choose $\delta$ small enough so that $(Y, \delta E)$ is klt. 
			If $x \cdot R=0$ then we have
			\[
			(K_Y+\delta E+\frac{\delta \epsilon}{2}H) \cdot R =(K_Y+\delta x -\frac{\delta \epsilon}{2}H) \cdot R<0,
			\]
			 where we have used that $-K_Y$ is nef and $H$ is ample.
			The cone theorem implies that there are only finitely many such extremal rays $R_1, \dots R_k$. 
		Locally around $x$, $\Nef(Y)$ is defined by the intersection of the half spaces $R_i^{\geq0}$ for some subset $I \subset \{1, \dots, k\}$ and is therefore locally rational polyhedral.
	
	Suppose now that $x \in \Bbig(Y) \cap \overline{\Mov}(Y)$. We first show that $x$ is contained in finitely many of the cones $\alpha^*(\Nef^e(Z) \cap \Bbig(Z))$. Fix one such cone, and let $Z \rightarrow \bar{Z}$ be the associated birational contraction morphism. All other cones correspond to 
	birational contractions $Z_i \rightarrow \bar{Z}$ where $Z \dashrightarrow Z_i$ is a small modification. Since $-K_Y$ is nef, the proof of \cite[Proposition 6.40, Theorem 6.42]{KollarMori} shows that there are only finitely many such $Z_i$, $i=1 \dots m$. By the first part, each $\alpha^*(\Nef^e(Z) \cap \Bbig(Z))$ is locally rational polyhedral, so we can choose a small neighbourhood $U \ni x$ with the property that every facet of $\alpha_i^*(\Nef^e(Z_i)) \cap U$ contains $x$. 
	We claim that
	\[
	\overline{\Mov}(Y) \cap U=\bigcup_{i=1}^m \alpha_i^*(\Nef^e(Z_i)) \cap U.
	\]
	Indeed, if there was a nef chamber not containing $x$ but meeting $U$, there would also be such a chamber which meets one of the $\alpha_i^*(\Nef^e(Z_i))$ along a common facet $F$. But by construction of $U$, $x \in F$.
	
	It follows that $\Bbig(Y) \cap \overline{\Mov}(Y)$ is locally rational polyhedral, as required.
		\end{proof}
		
		\begin{proposition}[Divisorial half-space description of the movable cone]\label{prop:movable-cone-divisorial-halfspaces}
			Let $Y$ be a smooth projective threefold with $-K_Y$ nef. Then $\overline{\Mov}(Y)$ is cut out inside $\overline{\Eff}(Y)$ by rational half-spaces corresponding to birational divisorial contractions $Y \dashrightarrow Z \rightarrow \bar{Z}$. 
		\end{proposition}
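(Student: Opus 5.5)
Since $\overline{\Mov}(Y)\subset\overline{\Eff}(Y)$, and each half-space $\{x : x\cdot \ell\ge 0\}$ coming from a divisorial contraction contains $\overline{\Mov}(Y)$, only one inclusion needs work. Let $C$ be the intersection of $\overline{\Eff}(Y)$ with all such half-spaces. We must show $C\subset\overline{\Mov}(Y)$. The half-space attached to a divisorial contraction $\alpha\colon Y\dashrightarrow Z\to\bar Z$ with exceptional divisor $E_Z$ and contracted ray $R$ is $\{x:\ \alpha_*$-transported $x\cdot R\ge 0\}$, pulled back via $\alpha^*$. It contains $\overline{\Mov}(Y)$ because curves in $R$ sweep out $E_Z$, so movable divisors are nonnegative on them. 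This is the Kawamata Lemma 2.3 argument already recalled in the background section. Rationality of the half-spaces is clear, since $R$ is spanned by an integral curve class.

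For the reverse inclusion, I would argue by contradiction in the big part first. Take $x$ in the interior of $C$, rational and big, and suppose $x\notin\overline{\Mov}(Y)$. Choose an ample $H$ and consider the segment from $H$ to $x$. By Proposition~\ref{prop:movable-nef-chamber-decomposition}, $\Bbig(Y)\cap\overline{\Mov}(Y)$ is locally rational polyhedral and decomposes into nef chambers $\alpha^*\Nef^e(Z)$ of SQMs. Perturbing $x$ slightly within the interior of $C$ if necessary, the segment leaves $\overline{\Mov}(Y)$ at a point $y$ in the relative interior of a facet of $\overline{\Mov}(Y)\cap\Bbig(Y)$. This facet is also a facet of some chamber $\alpha^*\Nef(Z)$. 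By Proposition~\ref{prop:movable-nef-chamber-decomposition}, that facet of $\Nef(Z)\cap\Bbig(Z)$ corresponds to a primitive contraction $p\colon Z\to\bar Z$. Because it lies on the boundary of the movable cone and is not shared with another nef chamber, $p$ cannot be small: a small $K$-trivial contraction admits a flop, since $-K_Z$ is nef so the contraction is $K$-nonpositive and Lemma~\ref{lem:Every-Contraction-Is-Mori} together with BCHM gives a flop, or $p$ is $K$-negative and we use Mori flips. In either case the flopped nef chamber would lie on the other side, contradicting that $y$ is on $\partial\overline{\Mov}$. If $p$ were of fiber type, $y$ would not be big. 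Hence $p$ is divisorial, and its half-space strictly excludes $x$, contradicting $x\in C$.

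Finally I pass to closures. The big rational points in the interior of $C$ are dense in $C$ whenever $\Bbig(Y)\neq\emptyset$, which holds because $Y$ is projective. So $C\subset\overline{\Mov}(Y)$, as both are closed. I should also check that the interior of $C$ meets $\Bbig(Y)$ densely. This holds because $C$ contains the ample cone, which is open.

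The main obstacle is the boundary step. We need the exit point $y$ to land in the relative interior of a genuine facet supported by a nef facet of some SQM, rather than on lower-dimensional strata, and the interplay between a small contraction and the existence of its flop must rule out boundary. Local rational polyhedrality from Proposition~\ref{prop:movable-nef-chamber-decomposition} handles the first point via a general-position choice of $x$. For the $K$-negative small case, I will use that on a smooth threefold with $-K$ nef there are no $K$-negative small contractions (Mori), so only flops occur.
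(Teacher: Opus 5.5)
Your proposal is correct and follows essentially the same route as the paper. The inclusion $\overline{\Mov}(Y)\subset C$ comes from curves in the contracted ray sweeping out the exceptional divisor. For the reverse inclusion you move toward an ample $H$ until you hit the boundary of $\overline{\Mov}(Y)\cap\Bbig(Y)$; there, local polyhedrality from Proposition~\ref{prop:movable-nef-chamber-decomposition} gives a divisorial contraction whose ray pairs negatively with $x$.

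The differences are cosmetic. You use the segment from $H$ to $x$ together with a density argument for non-big $x$, whereas the paper uses the ray $x+\lambda H$, which is automatically big for $\lambda>0$. You also spell out why the boundary facet's contraction is divisorial (a small one would flop, a fibre-type one would not be big), which the paper simply asserts.
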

		\begin{proof}
			Since $-K_Y$ is nef, every facet $F$ of $\overline{\Mov}(Y) \cap \Bbig(Y)$ defines a primitive birational divisorial contraction $Y \dashrightarrow Z \rightarrow \bar{Z}$. We denote $R_F \in N_1(Y)$ the extremal ray contracted by $Z \rightarrow \bar{Z}$ under the identification $N_1(Y) \cong N_1(Z)$. Note that $R_F$ is independent of the choice of the marked minimal model $Z$. 
			We claim that 
			\[
			\overline{\Mov}(Y)=\overline{\Eff}(Y) \cap \bigcap_F R_F^{\geq 0}.
			\]
			Since $Z \rightarrow \bar{Z}$ is a divisorial contraction, curves with class in $R_F$ 
			sweep out a divisor on $Y$, so that every movable divisor is nonnegative on $R_F$. This gives the inclusion $\subset$.
			For the other inclusion, suppose that $x \in \overline{\Eff}(Y) \cap \bigcap_F R_F^{\geq 0}$. Let 
			$H$ be an ample divisor and set
			\[
			c=\min\{\lambda \in \RR \mid x+\lambda H \in \overline{\Mov}(Y)\}.
			\]
			Assume for a contradiction that $c>0$. The class $x+\epsilon H$ is big for all $\epsilon>0$, so in particular, $x+cH$ is contained in the boundary of $\overline{\Mov}(Y) \cap \Bbig(Y)$. Therefore $x+cH$ is contained in $R_F^\perp$ for some $F$. Since $H$ is ample, this implies that $x \cdot R_F<0$, contradiction. It follows that $c \leq 0$, and therefore $x\in \overline{\Mov}(Y)$.
		\end{proof}
		\begin{lemma}\label{lem:movable-cone-inclusions}
			Let $Y$ be a smooth projective threefold with $-K_Y$ nef. We have the following inclusions:
			\begin{enumerate}
				\item $\overline{\Mov}(Y) \subset \overline{\Mov}(Y^{\mathrm{gen}})$. 
				\item $\overline{\Mov}(Y) \cap \Bbig(Y) = \overline{\Mov}(Y) \cap \Bbig(Y^{\mathrm{gen}})$
			\end{enumerate}
		\end{lemma}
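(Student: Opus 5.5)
We prove (1) by combining the divisorial half-space description of Proposition~\ref{prop:movable-cone-divisorial-halfspaces} with the deformation theory of divisorial contractions from Section~\ref{sec:deformations}. By Lemma~\ref{lem:ConeInclusions}, $\Eff(Y^{\mathrm{gen}})\subset\Eff(Y)$, which gives $\overline{\Eff}(Y)\supset\overline{\Eff}(Y^{\mathrm{gen}})$. This inequality points the wrong way for a direct comparison, so we argue facet by facet instead, using local rational polyhedrality (Proposition~\ref{prop:movable-nef-chamber-decomposition}).

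\textbf{Reduction to big classes.} Since $\overline{\Mov}(Y)$ is the closure of $\overline{\Mov}(Y)\cap\Bbig(Y)$ (the latter being nonempty because it contains the ample cone), it suffices to show that every $x\in\overline{\Mov}(Y)\cap\Bbig(Y)$ lies in $\overline{\Mov}(Y^{\mathrm{gen}})$.

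\textbf{Step 1: chambers.} By the decomposition \eqref{eq:movable-cone-decomposition}, $x\in\alpha^*\Nef(Z)$ for some SQM $\alpha\colon Y\dashrightarrow Z$, and $\alpha$ is a composite of flops. By \cite[Theorem 11.10]{KollarMoriFlips}, these flops deform over $\Def(Y)\cong\Def(Z)$, so $\alpha$ induces an SQM $Y^{\mathrm{gen}}\dashrightarrow Z^{\mathrm{gen}}$. This identifies $\overline{\Mov}(Y^{\mathrm{gen}})$ with $\overline{\Mov}(Z^{\mathrm{gen}})$ via $\alpha^*$.

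\textbf{Step 2: nef classes.} By Lemma~\ref{lem:ConeInclusions}(1) applied to $Z$, we have $\Nef(Z)\subset\Nef(Z^{\mathrm{gen}})\subset\overline{\Mov}(Z^{\mathrm{gen}})$. Hence $x\in\alpha^*\overline{\Mov}(Z^{\mathrm{gen}})=\overline{\Mov}(Y^{\mathrm{gen}})$.

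One point needs checking: $Z$ is smooth and $-K_Z$ is nef, because crepant flops preserve nefness of $-K$. Flops of smooth threefolds are smooth, so Lemma~\ref{lem:ConeInclusions} applies to $Z$.

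The only delicate point in (1) is that $x$ need not be effective, so it may lie in $\overline{\Mov}(Y)\cap\Bbig(Y)$ but not in ${\Mov}^e(Y)$. Since $x$ is big, however, it is effective, so it lies in ${\Mov}^e(Y)=\overline{\Mov}(Y)\cap\Eff(Y)$ and the decomposition applies. The boundary case, where $x$ is not big, is handled by taking closures.

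For (2), the inclusion $\supseteq$ is immediate from Lemma~\ref{lem:ConeInclusions}(3), which gives $\Bbig(Y^{\mathrm{gen}})\subset\Bbig(Y)$. For $\subseteq$, take $x\in\overline{\Mov}(Y)\cap\Bbig(Y)$ and, as above, write $x\in\alpha^*(\Nef(Z)\cap\Bbig(Z))$. Lemma~\ref{lem:ConeInclusions}(4) for $Z$ gives $x\in\alpha^*\Bbig(Z^{\mathrm{gen}})$. The deformed SQM identifies this with $\Bbig(Y^{\mathrm{gen}})$, since SQMs preserve big cones.

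The main obstacle I expect is justifying that the flops composing $\alpha$ deform compatibly over the whole miniversal family, so that the very general fibres of $\cY$ and $\cZ$ are related by an SQM and the identifications of $N^1$ via parallel transport commute with $\alpha^*$. If this fails, I would fall back on Kollár–Mori's result that flops extend over deformations when $H^2(\cO)=0$, exactly as it was used in the proof of Proposition~\ref{prop:injection-roots-faces}.
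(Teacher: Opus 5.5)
Your proposal is correct and follows essentially the paper's route. The paper simply cites \cite[Lemma 3.3]{Lutz}, noting that its proof only needs the chamber decomposition ${\Mov}^e(Y)=\bigcup_\alpha\alpha^*\Nef^e(Z)$ and the fact that every SQM is a composite of flops, which deform over the miniversal family; both are supplied by Proposition~\ref{prop:movable-nef-chamber-decomposition} when $-K_Y$ is nef, and this is exactly your Steps 1--2 combined with Lemma~\ref{lem:ConeInclusions} applied chamber by chamber. (Your opening remarks about divisorial half-spaces and a facet-by-facet argument are never used and can be dropped.)
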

		\begin{proof}
			See \cite[Lemma 3.3]{Lutz}. The proof given there only uses the decomposition of the movable cone into nef cones of its marked minimal models, together with the fact that all SQMs are composites of flops. Both of these are satisfied if $-K_Y$ is nef, by Proposition~\ref{prop:movable-nef-chamber-decomposition}.
		\end{proof}
		\begin{proposition}\label{prop:bijection-roots-faces}
			Let $Y$ be a smooth projective threefold with $\Def(Y)$ smooth and $H^2(Y, \cO_Y)=0$. Suppose further that $-K_Y$ is nef. 
			The injections 
			\[
			\Phi \colon \Delta_Y^{\mathrm{big}} \rightarrow S, \qquad \Phi' \colon \Delta_Y^{\mathrm{sm}} \rightarrow S'
			\]
			from Proposition~\ref{prop:injection-roots-faces} are bijective. 
		\end{proposition}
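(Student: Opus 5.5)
Injectivity is Proposition~\ref{prop:injection-roots-faces}, so only surjectivity needs proof: every disappearing facet of $\overline{\Mov}(Y)$ (resp.\ $\Nef(Y)$) must be shown to be of the form $\ell_E^\perp$ for some $E\in\Delta_Y^{\mathrm{big}}$ (resp.\ $\Delta_Y^{\mathrm{sm}}$). The strategy is to use $-K_Y$ nef to realize each facet by a $K$-trivial primitive contraction, and then apply Propositions~\ref{prop:root-faces-disappear-movable} and~\ref{prop:root-faces-disappear-nef}.

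\emph{Nef case.} Let $F$ be a disappearing facet of $\Nef(Y)$. First I check that $F$ meets $\Bbig(Y)$. If $F\subset\partial\overline{\Eff}(Y)$, then by Lemma~\ref{lem:ConeInclusions} the classes in $F$ are not big on $Y^{\mathrm{gen}}$ either. Since $\Nef(Y)\subset\Nef(Y^{\mathrm{gen}})$, the hyperplane of $F$ should then still support $\Nef(Y^{\mathrm{gen}})$ along a facet. Comparing both cones near a relative interior point of $F$ lying on the non-big boundary needs some care; I would argue that a nef, non-big class with $H^3=0$ stays non-ample, so the supporting hyperplane persists.

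So $F\cap\Bbig(Y)$ is a facet of $\Nef(Y)\cap\Bbig(Y)$. By Proposition~\ref{prop:movable-nef-chamber-decomposition} it corresponds to a primitive birational contraction $p_F$. This contraction is $K$-nonpositive because $-K_Y$ is nef, and it is Mori for a klt pair by Lemma~\ref{lem:Every-Contraction-Is-Mori} in the $K$-trivial case (and trivially in the $K$-negative case). Proposition~\ref{prop:root-faces-disappear-nef} then shows that $p_F$ is a $K$-trivial type III contraction, with $q$ quasi-ruled and $M_E\cong\cO_C$. By Proposition~\ref{prop:wilson-type-contraction-to-curve}(3), $\omega_Y^{-1}|_E\otimes q^*\omega_C\cong\cO_E$, so $E\in\Delta_Y^{\mathrm{sm}}$. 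Finally, $\ell_E$ spans the contracted ray, so $F$ is supported on $\ell_E^\perp$.

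\emph{Movable case.} Let $F$ be a disappearing facet of $\overline{\Mov}(Y)$. By the same bigness argument, and using Lemma~\ref{lem:movable-cone-inclusions}(2), $F$ meets $\Bbig(Y)$. By Propositions~\ref{prop:movable-cone-divisorial-halfspaces} and~\ref{prop:movable-nef-chamber-decomposition}, $F$ then arises from an SQM $\alpha\colon Y\dashrightarrow Z$ that is a composite of flops, hence crepant, followed by a primitive divisorial contraction $Z\to\bar Z$ with exceptional divisor $E_Z$. Since $-K_Z$ is again nef, this contraction is $K$-nonpositive. The flops deform over $\Def(Y)\cong\Def(Z)$ by \cite[11.10]{KollarMoriFlips}, so $F$ disappears for $Y$ if and only if the corresponding facet disappears for $Z$. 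Proposition~\ref{prop:root-faces-disappear-movable} then gives that the contraction is $K$-trivial of type III with $H^0(M_{E_Z})\ne0$. By Proposition~\ref{prop:wilson-type-contraction-to-curve}(3), $H^0(E_Z,\omega_Z^{-1}|_{E_Z}\otimes q^*\omega_C)\supseteq H^0(C,M_{E_Z})\neq0$. Thus $(\alpha,q)$ is a contractible $K$-trivial conic model satisfying the non-vanishing condition, so the strict transform $E$ lies in $\Delta_Y^{\mathrm{big}}$, and by definition $F$ is supported on $\ell_E^\perp$.

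\emph{Main obstacle.} I expect the delicate step to be excluding disappearing facets contained in the non-big boundary $\partial\overline{\Eff}(Y)$, where no birational contraction is available. I would handle it via Lemmas~\ref{lem:ConeInclusions} and~\ref{lem:movable-cone-inclusions}. If that fails, the fallback is to interpret the sets $S$ and $S'$ as facets of the big parts $\Nef(Y)\cap\Bbig(Y)$ and $\overline{\Mov}(Y)\cap\Bbig(Y)$, which is how they are used in Theorem~\ref{thm:withnef}.
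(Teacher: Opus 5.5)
Your proposal is correct and follows the paper's proof essentially step by step: you rule out facets missing $\Bbig(Y)$ via the cone inclusions of Lemmas~\ref{lem:ConeInclusions} and~\ref{lem:movable-cone-inclusions}, realize the remaining facets by $K$-nonpositive primitive contractions (on a crepant SQM $Z$ in the movable case, transporting disappearance through the deformed flops), and conclude with Propositions~\ref{prop:root-faces-disappear-nef} and~\ref{prop:root-faces-disappear-movable}. The non-big case you flag as the main obstacle is as easy as the paper treats it, so the fallback is unnecessary: $F$ misses $\Bbig(Y^{\mathrm{gen}})$ and hence lies in the boundary of the generic cone, and the supporting hyperplane at a relative interior point of the codimension-one convex set $F$ must contain $F$, hence equals its span and cuts out a facet.
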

		\begin{proof}
			Let $F$ be a disappearing facet of $\Nef(Y)$.
			If $F \cap \Bbig(Y)$ is empty, then $F$ persists under deformation, this follows from the inclusion of cones in Lemma~\ref{lem:ConeInclusions}. 
			We may therefore assume that $F$ meets $\Bbig(Y)$. 
			Since $-K_Y$ is nef, $F$ is $K$-nonpositive and corresponds to a primitive birational contraction $p_F$. Proposition~\ref{prop:root-faces-disappear-nef} shows that the exceptional divisor $E$ satisfies $E \in \Delta_Y^{\mathrm{sm}}$, and its defining quasi-ruling has class $\ell_E$. It follows that $\Phi'(E)=F$.

			Suppose now that $F$ is a disappearing facet of $\overline{\Mov}(Y)$. As before, using Lemma~\ref{lem:movable-cone-inclusions}, we may assume that $F$ meets $\Bbig(Y)$.  Since $-K_Y$ is nef, by Proposition~\ref{prop:movable-nef-chamber-decomposition}, we may choose a crepant SQM \(\alpha\colon Y\dashrightarrow Z\) such that $F$ contains a facet of $\alpha^*(\Nef(Z) \cap \Bbig(Z))$. Since $-K_Z$ is nef, this facet corresponds to a $K$-nonpositive primitive divisorial contraction $p \colon Z \rightarrow \bar{Z}$. 
Arguing as in Proposition~\ref{prop:injection-roots-faces}, we see that since $F$ disappears as a facet of $\overline{\Mov}(Y)$, $(\alpha^*)^{-1}(F)$ disappears as a facet of $\overline{\Mov}(Z)$. 
			Proposition~\ref{prop:root-faces-disappear-movable} now shows that $p$ is a $K$-trivial type III contraction with exceptional divisor $E$, and $H^0(M_E) \neq 0$. By definition, this means that $E \in \Delta_Z^{\mathrm{big}}$ and $(\alpha^*)^{-1}F$ is supported on $\ell_{E}^\perp$. Therefore $F=\Phi(E_Y)$, where $E_Y \in \Delta_Y^{\mathrm{big}}$ is the strict transform of $E$.
		\end{proof}
		\begin{proposition}\label{prop:movableisintersection}
			Let $Y$ be a smooth projective threefold with $\Def(Y)$ smooth and $H^2(Y, \cO_Y)=0$, and suppose that $-K_Y$ is nef. Then
			\begin{align}
				\Nef(Y)=\bar{\cC}^{sm} \cap \Nef(Y^{\mathrm{gen}}).	\label{eq:nef}\\
				\overline{\Mov}(Y)=\bar{\cC}^{big} \cap  \overline{\Mov}(Y^{\mathrm{gen}}). \label{eq:mov}
			\end{align}
		\end{proposition}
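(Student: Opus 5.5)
We prove both equalities by the same two-inclusion argument. For \eqref{eq:mov} we use the divisorial half-space description of Proposition~\ref{prop:movable-cone-divisorial-halfspaces} together with the bijection of Proposition~\ref{prop:bijection-roots-faces}. For \eqref{eq:nef} we use the facet description of $\Nef(Y)\cap\Bbig(Y)$ by contractions from Proposition~\ref{prop:movable-nef-chamber-decomposition}.

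\emph{The inclusion $\subset$.} By Lemma~\ref{lem:ConeInclusions}(1), $\Nef(Y)\subset\Nef(Y^{\mathrm{gen}})$. By Lemma~\ref{lem:movable-cone-inclusions}(1), $\overline{\Mov}(Y)\subset\overline{\Mov}(Y^{\mathrm{gen}})$. It remains to check that $\Nef(Y)\subset\bar{\cC}^{sm}$ and $\overline{\Mov}(Y)\subset\bar{\cC}^{big}$, which means $x\cdot\ell_E\ge 0$ for every root $E$. For $E\in\Delta_Y^{\mathrm{sm}}$, the class $\ell_E$ is represented by an effective curve on $Y$, so any nef class is nonnegative on it. For $E\in\Delta_Y^{\mathrm{big}}$, take a contractible conic model $\alpha\colon Y\dashrightarrow Z$ from Proposition~\ref{prop:existenceofcontractionofE}. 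Curves of class $\ell_{E_Z}$ sweep out the divisor $E_Z$, so movable classes on $Z$ are nonnegative on $\ell_{E_Z}$. Since $\alpha^*$ preserves $\overline{\Mov}$ and is dual to $\alpha_*$, this gives $x\cdot\ell_E\ge0$ for $x\in\overline{\Mov}(Y)$.

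\emph{The inclusion $\supset$ for \eqref{eq:mov}.} Let $x\in\bar{\cC}^{big}\cap\overline{\Mov}(Y^{\mathrm{gen}})$. Since $\overline{\Mov}(Y^{\mathrm{gen}})\subset\overline{\Eff}(Y^{\mathrm{gen}})\subset\overline{\Eff}(Y)$ by Lemma~\ref{lem:ConeInclusions}(2), Proposition~\ref{prop:movable-cone-divisorial-halfspaces} reduces us to showing $x\cdot R_F\ge0$ for every facet $F$ of $\overline{\Mov}(Y)\cap\Bbig(Y)$. There are two cases.
\begin{enumerate}
\item If $F$ disappears, Proposition~\ref{prop:bijection-roots-faces} gives $F=\Phi(E)$ with $R_F=\RR_{\ge0}\ell_E$. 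Then $x\cdot R_F\ge0$ because $x\in\bar{\cC}^{big}$.
\item If $F$ persists, its supporting hyperplane $R_F^\perp$ also supports a facet $F^{\mathrm{gen}}$ of $\overline{\Mov}(Y^{\mathrm{gen}})$.
\end{enumerate}
The main point in case (2) is to check that $\overline{\Mov}(Y^{\mathrm{gen}})$ lies on the side $R_F^{\ge0}$ rather than the opposite one. For this we use that $F\cap\Bbig(Y)$ is a common subset of both cones, using $\overline{\Mov}(Y)\cap\Bbig(Y)=\overline{\Mov}(Y)\cap\Bbig(Y^{\mathrm{gen}})$ from Lemma~\ref{lem:movable-cone-inclusions}. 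Both cones are convex, contain $\overline{\Mov}(Y)$, which has interior points strictly on the positive side, and are supported by the same hyperplane. Hence $\overline{\Mov}(Y^{\mathrm{gen}})\subset R_F^{\ge0}$, and in particular $x\cdot R_F\ge0$.

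We expect the main obstacle to be making ``persists'' rigorous in the form needed here. One has to know that the relevant facet of the generic cone is supported by the same hyperplane, with the correct sign, and not merely that some facet of the generic cone lies in $R_F^\perp$. If Definition~\ref{def:face-persistence} gives only this weaker statement, we would instead argue geometrically. By Proposition~\ref{prop:persistence-via-contraction}, a persisting movable facet corresponds to a contraction $P$ that is divisorial on very general fibres. Its exceptional divisor on $\cY_s$ is swept out by curves of class in $R_F$. Therefore every movable class on $Y^{\mathrm{gen}}$ is nonnegative on $R_F$, which again gives $x\cdot R_F\ge0$. This geometric version is what we would write.

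\emph{The inclusion $\supset$ for \eqref{eq:nef}.} The argument is the same. Let $x\in\bar{\cC}^{sm}\cap\Nef(Y^{\mathrm{gen}})$; this lies in $\overline{\Eff}(Y)$. First perturb to reduce to the big case: add a small ample class $\epsilon H$, which stays in both cones, and let $\epsilon\to0$ at the end. Now consider the ray from an ample class to $x$. If it left $\Nef(Y)\cap\Bbig(Y)$, it would cross a facet $G$, and by Proposition~\ref{prop:movable-nef-chamber-decomposition} this facet comes from a contraction $p_G$ with extremal ray $R_G$, on which $x$ would be negative. There are again two cases. If $G$ disappears, Proposition~\ref{prop:bijection-roots-faces} gives $R_G=\RR_{\ge0}\ell_E$ with $E\in\Delta_Y^{\mathrm{sm}}$, contradicting $x\in\bar{\cC}^{sm}$. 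If $G$ persists, Proposition~\ref{prop:persistence-via-contraction} shows that $P_s$ contracts curves of class in $R_G$ on a very general fibre. Then $x\cdot R_G\ge0$ because $x\in\Nef(Y^{\mathrm{gen}})$, again a contradiction. Hence the ray stays in $\Nef(Y)$, so $x\in\Nef(Y)$.
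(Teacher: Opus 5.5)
Your proof is correct and follows essentially the paper's argument. The paper likewise follows the line $x+tH$ to its exit point from $\Nef(Y)$ and splits into non-big, persisting-facet and disappearing-facet cases; your $\epsilon H$ perturbation simply absorbs the non-big case, and the paper only declares the movable case analogous.

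Routing the movable case through Proposition~\ref{prop:movable-cone-divisorial-halfspaces} just repackages that same line argument. Your main sign argument for persisting facets already suffices, since a facet of the convex cone $\overline{\Mov}(Y^{\mathrm{gen}})$ contained in $R_F^\perp$ forces $R_F^\perp$ to be a supporting hyperplane. The geometric fallback is therefore unnecessary.
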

		\begin{proof} This is very similar to \cite[Proposition 3.10]{Lutz}
			Since the classes $\ell_E$ for $E \in \Delta_Y^{\mathrm{sm}}$ are effective, we have an inclusion 
			\[
			\Nef(Y) \subset \bar{\cC}^{sm} \cap \Nef(Y^{\mathrm{gen}}).
			\]
			For the reverse inclusion, let $x \in \bar{\cC}^{sm} \cap \Nef(Y^{\mathrm{gen}})$. Let $H$ be a generic ample divisor on $Y$, and consider the line $L: \{x+ tH \mid t \in \RR \}$ in the vector space $N^1(Y, \RR)$. $L$ must meet $\partial \Nef(Y)$ transversely, and we set 
			\[
			t_0=\inf\{t \in \RR \mid x+tH \in \Nef(Y) \}
			\]
			and $y=x+t_0H \in \partial  \Nef(Y) $.
			We claim that $t_0 \leq 0$, which will show that $x \in \Nef(Y)$ and complete the proof.	
			Suppose first that $y \notin \Bbig(Y)$.
			Then $y \notin \Bbig(Y^{\mathrm{gen}})$ by Lemma~\ref{lem:ConeInclusions}, so $y \in \partial \Nef(Y^{\mathrm{gen}})$, so $t_0 \leq 0$. It follows that $x\in \Nef(Y)$. 
			
			Suppose now that $y \in \Bbig(Y)$. By Proposition~\ref{prop:movable-nef-chamber-decomposition}, $\Nef(Y) \cap \Bbig(Y)$ is locally rational polyhedral in $\Bbig(Y)$.
			Since $H$ was chosen generically, we may assume that $y$ is contained in the relative interior of one facet $F$ of $\Nef(Y) \cap \Bbig(Y)$. 
			
			Suppose first that $F$ persists under deformation. Then $y \in \partial \Nef(Y^{\mathrm{gen}})$. We therefore have $t_0 \leq 0$, and as before, $x \in \Nef(Y)$.
			
			Suppose now that $F$ disappears under deformation. By Proposition~\ref{prop:bijection-roots-faces}, we have that $F \subset \ell_E^\perp$ for some $E \in \Delta_Y^{\mathrm{sm}}$. 
			We have
			\[
			0=y \cdot \ell_E=(x \cdot \ell_E) +t_0(H \cdot \ell_E)
			\]
			Since $H$ is ample, $H \cdot \ell_E>0$, and $x \cdot \ell_E \geq 0$ since $x \in \bar{\cC}^{sm}$. It follows that 
			\[
			t_0=-\frac{x\cdot \ell_E}{H \cdot \ell_E} \leq 0,
			\]
			and as before, $x \in \Nef(Y)$.
			
			The argument for the movable cone is exactly analogous.
		\end{proof}
	We have a fundamental chamber result for the action of the Weyl group.
					\begin{theorem}\label{thm:MMP}
						Suppose that $Y$ is a smooth projective threefold with $\Def(Y)$ smooth, $H^2(Y, \cO_Y)=0$, and $-K_Y$ nef. 
						Let $\pi \colon \cY \rightarrow S$ be a miniversal deformation of $Y$.
						\begin{enumerate}
							\item Each $D \in \Nef(Y^{\mathrm{gen}}) \cap \Bbig(Y^{\mathrm{gen}})$ is a $W_Y^{sm}$-translate of an element in $\Nef(Y) \cap \Bbig(Y)$.
							\item Each $D \in \overline{\Mov}(Y^{\mathrm{gen}}) \cap \Bbig(Y^{\mathrm{gen}})$ is a $W_Y^{big}$-translate of an element in $\overline{\Mov}(Y) \cap \Bbig(Y)$.
						\end{enumerate}
					\end{theorem}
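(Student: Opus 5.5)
We prove (2); (1) is the same argument with $\Nef$, $\Delta_Y^{\mathrm{sm}}$, $W_Y^{sm}$ and \eqref{eq:nef} in place of $\overline{\Mov}$, $\Delta_Y^{\mathrm{big}}$, $W_Y^{big}$ and \eqref{eq:mov}. The model is the classical reduction to the fundamental chamber: reflect $D$ across walls it violates, with a height function to make the process terminate. Fix $D \in \overline{\Mov}(Y^{\mathrm{gen}})\cap\Bbig(Y^{\mathrm{gen}})$ and an ample class $H$ on $Y$. By Proposition~\ref{prop:movableisintersection}, $\overline{\Mov}(Y)=\bar{\cC}^{big}\cap\overline{\Mov}(Y^{\mathrm{gen}})$. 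Corollary~\ref{thm:weylpreservescones} shows that $W_Y^{big}$ preserves $\overline{\Mov}(Y^{\mathrm{gen}})\cap\Bbig(Y^{\mathrm{gen}})$. Hence it suffices to find $w\in W_Y^{big}$ with $wD\in\bar{\cC}^{big}$. The membership $wD\in\Bbig(Y)$ is then automatic from Lemma~\ref{lem:ConeInclusions}(3).

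\textbf{Reduction step.} If $D\notin\bar{\cC}^{big}$, some $E\in\Delta_Y^{\mathrm{big}}$ has $D\cdot\ell_E<0$. Then
\[
\sigma_E(D)=D+(D\cdot\ell_E)[E]=D-|D\cdot\ell_E|\,[E].
\]
We measure progress by the height $h(D')=D'\cdot H\cdot H$ (or pairing against a fixed movable curve class), which drops by $|D\cdot\ell_E|\,(E\cdot H^2)>0$. Since $\ell_E$ and $D$ are integral when $D$ is rational, each step lowers $h$ by at least a fixed positive amount $\min_E E\cdot H^2$ times an integer. The orbit stays inside $\overline{\Mov}(Y^{\mathrm{gen}})\cap\Bbig(Y^{\mathrm{gen}})\subset\overline{\Eff}(Y)$, on which $h\ge0$. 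Moreover, the effective classes $[E]$ of bounded $H$-degree form a finite set.

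\textbf{Main obstacle.} The issue is termination for arbitrary real $D$, and more precisely whether the orbit $W_Y^{big}D$ meets $\bar{\cC}^{big}$ at all, i.e.\ whether $D$ lies in the Tits cone $U^{big}$. For rational $D$ (integral after scaling), the height argument works. Each subtraction is by a nonnegative integer combination of the classes $[E]$, so $D-wD$ lies in the integral effective cone, and $h(wD)\ge0$ bounds the number of steps. Consequently every rational point of the big part lies in $U^{big}$.

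For a real $D$ in the interior of the big cone, we argue as follows. Pick a rational $D'$ near $D$ and use that the relevant cones are locally finite. Only finitely many walls $\ell_E^\perp$ meet a small neighbourhood of a big class: such $E$ satisfy $E\cdot H^2\le$ a bound coming from $D\cdot H^2$, since $E$ appears in the decomposition $D-wD$. Granting this, the region near $D$ is covered by finitely many chambers $w\bar{\cC}^{big}$, each closed, and $D$ is a limit of rational points in their union, so $D$ itself lies in one of them.

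Alternatively, one can run the descent directly on $D$: among the finitely many $E$ with $E\cdot H^2\le h(D)/\epsilon$, choose the reflection that decreases $h$ the most, and invoke local finiteness of the chamber structure, compare Proposition~\ref{prop:movable-nef-chamber-decomposition}. This bounded-degree finiteness for effective irreducible surfaces of fixed $H$-degree spanning rays, combined with the bigness hypothesis ensuring we stay away from the boundary where walls could accumulate, is the step I expect to require the most care.
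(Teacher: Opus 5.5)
Your rational-class argument is correct and takes a different route from the paper. For integral $D$, a reflection $\sigma_E$ with $D\cdot\ell_E<0$ lowers $h=D\cdot H^2$ by the positive integer $|D\cdot\ell_E|\,(E\cdot H^2)$. The orbit stays in $\Bbig(Y^{\mathrm{gen}})\subset\Bbig(Y)$, where $h>0$, so the descent stops in $\bar{\cC}^{big}$, and Proposition~\ref{prop:movableisintersection} with Corollary~\ref{thm:weylpreservescones} finishes.

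\textbf{The gap} is the passage to arbitrary real $D$, which the statement requires. For irrational $D$ the decrements $|D\cdot\ell_E|\,(E\cdot H^2)$ have no positive lower bound, and a greedy choice of $E$ does not change this. So nothing in your argument stops the descent from running forever with $h$ decreasing to a limit. This is exactly what happens at boundary points of a Tits cone: an irrational isotropic ray of a hyperbolic reflection group is a limit of rational points of the Tits cone without lying in it. Your remedy, local finiteness of walls near a big class, is asserted rather than proved, and the justification is circular. Bounding $E\cdot H^2$ ``since $E$ appears in $D-wD$'' presupposes some $w$ with $wD\in\bar{\cC}^{big}$, i.e.\ $D\in U^{big}$, which is the claim. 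Moreover, the walls near $D$ are all the translates $w(\ell_E^\perp)$, not only the simple walls $\ell_E^\perp$. Their local finiteness at $D$ essentially amounts to $D$ lying in the interior of $U^{big}$. Bigness has to enter more substantially than through $h\geq 0$.

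\textbf{How the paper handles it.} It uses no descent. It lifts $D$ to $\cD$ on $\cY$ and writes $\delta\cD\sim_{\RR,S}K_{\cY}+\Gamma+\delta\mathcal{G}$ with a klt boundary that is big over $S$; this is where $D\in\Bbig(Y^{\mathrm{gen}})$ is used. It then runs a $\cD$-MMP over $S$, which terminates for real divisors by \cite[Corollary 1.4.2]{BCHM}. Propositions~\ref{prop:bijection-roots-faces} and~\ref{prop:flopofroot} identify each step either with a root flop inducing some $\sigma_E$, or with a deformed flop of the central fibre. Hence the composite lies in $W_Y^{big}$ (resp.\ $W_Y^{sm}$).

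\textbf{A possible repair within your framework.}
(i) Prove that $U^{big}$ (resp.\ $U^{sm}$) is convex. For instance, use the characterisation that $x\in U^{big}$ iff only finitely many reflection hyperplanes of $W_Y^{big}$ separate $x$ from $\cC^{big}$, a condition preserved along segments.
(ii) Note that $-K_{Y^{\mathrm{gen}}}$ is nef because $\Nef(Y)\subset\Nef(Y^{\mathrm{gen}})$. Then Proposition~\ref{prop:movable-nef-chamber-decomposition} applied to $Y^{\mathrm{gen}}$ shows that $\overline{\Mov}(Y^{\mathrm{gen}})\cap\Bbig(Y^{\mathrm{gen}})$ and $\Nef(Y^{\mathrm{gen}})\cap\Bbig(Y^{\mathrm{gen}})$ are locally rational polyhedral.
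It follows that every real $D$ in these cones is a nonnegative combination of rational points of the same cone. Each of those lies in the Tits cone by your descent, and convexity then puts $D$ there too.
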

					\begin{proof}
					Let $D \in \Bbig(Y^{\mathrm{gen}})$.
						Choose an $\RR$-divisor $\cD$ on $\cY$ whose class maps to $D$ under the isomorphism 
						\[
						N^1(\cY/S) \cong N^1(Y).
						\]
						By assumption $[\cD|_{\cY_s}] \in \Bbig(\cY_s)$ for $s$ very general, so we have that $[\cD] \in \Bbig(\cY/S)$.
						By the relative Kodaira lemma, we may write 
						\[
						\cD \sim_{\RR, S} \mathcal{A}+\mathcal{G}
						\]
						where $\mathcal{A}$ is a relatively ample $\QQ$-divisor and $\mathcal{G}$ is an effective $\RR$-divisor.
						Choose $0<\delta\ll1$ rational such that
						$(\cY,\delta\mathcal{G})$ is klt.
						
						Since $-K_Y$ is nef, the divisor
						$
						-K_Y+\delta\cA|_Y
						$
						is ample. Since ampleness is an open condition, we have that
						\[
						\cL:=-K_{\cY/S}+\delta\cA \in \Pic(\cY)_\QQ
						\]
						is relatively ample after shrinking $S$. Choosing $m\gg0$ sufficiently divisible and a general
						$H\in |m\cL|$, set $\Gamma=\frac1mH$. By \cite[Lemma 5.17]{KollarMori}, the pair
						$(\cY,\Gamma+\delta\mathcal{G})$ is klt. Moreover,
						\[
						K_{\cY}+\Gamma+\delta\mathcal{G}
						\sim_{\RR,S}\delta\cD.
						\]
					Since	$\Gamma \in \Amp(\cY/S)$ and $\delta \mathcal{G} \in \Eff(\cY)$, we have 
						\[
						\Gamma+\delta \mathcal{G} \in \Bbig(\cY/S).
						\]
						By \cite[Corollary 1.4.2]{BCHM}, we may run a
						$(K_{\cY/S}+\Gamma+\delta\mathcal{G})$-MMP over $S$, which is precisely a
						$\cD$-MMP. After shrinking $S$, we may assume that each step of the MMP meets the central fiber. 
						
						The remainder of the argument is identical to~\cite[Theorem 4.15]{Lutz}, we sketch it here for the convenience of the reader. 
						
						Suppose first that $D \in \Nef(Y^{\mathrm{gen}})$, and let $P \colon \cY \rightarrow \bar{\cY}$ be the first contraction in the MMP, and let $p \colon Y \rightarrow \bar{Y}$ be its restriction to the central fiber. The contraction corresponds to a facet $F$ of $\Nef(Y)$. Denoting $R_p$ the contracted extremal ray, we have 
						\[
						D \cdot R_p<0, \qquad D \in  \Nef(Y^{\mathrm{gen}}).
						\]
						Hence the facet $F$ must disappear under deformation. Since $-K_Y$ is nef,  Proposition~\ref{prop:bijection-roots-faces} implies that $F \subset \ell_E^{\perp}$ for some $E \in \Delta_Y^{\mathrm{sm}}$, so that $p$ is the contraction of $E$ along its (quasi)-ruling, and the first step in the MMP is the flop of $E$ constructed in Proposition~\ref{prop:flopofroot}.
						We now repeat the same argument. After finitely many steps the MMP terminates, and we have that 
						\[
						[\cD^+|_Y] \in \Nef(Y) \cap \Bbig(Y), 
						\]
						where $\cD^+$ is the strict transform of $\cD$ and we have used Lemma~\ref{lem:ConeInclusions}(4). 
						By Proposition~\ref{prop:flopofroot}, the resulting automorphism of $N^1$ is given by the product of the root reflections $\sigma_E$, and therefore an element of $W_Y^{sm}$.
						
						Suppose now that $D \in \overline{\Mov}(Y^{\mathrm{gen}})$. We keep the same notation.
						If $F$ is not contained in a facet of $\overline{\Mov}(Y)$, then the induced contraction $p$ on central fibers is small. Since $-K_Y$ is nef, and $Y$ is smooth, $p$ must be a flopping contraction, and (after shrinking $S$), $P$ is just the deformation of $p$ given by \cite[11.10]{KollarMoriFlips}.
						
						If $F$ is contained in a facet of $\overline{\Mov}(Y)$, then $F\subset\ell_E^\perp$ for some $E \in \Delta_Y^{\mathrm{big}}$ by Proposition~\ref{prop:bijection-roots-faces}, and $p$ is the contraction of $E$ constructed in Proposition~\ref{prop:existenceofcontractionofE}.
						
						The MMP is therefore a sequence of flops, each of which is either a deformation of a flop of the current central fiber, or a flop constructed in Proposition~\ref{prop:flopofroot} associated to a deformation root on that fiber.
				Let $\alpha \colon Y \dashrightarrow Z$ be the induced SQM on central fibers. 
				After the MMP has terminated, the strict transform of $\cD$ restricts to a class in $\Nef(Z)\cap\Bbig(Z)$ on the final central fiber. Further composing with $\alpha^{-1} \colon Z \dashrightarrow Y$, the strict transform is in $\overline{\Mov}(Y) \cap \Bbig(Y)$. 
			On $N^1$, the contribution of the flops making up $\alpha$ cancels out by repeatedly applying Equation~\eqref{eq:reflection-compatible}. Hence the resulting automorphism of $N^1$ is an element of $W_Y^{big}$.
					\end{proof}
We are now ready to give the proof of Theorem~\ref{thm:withnef}.
					\begin{proof}[Proof of Theorem~\ref{thm:withnef}] With all our preliminary work, the proof is now similar to the proof of~\cite[Theorem 1.4]{Lutz}.
						By Proposition~\ref{prop:deformation-roots-form-a-root-system}, the set $\Delta_Y^{\mathrm{sm}}$ forms a set of generalized simple roots. The fundamental chamber is nonempty, since it contains the ample cone of $Y$. By \cite[Corollary 4.8]{Lutz}, 
						the fundamental chamber $\bar{\cC}^{sm}$ is a fundamental domain for the action of $W_Y^{sm}$ on the Tits cone $U^{sm}$. By Corollary~\ref{thm:weylpreservescones}, the action of $W_Y^{sm}$ preserves $\Nef(Y^{\mathrm{gen}})$, and it follows that $W_Y^{sm}$ acts on $U^{sm} \cap \Nef(Y^{\mathrm{gen}})$ with fundamental domain 
						\[
						\bar{\cC}^{sm} \cap \Nef(Y^{\mathrm{gen}})=\Nef(Y)
						\]
						where the last equality is~\eqref{eq:nef}.
						By Theorem~\ref{thm:MMP}, every element of $\Nef(Y^{\mathrm{gen}}) \cap \Bbig(Y^{\mathrm{gen}})$ is a $W_Y^{sm}$-translate of $\Nef(Y) \cap \Bbig(Y)$, so that
						\[
						\Nef(Y^{\mathrm{gen}}) \cap \Bbig(Y^{\mathrm{gen}}) \subset U^{sm}
						\]
						This shows that 
						$W_Y^{sm}$ acts on $\Nef(Y^{\mathrm{gen}}) \cap \Bbig(Y^{\mathrm{gen}})$ with fundamental domain 
						\[
						\Nef(Y) \cap \Bbig(Y^{\mathrm{gen}})=\Nef(Y) \cap \Bbig(Y),
						\]
						as required (we have used Lemma~\ref{lem:ConeInclusions}(4) for the last equality). 
						The proof of the second part is exactly analogous, using Lemma~\ref{lem:movable-cone-inclusions} instead.
					\end{proof}
					\section{Blowups of $\PP^3$}\label{sec:examples}
					
					\subsection{The Mukai lattice}
					Let $Y_S$ be obtained by successively blowing up an ordered collection
					$S=(p_1,\dots,p_n)$ of points of $\PP^3$, where we allow infinitely near
					points. Let $E_i$ be the pullback of the class of the $i$-th exceptional
					divisor, and let $H$ be the pullback of the hyperplane class. Then
					$H,E_1,\dots,E_n$ give a $\ZZ$-basis for
					$\Pic(Y_S)\cong N^1(Y_S,\ZZ)$, and
					\[
					-K_{Y_S}=4H-2\sum_{i=1}^n E_i.
					\]
					Let $h$ be the class of the strict transform of a general line in $\PP^3$,
					and let $\ell_i$ be the class of a line in the $i$-th exceptional divisor.
					We define
					\begin{align*}
						\alpha_0&=H-E_1-E_2-E_3-E_4,
						&\alpha_i&=E_i-E_{i+1}\qquad i=1,\dots,n-1,\\
						\alpha_0^\vee&=2h-\ell_1-\ell_2-\ell_3-\ell_4,
						&\alpha_i^\vee&=\ell_i-\ell_{i+1}\qquad i=1,\dots,n-1.
					\end{align*}
					
					Following Mukai~\cite{Mukai}, consider the bilinear form
					\[
					D_1\star D_2=D_1\cdot D_2\cdot\left(-\frac{1}{2}K_{Y_S}\right).
					\]
					A direct calculation gives
					$
					\alpha_i\star\alpha_i=-2$. 
					Denoting $T_{p,q,r}$ the Dynkin diagram in Figure~\ref{fig}, we have that for $i\neq j$,
					\[
					\alpha_i\star\alpha_j=
					\begin{cases}
						1 & \text{if the corresponding vertices are joined in }T_{2,4,n-4},\\
						0 & \text{otherwise.}
					\end{cases}
					\]
					The $\alpha_i$ form a $\ZZ$-basis of $K_{Y_S}^\perp$, so that $K_{Y_S}^\perp$ is the lattice associated to the Coxeter diagram
					$T_{2,4,n-4}$. 
					We write
					\[
					s_{\alpha_i}(x)=x+(x\star\alpha_i)\alpha_i
					\]
					for the corresponding root reflection.
					
					\subsection{A special infinitely near configuration}
					If the points of $S$ are in general position, then the $\alpha_i$ are not
					represented by effective divisors. We show that there is a suitable infinitely near
					configuration such that all of them become effective.
					\begin{proposition}\label{prop:Ye}
						For each $n \geq 4$, there exists an ordered collection $S=(p_1,\dots,p_n)$
						of possibly infinitely near points of $\PP^3$, such that, denoting $Y_e=Y_S$ the resulting blowup:
						\begin{itemize}
							\item each $\alpha_i$ is represented by an irreducible effective divisor
							$D_i$ on $Y_e$;
							\item each $D_i\in\Delta_{Y_e}^{big}$, and the associated fiber class is
							\[
							\ell_{D_i}=\alpha_i^\vee;
							\]
							\item $D_i\in\Delta_{Y_e}^{sm}$ for $i>0$, whereas
							\[
							D_0\in\Delta_{Y_e}^{big}\setminus\Delta_{Y_e}^{sm};
							\]
							\item if $n\leq 8$, then $-K_{Y_e}$ is nef. If $n>8$, then
							$-K_{Y_e}$ has negative degree along a unique curve.
						\end{itemize}
					\end{proposition}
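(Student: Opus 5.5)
We will build the configuration explicitly and then check the four bullets one at a time against Definition~\ref{def:deformation-root-classes}.

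\emph{Choice of points.} Fix a line $L\subset\PP^3$ and a point $p_1\in L$. Let $p_2$ be the point infinitely near $p_1$ in the direction of $L$, let $p_3$ be infinitely near $p_2$ in the direction of the strict transform of $L$, and so on, so that $p_1,\dots,p_n$ is a curvilinear chain along $L$. We then have to arrange that $\alpha_0=H-E_1-\dots-E_4$ is effective, i.e.\ that some plane contains $p_1,\dots,p_4$ with the right tangency. A chain along a straight line cannot work, since a plane containing $L$ vanishes to all orders along it, and that makes $H-E_1-\dots-E_n$ effective but reducible. So we instead take the chain along a smooth conic $Q$ lying in a plane $\Pi$. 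Then the strict transform $D_0$ of $\Pi$ has class $H-\sum_{i\le n}E_i+(\text{stuff})$, and it has class exactly $H-E_1-\dots-E_4$ once we let $p_5,\dots,p_n$ leave $\Pi$ at the fifth step; concretely, take the chain along a twisted curve osculating $\Pi$ to order four.

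For $i\ge1$, the divisor $D_i=E_i-E_{i+1}$ is the strict transform of the $i$-th exceptional divisor after blowing up the point $p_{i+1}$ on it. This is $\FF_1$, ruled over a line, with fiber class $\ell_i-\ell_{i+1}$. We check that $K\cdot(\ell_i-\ell_{i+1})=0$ and that $-K|_{D_i}\otimes q^*\omega_{\PP^1}$ is trivial. This is a local computation on $\FF_1$, using $-K_{Y_e}|_{D_i}=q^*\cO(2)$: the restrictions of $4H-2\sum E_j$ to $D_i$ collapse because $H|_{D_i}=0$ and $E_i|_{D_i}=-e-f$. That places $D_i$ in $\Delta^{sm}$, with fiber class $\alpha_i^\vee$.

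$D_0$ is the plane $\Pi$ blown up at four infinitely near points on the conic. It carries the conic pencil $|2\lambda-\sum m_i|$ through $p_1,\dots,p_4$, whose general member has class $2h-\ell_1-\dots-\ell_4$. This is a conic fibration $D_0\to\PP^1$ with $K\cdot\alpha_0^\vee=-8+2\cdot4=0$. Its singular fibers include $(-2)$-curves, namely the $D_i\cap D_0$, so it is not quasi-ruled; this is what shows $D_0\notin\Delta^{sm}$. For the nonvanishing condition we compute $-K|_{D_0}+q^*K_{\PP^1}$. On $\Pi$ we have $-K|_{D_0}=4\lambda-2\sum m_i=2\cdot(\text{conic pencil class})=q^*\cO(2)$, so the twist is $q^*\cO(0)$, which has a section. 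Here $\Delta^{big}$ only needs a $K$-trivial conic model, and $Z=Y_e$ itself serves. We should still check that $q$ is flat with connected fibers, and that $K_Z\equiv_q0$ on all fiber components. The latter holds because the vertical components are $(-2)$-curves $\ell_i-\ell_{i+1}$ and lines, each of $K$-degree $0$.

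\emph{Nefness.} For $n\le8$, we write $-K=4H-2\sum E_i$ and intersect it with the irreducible curves: the $\ell_i-\ell_{i+1}$, $\ell_n$, and $\alpha_0^\vee$-type components. All of these give degrees $\ge0$. For a general curve we write $-K/2=2H-\sum E_i$ and show it is represented by a moving or effective quadric containing the chain, since quadrics through $n\le8$ curvilinear points on a twisted cubic-type curve vary in a net. For $n>8$, the unique negative curve should be the strict transform $\Gamma$ of the osculating curve, with $-K\cdot\Gamma=4\deg-2n<0$ once $n>8$ for a degree-$4$ curve.

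\emph{Main obstacle.} The hard part is choosing the chain so that $D_0$ is irreducible with exactly the class $H-E_1-\dots-E_4$, while $2H-\sum E_i$ stays nef for $n=8$. We will first try the chain along a rational normal quartic: it osculates a unique plane to order four and lies on a pencil of quadrics. We will then verify these properties with explicit local coordinates.
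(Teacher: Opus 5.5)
\textbf{There is a genuine gap in the choice of curve.}

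Your treatment of the $D_i$ for $i>0$ is correct and agrees with the paper, and so is the computation $-K_{Y_e}|_{D_0}=q^*\cO_{\PP^1}(2)$. Both only use that $p_1,\dots,p_4$ lie on successive strict transforms of the plane and $p_5$ does not. The gap is the choice of the curve along which the chain is taken. The last bullet depends entirely on this choice, and you leave it open.

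The candidates you name do not work:
\begin{itemize}
\item A twisted cubic meets a plane in degree $3$, so it cannot osculate $\Pi$ to order four.
\item The rational normal quartic lives in $\PP^4$, not $\PP^3$.
\item A smooth rational quartic $\Gamma\subset\PP^3$ lies on a \emph{unique} quadric. It is a $(1,3)$-curve on a quadric and is not a complete intersection, since a complete intersection of two quadrics has arithmetic genus one. So it does not lie on a pencil of quadrics.
\end{itemize}
Without a pencil of quadrics containing the whole chain, you have no control over $|-\tfrac12K_{Y_e}|$. For $n>8$ this system is the single surface $\tilde Q$, so your claim that $\tilde\Gamma$ is the unique curve on which $-K_{Y_e}$ is negative is unsupported. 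For $n\le 8$, checking $-K_{Y_e}$ on a few curve classes does not prove nefness.

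\textbf{The missing idea is to take the chain along an elliptic quartic.} Embed an elliptic curve $(C,P)$ by $|\cO_C(4P)|$. Then $C=Q_1\cap Q_2$, and there is a plane $H_0$ with $H_0|_C=4P$. Let $p_{i+1}$ be the point of the new exceptional divisor on the strict transform of $C$.
\begin{itemize}
\item Contact order exactly four with $H_0$ is automatic, since $\deg C=4$.
\item No line meets $C$ in length three, because such a line would lie on both quadrics. Hence the conic pencil on $D_0$ has no fixed line. This is a condition your construction would also have had to impose: $p_1$ must not be a flex.
\item Since $C$ is a smooth complete intersection, every quadric of the pencil is smooth along $C$. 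It therefore passes through all the $p_i$, for every $n$.
\item The strict transforms of these quadrics form a pencil in $|-\tfrac12K_{Y_e}|$ with base locus $\tilde C$, and $(-\tfrac12K_{Y_e})\cdot\tilde C=8-n$.
\end{itemize}
Any other irreducible curve has nonnegative degree against a general member of this pencil. This gives nefness for $n\le 8$ and uniqueness of the negative curve for $n>8$ at once.

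\textbf{A separate, smaller gap concerns $D_0\notin\Delta_{Y_e}^{\mathrm{sm}}$.} Your argument only excludes the particular conic fibration you wrote down. You need to exclude every quasi-ruling. For example, $D_0$ is smooth with $\rho(D_0)=5$, whereas a smooth quasi-ruled surface is a $\PP^1$-bundle and has Picard number $2$.
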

					
					\begin{proof}
						Choose an elliptic curve $(C,P)$ and embed it into $\PP^3$ using the
						linear system $\cO_C(4P)$. We define $Y_e$ by blowing up successively
						along the strict transform of $C$: set $p_1=P$, and, after blowing up
						$p_i$, let $p_{i+1}$ be the intersection of the new exceptional divisor
						with the strict transform of $C$.
						
						There is a plane $H_0\subset\PP^3$ such that
						$
						H_0|_C=4P.
						$
						Hence the first four blowup centers lie on the successive strict
						transforms of $H_0$, whereas starting from the fifth, they do not. The strict transform
						$D_0$ of $H_0$ therefore has class
						\[
						[D_0]=\alpha_0.
						\]
						The pencil of conics on $H_0$ through the length-four cluster
						$p_1,\dots,p_4$ induces a conic fibration
						$
						q_0 \colon D_0\longrightarrow\PP^1
						$
						with fiber class
						\[
						\ell_{D_0}=2h-\ell_1-\ell_2-\ell_3-\ell_4=\alpha_0^\vee.
						\]
						For $i>0$, let $D_i$ be the strict transform of the $i$-th exceptional
						divisor after the blowup at $p_{i+1}$. Then $D_i\cong\FF_1$ and
						\[
						[D_i]=E_i-E_{i+1}=\alpha_i.
						\]
						Denoting $q_i \colon D_i \rightarrow \PP^1$ the ruling, we see that 
						its fiber class is 
						$
						\ell_{D_i}=\alpha_i^\vee.
						$
						In each case the fiber class is $K_{Y_e}$-trivial.
						
						The elliptic curve $C$ is the complete intersection of a pencil of
						quadrics. The strict transforms of these quadrics form a pencil contained
						in
						$
						\left|-\frac{1}{2}K_{Y_e}\right|
						$
						whose base locus is the strict transform $\widetilde C$ of $C$. We have
						\[
						\left(-\frac{1}{2}K_{Y_e}\right)\cdot\widetilde C=8-n.
						\]
						If $\Gamma\neq\widetilde C$ is an irreducible curve, a general member of the
						pencil does not contain $\Gamma$, and hence
						$
						-K_{Y_e}\cdot\Gamma\geq0.
						$
						It follows that $-K_{Y_e}$ is nef for $n\leq8$. If $n>8$, then
						$\widetilde C$ is the unique curve on which $-K_{Y_e}$ has negative
						degree.
						
						Finally, a direct intersection calculation gives
						$
						D_i\cdot\left(-\frac{1}{2}K_{Y_e}\right)=\ell_{D_i}
						$
						as curve classes. In particular, 
						\[
						\omega_{Y_e}^{-1}|_{D_i}\cong q_i^*(\cO_{\PP^1}(2))
						\]
						and therefore, $M_{D_i}\cong\cO_{\PP^1}$. Hence $D_i\in\Delta_{Y_e}^{big}$. For $i>0$ the
						conic fibration on $D_i\cong\FF_1$ is a $\PP^1$-bundle, so
						$D_i\in\Delta_{Y_e}^{sm}$. We have $\rho(D_0)=5$, so $D_0$ does not admit a quasi-ruling (for instance, the conic fibration on $D_0$ constructed above has singular
						fibers), and therefore
						\[
						D_0\in\Delta_{Y_e}^{big}\setminus\Delta_{Y_e}^{sm}.
						\]
					\end{proof}
We find that the Mukai reflection in $\alpha_i$ coincides with the reflection $\sigma_{D_i}$:
					\begin{proposition}\label{prop:identification-reflections}
						For every $i$ we have
						$
						s_{\alpha_i}=\sigma_{D_i}.
						$
						Consequently,
						\[
						W(A_{n-1})\subset W_{Y_e}^{sm},
						\qquad
						W(T_{2,4,n-4})\subset W_{Y_e}^{big}.
						\]
					\end{proposition}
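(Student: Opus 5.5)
Both sides are explicit linear maps on $N^1(Y_e,\RR)$, so the plan is to compare them directly. By definition $\sigma_{D_i}(x)=x+(x\cdot\ell_{D_i})[D_i]$. Proposition~\ref{prop:Ye} gives $[D_i]=\alpha_i$ and $\ell_{D_i}=\alpha_i^\vee$. Hence
\[
\sigma_{D_i}(x)=x+(x\cdot\alpha_i^\vee)\alpha_i, \qquad s_{\alpha_i}(x)=x+(x\star\alpha_i)\alpha_i,
\]
and it suffices to show $x\star\alpha_i=x\cdot\alpha_i^\vee$ for every $x\in N^1(Y_e)$. Unwinding the Mukai form, $x\star\alpha_i=x\cdot\bigl(D_i\cdot(-\tfrac12K_{Y_e})\bigr)$. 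So the identity reduces to the equality of curve classes $D_i\cdot(-\tfrac12 K_{Y_e})=\ell_{D_i}$ in $N_1(Y_e)$.

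That equality was asserted in the proof of Proposition~\ref{prop:Ye}. I would re-verify it by pairing both sides with the basis $H,E_1,\dots,E_n$. This uses the intersection numbers on an iterated point blowup of $\PP^3$:
\[
H^3=1,\qquad E_i^3=1,\qquad H\cdot E_i=0 \text{ as a curve class},\qquad E_i\cdot E_j=0 \text{ for } i\neq j,
\]
together with $H\cdot h=1$, $E_i\cdot\ell_j=-\delta_{ij}$ and $H\cdot\ell_i=E_i\cdot h=0$. These hold for the total transforms $E_i$ even in the infinitely near configuration, since the pullbacks behave exactly as in the general case.

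The computation then runs as follows, with $-\tfrac12K=2H-\sum E_j$.
\begin{itemize}
\item For $\alpha_i=E_i-E_{i+1}$: $(E_i-E_{i+1})(2H-\sum E_j)=-E_i^2+E_{i+1}^2$. Since $E_i^2=-\ell_i$ as a curve class, this equals $\ell_i-\ell_{i+1}=\alpha_i^\vee$.
\item For $\alpha_0=H-\sum_{k\le4}E_k$: the product is $2H^2-\sum_{k\le4}E_k^2$. Using $H^2=h$, this equals $2h-\ell_1-\ell_2-\ell_3-\ell_4=\alpha_0^\vee$.
\end{itemize}
This gives $s_{\alpha_i}=\sigma_{D_i}$ for every $i$.

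For the group inclusions, $W_{Y_e}^{big}$ is generated by the $\sigma_E$ with $E\in\Delta^{big}_{Y_e}$, and $W_{Y_e}^{sm}$ by those with $E\in\Delta^{sm}_{Y_e}$. By Proposition~\ref{prop:Ye}, all $D_i$ lie in $\Delta^{big}_{Y_e}$ and $D_1,\dots,D_{n-1}$ lie in $\Delta^{sm}_{Y_e}$.

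So the subgroup generated by $s_{\alpha_0},\dots,s_{\alpha_{n-1}}$ lies in $W_{Y_e}^{big}$, and the one generated by $s_{\alpha_1},\dots,s_{\alpha_{n-1}}$ lies in $W_{Y_e}^{sm}$. These subgroups are the Coxeter groups $W(T_{2,4,n-4})$ and $W(A_{n-1})$, because $\alpha_i\star\alpha_j$ realizes the Cartan matrix of these diagrams.

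The main obstacle is justifying that the abstract Coxeter group maps isomorphically onto the group generated by the reflections on $N^1$. Proposition~\ref{prop:deformation-roots-form-a-root-system} and the Coxeter theory cited from \cite{Lutz} (Proposition~\ref{cor:fundamentaldomain}) settle this: the $D_i$ are generalized roots whose chamber contains the ample cone, so the action is faithful and the generated group is the Coxeter group of the diagram given by the pairings $\alpha_i^\vee(\alpha_j)=\alpha_i\star\alpha_j$.

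Beyond that, the only point needing care is the sign convention $E_i^2=-\ell_i$ in the infinitely near case. I would check it locally: $E_i$ restricted to itself is $\cO(-1)$ on the exceptional $\PP^2$, and this is unchanged by the later blowups as a pulled-back class.
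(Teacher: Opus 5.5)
Your proposal is correct and follows the paper's argument: both reduce $s_{\alpha_i}=\sigma_{D_i}$ to $[D_i]=\alpha_i$ and the curve-class identity $D_i\cdot(-\tfrac12K_{Y_e})=\ell_{D_i}$ from Proposition~\ref{prop:Ye}. Your explicit re-verification of that identity and your faithfulness/Coxeter argument are extra detail that the paper leaves implicit. One small slip: in the $\alpha_0$ bullet the intermediate expression should be $2H^2+\sum_{k\le4}E_k^2$ rather than $2H^2-\sum_{k\le4}E_k^2$; with the corrected sign it gives $2h-\sum_{k\le4}\ell_k$, which is the answer you state.
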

					\begin{proof}
						By Proposition~\ref{prop:Ye}, $[D_i]=\alpha_i$ and
						$
						D_i\cdot\left(-\frac{1}{2}K_{Y_e}\right)=\ell_{D_i}.
						$
						Hence 
						\[
						s_{\alpha_i}(x)
						=x+(x\star\alpha_i)\alpha_i
						=x+(x\cdot\ell_{D_i})[D_i]
						=\sigma_{D_i}(x).
						\]
					\end{proof}
					The deformation space of a point blowup of $\PP^3$ is smooth by \cite[Lemma 3.1]{Sano} and a very general deformation of $Y_e$ is the blowup of $\PP^3$ in $n$ points in very general position. By Corollary~\ref{thm:weylpreservescones}, it follows that 
					$W(T_{2,4,n-4})$ acts on the movable cone of the blowup of $\PP^3$ in
					$n$ very general points, and this action coincides with the action defined by Mukai~\cite{Mukai} and Dolgachev--Ortland~\cite{Dolgachev-Ortland}.
					 
					Our construction gives a deformation explanation of this action: it arises from the flop of total spaces induced by root divisors on the special degeneration $Y_e$. 
					\subsection{Cayley octads and admissible divisors}
					We now specialize to $n=8$. Let $N\subset|\cO_{\PP^3}(2)|$ be a net of
					quadrics with zero-dimensional base scheme of length eight. We impose the
					nondegeneracy condition of~\cite[Assumption 1]{PrendergastSmithextremal}:
					there exist $Q,Q'\in N$ such that $Q\cap Q'$ is smooth along the base
					locus of $N$. Equivalently, at every $\PP^3$-basepoint of the net there
					is at most one quadric in $N$ which is singular there.
					
					Blowing up the resulting cluster of eight, possibly infinitely
					near, basepoints resolves the rational map defined by $N$. We call such
					a cluster a Cayley octad and write
					\[
					\pi\colon Y_S\longrightarrow\PP^2
					\]
					for the induced morphism. By~\cite{PrendergastSmithextremal}, $\pi$ is an
					elliptic fibration and is defined by the complete linear system
					$
					\left|-\frac{1}{2}K_{Y_S}\right|.
					$
					Its general fiber is the intersection of two general quadrics in the net,
					and its fiber class is
					\[
					F=\left(-\frac{1}{2}K_{Y_S}\right)^2.
					\]
					In particular, for divisors $D_1,D_2$ on $Y_S$, the product
					$D_1\star D_2$ is the intersection product of their restrictions to a
					general surface in $|-\frac12K_{Y_S}|$.
					
					The special configuration of Proposition~\ref{prop:Ye} with $n=8$ is of
					this form. Indeed, write the elliptic quartic $C$ as
					\[
					C=Q_1\cap Q_2,
					\]
					with $Q_1$ and $Q_2$ smooth.
					Then the corresponding net of quadrics is $\{Q_1, Q_2, 2H\}$, where $H$ is the unique plane with $H \cap C=4P$. 
					Following~\cite{PrendergastSmith}, we make the following definition.
					
					\begin{definition}
						Let $\pi\colon Y_S\to\PP^2$ be the elliptic fibration associated to a
						Cayley octad. An irreducible $\pi$-vertical divisor is \emph{admissible}
						if it is an irreducible component of a reducible divisor
						\[
						\pi^{-1}(B)
						\]
						for a curve $B\subset\PP^2$. We write $\Adm(Y_S)$ for the set of
						admissible divisors.
						
						The classes of admissible vertical divisors span a root system $\tilde{R} \subset K_{Y_S}^\perp$, which we call the \emph{affine root system} associated to the elliptic fibration.
						Its image $R$ in 
						\[
						K_Y^\perp/\ZZ \cdot(-\tfrac{1}{2}K_Y) \cong E_7
						\]
						is the \emph{finite root system} associated to the elliptic fibration. Equivalently, $R$ is spanned by the classes of admissible vertical divisors not meeting a chosen zero section. 
					\end{definition}
					
					Admissible divisors are three-dimensional analogues of the components
					of reducible fibers of an elliptic surface. We recall some of the properties from~\cite{PrendergastSmithextremal}.
					
					\begin{proposition}\label{prop:PS-classification}
						Let $E\in\Adm(Y_S)$ and write $q=\pi|_E$. Then
						\begin{itemize}
							\item $q(E)=L$ is a line, and $q\colon E\to L$ has connected fibers;
							\item $E$ is normal and
							\[
							E\star E=-2;
							\]
						\end{itemize}
					\end{proposition}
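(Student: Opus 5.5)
The plan is to work entirely with the elliptic fibration $\pi\colon Y_S\to\PP^2$ given by $|-\tfrac12K_{Y_S}|$, and write $G=-\tfrac12K_{Y_S}=\pi^*\cO(1)$.

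\emph{The image is a line.} Let $E$ be admissible, a component of a reducible $\pi^{-1}(B)$ with $B$ an irreducible plane curve of degree $d$, and write $\pi^*B=\sum m_jE_j$. Since $E$ is $\pi$-vertical, $q(E)=B$. To see that $d=1$ I would use the fiber class. For a general point $b\in B$, $\pi^{-1}(b)$ is an elliptic fibre $F$ with $F=G^2$ in $N_1$. Its components are distributed among the $E_j$, and $E_j\cdot G\cdot G=E_j\cdot F$. I would compute $E\star E=E\cdot E\cdot G$ by restricting to a general $S\in|G|$, which is a rational elliptic surface (a blowup of a del Pezzo surface of degree $2$, using the octad). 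On $S$ the divisor $E|_S$ is a union of components of the fibres of $S\to\pi(S)$ over the points $B\cap\pi(S)$, i.e.\ over $d$ points. Zariski's lemma then shows $(E|_S)^2<0$, and $(E|_S)^2=-2\cdot(\text{number of irreducible fibre components involved})$ after accounting for multiplicities.

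The key input for $d=1$ is that a reducible $\pi^{-1}(B)$ with $\deg B\ge2$ is impossible: the classes are controlled by $\Pic(Y_S)=\ZZ H\oplus\bigoplus\ZZ E_i$. I would argue as follows. $\pi^*\cO(d)=dG$ splits as $\sum m_jE_j$ with each $E_j$ vertical and effective. For a component with image $B$, the pullback of a general line $\ell$ gives $G|_E$, whose degree on fibres of $q$ is $0$. Write $[E]=aH-\sum b_iE_i$ with $a\ge1$, since $E$ is not supported on exceptional divisors. Then intersect with $h$-type and $\ell_i$-type curves inside general $\pi$-fibres and compare with $dG=2dH-d\sum E_i$. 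Using that a quadric cone or plane section through the base cluster forces small degree, I expect to get $a\in\{1,2\}$ and $d=1$. The main obstacle is precisely this step, namely excluding images of higher degree (for instance a reducible preimage over a conic). My first attempt is the quadric-net argument: $\pi^{-1}(B)$ corresponds to the quadrics parametrized by $B^\vee$, and its reducibility forces the corresponding family of quadrics to be reducible, which happens only along the discriminant pencil, a line. If that fails, I would fall back on Prendergast--Smith's enumeration.

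\emph{Connected fibres and normality.} For connectedness, Stein factorize $q\colon E\to\tilde L\to L$. The fibres of $q$ are unions of components of the elliptic fibres over points of $L$. If $\tilde L\to L$ had degree $\ge2$, the general fibre of $q$ would consist of several disjoint pieces of one connected fibre, with each piece meeting the other components of $\pi^{-1}(L)$. I would exclude this by noting that the monodromy of the components over the line $L\cong\PP^1$ would then produce a different irreducible decomposition, contradicting the irreducibility of $E$. This part needs care but is standard.

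\emph{Normality and self-intersection.} I would obtain normality from $E$ being Cohen--Macaulay (it is Cartier) together with regularity in codimension one. The singular locus would have to be a horizontal curve along which general fibres of $q$ are singular, and I would rule this out because those fibres are smooth rational fibre components of a Kodaira $I_n$-type fibre. Finally, $E\star E=(E|_S)^2$ on the rational elliptic surface $S$. There $E|_S$ is a single reduced $(-2)$-curve, since a line meets $\pi(S)=\PP^2$ in one point and picks out one fibre component, so $E\star E=-2$.
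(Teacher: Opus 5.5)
There is a genuine gap in the two steps you treat as routine: $E\star E=-2$ and connectedness of the fibres of $q$.

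\emph{Self-intersection and connectedness.} Your computation of $E\star E$ assumes that $E|_S$ is a single reduced $(-2)$-curve. That is, it assumes the general fibre of $q$ is irreducible and reduced, which you never prove. Zariski's lemma together with evenness of $(E|_S)^2$ on a rational elliptic surface only gives $E\star E\le -2$.

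The connectedness argument does not work either. Irreducibility of $E$ is exactly the statement that monodromy acts transitively on the pieces of a disconnected fibre, so it gives no contradiction. Moreover, the Stein factorization $\tilde L\to L$ may be ramified, so simple connectivity of $L\cong\PP^1$ does not force it to be trivial. Concretely, nothing you say excludes $E|_S=C_0+C_2$ for two opposite components of an $I_4$ fibre, interchanged by monodromy. This divisor has square $-4$. What excludes it is the exact value $E\star E=-2$, and that needs actual class information rather than fibre geometry.

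\emph{Normality.} You only discuss horizontal curves in $\mathrm{Sing}(E)$, not vertical ones. Your premise that general fibres of $q$ are components of $I_n$ fibres is also false. On $Y_e$ one has $\pi^*L=2D_0+D_1+2D_2+3D_3+4D_4+3D_5+2D_6+D_7$ for a single line $L$. Hence the fibre of a general $S\in|G|$ over a general point of $L$ is of type $III^*$, with non-reduced components. The same example shows that your assumption $a\ge1$ is wrong: $D_1,\dots,D_7$ are admissible and contracted by $Y_S\to\PP^3$.

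\emph{How the paper proceeds.} The paper supplies the missing input through the blowdown $p\colon Y_S\to\PP^3$, and runs the argument in the reverse order from yours.
\begin{enumerate}
\item If $E$ is $p$-exceptional, verticality and nefness of $-K_{Y_S}$ force $[E]=E_i-E_j$.
\item Otherwise, $\pi(E)=L$ is a line (cited from Prendergast--Smith, as in your fallback). Then $\pi^{-1}(L)$ is the member of $|-\tfrac12K_{Y_S}|$ coming from a quadric $Q\in N$. So $p(E)\subset Q$, and $Q$ is singular because $\pi^{-1}(L)$ is reducible. Either $Q$ is a cone with vertex at a base point and $[E]=2H-2E_a-\sum_{i\ne a,b}E_i$, or $p(E)$ is a plane and $[E]=H-E_i-E_j-E_k-E_l$.
\item Normality and $E\star E=-2$ are then read off from these explicit surfaces and classes.
\item Connectedness comes last. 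For a general line $L'$ through a general $b\in L$, $D=\pi^{-1}(L')$ is a rational elliptic surface. $E|_D$ is an effective divisor supported in the fibre over $b$ with $(E|_D)^2=E\star E=-2$. A disconnected such divisor would have square at most $-4$.
\end{enumerate}

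Your quadric-net sketch for the line claim (``discriminant pencil'') is not an argument as written. Since the paper also simply cites Prendergast--Smith there, your fallback on that reference is acceptable.
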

					
					\begin{proof}
						The assertion that $L=q(E)$ is a line is from~\cite{PrendergastSmithextremal}. 
						
						We next show that $E \star E=-2$ and $E$ is normal. 
						Let $p \colon Y \rightarrow \PP^3$ be the blowdown. If $E$ is contracted by $p$, then $E$ must be the strict transform of one of the exceptional divisors. Since $E$ is vertical for $\pi$, we must have that 
						\[
						[E]=E_i-\sum_{j>i}n_jE_j
						\]
						 for some $i=1, \dots, 7$ and $n_j \geq 0$. Since $-K_{Y_S}$ is nef, we must in fact have that 
						 \[
						 [E]=E_i-E_j
						 \]
						 for some $j>i$.
						 In particular, $E$ is normal and has $E\star E=-2$ in this case. 
						If $E$ is not contracted, then $p(E)\subset Q$, where $Q$ is one of the quadrics in the net. 
						Since $E$ is a component of a reducible divisor, $Q$ must be singular. If $Q$ has rank $3$, then $p(E)$ is a cone with vertex a base point, so that 
						\[
						[E]=2H-\sum_i n_iE_i
						\]
						where $n_i=2$ for the exceptional divisor corresponding to the base point, $n_i=0$ for a unique $i$, and the remaining $n_i$ are one.
						Again, we see that $E$ is normal and has $E\star E=-2$. 
						If $Q$ has rank $2$ or rank $1$, then $p(E)$ is a plane, so that
						\[
						[E]=H-E_i-E_j-E_k-E_l
						\]
						for $1 \leq i,j,k,l \leq 8$.
						Again, we see that $E$ is normal and has $E\star E=-2$. 

						We now claim that $q$ has connected fibers. Indeed, for a general point $p\in L$, consider the inverse image $D$ of a general line $L' \subset \PP^2$ through $p$. Note that $D \rightarrow L'$ is a rational elliptic surface, and $C=D \cap E$ is contained in the fiber over $p=L \cap L'$, hence a union of $(-2)$-curves. Since $E \star E=-2$, $C$ is connected. It follows that $q$ has connected fibers.
					\end{proof}
					
					For an admissible divisor $E$, define
					\[
					s_E(x)=x+(x\star E)E.
					\]
					The group generated by these reflections is the Weyl group
					$W(\widetilde R)$ considered in \cite{PrendergastSmithextremal}. We find that $W(\tilde{R})$ is nothing other than the big Weyl group of $Y_S$.
					\begin{proposition}\label{prop:bijectionbigroots}
						Let $Y_S$ be the blowup of $\PP^3$ in a Cayley octad. Then
						$
						\Adm(Y_S)=\Delta_{Y_S}^{big}
						$ 
						For every \(E\) in this set, the restriction
						$
						\pi|_E:E\longrightarrow\pi(E)\cong\mathbb P^1
						$
						is the unique \(K_{Y_S}\)-trivial conic fibration on \(E\), with fibre class \(\ell_E\).
						We have
						$s_E=\sigma_E$, and in particular, 
						\[
						W(\widetilde R)=W_{Y_S}^{big}.
						\]
					\end{proposition}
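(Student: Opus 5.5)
We prove the two inclusions $\Adm(Y_S)\subset\Delta_{Y_S}^{big}$ and $\Delta_{Y_S}^{big}\subset\Adm(Y_S)$ separately, and read off the identification of reflections from a divisor-class computation.

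\emph{Step 1: every admissible divisor is a deformation root.} Let $E\in\Adm(Y_S)$, with $q=\pi|_E\colon E\to L\cong\PP^1$ as in Proposition~\ref{prop:PS-classification}. Since $-\tfrac12K_{Y_S}=\pi^*\cO_{\PP^2}(1)$, we get
\[
\omega_{Y_S}^{-1}|_E\cong q^*\cO_{\PP^1}(2)\qquad\text{and}\qquad \omega_{Y_S}^{-1}|_E\otimes q^*\omega_{\PP^1}\cong\cO_E .
\]
So the non-vanishing condition holds on $Y_S$ itself, and $K_{Y_S}$ is $q$-trivial. It remains to show that $q$ is a conic fibration. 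The general fiber of $q$ is the curve $C=D\cap E$ from the proof of Proposition~\ref{prop:PS-classification}, where $D$ is a rational elliptic surface. This curve is a connected union of $(-2)$-curves inside a singular fiber of $D$, and it satisfies
\[
C^2_D=E\star E=-2 .
\]
A connected effective reduced cycle of $(-2)$-curves with self-intersection $-2$ is either a single $(-2)$-curve or a chain of two such curves. Rather than argue this abstractly, I would check it directly on the three explicit class types listed in that proof: $E_i-E_j$, quadric cones, and planes through four base points. In each case $C$ is a conic, possibly a line pair. $E$ is normal, hence Cohen--Macaulay as a Cartier divisor, and $q$ is flat with connected fibers. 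Therefore $q$ is a $K$-trivial conic model with $\alpha=\id$, and $E\in\Delta^{big}_{Y_S}$. By Lemma~\ref{lem:uniqueness-conic-fiber-class}, $\ell_E$ is the class of a fiber of $q$, which also gives the uniqueness assertion among contractible models. Before invoking that lemma, one has to show that $q$ is contractible; I would use Proposition~\ref{prop:existenceofcontractionofE} together with the fact that $\ell_E$ is well defined.

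\emph{Step 2: the reflections agree.} The fiber class of $q$ is the curve class
\[
\ell_E=E\cdot\big(-\tfrac12K_{Y_S}\big)=E\cdot\pi^*[\text{line}] ,
\]
because a general line meets $L$ in one point. Hence $x\cdot\ell_E=x\star E$, and therefore $s_E=\sigma_E$, exactly as in the proof of Proposition~\ref{prop:identification-reflections}.

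\emph{Step 3: every deformation root is admissible.} This is the main obstacle. Take $E\in\Delta^{big}_{Y_S}$ and a contractible model $(\alpha\colon Y_S\dashrightarrow Z,\ q\colon E_Z\to C)$. Since $-K_{Y_S}$ is nef, every crepant SQM of $Y_S$ is a composite of flops of $K$-trivial curves. These curves are $\pi$-vertical, because $-K$ is pulled back from $\PP^2$, so $Z$ still carries the elliptic fibration $\pi_Z$ defined by $|-\tfrac12K_Z|$.

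The general fiber $\ell$ of $q$ is a $K$-trivial curve, so $\pi_Z(\ell)$ is a point. Thus $\pi_Z(E_Z)$ is a curve $B\subset\PP^2$, and $E$ is $\pi$-vertical. Write $\pi^*B=E+\sum_i D_i$.

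To see that this divisor is reducible, I would argue by contradiction: suppose $\pi^{-1}(B)=E$ is irreducible, taken with its multiplicity. Then $E\star E$ equals a multiple of $B^2\geq0$, restricted to the surface $D$. This contradicts $E\star E=-2$, which holds because
\[
E\cdot\ell_E=-2\qquad\text{and}\qquad \ell_E=E\cdot(-\tfrac12K)
\]
by the same computation as in Step 2, using $M_E$ and the non-vanishing condition. More precisely, $-K_Z|_{E_Z}$ equals $q^*$ of a degree $\geq 2$ bundle, from which one deduces $\deg_C\big(q^*(-\tfrac12K)\big)=1$ and $\ell_E=E\cdot(-\tfrac12 K)$.

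The delicate point is that on an irreducible vertical divisor $E=\pi^{-1}(B)$ one has
\[
E\star E=(\pi^*B)^2\cdot\pi^*H=B^2\cdot H\cdot\big(\pi^*\text{ of a point}\big)=0,
\]
since a top-degree class pulled back from the surface $\PP^2$ to the threefold gives zero triple products. This yields $E\star E=0\neq-2$, so $\pi^{-1}(B)$ is reducible and $E$ is admissible. Combining Steps 1 and 3 gives $\Adm(Y_S)=\Delta^{big}_{Y_S}$, and Step 2 then gives $W(\widetilde R)=W^{big}_{Y_S}$.
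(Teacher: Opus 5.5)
\textbf{Verdict: there is a genuine gap.} Your outline matches the paper's proof in three places: admissible $\Rightarrow$ root via $\omega_{Y_S}^{-1}|_E\cong q^*\cO_{\PP^1}(2)$; root $\Rightarrow$ admissible by carrying the elliptic fibration along the flops and ruling out an irreducible $\pi^{-1}(B)$; and $s_E=\sigma_E$ via $x\star E=x\cdot\ell_E$. The gap is that you never identify $\ell_E$ with the class of a general fibre of $\pi|_E$. By definition, $\ell_E$ is the fibre class of an arbitrary \emph{contractible} $K$-trivial conic model $(\alpha\colon Y_S\dashrightarrow Z,\ q_Z\colon E_Z\to C)$. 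Your Step 2 only shows $x\star E=x\cdot[\text{fibre of }\pi|_E]$. So without this identification, none of the following is proved: the ``fibre class $\ell_E$'' claim, the uniqueness claim, or $s_E=\sigma_E$.

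\textbf{Step 1 does not supply it.} You invoke Lemma~\ref{lem:uniqueness-conic-fiber-class} for $(\id,\pi|_E)$ and propose to get contractibility on $Y_S$ from Proposition~\ref{prop:existenceofcontractionofE}. That proposition only produces \emph{some} contractible model, possibly after flops. Contractibility of $\pi|_E$ on $Y_S$ itself fails in general. For $D_0\subset Y_e$, which lies in $\Delta^{big}_{Y_e}\setminus\Delta^{sm}_{Y_e}$, the curve $D_0\cap D_1$ has class $\alpha_0\cdot\alpha_1=\ell_1-\ell_2$. It is vertical for the conic fibration, since $D_1\cdot\ell_{D_0}=0$. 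But it is not proportional to $\ell_{D_0}=2h-\ell_1-\dots-\ell_4$, so no primitive contraction of $Y_e$ contracts the fibres.

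\textbf{Step 3 does not supply it either.} You claim $\deg_C\bigl(q^*(-\tfrac12K)\bigr)=1$ follows from $-K_Z|_{E_Z}$ being pulled back from a bundle of degree $\geq 2$. That is a non sequitur. One has $E\cdot(-\tfrac12K)=d\,\ell_E$ with $d=\deg B\cdot\deg(C\to B)$, and your bound only gives $d\geq1$. Getting $d=1$ requires knowing that $B$ is a line and that $C\to B$ is birational.

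\textbf{The reducibility argument is circular as written.} In Step 3 it uses $E\star E=-2$, which you derive from the unproved identity $\ell_E=E\cdot(-\tfrac12K)$. This part is easy to repair. Either use $E\star E=-2d<0$, or argue as the paper does: $\pi_Z^*B=mE_Z$ would force $E_Z\cdot\ell_{E_Z}=0$, contradicting $E_Z\cdot\ell_{E_Z}=-2$.

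\textbf{How to close the gap.} Get the identification in the order the paper uses, i.e.\ only after admissibility is known.
\begin{enumerate}
\item Proposition~\ref{prop:PS-classification} gives that $B$ is a line and that $\pi|_E$ has connected fibres.
\item The flopped curves are $\pi$-vertical and lie over finitely many points of $\PP^2$. Hence $\alpha$ is an isomorphism over a general point of $B$, and $\pi_Z|_{E_Z}$ also has connected general fibres.
\item The $K$-trivial fibres of $q_Z$ are contracted by $\pi_Z$, which is defined by $|-\tfrac12K_Z|$. So $\pi_Z|_{E_Z}$ factors as $E_Z\to C\to B$.
\item Connectedness then forces $C\cong B$ and $q_Z=\pi_Z|_{E_Z}$. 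Thus $\ell_E$ is the class of a general fibre of $\pi|_E$.
\end{enumerate}
The same factorization argument gives uniqueness of the $K_{Y_S}$-trivial conic fibration on $E$. Only after this does your Step 2 computation yield $s_E=\sigma_E$.

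\textbf{Minor point.} Your abstract claim in Step 1 is false: a connected reduced cycle of $(-2)$-curves with square $-2$ need not be a single curve or a chain of two, since any tree of $(-2)$-curves, e.g.\ a chain of three, has square $-2$. You do not actually rely on it.
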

					\begin{proof}
						Let $E\in\Adm(Y_S)$. By Proposition~\ref{prop:PS-classification},
						$
						q=\pi|_E\colon E\longrightarrow L\cong\PP^1
						$
						is a conic fibration. Since
						$
						\omega_{Y_S}^{-1}\cong\pi^*\cO_{\PP^2}(2),
						$
						we have 
						$
						\omega_{Y_S}^{-1}|_E\cong q^*\cO_{\PP^1}(2).
						$
						Thus the obstruction bundle $M_E$ 
						is trivial, and hence $E\in\Delta_{Y_S}^{big}$.
						
						Conversely, let $E\in\Delta_{Y_S}^{big}$, and choose a $K$-trivial contractible
						conic model
						\[
						\alpha\colon Y_S\dashrightarrow Z,
						\qquad
						q\colon E_Z\longrightarrow C.
						\]
						Every flop occurring in $\alpha$ is the flop of a $\pi$-vertical curve, so that $\alpha$ preserves the fibration $\pi$. 
						The induced elliptic fibration
						\[
						\pi_Z\colon Z\longrightarrow\PP^2
						\]
						is defined by $-\frac{1}{2}K_Z$. Since $K_Z \cdot \ell_{E_Z}=0$, it follows that $\pi_Z|_{E_Z}$ factors through $q$. Let 
						\[
						B=\pi_Z(E_Z).
						\]
						 If we had $mE_Z=\pi_Z^{-1}(B)$, then $E_Z$ would be proportional to $-K_Z$ and $E_Z \cdot \ell_{E_Z}=0$, contradiction. It follows that $E_Z$ is an irreducible component of a reducible divisor vertical. Therefore, $E$ is an irreducible component of a reducible vertical divisor, and hence admissible, so $B$ is rational by Proposition~\ref{prop:PS-classification}. By the same Proposition, $\pi_Z|_{E_Z} \colon E_Z \rightarrow B$ has connected fibers, so we must have $C=B$ and $q=\pi_Z|_{E_Z}$. Consequently, the general fiber of $\pi|_E$ represents $\ell_E$.
					Finally, for an admissible divisor $E$, we have
					\[
					x\star E=x\cdot E\cdot(-\tfrac12K_{Y_S})=x\cdot\ell_E,
					\]
					because $(-\tfrac12K_{Y_S})|_E$ is the class of a fiber of $\pi|_E=q$. Therefore
					\[
					s_E(x)=x+(x\cdot\ell_E)E=\sigma_E(x),
					\]
					 which also gives the equality of Weyl groups.
					\end{proof}
					\subsection{The work of Stenger--Xie}
					By \cite{PrendergastSmithextremal}, the root lattice of $Y_e$ is $\tilde{E}_7$, so that we have that 
					\[
					W_{Y_e}^{big}=W(\tilde{E}_7)
					\]
					by Proposition~\ref{prop:bijectionbigroots}.
										We can now recover the fundamental-domain theorem of Stenger--Xie.
					\begin{theorem}[Stenger--Xie]\label{thm:Stenger--Xie--proof}
						Let $Y^{\mathrm{gen}}$ be the blowup of $\PP^3$ in eight very general points. The
						group
						\[
						W(T_{2,4,4})=W(\widetilde E_7)
						\]
						acts on ${\Mov}^+(Y^{\mathrm{gen}})$ with rational polyhedral
						fundamental domain $\overline{\Mov}(Y_e)$. 
					\end{theorem}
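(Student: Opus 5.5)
The plan is to apply Theorem~\ref{thm:withnef}(2) to $Y=Y_e$ with $n=8$, and then pass from the big parts of the cones to ${\Mov}^+$ using rational polyhedrality of $\overline{\Mov}(Y_e)$. The hypotheses of Theorem~\ref{thm:withnef} are all available. $H^2(Y_e,\cO)=0$ because $Y_e$ is rational. $\Def(Y_e)$ is smooth by \cite[Lemma 3.1]{Sano}. $-K_{Y_e}$ is nef by Proposition~\ref{prop:Ye}, since $n=8$. A very general deformation of $Y_e$ is the blowup of $\PP^3$ in eight very general points, so $Y^{\mathrm{gen}}$ in the statement agrees with the very general fibre of the miniversal family of $Y_e$. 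By Proposition~\ref{prop:bijectionbigroots} and \cite{PrendergastSmithextremal}, $W_{Y_e}^{big}=W(\widetilde E_7)$, and $W(\widetilde E_7)=W(T_{2,4,4})$ as Coxeter groups. By Proposition~\ref{prop:identification-reflections}, this action coincides with Mukai's. Theorem~\ref{thm:withnef}(2) then says that $\overline{\Mov}(Y_e)\cap\Bbig(Y_e)$ is a fundamental domain for $W(T_{2,4,4})$ acting on $\overline{\Mov}(Y^{\mathrm{gen}})\cap\Bbig(Y^{\mathrm{gen}})$.

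The main obstacle is showing that $\overline{\Mov}(Y_e)$ is rational polyhedral, i.e.\ has finitely many facets. For this I would use Proposition~\ref{prop:movable-cone-divisorial-halfspaces}: $\overline{\Mov}(Y_e)$ is cut out inside $\overline{\Eff}(Y_e)$ by half-spaces $R_F^{\geq0}$ coming from primitive divisorial contractions on SQMs.
\begin{enumerate}
\item First show that $\Eff(Y_e)$ is rational polyhedral, generated by finitely many explicit classes: the $D_i$, the strict transforms of the exceptional divisors, and the planes and quadric cones through the cluster. I would try to prove this with the elliptic fibration $\pi$: every prime divisor is either $\pi$-vertical, in which case it is a component of $\pi^{-1}(B)$, and Proposition~\ref{prop:PS-classification} gives a finite list, or it is $\pi$-horizontal, in which case it pairs nonnegatively with the vertical classes.
\item Then bound the divisorial contractions. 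The $K$-trivial ones correspond to elements of $\Adm(Y_e)$, which is finite for the extremal configuration $Y_e$. The $K$-negative ones contract exceptional divisors of the blowups or their images under SQMs. Since $-K$ is nef and $\pi$ is a fibration, every SQM is a flop of $\pi$-vertical curves, and there are finitely many such models, so there are finitely many such half-spaces.
\end{enumerate}
If this is too cumbersome, a fallback is to compute $\overline{\Mov}(Y_e)$ directly as the dual of the finitely many curve classes sweeping out divisors on $Y_e$ and its SQMs.

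Finally, to pass from big classes to ${\Mov}^+(Y^{\mathrm{gen}})$, use that $\overline{\Mov}(Y_e)$ is rational polyhedral, so it equals ${\Mov}^+(Y_e)$ and is the closure of its big part. Every rational point of $\overline{\Mov}(Y^{\mathrm{gen}})$ is a limit of rational big points of $\overline{\Mov}(Y^{\mathrm{gen}})$. By the fundamental domain statement, each of these is a $W$-translate of a point of $\overline{\Mov}(Y_e)$. Using local finiteness of the chamber decomposition on the Tits cone, a rational boundary point lies in finitely many translates $w\overline{\Mov}(Y_e)$. Hence ${\Mov}^+(Y^{\mathrm{gen}})=\bigcup_w w\,\overline{\Mov}(Y_e)$, and the interiors of the translates are disjoint by Proposition~\ref{cor:fundamentaldomain}. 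This gives the rational polyhedral fundamental domain $\overline{\Mov}(Y_e)$.
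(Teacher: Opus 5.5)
Your overall architecture matches the paper's: apply Theorem~\ref{thm:withnef}(2) to $Y_e$, prove $\overline{\Mov}(Y_e)$ rational polyhedral via $\Eff(Y_e)$ and Proposition~\ref{prop:movable-cone-divisorial-halfspaces}, then pass to ${\Mov}^+$. However, two steps have genuine gaps.

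\emph{First, the $K$-negative half-spaces.} Your claim that $K$-negative divisorial contractions on SQMs only contract strict transforms of blowup exceptional divisors is exactly what must be proved. ``Finitely many SQMs'' neither proves it nor suffices. You give no argument for that finiteness. Moreover, even on a single model $Z$ with $-K_Z$ nef there can a priori be infinitely many $K$-negative extremal rays accumulating on $K_Z^\perp$, as with sections of an elliptic fibration with infinite Mordell--Weil group. The missing ideas, which the paper uses, are:
\begin{enumerate}
\item $\omega_Z\cong\pi_Z^*\cO(-2)$, so $K_Z$ is $2$-divisible, and Mori's classification \cite{MoriThreefolds} leaves only the blowup of a smooth point, since all other types have $K_Z\cdot\ell=-1$.
\item Its exceptional divisor has $E\cdot(-\tfrac12K_Z)^2=1$, so it is a rational section of $\pi_Z$.
\item Triviality of $MW(Y_e)$ \cite{PrendergastSmithextremal} forces $E$ to be the strict transform of $E_8$.
\item Since the curves flopped by the SQM $\alpha\colon Y_e\dashrightarrow Z$ are $\pi$-vertical, $\alpha_*\ell_8=\ell$, so all these contractions give one and the same facet.
\end{enumerate}
On the $K$-trivial side you should also say why $E$ is admissible ($\pi_Z$ factors through $p$, which also rules out type II). You should also say why the facet depends only on $E$, which is Lemma~\ref{lem:uniqueness-conic-fiber-class}. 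Finally, in your step (1), pairing with vertical classes only gives $c_8=D\cdot F\geq0$; the remaining inequalities need the multiplicity bounds $0\leq m_i\leq d$.

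\emph{Second, the passage to ${\Mov}^+(Y^{\mathrm{gen}})$.} Local finiteness of chambers holds only in the interior of the Tits cone, and that interior misses a whole boundary hyperplane here. The class $F=\sum_i a_i\alpha_i^\vee=4h-\sum_i\ell_i$ (with $a_i$ the marks of $\widetilde E_7$) is $W$-invariant. It is nonnegative on $\bar\cC^{big}$ and vanishes there only on $\RR K_{Y_e}$. Hence the Tits cone lies in $\{x\cdot F\geq0\}$ and meets $F^\perp$ only in $\RR K_{Y_e}$.

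Your limit argument works for rational $x$ with $x\cdot F>0$, but it fails on $F^\perp$. At $-K_{Y_e}$ every translate $wP$ meets, so local finiteness breaks down. Any other rational point of $\overline{\Mov}(Y^{\mathrm{gen}})\cap F^\perp$ would lie in no translate at all, so you would have to show that no such points exist.

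The paper closes this gap with \cite[Theorem 4.1]{LooijengaCones}. Since $\overline{\Mov}(Y^{\mathrm{gen}})^\circ\subset\overline{\Mov}(Y^{\mathrm{gen}})\cap\Bbig(Y^{\mathrm{gen}})\subset W\cdot P$ and $P$ is rational polyhedral, it gives $W\cdot P=\overline{\Mov}(Y^{\mathrm{gen}})^+$. Disjointness of interiors then follows as you say.
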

					
					\begin{proof}
						The threefold $Y_e$ is an iterated blowup of $\PP^3$ at points, so
						$H^2(\cO_{Y_e})=0$. Deformations of the point blowup $Y_e$ are unobstructed since deformations of $\PP^3$ are unobstructed, see for example \cite[Lemma 3.1]{Sano}. Finally, a very general
						deformation of $Y_e$ is the blowup of $\PP^3$ in eight very general points. By
						Proposition~\ref{prop:Ye}, $-K_{Y_e}$ is nef. Hence
						Theorem~\ref{thm:withnef} and Proposition~\ref{prop:bijectionbigroots} give an action of $W_{Y_e}^{big}=W(\tilde{E}_7)$ on
						$
						\overline{\Mov}(Y^{\mathrm{gen}})\cap\Bbig(Y^{\mathrm{gen}})
						$
						with fundamental domain
						$
						\overline{\Mov}(Y_e)\cap\Bbig(Y_e).
						$
						In particular, we have 
						\begin{equation}\label{eq:mov-empty}
						{\Mov}(Y_e)^\circ \cap w({\Mov}(Y_e)^\circ) =\emptyset, \qquad (w \neq 1).
						\end{equation}
						We next show that $\overline{\Mov}(Y_e)$ is rational polyhedral.
						
						We first compute the effective cone. Set
						$
						\alpha_8=E_8.
						$
						The classes $\alpha_0,\dots,\alpha_8$ form a $\ZZ$-basis of
						$\Pic(Y_e)$ and are represented by effective divisors. We claim that
						\[
						\Eff(Y_e)=\sum_{i=0}^8\RR_{\geq0}\alpha_i.
						\]
						It is enough to consider a prime effective divisor $D$. If $D$ is
						contracted by $Y_e\to\PP^3$, then $D$ is one of the strict transforms of
						the exceptional divisors, so $[D]=\alpha_i$ for some $1\leq i\leq8$.
						Otherwise write
						\[
						D=dH-\sum_{i=1}^8m_iE_i,
						\]
						where $d>0$ and $m_i\geq0$ are the successive multiplicities at the
					 infinitely near points. Writing
						\[
						[D]=\sum_{i=0}^8c_i\alpha_i
						\]
						gives
						\[
\begin{alignedat}{3}
  c_0&=d,                  &\qquad c_1&=d-m_1,                  &\qquad c_2&=2d-m_1-m_2,\\
  c_3&=3d-m_1-m_2-m_3,     &\qquad c_4&=4d-m_1-m_2-m_3-m_4,   &\qquad c_5&=4d-\textstyle\sum_{i=1}^5m_i,\\
  c_6&=4d-\textstyle\sum_{i=1}^6m_i, &\qquad c_7&=4d-\textstyle\sum_{i=1}^7m_i, &\qquad c_8&=4d-\textstyle\sum_{i=1}^8m_i.
\end{alignedat}
\]
						At each blowup the multiplicity at the new infinitely near point is at
						most the degree $d$, so $0\leq m_i\leq d$. Hence
						\[
						c_0,c_1,c_2,c_3,c_4\geq0.
						\]
						On the other hand, the fiber class
						$
						F=\left(-\frac{1}{2}K_{Y_e}\right)^2
						$
						must be nonnegative on every effective divisor, so that
						\[
						c_8=D\cdot F=4d-\sum_{i=1}^8m_i\geq0.
						\]
						Since the $m_i$ are nonnegative, we also have
						\[
						c_5\geq c_6\geq c_7\geq c_8\geq0.
						\]
						This proves the claim. In particular, $\Eff(Y_e)$ is closed and rational
						polyhedral.
						
						By Proposition~\ref{prop:movable-cone-divisorial-halfspaces},
						$\overline{\Mov}(Y_e)$ is obtained from $\Eff(Y_e)$ by imposing the
						half-space inequalities associated to birational divisorial contractions
						\[
						Y_e\overset{\alpha}{\dashrightarrow}Z\longrightarrow\bar Z
						\]
						on SQMs $Z$ of $Y_e$. We show that only finitely many
						half-spaces occur.
						
						Suppose first that $p \colon Z\to\bar Z$ is $K_Z$-negative. Since $-K_Z$ is
						two-divisible, the classification of divisorial contractions from smooth
						threefolds~\cite{MoriThreefolds} rules out contractions to a curve and all
						contractions to a point except the blowup of a smooth point. If $E$ is its exceptional
						divisor, then
						\[
						E\cdot\left(-\frac{1}{2}K_Z\right)^2=1.
						\]
						Thus $E$ is a rational section of the elliptic fibration $\pi_Z \colon Z\to\PP^2$. However, by \cite{PrendergastSmithextremal}, we have that the Mordell--Weil group $MW(Y_e)$ is trivial, so that $E$ must be the strict transform of $E_8$, the last exceptional divisor of the blowup $Y_e \rightarrow \PP^3$. The indeterminacy locus of $\alpha$ is a union of $\pi$-vertical curves, and therefore disjoint from a general line in $E_8$. It follows that the corresponding curve classes satisfy
						\[
						\alpha_*\ell_8=\ell,
						\]
						where $\ell$ is the class of a line in $E$. It follows that the facet $F \subset \ell_E^\perp$ of $\overline{\Mov}(Z)$ corresponding to $p$ is identified under $\alpha^*$ with the facet of $\overline{\Mov}(Y_e)$ corresponding to the contraction of $E_8$. 
						There is therefore a unique facet of $\overline{\Mov}(Y_e)$ corresponding to divisorial $K$-negative contractions on an SQM of $Y_e$.
						
						Suppose now that $p$ is $K$-trivial. Since
						$-\frac12K_Z$ defines the elliptic fibration $\pi_Z$, we have that $\pi_Z$ must factor through $p$. In particular, $p$ cannot be of type II (divisor to point). 
						It follows that $p$ is a type III contraction, we let $E$ be its exceptional divisor. Write $B=\pi_Z(E)$. If $E=\pi_Z^{-1}(B)$ was the full inverse image of $B$, then $p$ would contract the fiber class of $\pi_Z$, contradicting that $p$ is birational. It follows that $E$ must be an irreducible component of $\pi_Z^{-1}(B)$, and therefore $E \in \Adm(Z)$.  
						
						By Proposition~\ref{prop:bijectionbigroots}, the set of admissible divisors is in bijection with the simple roots of the $\tilde{E}_7$ Dynkin-diagram. In particular, there are finitely many facets corresponding to $K$-trivial birational contractions on an SQM of $Y$. 
						
						It follows that $\overline{\Mov}(Y_e)$ is the intersection of a rational
						polyhedral cone with finitely many rational half-spaces, and is therefore
						rational polyhedral.
						
						Set
						\[
						P=\overline{\Mov}(Y_e).
						\]
						The interior of $\overline{\Mov}(Y^{\mathrm{gen}})$ is contained in the big cone,
						so Theorem~\ref{thm:withnef} gives
						\[
						\overline{\Mov}(Y^{\mathrm{gen}})^{\circ}
						\subset W(\tilde{E}_7)\cdot P.
						\]
				By \cite[Theorem 4.1]{LooijengaCones}, it follows that 
					\[
				\overline{\Mov}(Y^{\mathrm{gen}})^+
				= W(\tilde{E}_7)\cdot P.
				\]
				Using Equality~\eqref{eq:mov-empty}, it now follows that $P$ is a rational polyhedral fundamental domain.
					\end{proof}
					The original proof of Stenger--Xie explicitly constructs a covering 
					domain $\Pi$ for the action of $W(T_{2,4,4})$. Moreover, as a byproduct, they prove that 
					\[
					{\Mov}^e(Y^{\mathrm{gen}})={\Mov}^+(Y^{\mathrm{gen}})
					\]
					The following relates the two domains $P$ and $\Pi$. The argument below was obtained in conversation with Isabel Stenger. 
					\begin{lemma}
						We have that
					\[
					\overline{\Mov}(Y_e)=\Pi \cap \overline{\cC}^{big}
					\]
					where $\overline{\cC}^{big}$ is the fundamental chamber defined by $\Delta_Y^{\mathrm{big}}$. 
				\end{lemma}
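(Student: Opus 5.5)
The plan is to compare both sides with the description of $\overline{\Mov}(Y_e)$ from Proposition~\ref{prop:movableisintersection}, namely
\[
\overline{\Mov}(Y_e)=\bar{\cC}^{big}\cap\overline{\Mov}(Y^{\mathrm{gen}}),
\]
which applies because $-K_{Y_e}$ is nef, $H^2(\cO_{Y_e})=0$ and $\Def(Y_e)$ is smooth (Proposition~\ref{prop:Ye} and the proof of Theorem~\ref{thm:Stenger--Xie--proof}). I will also use the explicit half-space description obtained in the proof of Theorem~\ref{thm:Stenger--Xie--proof}:
\[
\overline{\Mov}(Y_e)=\Eff(Y_e)\cap \ell_8^{\geq0}\cap\bigcap_{E\in\Adm(Y_e)}\ell_E^{\geq0}.
\]
Here $\Eff(Y_e)$ is the simplicial cone on $\alpha_0,\dots,\alpha_8$, and $\ell_8$ is the class of a line in $E_8$. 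By Proposition~\ref{prop:bijectionbigroots}, $\Adm(Y_e)=\Delta^{big}_{Y_e}$, so the last intersection is exactly $\bar{\cC}^{big}$.

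\emph{The inclusion $\Pi\cap\bar{\cC}^{big}\subset\overline{\Mov}(Y_e)$.} By construction of Stenger--Xie, $\Pi$ is a covering domain contained in ${\Mov}^+(Y^{\mathrm{gen}})\subset\overline{\Mov}(Y^{\mathrm{gen}})$. Intersecting with $\bar{\cC}^{big}$ and applying \eqref{eq:mov} gives the inclusion at once.

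\emph{The reverse inclusion $\overline{\Mov}(Y_e)\subset\Pi$.} This is the main step. Since $\overline{\Mov}(Y_e)$ is rational polyhedral, it suffices to check that each of its finitely many extremal rays lies in $\Pi$. The rays can be computed from the finite list of inequalities above. The relevant data are the $9$ generators of $\Eff(Y_e)$, the single $K$-negative facet $\ell_8^\perp$, and the simple roots of $\tilde E_7$, which by Propositions~\ref{prop:Ye} and~\ref{prop:bijectionbigroots} are the classes $\alpha_0,\dots,\alpha_7$ with coroots $\alpha_i^\vee$. I would then test each ray against the explicit inequalities defining $\Pi$ in \cite{StengerXie}.

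I expect this finite verification to be the main obstacle. The reason is that $\Pi$ is presented by Stenger--Xie in their own coordinates, and need not be visibly adapted to the chamber $\bar{\cC}^{big}$.

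If the direct check is unwieldy, I would instead try a structural argument. Take $x$ in the interior of $\overline{\Mov}(Y_e)$. Since $\Pi$ is a covering domain, write $x=w\pi$ with $\pi\in\Pi$ and $w\in W(\tilde E_7)$. The element $\pi$ lies in the Tits cone, so by Proposition~\ref{cor:fundamentaldomain} there is $w'$ with $w'\pi\in\bar{\cC}^{big}$. Then $w'\pi$ and $x$ are both in $\overline{\Mov}(Y_e)$ and lie in the same $W$-orbit, so the fundamental-domain statement \eqref{eq:mov-empty} forces $w'\pi=x$. One would then need to know that $\Pi$ is stable under the reflections $\sigma_{D_i}$ restricted to the part of $\Pi$ outside $\bar{\cC}^{big}$, which is what I would try to extract from the shape of Stenger--Xie's domain. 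Closedness of both sides then gives equality.
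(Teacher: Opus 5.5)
Your first inclusion, $\Pi\cap\bar{\cC}^{big}\subset\overline{\Mov}(Y_e)$, is exactly the paper's argument. The gap is the reverse inclusion $\overline{\Mov}(Y_e)\subset\Pi$, which you do not establish.

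The ray-by-ray check is only announced. You do not compute the extremal rays of $\overline{\Mov}(Y_e)$, and you do not state the inequalities defining $\Pi$. It could work in principle, but as written it proves nothing.

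The structural fallback does not close the gap either. It only shows that $x$ is the unique $\bar{\cC}^{big}$-representative of the $W$-orbit of some $\pi\in\Pi$. To conclude $x\in\Pi$, you would need $\Pi$ to contain the chamber representative of each of its points. Every point of the chamber arises this way, so that property is equivalent to the inclusion you want. The ``stability'' you hope to read off from the shape of $\Pi$ is therefore the claim itself, not a reduction of it.

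The missing idea is to recognize $\bar{\cC}^{big}$ in Stenger--Xie's coordinates. For $D=dH-\sum m_iE_i$ one has
$D\cdot\alpha_0^\vee=2d-m_1-m_2-m_3-m_4$ and $D\cdot\alpha_i^\vee=m_i-m_{i+1}$ for $1\le i\le 7$.
So $D\in\bar{\cC}^{big}$ is exactly the standard-form condition of \cite[Definition 3.3]{StengerXie}.

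By \eqref{eq:mov}, $\overline{\Mov}(Y_e)\subset\bar{\cC}^{big}$, so every class in $\overline{\Mov}(Y_e)$ is in standard form. The effective-cone computation in the proof of Theorem~\ref{thm:Stenger--Xie--proof} gives $d\ge m_i\ge 0$ for these classes. With these two facts, \cite[Lemma 6.3]{StengerXie} places $D$ in $\Pi$ directly. This is how the paper concludes, with no enumeration of rays and no orbit argument.
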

				\begin{proof}
					By \cite{StengerXie}, we have 
					$\Pi \subset \overline{\Mov}(Y^{\mathrm{gen}})
						$ so intersecting with $\overline{\cC}^{big}$ gives using \eqref{eq:mov}
					\[
					\Pi \cap \overline{\cC}^{big} \subset \overline{\Mov}(Y_e).
					\]
					Conversely, we have $\overline{\Mov}(Y_e) \subset \overline{\cC}^{big}$, every 
					\[
					D= dH-\sum_i m_iE_i \in \overline{\Mov}(Y_e)
					\]
					is in \emph{standard form} (\cite[Definition 3.3]{StengerXie}) and we showed in the proof of Theorem~\ref{thm:Stenger--Xie--proof} that $d \geq m_i \geq 0$. Lemma~\cite[Lemma 6.3]{StengerXie} shows that $D \in \Pi$. 
					It follows that 
					\[
					\overline{\Mov}(Y_e) 	\subset\Pi \cap \overline{\cC}^{big}.
					\]
				\end{proof}
					\subsection{On a question of Prendergast-Smith}
					Let $Y_S$ be the blowup of $\PP^3$ in a Cayley octad associated to a net $N$ of quadrics. We have shown that the affine Weyl group $W(\tilde{R})$ of the elliptic fibration agrees with $W_Y^{big}$. 
					On the other hand, $W(R)$ need not agree with $W_Y^{sm}$. This can easily be seen for $Y_e$ constructed in the previous section, for which we have
					\[
					W_{Y_e}^{big}=W(\tilde{R})=W(\tilde{E}_7), \qquad W_{Y_e}^{sm}=W(A_7), \qquad W(R)=W(E_7).
					\]
					In this section, we show that $Y$ has a relative minimal model $Z$ which, in addition to the elliptic fibration, has the structure of a $\PP^1$-bundle over a weak del Pezzo surface.  We show that $W_Z^{sm}$ coincides with $W(R)$ and also give a geometric explanation for the observation in \cite{PrendergastSmithextremal} that the ADE type of $R$ agrees with the ADE singularity type of the determinant quartic in the dual projective space parametrizing singular quadrics in the net.
					
					Let $\PP^2$ be the projective space which is the base of the elliptic fibration. Points in $\PP^2$ correspond to pencils in $N$, and lines in $\PP^2$ correspond to quadrics in $N$. 
					Points of the dual projective space $(\PP^2)^*$ therefore parametrize quadrics in $N$. 
					The plane quartic 
					\[
					\Delta=\left\{\det(\lambda_1Q_1+\lambda_2Q_2+\lambda_3Q_3)=0\right\}
					\subset(\mathbb P^2)^*
					\]
					parametrizes singular quadrics in $N$ and has at worst ADE singularities by~\cite[Proposition 1.2]{Hattori}.
					Let $\bar{S}$ be the double cover of $(\PP^2)^*$ branched along $\Delta$. 
					$\bar{S}$ is a del Pezzo surface of degree $2$ with du Val singularities (corresponding to the singularities of the plane quartic $\Delta$), and we let $S$ denote its minimal resolution. The morphism $a \colon S \rightarrow (\PP^2)^*$ is given by the complete linear system $|-K_{{S}}|$. Let $\pi' \colon Z \rightarrow \PP^2=\PP(H^0(S, \omega_S^{-1})^*)$ be the universal family of $|-K_S|$, that is, $Z$ is the incidence variety
					\begin{align}
					Z&=\{(s, E) \in {S} \times \PP(H^0(S, \omega_S^{-1})^*) \mid E(s)=0\} \\
					&=\{(s, t) \in {S} \times \PP^2 \mid t \in L_{a(s)}\} \label{eq:incidence}
					\end{align}
					The second projection $\pi' \colon Z \rightarrow \PP^2$ is the universal family, and the first projection $r \colon Z \rightarrow S$ exhibits $Z$ as a $\PP^1$-bundle over $S$. 
					Note that 
					\[
					Z \in |p_1^*\omega_S^{-1}\otimes p_2^*\cO(1)|,
					\]
					 so that $\omega_Z \cong \pi'^*\cO(-2)$.

					\begin{proposition}\label{prop:YZflops}
						The elliptic fibrations $ \pi \colon Y \rightarrow \PP^2$ and $\pi' \colon Z \rightarrow \PP^2$ are related by a crepant SQM $f \colon Y \dashrightarrow Z$ over $\PP^2$. 
					\end{proposition}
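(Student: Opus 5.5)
Both $Y$ and $Z$ are crepant over $\PP^2$: $\omega_Y\cong\pi^*\cO(-2)$ and $\omega_Z\cong\pi'^*\cO(-2)$. So the plan is to first build a birational map $f\colon Y\dashrightarrow Z$ commuting with the two projections to $\PP^2$. We then show $f$ is an isomorphism in codimension one, and conclude by the standard fact that two relative minimal models over a common base are connected by flops.

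\textbf{Construction of $f$.} We identify the generic fibers over $\PP^2$. A point $t\in\PP^2$ is a pencil $N_t\subset N$, that is, a line $L_t\subset(\PP^2)^*$. The fiber $Y_t$ is the base curve of that pencil: an elliptic quartic, the intersection of two quadrics. The fiber $Z_t$ is by \eqref{eq:incidence} the curve $a^{-1}(L_t)\subset S$, a double cover of $L_t$ branched at the four points $L_t\cap\Delta$. These four points are exactly the singular quadrics of the pencil. By the classical theory of pencils of quadrics, the base curve $Q\cap Q'$ is isomorphic to the double cover of the pencil branched at its four singular members. Concretely, for each smooth quadric we choose one of its two rulings; this choice is globally parametrised by the double cover $\bar S$ of $(\PP^2)^*$ branched along $\Delta$. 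Over the generic point of $\PP^2$ this gives an isomorphism of the fibers, at least after fixing compatible base points coming from the octad sections. Hence we get a birational map $f\colon Y\dashrightarrow Z$ over $\PP^2$. As a more explicit alternative, a point $y$ of the base curve together with a ruling of a quadric $Q\ni y$ determines a line through $y$; this map $Y\times_{\PP^2}(\ldots)\to S$ directly gives a rational map $Y\dashrightarrow Z$.

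\textbf{$f$ is small and crepant.} Both $Y$ and $Z$ are smooth, and both $K_Y$ and $K_Z$ are pulled back from $\PP^2$. Therefore $(Y,0)$ and $(Z,0)$ are both minimal models over $\PP^2$, since $K$ is relatively trivial. Two birational relative minimal models with terminal singularities are isomorphic in codimension one: this is the standard negativity-lemma argument (Kawamata), using a common resolution and comparing discrepancies, where equal relatively nef canonical classes force the same exceptional divisors. So $f$ is an SQM, automatically crepant. By \cite[Theorem 4.6]{KawamataD} it factors into flops, which may be taken over $\PP^2$ since all flopping curves are $\pi$-vertical.

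\textbf{Main obstacle.} The hard step is making the generic-fiber identification genuinely global and birational rather than only fiberwise. One must check that the Stein factorisation of the ruling data has degree one over $S$, so that $f$ is birational and not merely a correspondence. One must also check that the identification is defined over the function field of $\PP^2$ and not only after a base change; the double cover is global, but choosing a base point could introduce monodromy. My first attempt would use the explicit line construction: for each $s\in S$ lying over a smooth quadric $Q_s$, the ruling gives a family of lines, and a general $y\in Q_s$ lies on exactly one line of that ruling. The map $(s,y)\mapsto$ (that line, and hence the pencil point $t$) should give a degree-one map. I would verify this degree by a dimension and degree count. If that fails, I would instead construct $f$ via Picard rank and the relative cone, running the MMP over $\PP^2$ on a common resolution.
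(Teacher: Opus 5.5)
Your overall plan has the same shape as the paper's. You identify the generic fibres over $K=k(\PP^2)$, spread this out to a birational map $f$ over $\PP^2$, and then use that $Y$ and $Z$ are smooth with $\omega_Y\cong\pi^*\cO(-2)$ and $\omega_Z\cong\pi'^*\cO(-2)$ to conclude, by Kawamata, that $f$ is a composite of flops over $\PP^2$. That second half is fine; the paper simply cites Kawamata's theorem that flops connect minimal models, which is what your negativity-lemma argument plus factorization amounts to.

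The gap is the first half, which you flag yourself as the main obstacle and do not resolve. None of your three routes produces an isomorphism $Y_\eta\cong Z_\eta$ defined over $K$.
\begin{enumerate}
\item The classical identification of the base curve of a pencil with the double cover of the pencil branched at its four singular members, as you invoke it, only gives an isomorphism over $\bar K$. Choosing $K$-points on both sides makes $Y_\eta$ and $Z_\eta$ elliptic curves over $K$ that are twists of each other, and twists need not be $K$-isomorphic.
\item Your ``explicit alternative'' does not define a map $Y\dashrightarrow Z$. A point $y\in Y_\eta$ lies on every quadric of its pencil, so $y$ together with a ruling only determines a line through $y$; that is a correspondence, not a map. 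The incidence $\{(s,y): y\in Q_{a(s)}\}$ in your last paragraph is birationally just $Z\times_{\PP^2}Y$. It has one-dimensional fibres over $Y$, over $Z$, and over the family of lines (moving $y$ along the line does not change the line), so no degree count on it yields a birational map.
\item The fallback of running an MMP on a common resolution presupposes the very birational map you are trying to construct.
\end{enumerate}

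The missing idea is to actually use the $K$-point $P\in Y_\eta(K)$ given by a section of $\pi$, for instance the last exceptional divisor. You mention such a point but never put it to work.
\begin{enumerate}
\item For general $Q\in Y_\eta$, every quadric of the pencil $\ell_\eta$ restricts to the line $\overline{PQ}$ as a binary quadric vanishing at $P$ and $Q$. These restrictions are therefore proportional, so exactly one quadric of the pencil contains $\overline{PQ}$.
\item That quadric, together with the ruling containing $\overline{PQ}$, is a point of $Z_\eta\cong\ell_\eta\times_{(\PP^2)^*_K}\bar S_K$. Here $\bar S$ is identified with the normalization of $(\PP^2)^*$ in the function field of the double cover parametrizing rulings of smooth quadrics.
\item Conversely, the unique line through $P$ in a given ruling of a given quadric meets $Y_\eta$ in $P$ and one residual point.
\end{enumerate}
These two constructions are defined over $K$ and mutually inverse. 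They give a birational map, hence an isomorphism, between the smooth projective curves $Y_\eta$ and $Z_\eta$, and this spreads out to $f$. Equivalently, sending a ruling to the $g^1_2$ its lines cut on $Y_\eta$ gives a canonical $K$-isomorphism $Z_\eta\cong\Pic^2(Y_\eta)$, and $P$ trivializes $\Pic^2(Y_\eta)\cong Y_\eta$ via $y\mapsto[y+P]$. Your correspondence becomes exactly the inverse map once you specialize $y$ to $P$ and take the residual intersection point. With this in place, the rest of your argument goes through.
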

					\begin{proof}
						Let $K=k(\mathbb P^2)$ and write $\eta=[x_1:x_2:x_3]$ for
						the generic point of the elliptic base. The corresponding pencil of quadrics
						is parametrized by the line
						\[
						\ell_\eta=\{x_1\lambda_1+x_2\lambda_2+x_3\lambda_3=0\}
						\subset(\mathbb P^2)^*_K.
						\]
						Its base locus is $Y_\eta$, while the incidence description of $Z$
						gives $Z_\eta=a_K^{-1}(\ell_\eta)$. 
						The identification between $Y_\eta$ and $Z_\eta$ is a standard argument, but we provide it here for the convenience of the reader:
						let $U=(\PP^2)^* \setminus \Delta$ and consider the variety of lines 
						\[
						\cF = \{(Q, \ell) \in U \times \Gr(2,4) \mid \ell \subset Q \}.
						\]
						Let $\cF \rightarrow T_U \rightarrow U$ be the Stein factorization of the first projection. 
						Since a smooth quadric has two distinct rulings, $T_U \rightarrow U$ is an etale double cover.
						Let $u \colon T \rightarrow (\PP^2)^*$ be the normalization of $(\PP^2)^*$ in the function field of $T_U$. Over a general point of $\Delta$, the quadric has rank $3$ and contains a unique ruling, so that $u$ ramifies over $\Delta$. It follows that 
						$
						T \cong \bar{S}.
						$
						It follows that 
						\[
						Z_\eta=\ell_\eta \times_{(\PP^2)^*_K}S_K\cong \ell_\eta \times_{(\PP^2)^*_K}T_K,
						\]
						since $\ell_\eta$ avoids the images of the exceptional curves. 
						Choose $P\in Y_\eta(K)$ using a section of $\pi$, for instance the exceptional divisor of the last blowup.
						For a general point $Q\in Y_\eta$, the line $\overline{PQ}$ lies
						in a unique quadric of the pencil parametrized by $\ell_\eta$: the restrictions of the two
						generating quadrics to $\overline{PQ}$ are proportional, since
						both vanish at $P$ and $Q$, and they are not both identically zero.
						The quadric and the ruling containing $\overline{PQ}$ determine
						a point of $Z_\eta$. Conversely, given a general point of $Z_\eta$,
						take the unique line through $P$ in the specified ruling.
						Its intersection with $Y_\eta$ consists of $P$ and a residual
						point $Q$. These constructions are inverse and defined over $K$.
						They therefore give a birational map between the smooth projective
						curves $Y_\eta$ and $Z_\eta$, hence an isomorphism.
						
						This induces a birational map $f\colon Y\dashrightarrow Z$ over
						$\mathbb P^2$. Both varieties are smooth, and
						\[
						\omega_Y\cong\pi^*\mathcal O_{\mathbb P^2}(-2),
						\qquad
						\omega_Z\cong\pi'^*\mathcal O_{\mathbb P^2}(-2).
						\]
						Their canonical divisors are therefore relatively nef.
						By~\cite[Theorem~1]{Kawamata}, $f$ is a composite of flops over
						$\mathbb P^2$, and in particular a crepant SQM.
						\end{proof}
					On the birational model $Z$ of $Y$, there is an explicit description of $W_Z^{sm}$.

\begin{proposition}\label{prop:smallweylgroups}
	Let $R$ be the finite root system associated to the elliptic fibration
	$\pi'\colon Z\to\PP^2$, let $Q(R)$ be its root lattice, and let
	$r\colon Z\to S$ be the $\PP^1$-bundle defined above.
	The assignment $\Gamma\mapsto r^{-1}(\Gamma)$ identifies the irreducible
	$(-2)$-curves on $S$ with $\Delta_Z^{\mathrm{sm}}$.
 In particular, denoting $M$ the intersection lattice of $(-2)$-curves on $S$, we have 
	\[
	W_Z^{\mathrm{sm}}\cong W(R), \qquad M \cong Q(R).
	\]
\end{proposition}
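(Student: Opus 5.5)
The plan is to prove both inclusions $\{r^{-1}(\Gamma)\}\subset\Delta_Z^{\mathrm{sm}}$ and $\Delta_Z^{\mathrm{sm}}\subset\{r^{-1}(\Gamma)\}$ directly from the geometry of the incidence variety \eqref{eq:incidence}. Then we compare reflections with the lattice of $(-2)$-curves on $S$, and finally with $R$.

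\emph{Step 1: each $r^{-1}(\Gamma)$ lies in $\Delta_Z^{\mathrm{sm}}$.} Let $\Gamma\subset S$ be an irreducible $(-2)$-curve, so $\Gamma\cong\PP^1$ and $K_S\cdot\Gamma=0$. Since $r$ is a $\PP^1$-bundle, $E_\Gamma:=r^{-1}(\Gamma)\to\Gamma$ is a ruled surface. Its fiber class $\ell$ is a fiber of $r$. Because $\omega_Z\cong\pi'^*\cO(-2)$ and the fibers of $r$ map to lines in $\PP^2$, we get $-K_Z\cdot\ell=2$. This is not $K$-trivial, so the naive choice of ruling is wrong, and I will use the other ruling instead. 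The curve $\Gamma$ is contracted by $a$ to a point $a(\Gamma)\in(\PP^2)^*$, so for $s\in\Gamma$ the fiber $r^{-1}(s)=L_{a(s)}$ is a fixed line $L\subset\PP^2$. Hence $E_\Gamma=\Gamma\times L\cong\PP^1\times\PP^1$. The second projection $q\colon E_\Gamma\to L$ is a ruling whose fibers $\Gamma\times\{t\}$ are contracted by $\pi'$, so $K_Z\cdot\ell_{E_\Gamma}=0$. Moreover $\omega_Z^{-1}|_{E_\Gamma}\cong\pi'^*\cO(2)|_{E_\Gamma}=q^*\cO_L(2)$, hence $\omega_Z^{-1}|_{E_\Gamma}\otimes q^*\omega_L\cong\cO$. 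This places $E_\Gamma$ in $\Delta_Z^{\mathrm{sm}}$, which is the $g(C)=0$ case discussed in the introduction. I also expect $E_\Gamma\cdot\ell_{E_\Gamma}=r^*\Gamma\cdot\ell_{E_\Gamma}=\Gamma\cdot\Gamma=-2$ via the projection formula, since $r$ maps $\Gamma\times\{t\}$ isomorphically to $\Gamma$. Distinct $\Gamma$ give distinct surfaces, so the assignment is injective.

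\emph{Step 2: every element of $\Delta_Z^{\mathrm{sm}}$ is of this form.} Let $E\in\Delta_Z^{\mathrm{sm}}$ with quasi-ruling $q\colon E\to C$. The fiber class is $K_Z$-trivial, so every fiber is contracted by $\pi'$. As in the proof of Proposition~\ref{prop:bijectionbigroots}, $E$ is $\pi'$-vertical, and $\pi'(E)=B$ is a curve with $q$ factoring through $\pi'|_E$. Next consider $r|_E$. If a fiber $F$ of $q$ were contracted by $r$, then $F$ would be a fiber of $r$; but fibers of $r$ have $-K_Z$-degree $2$, which is a contradiction. So $r(F)$ is a curve in $S$. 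Since $F$ maps to a point of $\PP^2$, the incidence condition forces $a(r(F))$ to lie on a fixed line dual to that point, and $r(E)=\Gamma'$ is a curve. Because $E$ is a divisor, $E=r^{-1}(\Gamma')$.

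For every $s\in\Gamma'$ the line $L_{a(s)}$ must equal $B$, because $E$ is vertical over the curve $B$. So $a$ contracts $\Gamma'$, which means $\Gamma'$ is a $(-2)$-curve; the curves contracted by $a$ are exactly the $(-2)$-curves on the weak del Pezzo surface $S$ of degree $2$. The main obstacle is making this step rigorous, in particular handling non-normal quasi-ruled $E$ and justifying that $r(E)$ is a curve rather than all of $S$. The last case is excluded because $r^{-1}$ of a surface is all of $Z$.

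\emph{Step 3: comparison of reflections and lattices.} For $E_\Gamma,E_{\Gamma'}$, the pairing $E_\Gamma\cdot\ell_{E_{\Gamma'}}$ equals $\Gamma\cdot\Gamma'$, again by the projection formula. So the generalized root datum of $\Delta_Z^{\mathrm{sm}}$ from Proposition~\ref{prop:deformation-roots-form-a-root-system} is the Cartan datum of $M$, and $W_Z^{\mathrm{sm}}\cong W(M)$. To identify $M$ with $Q(R)$, I will use that $(-2)$-curves on $S$ form the simple roots of the ADE system of the singularities of $\bar S$, equivalently of $\Delta$. I will then compare with the finite root system $R$. The admissible divisors of $\pi'$ not meeting a zero section should be exactly the $r^{-1}(\Gamma)$, with $\star$-form equal to the intersection form on $S$ by Proposition~\ref{prop:bijectionbigroots}. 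Here I will use the crepant SQM of Proposition~\ref{prop:YZflops} to transport $\Adm$ and the zero section from $Y$. Choosing the zero section compatibly is the second delicate point.
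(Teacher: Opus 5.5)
Your Step 1 is the paper's argument. Your computation $E_\Gamma\cdot\ell_{E_{\Gamma'}}=\Gamma\cdot\Gamma'$ is a clean way to get $W_Z^{\mathrm{sm}}\cong W(M)$ once the bijection is known.

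The genuine gap is in Step 2, where you conclude that $r(E)$ is a curve. You know that each fibre $F$ of $q$ lies over a point $t\in\PP^2$ and that $a(r(F))$ lies on the line dual to $t$. This says nothing about how $r(F)$ moves as $t$ varies. Your exclusion of $r(E)=S$ is also circular: $r^{-1}(S)=Z$ only shows $E\neq r^{-1}(r(E))$, while the danger is exactly that $E$ is a multisection of the $\PP^1$-bundle.

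Nothing you use up to that point involves quasi-ruledness, and all of it holds for a divisor dominating $S$. Take $u=a(\Gamma)$ for a $(-2)$-curve $\Gamma$. Then $\pi'^{-1}(L_u)$ consists of the surfaces $r^{-1}(\Gamma')$ with $a(\Gamma')=u$, together with the closure $E_u$ of $\{(s,\,L_u\cap L_{a(s)}) : a(s)\neq u\}$. This $E_u$ maps birationally onto $S$. It is admissible (it is the component $E_0$ in the paper's proof), and its conic fibration $\pi'|_{E_u}$ has $K_Z$-trivial fibres contracted by $\pi'$ but not by $r$. Moreover, $a$ maps the $r$-image of the fibre over $t$ into the line dual to $t$. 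Yet $r(E_u)=S$.

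The missing input is quasi-ruledness, used through Picard numbers as in the paper. Suppose $E\cdot f>0$; then $r|_E$ is generically finite and surjective onto $S$. The normalization $\tilde E$ is a $\PP^1$-bundle over a smooth connected curve, so $\rho(\tilde E)=2$. Pullback along the generically finite surjection $\tilde E\to S$ is injective on $N^1$, which gives $2\geq\rho(S)=8$, a contradiction. The paper first uses Proposition~\ref{prop:bijectionbigroots} to get $C=\PP^1$, hence $E$ ruled; working on $\tilde E$ instead also removes your worry about non-normal $E$. Once $E\cdot f=0$ you get $E=r^{-1}(r(E))$, and the rest of your Step 2 coincides with the paper.

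In Step 3, which admissible divisors avoid a zero section depends on the section. A section $r^{-1}(\gamma)$ with $\gamma$ a $(-1)$-curve meets $r^{-1}(\Gamma)$ along $r^{-1}(\gamma\cap\Gamma)$ whenever $\gamma\cdot\Gamma>0$. When the $(-2)$-curves span a lattice of rank $7$ (e.g.\ type $E_7$, as for $Y_e$), every $(-1)$-curve meets one of them, since a class orthogonal to all of them would be a rational multiple of $K_S$. So your expected identification of the roots avoiding the zero section with the $r^{-1}(\Gamma)$ can fail.

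The paper avoids sections altogether. Write $\pi'^*L=\sum_i m_iE_i$ for a line $L$ and intersect with $f$, using $-\tfrac12K_Z\cdot f=1$. This shows exactly one component $E_0$ has $E_0\cdot f=1$ and $m_0=1$. Hence $E_0\equiv-\sum_{i>0}m_iE_i$ modulo $-\tfrac12K_Z$, so the $r^{-1}(\Gamma)$ generate $Q(R)$. Finally, $r^*\alpha\star r^*\beta=(\alpha\cdot\beta)_S$, again because $-\tfrac12K_Z\cdot f=1$.
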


\begin{proof}
	Let $f$ be the class of the fiber of the $\PP^1$-bundle. 
	An irreducible divisor $E$ satisfies $E\cdot f=0$ precisely when
	$E=r^{-1}(\Gamma)$ for an irreducible curve $\Gamma\subset S$.
	By Equation~\eqref{eq:incidence}, $E$ is
	$\pi'$-vertical precisely when $a(\Gamma)$ is a point, or equivalently
	$(-K_S)\cdot \Gamma=0$, where $a \colon S \rightarrow (\PP^2)^*$ is the anticanonical morphism. By the Hodge index theorem such a curve must have $\Gamma^2<0$, and adjunction shows
	that $\Gamma$ is a $(-2)$-curve.
	
	It follows that we have a bijection between the set of $(-2)$-curves on $S$ and $\pi'$-vertical divisors $E$ satisfying $E \cdot f=0$.
	
	Suppose now that $\Gamma$ is a $(-2)$-curve. 
Then
	\[
	E:=r^{-1}(\Gamma)=\Gamma \times L_{a(\Gamma)}\cong\PP^1\times\PP^1.
	\]
	The ruling $q \colon E \rightarrow \PP^1$ given by the second projection satisfies $q=\pi'|_E$. It is contracted by $\pi'$ and therefore $K_Z$-trivial. We have
	\[
	\omega_Z^{-1}|_E\otimes q^*\omega_{\PP^1}
	\cong q^*\bigl(\cO_{\PP^1}(2)\otimes\omega_{\PP^1}\bigr)
	\cong\cO_E,
	\]
	so $[E]\in\Delta_Z^{\mathrm{sm}}$.
	
	Conversely, let $E \in \Delta_Z^{sm}$, and write $q \colon E \rightarrow C$ for the defining $K$-trivial ruling. Since flops identify admissible divisors, Proposition~\ref{prop:bijectionbigroots}, shows that $q=\pi'|_E$ and $C=\PP^1$.
	Quasi-ruledness therefore makes $E$ a $\PP^1$-bundle.
	Thus $\rho(E)=2$. If $E\cdot f>0$, then $r|_E\colon E\to S$ is generically finite and surjective, giving
	$\rho(E)\geq\rho(S)=8$, a contradiction, so we must have $E \cdot f=0$.
	$E$ is also $\pi'$-vertical
	because $\pi'$ contracts its $K_Z$-trivial ruling fibers. It follows that $\Gamma=r(E)$ is a $(-2)$-curve. 
	This proves the asserted correspondence.
	
	For the identifications of Weyl groups, recall from Proposition~\ref{prop:PS-classification} that every admissible divisor lies over a line. Since $f$ maps to a line in $\PP^2$, we have $-\tfrac{1}{2}K_Z\cdot f=1$.
	Writing
	\[
	\pi'^*L=\sum_{i=0}^k m_iE_i,
	\qquad
	1=\sum_{i=0}^k m_i(E_i\cdot f),
	\]
	we find, after reindexing, that
	$m_0=E_0\cdot f=1$ and $E_i\cdot f=0$ for $i>0$.
	Thus
	\[
	[E_0]\equiv-\sum_{i>0}m_i[E_i]\pmod{\ZZ \cdot (-\tfrac{1}{2}K_Z)}.
	\]
	The images of the small roots therefore generate the root
	lattice of $R$ in $K_Z^\perp/\ZZ \cdot (-\tfrac{1}{2}K_Z)$. Since 
	\[
	E_1 \star E_2=E_1 \cdot E_2 \cdot -\tfrac{1}{2}K_Z=E_1 \cdot \ell_{E_2}
	\]
	for $E_1, E_2 \in \Delta_Z^{sm}$, we obtain that $W(R)=W_Z^{sm}$. 
	
	Finally, for $\alpha,\beta\in N^1(S)$, we compute 
	\[
	r^*\alpha\star r^*\beta
	=r^*\alpha\cdot r^*\beta\cdot (-\tfrac{1}{2}K_Z)
	=(\alpha\cdot\beta)_S (-\tfrac{1}{2}K_Z \cdot f)=(\alpha \cdot \beta)_S,
	\]
	so that the intersection lattice $M$ of $(-2)$-curves is identified with $Q(R)$. 
\end{proof}

\begin{corollary}\label{cor:PS}
	The ADE singularity type of the determinant quartic $\Delta$
	agrees with the ADE type of the finite root system $R$
	associated to $\pi \colon Y\to\PP^2$.
\end{corollary}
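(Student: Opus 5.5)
The plan is to deduce the corollary from Proposition~\ref{prop:smallweylgroups} together with the classical description of the singularities of a double cover. The surface $\bar S$ is the double cover of $(\PP^2)^*$ branched along $\Delta$. Over a point of $\Delta$ with local equation $g(x,y)=0$, $\bar S$ is locally $\{z^2=g(x,y)\}$. Since $\Delta$ has at worst ADE singularities, a singular point of $\Delta$ of type $A_k$, $D_k$ or $E_k$ gives a du Val singularity of $\bar S$ of the same type. Conversely, every singular point of $\bar S$ lies over a singular point of $\Delta$, because the cover is smooth over the smooth locus of the branch curve and \'etale away from it. So the ADE type of $\bar S$, meaning the formal sum of the types of its singular points, equals the ADE type of $\Delta$.

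Next I would use that $S\to\bar S$ is the minimal resolution. The $(-2)$-curves on $S$ are exactly the exceptional curves of this resolution. Indeed, $S$ is a weak del Pezzo surface of degree $2$, so every $(-2)$-curve $\Gamma$ satisfies $-K_S\cdot\Gamma=0$ and is therefore contracted by the anticanonical morphism, which factors through $\bar S$. Conversely, the exceptional curves of a minimal resolution of du Val points are $(-2)$-curves. Hence the intersection lattice $M$ spanned by the $(-2)$-curves is the orthogonal sum, over the singular points of $\bar S$, of the root lattices of the corresponding ADE types, with the irreducible $(-2)$-curves forming a basis of simple roots for each summand.

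Finally, Proposition~\ref{prop:smallweylgroups} identifies $M\cong Q(R)$ compatibly with the forms. Under this identification the irreducible $(-2)$-curves correspond to the classes $r^{-1}(\Gamma)\in\Delta_Z^{\mathrm{sm}}$, which are simple roots of $R$. The Dynkin diagram of $R$ is therefore the dual graph of the $(-2)$-curves on $S$, which is the disjoint union of the ADE diagrams of the singularities of $\Delta$. Two points remain to check. First, $R$ computed on $Z$ is the same as $R$ computed on $Y$: the crepant SQM $f$ of Proposition~\ref{prop:YZflops} preserves admissible divisors, as noted in the proof of Proposition~\ref{prop:smallweylgroups}, and it is compatible with $\star$ because $-\tfrac12K$ is pulled back from $\PP^2$ on both sides. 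Second, and this is the main obstacle, one must confirm that the simple roots $\Delta_Z^{\mathrm{sm}}$ really give a base of the finite root system $R$, not merely a generating set of $Q(R)$, so that the ADE type is read off correctly. For this I would use the proof of Proposition~\ref{prop:smallweylgroups}: every admissible divisor not meeting the fiber-degree-one component is of the form $r^{-1}(\Gamma)$, so the root system $R$ is generated by these negative-definite simple roots. A root system with a base of simple roots whose pairwise products are nonnegative has its ADE type determined by that base, which finishes the argument.
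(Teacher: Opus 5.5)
Your proposal is correct and follows essentially the same route as the paper: the du Val points of the double cover $\bar S$ have the same ADE types as the singularities of $\Delta$, the $(-2)$-curves on $S$ are exactly the curves contracted by the anticanonical morphism $S\to\bar S\to(\PP^2)^*$, and Proposition~\ref{prop:smallweylgroups} identifies their lattice $M$ with $Q(R)$. Your two extra checks are points the paper's short proof leaves implicit, and your arguments for them are sound: that $R$ computed on $Z$ agrees with $R$ on $Y$ via the crepant SQM over $\PP^2$, and that the classes $r^{-1}(\Gamma)$ form a base of $R$ rather than just a generating set.
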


\begin{proof}
	The double cover $\bar S\to(\PP^2)^*$ branched along $\Delta$
	has du Val singularities of the same ADE types as $\Delta$.
	Their exceptional curves on the minimal resolution $S$ are
	precisely the $(-2)$-curves on $S$: both are exactly the curves
	contracted by the anticanonical morphism
	$S\to\bar S\to(\PP^2)^*$.
	The intersection lattice $M$ of the $(-2)$-curves on $S$ therefore has the ADE type of
	$\Delta$. 
We conclude by Proposition~\ref{prop:smallweylgroups}.
\end{proof}
				\appendix
\section{Background on Hodge modules}\label{app:hodge}

We recall the facts about Saito's theory of Hodge modules that are needed in the proof of Theorem~\ref{thm:surjective}. We will follow the exposition of \cite{Schnell}.
Let \(X\) be a smooth complex algebraic variety. Roughly speaking, a mixed Hodge module \(M\) on \(X\) consists of the data
\[
M=(\mathcal M,F_\bullet\mathcal M,K,W_\bullet),
\]
where \(\mathcal M\) is a regular holonomic \(\mathscr D_X\)-module, \(F_\bullet\mathcal M\) is a filtration, called the Hodge filtration, \(K\) is a perverse sheaf of \(\QQ \)-vector spaces, and \(W_\bullet\) is a finite increasing weight filtration. The \(\mathscr D\)-module and perverse-sheaf are related by the Riemann--Hilbert correspondence:
\[
\operatorname{DR}(\mathcal M)\simeq K\otimes_{\QQ }\mathbf C .
\]
where 
\[
\operatorname{DR}(\cM)
=
\left[
\cM
\longrightarrow
\Omega_X^1 \otimes_{\cO_X} \cM
\longrightarrow
\Omega_X^2 \otimes_{\cO_X} \cM
\longrightarrow
\cdots
\longrightarrow
\Omega_X^n \otimes_{\cO_X} \cM
\right][n].
\] 
is the de Rham complex of $\cM$.
These data are required to satisfy additional compatibility and admissibility conditions which we do not discuss, see \cite{Schnell, SaitoMHM}. On a singular variety $X$, the definition is made by embedding $X$ into a smooth variety and considering Hodge modules supported on $X$.

Mixed Hodge modules form an abelian category
$
\operatorname{MHM}(X),
$
and there is an exact faithful realization, or rationalization, functor
\[
\operatorname{rat}\colon
\operatorname{MHM}(X)\longrightarrow
\operatorname{Perv}(X,\QQ ),
\]
sending a mixed Hodge module to its underlying perverse sheaf. 

Saito proved that the usual six operations on constructible complexes lift to the derived categories \(D^b\operatorname{MHM}(X)\). Moreover, the Verdier duality functor $\DD$ lifts to a duality functor on  \(D^b\operatorname{MHM}(X)\), also denoted $\DD$.

The Tate twist $M(m)$ has $F_k(M(m))=F_{k-m}M$ and changes
the weight by $-2m$. 

Polarizable variations of Hodge structures give basic examples of Hodge modules. Let \(X\) be a smooth complex variety of dimension \(d\), and let \((V,F^\bullet\mathcal V)\) be a polarizable variation of rational Hodge structures of weight \(w\) on \(X\), where \(\mathcal V=V\otimes_{\mathbb Q}\mathcal O_X\) is the associated flat vector bundle. Its Hodge filtration \(F^\bullet\mathcal V\) consists of holomorphic subbundles and satisfies Griffiths transversality. By the holomorphic Poincaré lemma, \(\operatorname{DR}(\mathcal V)\) is quasi-isomorphic to \((V\otimes_{\mathbb Q}\mathbb C)[d]\). Setting \(F_k\mathcal V=F^{-k}\mathcal V\), the data \((\mathcal V,F_\bullet\mathcal V,V[d])\) define a polarizable pure Hodge module of weight \(w+d\).

A mixed Hodge module on a point is the same thing as a mixed Hodge structure. We denote $\QQ^{H}$ the Hodge module corresponding to the weight zero Hodge structure $\QQ(0)$. 

For any complex algebraic variety $X$, we then define the constant Hodge complex 
\[
\QQ_X^H=a_X^*\QQ^H \in D^b\operatorname{MHM}(X),
\]
where $a_X \colon X \rightarrow pt$.
Its underlying rational complex is $\QQ_X$.
If \(X\) is smooth of pure dimension \(d\), then
\[
\QQ_X^H[d]=(\cO_X,F_\bullet\cO_X,\QQ_X[d])
\]
is a pure Hodge module of weight $d$, with $F_k\cO_X=0$ for $k<0$ and $F_k\cO_X=\cO_X$ for $k\ge0$, and the compatibility 
\[
\DR(\cO_X) \simeq \CC_X[d]
\]
is the well-known fact that the de Rham complex is quasi-isomorphic to the constant sheaf $\CC_X$. 

For smooth $X$ of dimension $d$,
$\DD\QQ_X^H=\QQ_X^H(d)[2d]$; see \cite[Sections 7 and 29]{Schnell}.

For general $X$, $\QQ_X^H[d]$ is a complex of Hodge modules, and the $D$-module is not easy to describe. However, Saito proved~\cite[Theorem 0.2]{Saito2000} that there is a functorial quasi-isomorphism
\begin{equation}\label{eq:appendix-dubois}
(\operatorname{DR}(\QQ _X^H[d]), F)
\simeq
(\underline{\Omega}_X^{\bullet}, F)[d]
\end{equation}
of filtered complexes, where $\underline{\Omega}_X^{\bullet}$ is the du Bois complex of $X$.
Define the functor
\[
G_a=\Gr^F_{-a}DR(-).
\]
then we have
\[
G_a(\QQ_X^H(r)[n]) \cong \underline{\Omega}_X^{a+r}[n-r-a]
\]
If $X$ is smooth, then there is a filtered, functorial quasi-isomorphism
$
\underline{\Omega}_X^{\bullet} \cong {\Omega}_X^{\bullet}
$
so that 
\[
G_a(\QQ_X^H(r)[n]) \cong {\Omega}_X^{a+r}[n-r-a]
\]
For every variety $X$, there is a natural morphism
$
\mathcal O_X\longrightarrow\underline{\Omega}_X^0
$
and \(X\) is said to have Du Bois singularities if it is a quasi-isomorphism. Hence, if \(X\) is Du Bois,
\begin{equation}\label{eq:appendix-dubois-zero}
G_0(\QQ_X^H[d])
\simeq\mathcal O_X[d].
\end{equation}

The functor $G_a$ behaves well with respect to proper direct image and duality: if 
$f \colon X \rightarrow Y$ is a proper morphism between
varieties and $M$ is a Hodge module, by \cite[2.3.7 and Section 2.4]{Saito88}, see also \cite[Equation 9.2]{Popa} there are natural quasi-isomorphisms of functors
\begin{equation}\label{eq:compatibilitydirectimageduality}
	G_a\circ f_*\simeq Rf_*\circ G_a,\qquad G_a\circ\DD\simeq D_X\circ G_{-a}
\end{equation}
where $Rf_*$ is the right derived functor of the usual $\cO$-module push-forward, and $D_X=R\mathcal{H}om_{\cO_X}(_,\omega_X^\bullet)$ is the Grothendieck duality functor. 

Let \(Y\) be a pure \(d\)-dimensional complex algebraic variety. Saito constructs~\cite[4.5.12]{SaitoMHM} a fundamental-class morphism
\begin{equation}
\operatorname{cl}_Y^H:
\QQ_Y^H
\longrightarrow
\mathbb D\QQ_Y^H(-d)[-2d]
\label{eq:appendix-fundamental-class}
\end{equation}
in \(D^b\operatorname{MHM}(Y)\), and shows that it is uniquely determined by its restriction to the smooth locus of $Y$. 
The underlying morphism of rational complexes is the usual fundamental-class morphism
\[
\QQ _Y\longrightarrow
\mathbb D\QQ _Y[-2d].
\]
which, under the isomorphism $\Hom(\QQ_Y,\DD\QQ_Y[-2d]) \cong H_{2d}^{BM}(Y, \QQ)$ corresponds to the fundamental class $[Y]$. 
Here $BM$ denotes Borel--Moore homology, which agrees with singular homology if $Y$ is compact.
If $Y$ is smooth, then $cl_Y^{H}$ is an isomorphism.
After shifting by $[a]$, the induced morphism on $G_a$ is given by 
\begin{equation}\label{eq:appendix-fundamental-graded}
	\underline{\Omega}_Y^a\longrightarrow D_Y(\underline{\Omega}_Y^{d-a})[-d].
\end{equation}
If $Y$ is smooth, this is the isomorphism given by the perfect pairing 
\[
\Omega_Y^a \times \Omega_Y^{d-a} \rightarrow \omega_Y
\]
Finally, we discuss the subcategory of pure Hodge modules. 
A pure Hodge module is a mixed Hodge module with trivial weight filtration.
Let $X$ be smooth projective, and let $Z \subset X$ be an irreducible subvariety. By \cite[Theorem 15.1]{Schnell}, every polarizable variation of Hodge structures of weight $w -\dim Z$ on the smooth locus of $Z$ extends uniquely to an object in $HM(X, w)$ with strict support $Z$. 
Conversely, every object $M \in HM(X, w)$ is of the form 
\[
M \cong  \bigoplus_{Z \subset X} M_Z
\]
where each $M_Z$ is the unique extension with strict support $Z$ of a polarizable variation of Hodge structure of weight $w-\dim Z$ on a Zariski-open subset of the smooth locus of $Z$.
In particular, the category $HM(X, w)$ is semisimple.

We will need a special case of the Hodge-module decomposition theorem. Recall that a morphism $f \colon X \rightarrow Y$ is semismall if 
\[
\dim\{y:\dim f^{-1}(y)\ge k\}\le\dim X-2k
\]
for all $k\ge0$. In particular, a type III contraction $Y \rightarrow \bar{Y}$ as in Proposition~\ref{prop:wilson-type-contraction-to-curve} is semismall.
\begin{theorem}\label{thm:semismall-hodge}
	Let $f \colon X \rightarrow Y$ be a semismall projective morphism of complex varieties with $X$ smooth of pure dimension $d$. Then
	$Rf_*\QQ_X^H[d]$ is a pure (and hence semisimple) Hodge module of weight $d$. 
\end{theorem}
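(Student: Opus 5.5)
The plan is to combine Saito's decomposition theorem for projective morphisms with the classical fact that the direct image of the shifted constant sheaf under a semismall map is perverse. The realization functor $\operatorname{rat}$ then transports perversity back to the Hodge-module level.

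First, $M:=\QQ_X^H[d]$ is a polarizable pure Hodge module of weight $d$, since $X$ is smooth of pure dimension $d$ (see the discussion above). By Saito's decomposition theorem for projective morphisms \cite{Saito88}, $Rf_*M$ is a pure complex of weight $d$. This means
\[
Rf_*M\cong\bigoplus_i H^i(Rf_*M)[-i],
\]
where each $H^i(Rf_*M)$ is a polarizable pure Hodge module of weight $d+i$. Since $\operatorname{rat}$ is exact and faithful, and commutes with $f_*$ and with taking cohomology (with perverse cohomology on the constructible side), we get $\operatorname{rat}H^i(Rf_*M)\cong{}^pH^i(Rf_*\QQ_X[d])$. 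A Hodge module whose underlying perverse sheaf is zero is itself zero. So it suffices to show that $K:=Rf_*\QQ_X[d]$ is perverse. Then $H^i(Rf_*M)=0$ for $i\neq0$, and $Rf_*M\cong H^0(Rf_*M)$ is a pure Hodge module of weight $d$. Semisimplicity follows from the semisimplicity of the category of polarizable pure Hodge modules of fixed weight recalled above.

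For perversity of $K$, we check the support condition first. By proper base change, $\mathcal H^j(K)_y\cong H^{j+d}(f^{-1}(y),\QQ)$. This vanishes unless $j+d\le 2\dim f^{-1}(y)$. Stratify $Y$ by the loci $S_k=\{y:\dim f^{-1}(y)=k\}$. If $\mathcal H^j(K)$ is nonzero somewhere on $S_k$, then $k\ge (j+d)/2$. Semismallness gives $\dim S_k\le d-2k\le -j$, so $\dim\operatorname{supp}\mathcal H^j(K)\le -j$. For the cosupport condition, use Verdier duality. Since $f$ is proper, $\DD K\cong Rf_*\DD(\QQ_X[d])\cong Rf_*\QQ_X[d](d)$, whose underlying complex is again $K$. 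So the cosupport condition for $K$ is the support condition for $\DD K$, which we just verified.

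I expect no serious obstacle. The only point requiring care is the stalk-dimension bookkeeping in the support condition: the inequality must be applied on each stratum $S_k$ separately, rather than on the closed loci $\{\dim f^{-1}(y)\ge k\}$ appearing in the definition of semismallness. The other delicate point is the formal compatibility of $\operatorname{rat}$ with $f_*$ and with the cohomology functors. Both are standard, and I would cite \cite{SaitoMHM} for the latter.
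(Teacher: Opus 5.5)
Your proposal is correct and follows the same route as the paper: apply the Hodge-module decomposition theorem to $Rf_*\QQ_X^H[d]$, transport to the constructible side using that $\operatorname{rat}$ commutes with direct image and (perverse) cohomology, and use perversity of $Rf_*\QQ_X[d]$ for semismall $f$ to conclude that $\cH^j=0$ for $j\neq 0$. The only difference is that you prove this perversity directly, by the stalk-dimension count on the strata $S_k$ together with Verdier self-duality $\DD K\cong K$ for proper $f$, whereas the paper cites de Cataldo--Migliorini for it.
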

\begin{proof}
	By the decomposition theorem for Hodge modules~\cite[Theorem 4.2.6]{CataldoMiglioriniDecomposition}, we have that 
	\[
	Rf_*\QQ^H_X[d]\cong\bigoplus_{j\in\ZZ}\cH^j(Rf_*\QQ_X^H[d])[-j]
	\]
	with each $\cH^j(Rf_*\QQ_X^H[d])$ pure of weight $d+j$ and semisimple. 
	The rationalization functor commutes with cohomology and direct image, so the underlying perverse sheaf of $\cH^j(Rf_*\QQ_X^H[d])$ is ${}^p\cH^j(Rf_*\QQ_X[d])$. Since $f$ is semismall, $Rf_*\QQ_X[d]$ is a perverse sheaf (\cite[Theorem 2.1.1, Remark 4.7.3]{CataldoMigliorini}), and hence
	\[
	{}^p\cH^j(Rf_*\QQ_X[d])=0
	\]
	for $j\ne0$. We conclude that 
	$Rf_*\QQ^H_X[d]\cong\cH^0(Rf_*\QQ_X^H[d])$ is pure and semisimple.
\end{proof}
Write  
\[
Rf_*\QQ^H_X[d] \cong  \bigoplus_{Z \subset Y} M_Z
\]
as in the strict-support decomposition above. 
Passing to perverse sheaves, we obtain 
\[
Rf_*\QQ_X[d] \cong  \bigoplus_{Z \subset Y} IC_Z(L)
\]
where $Z$ ranges over the relevant strata of $f$, $L$ is the local system of top-dimensional fiber cohomology on the corresponding stratum (\cite[Theorem 3.4.1]{CataldoMigliorini2}), and $IC_Z(L)$ is its intersection complex. For example, for a $K$-trivial type III contraction $p \colon Y \rightarrow \bar{Y}$ contracting a surface $E$ to a smooth curve $C$, and such that $q \colon E \rightarrow C$ has general fiber $\PP^1$, the closures of the relevant strata are $\bar{Y}$ and $C$, and the local systems are trivial. Passing to hypercohomology, we obtain isomorphisms
\[
H^k(Y, \QQ) \cong IH^k({\bar{Y}, \QQ}) \oplus H^{k-2}(C, \QQ)(-1)
\]
If $E \rightarrow C$ is quasi-ruled, then $\bar{Y}$ is a $\QQ$-homology manifold so that 
\[
IH^k({\bar{Y}, \QQ}) \cong H^k(\bar{Y}, \QQ).
\]

					\color{black}
					\bibliography{ReflectionGroupsThreefolds}
					\bibliographystyle{alpha}
					
				\end{document}